\renewcommand{\citepunct}{;\penalty\citemidpenalty\ }
\numberwithin{equation}{subsection}
\newtheorem{theorem}[equation]{Theorem}
\newtheorem{corollary}[equation]{Corollary}
\newtheorem{lemma}[equation]{Lemma}
\newtheorem{proposition}[equation]{Proposition}
\newenvironment{customthm}[1]
  {\innercustomthm}
  {\endinnercustomthm}
\theoremstyle{definition}
\newtheorem{definition}[equation]{Definition}
\newtheorem{examples}[equation]{Examples}
\newtheoremstyle{citeddef}{.5\baselineskip\@plus.2\baselineskip\@minus.2\baselineskip}{.5\baselineskip\@plus.2\baselineskip\@minus.2\baselineskip}{}{}{\bfseries}{\bfseries .}{5pt plus 1pt minus 1pt}{\thmname{#1}\thmnumber{ #2}\thmnote{ \normalfont#3}}
\theoremstyle{citeddef}
\newtheorem{citeddef}[equation]{Definition}
\newtheorem{citedex}[equation]{Example}
\theoremstyle{remark}
\newtheorem{remark}[equation]{Remark}
\newtheoremstyle{step}{.25\baselineskip\@plus.1\baselineskip\@minus.1\baselineskip}{.25\baselineskip\@plus.1\baselineskip\@minus.1\baselineskip}{\itshape}{}{\bfseries}{\bfseries .}{5pt plus 1pt minus 1pt}{\thmname{#1}\thmnumber{ #2}\thmnote{ \normalfont(#3)}}
\theoremstyle{step}
\newtheorem{step}{Step}[subsection]
\newtheoremstyle{substep}{.25\baselineskip\@plus.1\baselineskip\@minus.1\baselineskip}{.25\baselineskip\@plus.1\baselineskip\@minus.1\baselineskip}{\itshape}{}{\itshape}{\itshape .}{5pt plus 1pt minus 1pt}{\thmname{#1}\thmnumber{ \normalfont#2}\thmnote{ \normalfont(#3)}}
\theoremstyle{substep}
\newtheorem{claim}[equation]{Claim}
\DeclareMathOperator{\Ass}{Ass}
\DeclareMathOperator{\C}{\mathit{C}}
\DeclareMathOperator{\CF}{\mathit{CF}}
\DeclareMathOperator{\D}{\mathit{D}}
\DeclareMathOperator{\Diag}{Diag}
\DeclareMathOperator{\Codiag}{Codiag}
\DeclareMathOperator{\Coreal}{Coreal}
\DeclareMathOperator{\Exc}{Exc}
\DeclareMathOperator{\Gr}{Gr}
\DeclareMathOperator{\Ho}{Ho}
\DeclareMathOperator{\Hom}{Hom}
\DeclareMathOperator{\Hrc}{\mathbf{Hrc}}
\DeclareMathOperator{\Mor}{Mor}
\DeclareMathOperator{\Ob}{Ob}
\DeclareMathOperator{\PProj}{\mathbf{Proj}}
\DeclareMathOperator{\Real}{Real}
\DeclareMathOperator{\Res}{Res}
\DeclareMathOperator{\Sch}{Sch}
\DeclareMathOperator{\Sh}{Sh}
\DeclareMathOperator{\Sp}{\mathbf{Sp}}
\DeclareMathOperator{\Spa}{Spa}
\DeclareMathOperator{\Spec}{Spec}
\DeclareMathOperator{\Supp}{Supp}
\DeclareMathOperator{\Top}{Top}
\DeclareMathOperator{\Tor}{Tor}
\DeclareMathOperator{\ZR}{ZR}
\DeclareMathOperator{\rd}{rd}
\DeclareMathOperator{\tot}{tot}
\DeclareMathOperator*{\RRvarprojlim}{\underleftarrow{\mathbf{R}\kern0.5pt\mathrm{lim}}}
\newcommand{\TwoCat}{\mathbf{2Cat}}
\newcommand{\Ab}{\mathbf{Ab}}
\newcommand{\AU}{\mathrm{AU}}
\newcommand{\Cat}{\mathbf{Cat}}
\newcommand{\DGA}{\mathbf{DGA}}
\newcommand{\FF}{\mathrm{F}}
\newcommand{\MHC}{\mathbf{MHC}}
\newcommand{\MHD}{\mathbf{MHD}}
\newcommand{\Mod}{\mathrm{Mod}}
\newcommand{\Sets}{\mathbf{Sets}}
\newcommand{\an}{\mathrm{an}}
\newcommand{\coh}{\mathrm{coh}}
\newcommand{\comp}{\mathrm{comp}}
\newcommand{\diff}{\mathrm{diff}}
\newcommand{\dR}{\mathrm{dR}}
\newcommand{\et}{\mathrm{\acute{e}t}}
\newcommand{\id}{\mathrm{id}}
\newcommand{\lisse}{\mathrm{lis}}
\newcommand{\locproj}{\mathrm{locproj}}
\newcommand{\op}{\mathrm{op}}
\newcommand{\proet}{\mathrm{pro\acute{e}t}}
\newcommand{\proj}{\mathrm{proj}}
\newcommand{\proper}{\mathrm{proper}}
\newcommand{\red}{\mathrm{red}}
\newcommand{\reg}{\mathrm{reg}}
\newcommand{\typ}{\mathrm{typ}}
\newcommand{\bA}{\mathbf{A}}
\newcommand{\BB}{\mathbf{B}}
\newcommand{\CC}{\mathbf{C}}
\newcommand{\bP}{\mathbf{P}}
\newcommand{\QQ}{\mathbf{Q}}
\newcommand{\RR}{\mathbf{R}}
\newcommand{\ZZ}{\mathbf{Z}}
\newcommand{\bfs}{\mathbf{s}}
\newcommand{\cE}{\mathcal{E}}
\newcommand{\cI}{\mathcal{I}}
\newcommand{\cJ}{\mathcal{J}}
\newcommand{\cM}{\mathcal{M}}
\newcommand{\cO}{\mathcal{O}}
\newcommand{\cW}{\mathcal{W}}
\newcommand{\cX}{\mathcal{X}}
\newcommand{\fI}{\mathfrak{I}}
\newcommand{\fm}{\mathfrak{m}}
\newcommand{\fp}{\mathfrak{p}}
\newcommand{\sA}{\mathscr{A}}
\newcommand{\sC}{\mathscr{C}}
\newcommand{\sD}{\mathscr{D}}
\newcommand{\sF}{\mathscr{F}}
\newcommand{\sG}{\mathscr{G}}
\newcommand{\sH}{\mathscr{H}}
\newcommand{\sK}{\mathscr{K}}
\newcommand{\sL}{\mathscr{L}}
\newcommand{\sM}{\mathscr{M}}
\mathchardef\mhyphen="2D
\def\hyph{-\penalty0\hskip0pt\relax}
\DeclarePairedDelimiter\abs{\lvert}{\rvert}
\providecommand\given{}
\newcommand\SetSymbol[1][]{\nonscript\:#1\vert\allowbreak\nonscript\:\mathopen{}}
\DeclarePairedDelimiterX\Set[1]\{\}{\renewcommand\given{\SetSymbol[\delimsize]}#1}
\newcommand{\hooklongrightarrow}{\lhook\joinrel\longrightarrow}
\begin{document}
\title[Injectivity theorems and cubical descent]{Injectivity theorems
and cubical descent\\for schemes, stacks, and analytic
spaces}
\author{Takumi Murayama}
\address{Department of Mathematics\\Purdue University\\150 N. University
Street\\West Lafayette, IN 47907-2067\\USA}
\email{\href{mailto:murayama@purdue.edu}{murayama@purdue.edu}}
\urladdr{\url{https://www.math.purdue.edu/~murayama/}}

\thanks{This material is based upon work supported by the National Science
Foundation under Grant No.\ DMS-2201251.}
\subjclass[2020]{Primary 14F17, 14E30; Secondary 32L20, 14C30, 14F40,
14G22, 14F20}

\keywords{injectivity theorems, cubical hyperresolutions, cohomological descent,
Deligne--Du Bois complex, Du Bois singularities, weight filtration}

\makeatletter
  \hypersetup{
    pdftitle={Injectivity theorems and cubical descent for schemes, stacks, and analytic spaces},
    pdfsubject=\@subjclass,pdfkeywords=\@keywords
  }
\makeatother

\begin{abstract}
  We prove relative injectivity, torsion-freeness, and vanishing theorems for
  generalized normal crossing pairs on schemes, algebraic stacks, formal schemes,
  semianalytic germs of complex analytic spaces, rigid analytic
  spaces, Berkovich spaces, and adic spaces locally of weakly finite type over a
  field, all in equal characteristic zero.
  We give a uniform proof for all these theorems
  in all the categories of spaces mentioned above, which were previously only known
  for varieties and complex analytic spaces due to work of Ambro and
  Fujino.
  Ambro and Fujino's results are integral in the proofs of
  the fundamental theorems of the
  minimal model program for (semi-)log canonical pairs and the theory of
  quasi-log structures.
  Our results resolve a significant barrier to extending these results on
  (semi-)log canonical pairs and quasi-log structures beyond the setting of
  varieties and complex analytic spaces.
  \par In order to prove our most general injectivity theorems, we
  generalize to all these categories of spaces
  a criterion due to Guill\'en and Navarro Aznar characterizing when
  functors defined on smooth varieties extend to all varieties.
  This extension result uses cubical hyperresolutions, which
  we construct in all categories of spaces mentioned above.
  Our extension result is very general and is of independent interest.
  We use this extension result to prove our injectivity theorems for
  generalized normal crossing pairs.
  We also apply our extension result to
  develop the theoretical foundations for the Deligne--Du
  Bois complex in these categories of spaces and to construct a
  weight filtration on the (pro-)\'etale cohomology of schemes and rigid
  analytic spaces.
  These results establish some aspects of Deligne--Hodge theory in all
  categories of spaces mentioned above.
\end{abstract}

\maketitle
\setcounter{tocdepth}{1}
{\hypersetup{hidelinks}\tableofcontents}
\newpage

\section{Introduction}\label{sect:intro}
\subsection{Background}
Let \(f\colon X \to Y\) be a locally projective morphism of spaces of equal
characteristic zero in one of the following
categories:
\begin{enumerate}[label=\((\textup{\Roman*})\),ref=\textup{\Roman*}]
  \item[\((0)\)]
  \makeatletter
  \protected@edef\@currentlabel{0}
  \phantomsection
  \label{setup:introalgebraicspaces}
  \makeatother
    Decent locally Noetherian algebraic stacks over a scheme \(S\).
  \item\label{setup:introformalqschemes}
    Locally Noetherian formal schemes.
  \item\label{setup:introcomplexanalyticgerms}
    Semianalytic germs of complex analytic spaces.
  \item\label{setup:introberkovichspaces}
    Berkovich \(k\)-analytic spaces
    over a complete non-Archimedean field \(k\).
  \makeatletter
  \item[{\((\ref*{setup:introberkovichspaces}')\)}]
  \protected@edef\@currentlabel{\ref*{setup:introberkovichspaces}'}
  \phantomsection
  \label{setup:introrigidanalyticspaces}
  \makeatother
    Rigid \(k\)-analytic spaces over a complete non-trivially valued
    non-Archimedean field \(k\).
  \item\label{setup:introadicspaces}
    Adic spaces locally of weakly finite type over a complete
    non-trivially valued non\hyph{}Archimedean field \(k\).
\end{enumerate}
Even if one's primary interest is smooth complex projective varieties, there are
compelling reasons to consider morphisms \(f\) between spaces of the form
\((\ref{setup:introalgebraicspaces})\textnormal{--}(\ref{setup:introadicspaces})\).
For example, moduli spaces are often not schemes or even algebraic spaces.
Studying deformations of varieties sometimes requires working with formal
schemes and their birational geometry (see, e.g., \cite{AAB}).
Working over formal power series rings has been useful in studying
Shokurov's conjectures on the ascending chain condition for invariants of
singularities like the log canonical threshold \cite{dFEM10,dFEM11} and the minimal
log discrepancy \cite{Kaw15}.
Even Hironaka's proof that complex varieties have resolutions of singularities
requires leaving the world of complex varieties \cite{Hir64}.
See \citeleft\citen{Mur}\citemid p.\ 2\citepunct \citen{LM}\citemid p.\
4\citeright\ for more discussion.\medskip
\par Until recently, two extremely useful techniques originally developed for
complex varieties
were unavailable for morphisms \(f\) as above:
\begin{enumerate}[label=\((\arabic*)\),ref=\arabic*]
  \item\label{intro:kvquestion} Relative 
    Kawamata--Viehweg-type vanishing theorems \cite{Kaw82,Vie82,KMM87}.
  \item\label{intro:mmpquestion} The relative minimal model program with scaling
    for klt pairs \cite{BCHM10,HM10}.
\end{enumerate}
For \((\ref{intro:kvquestion})\), the case of varieties in case
\((\ref{setup:introalgebraicspaces})\) was shown by Kawamata \cite{Kaw82},
Viehweg \cite{Vie82}, and Kawamata, Matsuda, and Matsuki \cite{KMM87}.
Case \((\ref{setup:introcomplexanalyticgerms})\) was shown by Nakayama \cite{Nak87}.
For \((\ref{intro:mmpquestion})\), the case of varieties in case
\((\ref{setup:introalgebraicspaces})\) was shown by
Birkar, Cascini, Hacon, and M\textsuperscript{c}Kernan \cite{BCHM10,HM10}.
The case of algebraic spaces of finite type
over a field of
characteristic zero in case
\((\ref{setup:introalgebraicspaces})\) was shown by Vilallobos-Paz \cite{VP} and 
case \((\ref{setup:introcomplexanalyticgerms})\) was shown by
Fujino \cite{Fuj} (see also \cite{DHP,LM}).
For the rest of the categories
\((\ref{setup:introalgebraicspaces})\textnormal{--}(\ref{setup:introadicspaces})\),
the author of the present paper resolved \((\ref{intro:kvquestion})\) in
\cite{Mur} and resolved \((\ref{intro:mmpquestion})\) in joint work with Lyu
\cite{LM}, where in
case \((\ref{setup:introalgebraicspaces})\), we restricted to
the subcategory of algebraic spaces.\medskip
\par For complex varieties,
advances in the minimal model program for log canonical and semi-log canonical pairs
require injectivity theorems due to Ambro \cite{Amb03,Amb14,Amb20},
Fujino \cite{Fuj04,Fuj11,Fuj16,Fuj17}, and Fujino and Fujisawa \cite{FF14},
which are strong generalizations of
Kawamata--Viehweg-type vanishing theorems.
Working with log canonical and semi-log canonical pairs is
critical in algebraic geometry, especially in moduli theory, where introducing
semi-log canonical degenerations is necessary to obtain compact moduli
spaces \cite{KSB88,Ale96,Kol23}.
Existing proofs of these injectivity theorems rely on Hodge theory or
complex analysis.
None of these proofs can be adapted to arbitrary spaces in the categories
\((\ref{setup:introalgebraicspaces})\textnormal{--}(\ref{setup:introadicspaces})\).
Thus, the lack of injectivity theorems has been a significant barrier to
extending results on (semi-)log canonical pairs and quasi-log structures beyond
the setting of varieties and complex analytic spaces.
\subsection{Overview of this paper}
\par In this paper, our goal is to prove the injectivity theorems
mentioned above in the categories
\((\ref{setup:introalgebraicspaces})\textnormal{--}(\ref{setup:introadicspaces})\).
For varieties and complex analytic spaces, these injectivity theorems have proved
critical for advancing the minimal model program for log canonical and
semi-log canonical pairs.
Proving these injectivity theorems in full generality requires difficult
techniques that are of independent interest and have other
applications.\medskip
\par Our main results are as follows (all in equal characteristic zero):
\begin{enumerate}[label=\((\arabic*)\)]
  \item The injectivity, torsion-freeness, and vanishing theorems for
    generalized normal crossing pairs in the categories
    \((\ref{setup:introalgebraicspaces})\textnormal{--}(\ref{setup:introadicspaces})\)
    (Theorem \ref{thm:intromaininj}).
  \item
    The construction of cubical hyperresolutions in the categories
    \((\ref{setup:introalgebraicspaces})\textnormal{--}(\ref{setup:introadicspaces})\)
    (Theorem \ref{thm:introcubicalhyperres}).
  \item
    An extension criterion for functors defined on regular spaces, generalizing
    a result of Guill\'en and Navarro Aznar proved for varieties to all
    categories 
    \((\ref{setup:introalgebraicspaces})\textnormal{--}(\ref{setup:introadicspaces})\).
  \item 
    The construction of the Deligne--Du Bois complex and the basic theory of Du
    Bois singularities in the categories
    \((\ref{setup:introalgebraicspaces})\textnormal{--}(\ref{setup:introadicspaces})\)
    (\S\ref{sect:delignedubois}).
  \item
    The construction of a weight filtration on the (pro-)\'etale cohomology of
    schemes and compactifiable rigid analytic spaces (Theorem
    \ref{thm:weightfiltration}).
\end{enumerate}
A common theme throughout the paper is that we must build suitable replacements
for Hodge theory.
We also do not have a direct analogue of the degeneration of the Hodge-to-de Rham
spectral sequence available because our spaces may not be proper over a
field or a point.
As a consequence of Theorems \ref{thm:introcubicalhyperres} and
\ref{thm:introgna}, we are able to prove
some aspects of Hodge theory in all the categories
\((\ref{setup:introalgebraicspaces})\textnormal{--}(\ref{setup:introadicspaces})\)
listed above.
See Remark \ref{rem:somehodge} and \cite[(4.2)]{GNA02}.\medskip
\par We will discuss the results listed above in the remainder of this
introduction.
In the remainder of this introduction, we always work in equal characteristic
zero.
\subsection{Injectivity, torsion-freeness, and vanishing
theorems}\label{sect:intromaininj}
\par Our first main result is that injectivity theorems and related
torsion-freeness and vanishing theorems hold in all the categories
\((\ref{setup:introalgebraicspaces})\textnormal{--}(\ref{setup:introadicspaces})\)
mentioned above, generalizing the results for complex varieties and complex
analytic spaces that have proved to be indispensable in the
theory of (semi-)log canonical pairs and quasi-log structures for complex
varieties \cite{Amb03,Fuj11,Bir12,Fuj14,Has20} and complex analytic spaces
\cite{Fujconecontr,Fujql,EH}.
Our theorems therefore resolve a substantial roadblock for studying the birational
geometry of compact moduli spaces beyond the case of varieties or complex
analytic spaces.
For complex varieties (beyond the smooth or klt settings),
these injectivity, torsion-freeness, and vanishing theorems
are due to Kawamata \cite{Kaw85} (when \(B\) below has coefficients in \((0,1)\)),
Ambro \cite{Amb03,Amb14,Amb20}, Fujino
\cite{Fuj04,Fuj11,Fuj16,Fuj17}, and Fujino and Fujisawa \cite{FF14}.
In the complex analytic setting, the special case of this 
result for simple normal crossing pairs is due to Fujino
\cite{Fujvan}.
\par See \S\S\ref{sect:wlc}--\ref{sect:gnc} for the definitions of
\textsl{generalized normal crossings log pairs} and of \(\omega^{[r]}_{(X,B)}\)
needed for the most general statements below.
For simplicity, one can consider the special case of the statement below where
\((X,B)\) is a pair consisting of a regular space \(X\) and a \(\QQ\)-divisor
\(B\) with normal crossing support and coefficients in \([0,1]\), in which case
\[
  \omega^{[r]}_{(X,B)} = \cO_X\bigl(r(K_X+B)\bigr)
\]
for all sufficiently divisible \(r\).
The nomenclature for the statements
\((\ref{thm:intromaininjesvi})\textnormal{--}(\ref{thm:intromaininjohskol})\)
below follows \cite{Amb20}.
\begin{customthm}{\ref{thm:maininj}}\label{thm:intromaininj}
  Let \(f\colon (X,B) \to Y\) be a locally projective morphism of spaces of one
  of the types
  \((\ref{setup:introalgebraicspaces})\textnormal{--}(\ref{setup:introadicspaces})\)
  such that \((X,B)\) is a generalized normal crossing pair.
  In case \((\ref{setup:introalgebraicspaces})\) (resp.\
  \((\ref{setup:introformalqschemes})\)), suppose that \(Y\) is locally
  quasi-excellent of equal characteristic zero and has a dualizing complex
  (resp.\ \(c\)-dualizing complex) \(\omega_Y^\bullet\).
  In cases \((\ref{setup:introberkovichspaces})\),
  \((\ref{setup:introrigidanalyticspaces})\), and
  \((\ref{setup:introadicspaces})\), suppose that \(k\) is of characteristic
  zero.
  Let \(\sL\) be an invertible \(\cO_X\)-module.
  \begin{enumerate}[label=\((\roman*)\),ref=\roman*]
    \item\label{thm:intromaininjesvi}
      \textnormal{(The Esnault--Viehweg injectivity theorem)}
      Suppose that 
      \[
        \sL^{\otimes r} \simeq \omega^{[r]}_{(X,B)}
      \]
      for an integer \(r \ge 1\) such that \(rB\) has
      integer coefficients.
      Let \(D\) be an effective Cartier divisor with support in \(\Supp(B)\).
      Then, the natural map
      \[
        R^if_*\sL \longrightarrow R^if_*\bigl(\sL(D)\bigr)
      \]
      is injective for every \(i\).
    \item\label{thm:intromaininjtankol}
      \textnormal{(The Tankeev--Koll\'ar injectivity theorem)}
      Suppose that
      \[
        \sL^{\otimes r} \simeq \omega^{[r]}_{(X,B)} \otimes_{\cO_X} \sH
      \]
      for an integer \(r \ge 1\) such that
      \(rB\) has integer coefficients and for an invertible \(\cO_X\)-module \(\sH\)
      such that \(f^*f_*\sH \to \sH\) is surjective.
      Let \(s \in \Gamma(X,\sH)\) be a global section that is invertible at the
      generic point of every log canonical center of \((X,B)\) and let \(D\) be the
      effective Cartier divisor defined by \(s\).
      Then, the natural maps
      \[
        R^i f_*\sL \longrightarrow R^if_*\bigl(\sL(D)\bigr)
      \]
      are injective for every \(i\).
    \item\label{thm:intromaininjtf} \textnormal{(The Koll\'ar torsion-freeness theorem)}
      Suppose that
      \[
        \sL^{\otimes r} \simeq \omega^{[r]}_{(X,B)}
        \otimes_{\cO_X} \sH
      \]
      for an integer \(r \ge 1\) such that
      \(rB\) has integer coefficients
      and for an invertible \(\cO_X\)-module
      \(\sH\) such that \(f^*f_*\sH \to \sH\) is surjective.
      In case \((\ref{setup:introalgebraicspaces})\), suppose that \(Y\) is an
      algebraic space.
      Then, every associated subspace of \(R^i f_*\sL\) is the \(f\)-image of an
      irreducible component of \(X\) or a log
      canonical center of \((X,B)\).
    \item\label{thm:intromaininjohskol}
      \textnormal{(The Ohsawa--Koll\'ar vanishing theorem)}
      Let \(g\colon Y \to Z\) be another locally projective morphism, where
      \(Z\) satisfies the same hypotheses as \(Y\) above.
      Suppose that
      \[
        \sL^{\otimes r} \simeq \omega^{[r]}_{(X,B)} \otimes_{\cO_X} f^*\sA
      \]
      for an integer \(r \ge 1\) such that
      \(rB\) has integer coefficients and \(\sA\) is a \(g\)-ample invertible
      \(\cO_Y\)-module.
      Then, we have
      \[
        R^pg_*R^qf_*\sL = 0
      \]
      for all \(q\) and for all \(p \ne 0\).
  \end{enumerate}
\end{customthm}
See \cite[Theorem 5.1\((a)\)]{EV92},
\citeleft\citen{Tan71}\citemid Proposition
1\citepunct \citen{Kol86}\citemid Theorem 2.2\citeright,
\cite[Theorem 2.1\((i)\)]{Kol86},
and \citeleft\citen{Ohs84}\citemid Theorem 3.1\citepunct \citen{Kol86}\citemid
Theorem 2.1\((iii)\)\citeright, respectively, for the original versions of
these statements that inspired the nomenclature for these results used above
that we have adopted from \cite{Amb20}.
In \cite{Fuj17,Fujvan}, Fujino refers to these statements as the Hodge-theoretic
injectivity theorem, the injectivity theorem, the torsion-freeness theorem or
the strict support condition, and the vanishing theorem, respectively.
\par As far as we are aware, the only known cases of Theorem \ref{thm:intromaininj}
outside of the smooth or klt case
are the case for complex varieties due to Ambro \cite{Amb20} and the case for
simple normal crossing pairs on complex analytic spaces due to Fujino
\cite{Fujvan}.
An earlier result of Kawamata \cite{Kaw85} proves the case when \(B\) above has
coefficients in \((0,1)\) using a slighly stronger definition for generalized
normal crossings.
Thus, the results for schemes and algebraic stacks that are not of finite type over a field
\((\ref{setup:introalgebraicspaces})\),
formal schemes \((\ref{setup:introformalqschemes})\),
and non-Archimedean analytic spaces
\((\ref{setup:introberkovichspaces})\),\((\ref{setup:introrigidanalyticspaces})\),\((\ref{setup:introadicspaces})\)
are completely new, and the results for
generalized normal crossing pairs on semianalytic germs of complex analytic
spaces \((\ref{setup:introcomplexanalyticgerms})\) generalize \cite{Fujvan}.
Moreover, our proof method gives a uniform proof of the injectivity,
torsion-freeness, and vanishing theorems in all the categories
\((\ref{setup:introalgebraicspaces})\textnormal{--}(\ref{setup:introadicspaces})\)
simultaneously that recovers the results from \cite{Amb20,Fujvan}.\medskip
\par As in our previous work \cite{Mur}, we first prove Theorem
\ref{thm:maininj} for schemes and then extend these results to the categories
\((\ref{setup:introalgebraicspaces})\textnormal{--}(\ref{setup:introadicspaces})\)
using the GAGA theorems from \cite{EGAIII1,Kop74,Ber93,Hub07,Poi10,AT19,LM}.
The proof of Theorem \ref{thm:intromaininj} for schemes
is a subtle application of relative Noetherian approximation \cite[\S8]{EGAIV3}
and our version of Grothendieck's limit theorem \cite[Expos\'e VI, Th\'eor\`eme
8.7.3]{SGA42} for local cohomology \cite[Theorem 3.13]{Mur}.
In \cite{Mur}, a surprising aspect of our proof was the appearance of the
\textsl{Zariski--Riemann space} \(\ZR(X)\), which is a locally ringed space but
is not a scheme when \(\dim(X) \ge 2\).
Because of the boundary divisors \(B\), we do not know of an
appropriate analogue for the Zariski--Riemann space, even if \(B\) is a reduced
simple normal crossing divisor.\medskip
\par In this paper, we instead work one element of local cohomology
at a time and construct boundary divisors that may depend on the element of
local cohomology chosen.
This strategy is enough to prove the case when \(X\) is regular and \(B\) has
normal crossing support in Theorem
\ref{thm:intromaininj}.
However, extending our results to the case when \(X\) is itself a normal
crossing divisor, or more generally when \((X,B)\) is a 
generalized normal crossing pair,
introduces a host of new difficulties.
We use simplicial resolutions to reduce to the normal crossing case, following
the approach in \cite{Kaw85,Amb20}.
Since simplicial resolutions are infinite and the theoretical foundations on the
Deligne--Du Bois complex \(\underline{\Omega}_X^\bullet\) obtained from such a
simplicial resolution in the categories
\((\ref{setup:introalgebraicspaces})\textnormal{--}(\ref{setup:introadicspaces})\)
do not exist in the literature, we prove the necessary foundational results in
Part \ref{part:cubical} of this paper.\medskip
\par As an application of Theorem \ref{thm:maininj}, in Theorem \ref{thm:ma18},
we extend Ma's
characterization \cite[Theorem 5.5]{Ma18}
of derived splinters of equal characteristic zero as the class
of \(\QQ\)-algebras satisfying Hochster and Huneke's vanishing conjecture for
maps of Tor \cite[Theorem 4.1 and (4.4)]{HH95} to the complete local case.
Ma's result applies to local rings essentially of finite type
over fields of characteristic zero.
Our proof also recovers Hochster and Huneke's vanishing theorem for maps of Tor
for all regular domains of equal characteristic zero \cite[Theorem 4.1]{HH95}.
In contrast with \cite{HH95}, this proof does not use reduction modulo \(p\).
See \cite[Remark 5.6(2)]{Ma18}.
\subsection{Cubical hyperresolutions and an extension criterion for functors
defined on regular spaces}
The second group of results in this paper concern cubical
hyperresolutions.
Cubical hyperresolutions are the cubical analogue of simplicial resolutions.
Deligne \cite{Del74} used simplicial resolutions
to construct mixed Hodge structures on the singular
cohomology of singular complex varieties.
\par For varieties, the existence of cubical hyperresolutions is due to
Navarro Aznar (see \cite[\S2]{Gui87}) and was published by Guill\'en
\cite[Expos\'e I]{GNAPGP88}.
The complex analytic case appears in \cite[Expos\'e I]{GNAPGP88} as well.
The theory of cubical hyperresolutions and various applications to Hodge theory,
vanishing theorems, and more were shown in \cite{GNAPGP88}.
Our existence result for cubical hyperresolutions
shows that cubical hyperresolutions exist in all categories of spaces
of the form
\((\ref{setup:introalgebraicspaces})\textnormal{--}(\ref{setup:introadicspaces})\).
\begin{customthm}{\hyperref[thm:gnapgp215]{B}}[Cubical
  hyperresolutions exist; special case of Theorem \ref{thm:gnapgp215}]
  \label{thm:introcubicalhyperres}
  Let \(\Sp\) be a category of reduced spaces of one of the types
  \((\ref{setup:introalgebraicspaces})\textnormal{--}(\ref{setup:introadicspaces})\)
  that is essentially stable under fiber products, immersions, and proper morphisms.
  Let \(I\) be a finite orderable category and let
  \[
    S\colon I^\op \longrightarrow \Sp
  \]
  be a functor such that
  \[
    \dim(S) \coloneqq \sup\Set[\big]{\dim(S_i) \given i \in \Ob(I)}
  \]
  is finite.
  In cases \((\ref{setup:introalgebraicspaces})\) and
  \((\ref{setup:introformalqschemes})\), suppose that
  \(S_i\) is quasi-excellent of equal characteristic zero for every \(i \in
  \Ob(I)\).
  In cases \((\ref{setup:introberkovichspaces})\),
  \((\ref{setup:introrigidanalyticspaces})\), and
  \((\ref{setup:introadicspaces})\), suppose that \(k\) is of characteristic
  zero.
  Then, there exists an augmented cubical hyperresolution
  \[
    Z_\bullet\colon (\square_r^+ \times I)^\op \longrightarrow \Sp
  \]
  of \(S\) such that
  \[
    \dim(Z_\alpha) \le \dim(S) - \abs{\alpha} + 1
  \]
  for every \(\alpha \in \square_r\).
\end{customthm}
\par Our proof of Theorem \ref{thm:introcubicalhyperres} largely follows that in
\cite[Expos\'e I]{GNAPGP88}, except for some corrections due to Steenbrink
\cite{Ste17} and our use of Temkin's resolutions of singularities
\cite{Tem12,Tem18} instead of Hironaka's \cite{Hir64}.
We work through the proof of Theorem \hyperref[thm:gnapgp215]{B} and
other material from \cite[Expos\'e I]{GNAPGP88} carefully in this paper
for two reasons.
\begin{enumerate}[label=\((\arabic*)\),ref=\arabic*]
  \item\label{intro:needprojhyperres}
    For our applications, we need to construct cubical hyperresolutions
    using only (locally) blowup morphisms.
    This extra condition is necessary for
    our proof of Theorem \ref{thm:introgna} below.
  \item Even for complex varieties, there are some subtleties concerning the
    definitions of resolutions, 2-resolutions, and cubical hyperresolutions in
    \cite{GNAPGP88,GNA02}.
    Specifically, Steenbrink \cite{Ste17} constructed a counterexample to the
    existence results in \cite[Expos\'e I]{GNAPGP88} and gave corrections
    that we incorporate into this paper.
    See Remark \ref{rem:ste17proofcomment}.
\end{enumerate}
\par Compared to simplicial hyperresolutions, cubical hyperresolutions have the
advantage of only having finitely many irreducible components.
Cubical objects are also convenient because products of cubes are cubes,
and hence cubical objects are stable under products.
\par However, the most significant advantage of cubical hyperresolutions in the context of
this paper is the following result, which provides a powerful and precise categorical
framework for extending functors from the subcategory of regular spaces to an
entire category of spaces.
This result was first proved for complex varieties by Guill\'en and Navarro
Aznar \cite{GNA02} and for compactifiable complex analytic spaces by Cirici and
Guill\'en \cite{CG14}.
A special case of their extension criterion says that given a functor \(G\)
defined on
regular spaces with values in a category \(\C^b(\sA)\) of bounded complexes over
an Abelian category \(\sA\), if \(G\)
is additive and compatible with blowups along regular subspaces in a suitable
sense, then \(G\) can be extended to a functor on \emph{all} spaces.
The extension \(G'\) in Theorem \ref{thm:introgna} below is
computed by taking a cubical hyperresolution \(X_\bullet\) of a space \(X\),
evaluating \(G\) on each component of \(X_\bullet\), and then taking the
associated simple (or total) complex.
\par Because the full statement requires substantial 2-categorical preliminaries,
we state a special case of our result below.
See Theorem \ref{thm:gna215} for the full statement, which works for
\(\Phi\)-rectified functors \(G\) (see Definition \ref{def:rectified})
with values in a category of cohomological descent (see Definition
\ref{def:catcohdescent}).
Below, a space \(U\) is \textsl{compactifiable} if there exists a quasi-compact
space \(X\) and a closed subspace \(Z \subseteq X\) such that \(U \simeq X -
Z\).
\begin{customthm}{\hyperref[thm:gna215]{C}}[An extension criterion for functors
  defined on regular compactifiable spaces;
  special case of Theorem \ref{thm:gna215}]
  \label{thm:introgna}
  Let \(\Sp\) be a small category of compactifiable
  finite-dimensional reduced spaces of one of the types
  \((\ref{setup:introalgebraicspaces})\textnormal{--}(\ref{setup:introadicspaces})\)
  that is essentially stable under fiber products, immersions, and proper morphisms.
  In cases \((\ref{setup:introalgebraicspaces})\) and
  \((\ref{setup:introformalqschemes})\), suppose that all objects in \(\Sp\) are
  Noetherian and quasi-excellent of equal characteristic zero.
  In cases \((\ref{setup:introberkovichspaces})\),
  \((\ref{setup:introrigidanalyticspaces})\), and
  \((\ref{setup:introadicspaces})\), suppose that
  \(k\) is of characteristic zero.
  \par Let \(\Sp_\reg\) be the full subcategory of \(\Sp\) consisting of regular
  spaces.
  Let \(\sA\) be an Abelian category and consider a functor
  \[
    G\colon \bigl(\Sp_\reg\bigr)^{\op} \longrightarrow \C^b(\sA)
  \]
  satisfying the following conditions:
  \begin{enumerate}[label=\((\mathrm{F\arabic*})\),ref=\mathrm{F\arabic*}]
    \item
      \label{thm:introgna251f1}
      \(G(\emptyset) = 0\) and the canonical morphism
      \(G(X \sqcup Y) \to G(X) \times G(Y)\)
      is a quasi-isomorphism.
    \item
      \label{thm:introgna251f2}
      For every Cartesian square
      \[
        \begin{tikzcd}
          \tilde{Y} \dar[swap]{g} \rar[hook]{j} & \tilde{X} \dar{f}\\
          Y \rar[hook]{i} & X
        \end{tikzcd}
      \]
      in \(\Sp_\reg\) where \(i,j\) are closed immersions and \(f\) is the
      blowup of \(X\) along \(Y\), the morphism
      \[
        G(X) \xrightarrow{f^*+i^*} \bfs\Bigl( G(\tilde{X}) \oplus G(Y)
        \xrightarrow{j^*-g^*} G(\tilde{Y})\Bigr)
      \]
      is a quasi-isomorphism, where \(\bfs(\,\cdot\,)\)
      denotes the simple (or total) complex.
  \end{enumerate}
  Then, there exists an essentially unique extension of \(G\) to a functor
  \[
    G'\colon \Sp^{\op} \longrightarrow \D^b(\sA)
  \]
  satisfying the analogue of \((\ref{thm:introgna251f2})\) for proper modifications
  \(f\) such that \(f\rvert_{X-Y}\) is an isomorphism.
\end{customthm}
\par The proof of Theorem \ref{thm:introgna} and the original version in
\cite{GNA02} is difficult.
The proof in \cite{GNA02} relies on two main ingredients:
\begin{enumerate}
  \item\label{ingredient:intrognares} Hironaka's resolutions of singularities \cite[Chapter 0, \S3, Main
    theorem I]{Hir64}.
  \item\label{ingredient:intrognachow} The Chow--Hironaka lemma
    \citeleft\citen{Hir64}\citemid pp.\ 144--145\citepunct \citen{Hir75}\citemid p.\
    505\citeright,
    which says that
    any proper birational morphism between smooth varieties can be dominated by a
    finite sequence of blowups along smooth centers.
\end{enumerate}
The main difference between our proof and that in \cite{GNA02} is that we do not
have a version of the Chow--Hironaka lemma available for arbitrary spaces in the
categories 
\((\ref{setup:introalgebraicspaces})\textnormal{--}(\ref{setup:introadicspaces})\).
Instead, our new insight is that the proof in \cite{GNA02} works just as well if
we replace compositions of blowups in one direction with compositions of blowups
and their inverses.
Thus, we are able to replace the Chow--Hironaka lemma with
the weak factorization theorem \cite{AKMW02,Wlo03}, which holds in all categories
\((\ref{setup:introalgebraicspaces})\textnormal{--}(\ref{setup:introadicspaces})\)
by work of Abramovich and Temkin \cite{AT19}.
However, to apply this weak factorization theorem, we must construct cubical
hyperresolutions that only use projective morphisms (see
\((\ref{intro:needprojhyperres})\) on p.\ \pageref{intro:needprojhyperres}).
\subsection{Additional applications}
We discuss two additional applications of Theorem \ref{thm:introgna} and
its proof.
\subsubsection{The Deligne--Du Bois complex and Du Bois singularities}
\par In Theorem \ref{thm:gna41},
we use Theorem \ref{thm:introgna} and its proof to construct the Deligne--Du Bois
complex \(\underline{\Omega}_X^\bullet\) in all categories
\((\ref{setup:introalgebraicspaces})\textnormal{--}(\ref{setup:introadicspaces})\).
While the construction only makes sense when sheaves of differential forms
\(\Omega_X\) make sense, we can always define the sheaf
\(\underline{\Omega}_X^0\).
We use the sheaf \(\underline{\Omega}_X^0\) to define Du Bois singularities for
all spaces in the categories
\((\ref{setup:introalgebraicspaces})\textnormal{--}(\ref{setup:introadicspaces})\).
See Definition \ref{def:dubois}.
The construction of \(\underline{\Omega}_X^\bullet\) for varieties is originally
due to Du Bois \cite{DB81}, who used simplicial resolutions.
A cubical construction is given in \cite[Expos\'es III and V]{GNAPGP88}.
For complex analytic spaces, Guill\'en and Navarro Aznar constructed
\(\underline{\Omega}_X^\bullet\) using their version of
Theorem \ref{thm:introgna}.
Guo recently gave a construction for \(\underline{\Omega}_X^\bullet\) for rigid
analytic spaces over \(p\)-adic fields \cite{Guo23}.
\par By Theorem \ref{thm:introgna}, our constructions of
\(\underline{\Omega}_X^\bullet\) and \(\underline{\Omega}_X^0\)
are automatically functorial and these objects are essentially unique.
As mentioned in \S\ref{sect:intromaininj}, these results on
\(\underline{\Omega}_X^0\) are critical in our proof of our injectivity theorems
(Theorem \ref{thm:maininj})
because they allow us to use specific cubical hyperresolutions to prove that
generalized normal crossings log pairs have at most Du Bois
singularities, and the finiteness of the hyperresolutions allows us to prove
they are compatible with localization and completion (Corollary
\ref{cor:delignedblocal}).
As far as we are aware, existing proofs of these formal properties of
\(\underline{\Omega}_X^\bullet\) in the literature (see
\cite{DB81,GNAPGP88}) do not work for arbitrary spaces of the form
\((\ref{setup:introalgebraicspaces})\textnormal{--}(\ref{setup:introadicspaces})\), which led
us to generalize \cite{GNA02} to our setting.
\subsubsection{A weight filtration on the (pro-)\'etale cohomology of schemes
and compactifiable rigid analytic spaces}
\par Finally, we mention one last application of Theorem \ref{thm:introgna}.
Deligne's original motivation for working with simplicial resolutions comes from
Hodge theory, where he constructed a mixed Hodge structure on the singular
cohomology of arbitrary complex varieties using simplicial resolutions
\cite{Del74}.
Gillet and Soul\'e later constructed the weight filtration on singular
cohomology using resolutions of singularities and algebraic \(K\)-theory
\cite{GS95}.
\par The work of Guill\'en and Navarro Aznar \cite{GNA02} and our generalization
of their work in Theorem \ref{thm:introgna} gives a new approach to constructing the
weight filtration using only resolutions of singularities and either the
Chow--Hironaka lemma (in \cite{GNA02}) or the weak factorization theorem (in
this paper).
Using \cite{GNA02}, Totaro constructed the weight filtration on singular
cohomology with compact support on complex or real analytic spaces \cite{Tot02}.
McCrory and Parusi\'nski \cite{McP11,McP14} also used \cite{GNA02} to construct
analogous weight filtrations for Borel--Moore homology and compactly supported
singular homology on real algebraic varieties with \(\ZZ/2\ZZ\) coefficients.
\par Using \cite{GNA02}, Cirici and Guill\'en \cite{CG14} showed
that Deligne's construction can be used to construct a weight filtration on the
singular cohomology of \emph{compactifiable} complex analytic spaces.
In this paper, we adapt their construction to construct a weight filtration on
the (pro-)\'etale cohomology of schemes and compactifiable rigid analytic
spaces (Theorem \ref{thm:weightfiltration}).
\subsection{Notation}
By a \textsl{ring} we mean a commutative ring with identity and by a
\textsl{ring map} we mean unital ring homomorphisms.
If \(f\colon X \to Y\) is a separated morphism of locally Noetherian
schemes and \(Y\) is assumed to have a dualizing complex \(\omega_Y^\bullet\),
then we will always use the induced dualizing complex \(\omega_X^\bullet
\coloneqq f^!\omega_Y^\bullet\).
We will follow the analogous convention for other kinds of spaces when the
exceptional pullback \(f^!\) is defined.
See \cite[\S23]{LM}.
\par We denote by \(\Cat\) the (large) category of small categories.
We will sometimes work with the structure of \(\Cat\) as a 2-category.
If \(\sC\) is a small category, we denote by \(\Ob(\sC)\) and
\(\Mor(\sC)\) the sets of objects and morphisms of \(\sC\),
respectively.
For every natural number \(n \ge 0\), we denote by \(\underline{n}\) the
well-ordered set \(\{0,1,\ldots,n-1\}\).
Sites, topoi, and their morphisms are defined as in
\cite[\href{https://stacks.math.columbia.edu/tag/00VH}{Tag
00VH}, \href{https://stacks.math.columbia.edu/tag/00X1}{Tag 00X1}, and
\href{https://stacks.math.columbia.edu/tag/00XA}{Tag 00XA}]{stacks-project}.

\subsection{Set-theoretic conventions}\label{sect:settheory}
We prefer to work within Zermelo--Fraenkel set theory with
the Axiom of Choice (ZFC).
We will also sometimes use the language of proper classes.
This is allowed since introducing classes, for example by using von
Neumann--Bernays--G\"odel set theory (NBG) with the Axiom of
Choice, yields a conservative extension of
ZFC \cite[\S4.1]{McL20}.
\begin{remark}[Choosing small categories of spaces]\label{rem:smallspaces}
  When we need to work with a \emph{small} subcategory \(\Sp\) of one of the
  categories of spaces of the form
  \((\ref{setup:introalgebraicspaces})\textnormal{--}(\ref{setup:introadicspaces})\),
  we choose \(\Sp\) so that \(\Sp\) is essentially stable under
  fiber products, immersions, and proper morphisms, and so that these
  constructions coincide with what they would be in the (large) category of
  spaces.
  For schemes, such a choice for \(\Sp\) can be made by taking the category
  \(\Sch_\alpha\) constructed in
  \cite[\href{https://stacks.math.columbia.edu/tag/000J}{Tag
  000J}]{stacks-project}, which is essentially stable under more constructions
  than those listed above \cite[\href{https://stacks.math.columbia.edu/tag/000R}{Tag
  000R}]{stacks-project}, and replacing \(\Sch_\alpha\) with a smaller subcategory.
  For example, we could take all objects in \(\Sch_\alpha\) that are
  quasi-excellent or of finite type over a (varying or fixed) field \(k\).
  The 2-category of algebraic stacks over a scheme \(S\) is already small by
  definition in \cite[\href{https://stacks.math.columbia.edu/tag/03YP}{Tag
  03YP}]{stacks-project}.
  In cases
  \((\ref{setup:introformalqschemes})\textnormal{--}(\ref{setup:introadicspaces})\),
  one adapts the proofs in
  \cite[\href{https://stacks.math.columbia.edu/tag/000J}{Tag
  000J} and \href{https://stacks.math.columbia.edu/tag/000R}{Tag
  000R}]{stacks-project}.
\end{remark}
  Many results in the literature use the Axiom of Universes \((\AU)\) from
  \cite[Expos\'e I, Appendice, \S4, (A.6)]{SGA41}.
  When we can, we cite references that do not assume \((\AU)\), for
  example \cite{stacks-project}.
  The results we need from
  \cite{EGAII,EGAIII1,EGAIV2,EGAIV3,EGAIV4,SGA1new,SGA41,SGA42,SGA43,SGA5} do not need
  \((\AU)\) \cite{McL10,McL20}.\medskip
  \par We prefer to work in ZFC to avoid introducing additional axioms.
  The Axiom of Universes \((\AU)\)
  implies the existence of a strongly inaccessible cardinal \cite[p.\
  360]{McL10}.
  This in turn implies the consistency of ZFC and therefore
  cannot be proved in ZFC by G\"odel's second incompleteness theorem
  \cite[pp.\ 18--19]{Kan09}.

\section{Preliminaries on spaces}
In this section, we fix some terminology that will be convenient to
unify our presentation for all spaces of the form
\((\ref{setup:introalgebraicspaces})\textnormal{--}(\ref{setup:introadicspaces})\).
\subsection{Affinoids}\label{sect:affinoids}
We adopt the terminology \textsl{affinoid subdomain} from non-Archimedean
geometry.
See \cite[\S6.2.3(2)]{AT19} for the definition in cases
\((\ref{setup:introformalqschemes})\textnormal{--}(\ref{setup:introrigidanalyticspaces})\).
For case \((\ref{setup:introalgebraicspaces})\), we will mean smooth morphisms
from an affine scheme.
For case \((\ref{setup:introadicspaces})\), see
\cite[Definition on p.\ 521]{Hub94}.
\subsection{Associated subspaces}\label{sect:associated}
For a space \(X\), we define the set of associated subspaces of a coherent
\(\cO_X\)-module following
\cite[\href{https://stacks.math.columbia.edu/tag/0CTX}{Tag
0CTX}]{stacks-project} in case \((\ref{setup:introalgebraicspaces})\) when \(X\)
is an algebraic space,
\cite[p. 379]{Siu69} in case \((\ref{setup:introcomplexanalyticgerms})\),
and \cite[D\'efinition 2.7]{Duc21} in cases
\((\ref{setup:introberkovichspaces})\) and
\((\ref{setup:introrigidanalyticspaces})\).
In cases \((\ref{setup:introformalqschemes})\) and
\((\ref{setup:introadicspaces})\), we define associated subspaces
affinoid-locally, in which case the excellence results
necessary to adapt the argument in \cite[\S2]{Duc21} hold by assumption and by
\cite[Lemma 24.9]{LM}, respectively.
\subsection{Closures}\label{sect:closures}
In each of the categories
\((\ref{setup:introalgebraicspaces})\textnormal{--}(\ref{setup:introadicspaces})\),
there is a good notion of scheme-theoretic closures (for open subsets)
or analytic closures (for analytic subdomains).
See
\cite[\href{https://stacks.math.columbia.edu/tag/0834}{Tag 0834}]{stacks-project}
for case \((\ref{setup:introalgebraicspaces})\) and
\cite[Proposition 3.29]{Yas09} for case \((\ref{setup:introformalqschemes})\).
\par For the analytic cases, we proceed as follows.
Case \((\ref{setup:introberkovichspaces})\) is defined in
\cite[Lemme-D\'efinition 2.17]{Duc21}.
Case \((\ref{setup:introcomplexanalyticgerms})\) can be defined similarly by
working on affinoid subdomains.
Note
that the excellence results necessary
to adapt the argument in \cite[\S2]{Duc21} hold for affinoid semianalytic germs
of complex analytic spaces by \citeleft\citen{Fri67}\citemid Th\'eor\`eme
(I,\,9)\citepunct \citen{Mat73}\citemid Theorem 2.7\citepunct \citen{AT19}\citemid
Lemma B.6.1\((i)\)\citeright.
By \cite[Theorem 1.6.1]{Ber93}, \((\ref{setup:introrigidanalyticspaces})\) can
be reduced to case \((\ref{setup:introberkovichspaces})\).
Finally, for case \((\ref{setup:introadicspaces})\), we work locally to reduce
to the case of taking closures in \(\Spa(A,A^+)\) where \(A\) is topologically
of finite type over \(k\).
We then think of \(\Spa(A,A^\circ)\) as an open subspace of \(\Spa(A,A^+)\) (see
\cite[Proof of Proposition 3.2]{Man23}) and
reduce to case \((\ref{setup:introrigidanalyticspaces})\) using
\citeleft\citen{Hub94}\citemid Proposition 4.5\((iv)\)\citepunct
\citen{Man23}\citemid Lemma 3.1\citeright.
\subsection{Compactifiable spaces}
We will use the following terminology.
\begin{definition}
  Let \(U\) be a space of one of the types
  \((\ref{setup:introalgebraicspaces})\textnormal{--}(\ref{setup:introadicspaces})\).
  We say that \(U\) is \textsl{compactifiable} if there exists a quasi-compact
  space \(X\) and closed subspace \(Z \subseteq X\) such that \(U \simeq X -
  Z\).
\end{definition}
\subsection{Dimension}\label{sect:dimension}
We define dimension as in
\cite[\href{https://stacks.math.columbia.edu/tag/0AFP}{Tag
0AFP}]{stacks-project} in case \((\ref{setup:introalgebraicspaces})\),
\cite[Chapitre 0, D\'efinition 14.1.2]{EGAIV1} in case
\((\ref{setup:introformalqschemes})\),
\cite[5.1.1]{GR84} in case \((\ref{setup:introcomplexanalyticgerms})\),
\cite[p.\ 23]{Ber93} in case \((\ref{setup:introberkovichspaces})\),
\cite[p.\ 496]{Con99} in case \((\ref{setup:introrigidanalyticspaces})\),
and
\cite[Definition 1.8.1]{Hub96} in case \((\ref{setup:introadicspaces})\).
\subsection{The \'etale and pro-\'etale topologies}\label{sect:etaletop}
When discussing sheaves on diagrams of spaces, we will mostly work with sheaves on the
\'etale topology.
In case \((\ref{setup:introalgebraicspaces})\), we instead work with the
lisse-\'etale topology
\cite[\href{https://stacks.math.columbia.edu/tag/0787}{Tag
0787}]{stacks-project}
using the six-functor formalism established in
\cite{LO08,LZ}.
\par In order to treat the other cases
\((\ref{setup:introformalqschemes})\textnormal{--}(\ref{setup:introadicspaces})\)
at once, we will refer to the Euclidean topology as the \'etale topology in the
complex analytic setting (case \((\ref{setup:introcomplexanalyticgerms})\)) and
use the \'etale topology in the other cases.
In case \((\ref{setup:introberkovichspaces})\), see \cite[\S4.1]{Ber93}.
In case \((\ref{setup:introrigidanalyticspaces})\), see \cite[p.\ 58]{SS91}.
In case \((\ref{setup:introadicspaces})\), see \cite[\S2.1]{Hub96}.
In case \((\ref{setup:introformalqschemes})\), note the \'etale site on a
formal scheme and the scheme defined (locally) by an ideal of definition are
equivalent \cite[Expos\'e I, Corollaire 8.4]{SGA1new}.
\par We will also use the pro-\'etale topology in cases
\((\ref{setup:introalgebraicspaces})\) and \((\ref{setup:introadicspaces})\).
See \cite{BS15,Cho20} for \((\ref{setup:introalgebraicspaces})\) (for arbitrary
schemes and for algebraic stacks of finite type over a Noetherian scheme \(S\)).
See \cite[\S3]{Sch13} for case \((\ref{setup:introadicspaces})\).

\subsection{Irreducibility and irreducible components}\label{sect:irred}
In case \((\ref{setup:introalgebraicspaces})\), we say that \(\cX\) is
irreducible if the topological space \(\abs{\cX}\) is irreducible and define
irreducible components by taking closures of generic points in \(\abs{\cX}\).
The assumption that \(\cX\) is decent (see
\cite[\href{https://stacks.math.columbia.edu/tag/0GW1}{Tag
0GW1}]{stacks-project} for the definition)
is used to guarantee that generic points for
irreducible components are unique
\cite[\href{https://stacks.math.columbia.edu/tag/0GW8}{Tag
0GW8}]{stacks-project}.
In case \((\ref{setup:introformalqschemes})\), we use the topological definition
from \cite[\S2.1]{EGAInew}.
In case \((\ref{setup:introcomplexanalyticgerms})\), we define irreducibility as
in \cite[9.1.2]{GR84} and apply the Global Decomposition Theorem
\cite[9.2.2]{GR84} to a representative of a semianalytic germ of a complex
analytic space to obtain a decomposition of the germ
into irreducible components.
In case \((\ref{setup:introberkovichspaces})\), we follow \cite[p.\
1455]{Duc09}.
In case \((\ref{setup:introrigidanalyticspaces})\), we follow \cite[Definition
2.2.2]{Con99}.
In case \((\ref{setup:introadicspaces})\), we follow \cite[Definitions 2.11 and
2.14]{Man23}.
\subsection{Relative Proj and blowing up}\label{sect:blowingup}
We define relative \(\PProj\) and blowups as in
\cite[\S5.1.2]{Tem12}
in case \((\ref{setup:introalgebraicspaces})\),
\cite[Definition 6.3]{FK06} in case
\((\ref{setup:introformalqschemes})\),
\cite[Chapter II, \S1.b]{Nak04} in case \((\ref{setup:introcomplexanalyticgerms})\),
\cite[(5.4)]{Duc21} in case \((\ref{setup:introberkovichspaces})\),
\cite[Definition 2.3.3]{Con06} in case
\((\ref{setup:introrigidanalyticspaces})\),
and
\citeleft\citen{Guo23}\citemid \S2.2\citepunct \citen{Zav}\citemid Definition
6.7\citeright\ in case \((\ref{setup:introadicspaces})\).

\section{Diagrams and codiagrams}
We collect the definitions we
need on diagrams and codiagrams
that are necessary for the theory of cubical hyperresolutions.
This material combines some of the material in \cite[Expos\'e I, \S1]{GNAPGP88}
and in \cite[\S1]{GNA02}.
We have tried to use the notation in \cite{GNAPGP88,GNA02} as much as possible.
We recommend the reader skip this section on first reading and return back to it
as needed.
\subsection{Conventions}
\par In this section, \(\sC\) denotes a (not necessarily small)
category and \(I,J\) denote small categories.
\subsection{1-diagrams and 1-codiagrams}
We start with the notion of a 1-diagram and a 1-codiagram.
We will not use 1-codiagrams until \S\ref{sect:gna02}.
\begin{definition}[1-diagrams and 1-codiagrams
  {\citeleft\citen{GNAPGP88}\citemid Expos\'e I, (1.2)\citepunct
  \citen{GNA02}\citemid (1.2.1) and (1.6.1)\citeright}]
  \label{def:gnapgp12}
  A \textsl{1-diagram of \(\sC\) of type \(I\)}, which we also call an
  \textsl{\(I\)-object of \(\sC\)}, is a functor \(I^\op \to \sC\).
  A \textsl{1-codiagram of \(\sC\) of type \(I\)} is a functor \(I \to \sC\).
  \par Let \(X\) and \(Y\) be two 1-diagrams (resp.\ 1-codiagrams)
  of \(\sC\) of types \(I\) and
  \(J\), respectively.
  A \textsl{morphism of 1-diagrams of \(\sC\) from \(X\) to \(Y\)} (resp.\
  a \textsl{morphism of 1-codiagrams of \(\sC\) from \(X\) to \(Y\)}) is a pair
  consisting of a functor \(\varphi\colon I \to J\) and a natural transformation
  \(f\colon X \rightarrow \varphi^*(Y)\).
  Such a morphism \(f\) is called a \textsl{\(\varphi\)-morphism} and can be
  visualized using the diagrams
  \begin{equation}\label{eq:diagrams2cat}
    \begin{tikzcd}[column sep=tiny]
      I^\op \arrow[rr,"\varphi^\op"]\arrow[dr,"X"'near start,""{name=X,near end}] & & J^\op
      \arrow[dl,"Y"near start,""'{name=Y,near end}] &[5em]
      I \arrow[rr,"\varphi"]\arrow[dr,"X"'near start,""{name=coX,near end}]
      &[0.33em] &[0.33em] J
      \arrow[dl,"Y"near start,""'{name=coY,near end}]\\
      & \sC & & & \sC\mathrlap{.}
      \arrow[Rightarrow,from=X,to=Y,"f"]
      \arrow[phantom,from=Y,to=coX,"\text{and}"{yshift=2.75pt}]
      \arrow[Rightarrow,from=coX,to=coY,"f"]
    \end{tikzcd}
  \end{equation}
  If \(\varphi\) is the identity functor on \(I\), we simply say that \(f\) is
  an \textsl{\(I\)-morphism} or (for 1-diagrams only)
  a \textsl{morphism of \(I\)-objects}.
  \par The 1-diagrams (resp.\ 1-codiagrams) of \(\sC\) with
  \(\varphi\)-morphisms as defined above form a category \(\Diag_1(\sC)\)
  (resp.\ \(\Codiag_1(\sC)\)).
  If \(\fI\) is a subcategory of \(\Cat\), we denote by \(\Diag_\fI(\sC)\)
  (resp.\ \(\Codiag_\fI(\sC)\)) the subcategory of \(\Diag_1(\sC)\) (resp.\
  \(\Codiag_1(\sC)\)) whose types lie in \(\fI\).
  When \(\fI\) consists of one object \(I\) and the identity on \(I\), we denote
  this subcategory by \(\Diag_I(\sC)\) (resp.\ \(\Codiag_I(\sC)\)).
  \par We have functors
  \[
    \typ\colon \Diag_1(\sC) \longrightarrow \Cat\qquad\text{and}\qquad
    \typ\colon \Codiag_1(\sC) \longrightarrow \Cat
  \]
  defined by setting \(\typ(X) = I\) for every 1-diagram or 1-codiagram
  \(X\) of \(\sC\) and \(\typ(f) = \varphi\) for every \(\varphi\)-morphism
  \(f\) of 1-diagrams or 1-codiagrams of \(\sC\).
  \par If \(X\) is a 1-diagram of \(\sC\) of type \(I\),
  \(i \in \Ob(I)\), and \(u \in \Mor(I)\), we
  denote by \(X_i\), \(X_u\), and \(f_i\) the images of \(i\), \(u\), and \(i\)
  under \(X\), \(X\), and \(f\), respectively.
  If \(X\) is a 1-codiagram of \(\sC\) of type \(I\), we use the
  notation \(X^i\), \(X^u\), and \(f^i\) instead of \(X_i\), \(X_u\), and
  \(f_i\).
\end{definition}
\begin{definition}[Augmented 1-diagrams and 1-codiagrams
  {\cite[Expos\'e I, (1.3)]{GNAPGP88}}]\label{def:gnapgp13}
  Let \(S\) be an object of \(\sC\).
  A \textsl{1-diagram} (resp.\ \textsl{1-codiagram}) \textsl{of type \(I\)
  augmented by \(S\)} is a 1-diagram (resp.\ 1-codiagram) \(X^+\) of type \(I\) of
  the slice category \(\sC/S\) (resp.\ the coslice category \(S\backslash \sC\)).
  We identify \(X^+\) with the 1-diagram (resp.\ 1-codiagram)
  \(X\) of type \(I\) of \(\sC\) together with a
  morphism \(a\colon X \to S\) (resp.\ \(a\colon S \to X\))
  of 1-diagrams (resp.\ 1-codiagrams) of \(\sC\) where \(\typ(a)\) is
  the canonical functor \(I \to \underline{1}\) (resp.\ the canonical functor
  \(\underline{0} \to I\)).
  \par Let \(\varphi\colon I \to J\) be a functor between small
  categories and let \(u\colon S \to T\) be a morphism in \(\sC\).
  Let \(a \colon X \to S\) (resp.\ \(a\colon S \to X\))
  and \(b\colon Y \to T\) (resp.\ \(a\colon T \to Y\)) be 1-diagrams (resp.\
  1-codiagrams) of type \(I\)
  and \(J\) of \(\sC\) augmented by \(S\) and \(T\), respectively.
  A \textsl{\(\varphi\)-morphism augmented by \(u\)} is a \(\varphi\)-morphism
  \(f\colon X \to Y\) such that the diagram
  \begin{equation}\label{eq:moraugmented}
    \begin{tikzcd}
      X \rar{f}\dar[swap]{a} & Y\arrow[d,"b"{name=b}]
      &[5em] S \rar{u}\arrow[d,"a"'{name=a}] & T\dar{b}\\
      S \rar{u} & T & X \rar{f} & Y
      \arrow[from=b,to=a,phantom,"\text{resp.}"{yshift=-1pt}]
    \end{tikzcd}
  \end{equation}
  of 1-diagrams of \(\sC\) commutes.
  The augmented 1-diagrams (resp.\ 1-codiagrams) of \(\sC\) together with augmented
  \(\varphi\)-morphisms form a category \(\Diag_1^+(\sC)\) (resp.\
  \(\Codiag_1^+(\sC)\)).
  We have two projection functors
  \[
    \begin{tikzcd}[cramped,row sep=0,column sep=scriptsize]
      \sC & \lar[swap]{\pi_0} \Diag_1^+(\sC) \rar{\pi_1} & \Diag_1(\sC)\\
      \sC & \lar[swap]{\pi_0} \Codiag_1^+(\sC) \rar{\pi_1} & \Codiag_1(\sC)
    \end{tikzcd}
  \]
  where \(\pi_0(X \to S) = S\) (resp.\ \(\pi_0(S \to X) = S\))
  and \(\pi_1(X \to S) = X\) (resp.\ \(\pi_1(S \to X) = X\)) on objects and
  \(\pi_0(f) = u\) and \(\pi_1(f) = f\) for a morphism \(f\) as in
  \eqref{eq:moraugmented}. 
\end{definition}
\subsection{The total 1-diagram of a 2-diagram}
Next, we define 2-diagrams and total 1-diagrams.
\begin{definition}[2-diagrams
  {\citeleft\citen{GNAPGP88}\citemid Expos\'e I, (1.4)\citeright}]
  The category of \textsl{2-diagrams of \(\sC\)} is the category
  \(\Diag_2(\sC) \coloneqq \Diag_1(\Diag_1(\sC))\).
  We have a functor
  \[
    \typ_1\colon \Diag_2(\sC) \longrightarrow \Diag_1(\sC)
  \]
  defined by \(\typ_1(X) = \typ \circ X\) for every 2-diagram \(X\) of \(\sC\).
\end{definition}
\begin{definition}[Total categories
  {\citeleft\citen{GNAPGP88}\citemid Expos\'e I, (1.5)\citeright}]
  Let \(K\) be an \(I\)-object of \(\Cat\).
  The \textsl{total category \(\tot(K)\) of the functor \(K\)} is the category
  obtained as follows:
  \begin{itemize}
    \item The objects of \(\tot(K)\) are pairs \((i,x)\) where \(i \in \Ob(I)\)
      and \(x \in \Ob(K_i)\).
    \item The morphisms \((i,x) \to (j,y)\) of \(\tot(K)\) are pairs \((u,a)\)
      consisting of a morphism
      \(u\colon i \to j\) of \(I\) and a morphism \(a\colon x \to K_u(y)\) of
      \(K_i\).
  \end{itemize}
  Let \((u,a)\colon (i,x) \to (j,y)\) and \((v,b)\colon (j,y) \to (k,z)\) be two
  morphisms of \(\tot(K)\).
  The composition is defined by 
  \((v,b) \circ (u,a) \coloneqq (v \circ u,K_u(b) \circ a)\).
  \par If \(f\colon K \to L\) is a morphism of 1-diagrams of \(\Cat\) such that
  \(\typ(f) = \varphi\), then \(f\) defines a functor
  \[
    \begin{tikzcd}[cramped,row sep=0,column sep=1.475em]
      \tot(f)\colon &[-2.2em] \tot(K) \rar & \tot(L)\\
      & (i,x) \rar[mapsto] & \bigl(\varphi(i),f_i(x)\bigr)\\
      & (u,a) \rar[mapsto] & \bigl(\varphi(u),f_i(a)\bigr)\mathrlap{.}
    \end{tikzcd}
  \]
  With these definitions, we obtain a functor
  \(\tot\colon \Diag_1(\Cat) \rightarrow \Cat\).
  \par If \(K\) is an \(I\)-object of \(\Cat\), we have the projection functor
  \(\pi\colon \tot(K) \rightarrow I\).
\end{definition}
\begin{definition}[Total 1-diagrams
  {\citeleft\citen{GNAPGP88}\citemid Expos\'e I, (1.6)\citeright}]
  \label{def:gnapgp16}
  Let \(K\) be an \(I\)-object of \(\Cat\).
  Let \(X\) be a 2-diagram of
  \(\sC\) such that \(\typ_1(X) = K\).
  The \textsl{total 1-diagram of \(X\)} is the 1-diagram
  \[
    \begin{tikzcd}[cramped,row sep=0,column sep=1.475em]
      \tot(X)\colon&[-2.2em] \tot(K) \rar & \sC\\
      & (i,x) \rar[mapsto] & X_i(x)\\
      & (u,a) \rar[mapsto] & X_i(a) \circ X_u(y)\mathrlap{.}
    \end{tikzcd}
  \]
  With these definitions, we obtain a functor
  \(\tot\colon \Diag_2(\sC) \rightarrow \Diag_1(\sC)\).
  By \cite[Expos\'e VI, Proposition 12.1]{SGA1new}, the commutative diagram of
  functors
  \[
    \begin{tikzcd}
      \Diag_2(\sC) \rar{\tot} \dar[swap]{\typ_1} & \Diag_1(\sC) \dar{\typ}\\
      \Diag_1(\Cat) \rar{\tot} & \Cat
    \end{tikzcd}
  \]
  is Cartesian.
  This diagram allows us to identify a 2-diagram \(X\) of \(\sC\) with a pair
  \((\tot(X),\typ_1(X))\).
\end{definition}
We can reinterpret augmentation using total diagrams.
\begin{definition}[Augmented categories
  {\cite[Expos\'e I, (1.7)]{GNAPGP88}}]\label{def:gnapgp17}
  We identify a functor \(X_\bullet\colon \underline{2}^\op \to \sC\) with
  the morphism \(f\colon X_1 \to X_0\) in \(\sC\).
  In particular, if \(\varphi\colon K_1 \to K_0\) is a functor between
  small categories, then \(\varphi\) is identified with the functor
  \(K_\bullet\colon \underline{2}^\op \to \Cat\).
  \par If \(\varphi\colon K_1 \to K_0\) is a functor between small
  categories, we denote by \(\tot(\varphi)\) the total category of the functor
  \(K_\bullet\) defined by \(\varphi\).
  If \(f\colon X_1 \to X_0\) is a \(\varphi\)-morphism of 1-diagrams of a
  category \(\sC\), we denote by \(\tot(f)\) the total 1-diagram of the
  2-diagram
  \[
    X_\bullet\colon \underline{2}^\op \longrightarrow \Diag_1(\sC)
  \]
  defined by \(f\).
  \par Let \(\varepsilon\colon I \to \underline{1}\) be the canonical functor
  from
  \(I\) to the singleton category \(\underline{1} = \{0\}\).
  The \textsl{augmented category of \(I\)} is the category \(I^+ \coloneqq
  \tot(\varepsilon)\).
  The unique object \(0 \in \Ob(\underline{1})\) is identified with the initial
  object of \(I^+\).
  Now let \(X\) be an \(I\)-object of \(\sC\) and let \(a\colon X \to S\) be an
  augmentation of \(X\) by an object \(S\) of \(\sC\).
  Then, \(X^+ \coloneqq \tot(a)\) is an \(I^+\)-object of \(\sC\).
  Conversely, every \(I^+\)-object of \(\sC\) is associated to an augmented
  \(I\)-object of \(\sC\).
\end{definition}
\begin{definition}[\(\pi\)-augmentations {\cite[Expos\'e I, (1.8)]{GNAPGP88}}]
  Let \(K\) be an \(I\)-object of \(\Cat\), let \(\pi\colon \tot(K) \to I\) be
  the projection functor, and let \(X\) be a 2-diagram of \(\sC\) such that
  \(\typ_1(X) = K\).
  Let \(S\) be an \(I\)-object of \(\sC\).
  A \textsl{\(\pi\)-augmentation of \(X\) by \(S\)} is a \(\pi\)-morphism \(a
  \colon \tot(X) \to S\) of 1-diagrams of \(\sC\).
  Giving such a \(\pi\)-morphism of 1-diagrams of \(\sC\) is equivalent to
  giving a functor
  \[
    X^+\colon I^\op \longrightarrow \Diag^+_1(\sC)
  \]
  such that \(\pi_0 \circ X^+ = S\) and \(\pi_1 \circ X^+ = X\).
  See Definition \ref{def:gnapgp13}.
\end{definition}
\subsection{Orderable categories}
An important notion for us will be the following notion of an orderable
category, which is a categorical generalization of a partially ordered set
(see Example \ref{ex:notorderedset}).
\begin{definition}[Orderable categories
  {\citeleft\citen{GNAPGP88}\citemid Expos\'e I, (1.9) and
  (1.11)\citepunct \citen{GNA02}\citemid (1.1.2)\citeright}]
  The set \(\Ob(I)\) can be given a preorder in the following manner:
  \(i \le j\) if and only if \(\Hom_I(i,j)\) is nonempty.
  We say that the category \(I\) is \textsl{orderable} if this preorder is a
  partial order and if, for every \(i \in \Ob(I)\), the set of endomorphisms of
  \(i\) consists only of the identity on \(i\).
\end{definition}
Note that a small category \(I\) is orderable if and only if \(I\) is rigid and
reduced, i.e., if every isomorphism and every endomorphism in \(I\) is an
identity morphism.
\begin{citedex}[{\cite[Expos\'e I, (1.10)]{GNAPGP88}}]\label{ex:notorderedset}
  Let \(E\) be a partially ordered set.
  Then, the category associated to \(E\) (see \citeleft\citen{GNA02}\citemid
  (1.1)\citeright) is an orderable category.
\end{citedex}
We now define finite orderable categories and the notion of induction on
such a category.
\begin{definition}[The category \(\Phi\) of finite orderable categories
  {\citeleft\citen{GNAPGP88}\citemid Expos\'e I, (1.13)\citepunct
  \citen{GNA02}\citemid (1.1.2)\citeright}]
  Suppose that \(I\) is orderable.
  We say that \(I\) is \textsl{finite} if the set \(\Mor(I)\) is
  finite.
  We say that \(I\) is \textsl{finite to the left} (resp.\ \textsl{finite to the
  right}) if, for every \(i \in \Ob(I)\), the slice category \(I/i\)
  (resp.\ the coslice category \(i\backslash I\)) is finite.
  If \(I\) is finite, then \(I\) is both finite to the left and finite to the
  right.
  \par We denote by \(\Phi\) the full subcategory of \(\Cat\) whose objects
  are finite orderable categories.
\end{definition}
\begin{definition}[Noetherian induction {\cite[Expos\'e I, (1.14)]{GNAPGP88}}]
  Suppose that \(I\) is orderable.
  Assume that \(I\) is finite to the right (resp.\ finite
  to the left).
  From the definitions, we see that the order associated to \(I\) (resp.\
  \(I^\op\)) is Noetherian.
  We call descending induction (resp.\ increasing induction) on \(I\)
  \textsl{Noetherian induction on \(I\)} (resp.\ \(I^\op\)).
\end{definition}
\subsection{Cubical categories}
Finally, we define cubical categories, which will be the main example of an
orderable category that we work with to construct hyperresolutions.
\begin{definition}[Cubical categories
  {\citeleft\citen{GNAPGP88}\citemid Expos\'e I, (1.15)\citepunct
  \citen{GNA02}\citemid (1.1.1), (1.4.1), and (1.4.2)\citeright}]
  Let \(S\) be a finite set (resp.\ a finite nonempty set).
  We denote by \(\square^+_S\) (resp.\ \(\square_S\)) the category associated to
  the set of subsets of
  \(S\) (resp.\ the set of nonempty subsets of \(S\)), partially ordered by
  inclusion.
  The objects of \(\square^+_S\) (resp.\ \(\square_S\)) can be identified with elements
  \((\alpha(s))_{s \in S} \in \{0,1\}^S\) (resp.\ \((\alpha(s))_{s \in S} \in
  \{0,1\}^S - (0,0,\ldots,0)\)).
  Note that
  \[
    \Ob\bigl(\square^+_S\bigr) - \{\emptyset\} = \Ob\bigl(\square_S\bigr)
  \]
  and that
  \(\square^+_S\) is the augmented category of \(\square_S\) (see Definition
  \ref{def:gnapgp17}).
  \par Now let \(n \ge -1\) be an integer.
  We set
  \[
    \square^+_n \coloneqq \square_{\underline{n}}^+ =
    \underline{2}^{n+1} \qquad\text{and}\qquad
    \square_n \coloneqq \square_{\underline{n}}.
  \]
  The objects of \(\square^+_n\) (resp.\ \(\square_n\)) can be identified with
  sequences (resp.\ nonzero sequences)
  \(\alpha = (\alpha_0,\alpha_1,\ldots,\alpha_n)\) where \(\alpha_i
  \in \{0,1\}\) for all \(0 \le i \le n\).
  For \(n = -1\) we have \(\square^+_{-1} = \underline{1}\) and for \(n = 0\) we
  have \(\square^+_0 = \underline{2}\).
  The category \(\square^+_n\) (resp.\ \(\square_n\)) is called the
  \textsl{standard augmented cubical} (resp.\ \textsl{standard cubical})
  \textsl{category of order \(n\)}.
  The 1-diagrams and 1-codiagrams of type \(\square^+_n\) (resp.\
  \(\square_n\)) of a category \(\sC\)
  are called \textsl{augmented cubical (resp.\ cubical) 1-diagrams and 1-codiagrams
  of \(\sC\)}, respectively.
  The 1-diagrams of \(\square^+_n\) (resp.\ \(\square_n\)) of a category \(\sC\)
  are also called 
  \textsl{augmented cubical objects} (resp.\ \textsl{cubical
  objects}) of \(\sC\).
  \par Since \(\square^+_S\) and \(\square_S\) arise as the orderable categories
  associated to finite partially ordered sets as in Example
  \ref{ex:notorderedset},
  we will omit the notation \(\Ob(-)\) when discussing objects in
  \(\square^+_S\) and \(\square_S\).
  If \(\alpha \in \square^+_S\), we set
  \[
    \abs{\alpha} \coloneqq \sum_{i \in S} \alpha(s).
  \]
\end{definition}
\begin{definition}[The categories \(\Pi^+\) and \(\Pi\)
  {\cite[(1.1.1) and (1.4.1)]{GNA02}}]
  We denote by \(\Pi^+\) the category whose objects are \(\square^+_S\) for finite
  sets \(S\) and whose morphisms are morphisms of the form \(\square^+_u\colon
  \square^+_S \to \square^+_T\) where \(\square^+_u(\alpha) = u(\alpha)\)
  for injective maps \(u \colon S \to T\).
  The category \(\Pi^+\) has a symmetric monoidal structure given by the
  disjoint union, where the unit is the empty set \(\emptyset\).
  \par We denote by \(\Pi\) the category whose objects are finite products
  \(\square_S \coloneqq \prod_{i \in I} \square_{S_i}\) for a finite family of finite
  nonempty sets \((S_i)_{i \in I}\), and whose morphisms are of the form
  \(\square_u\colon \square_S \to \square_T\) where \(\square_u(\alpha) = u(\alpha)\)
  for maps \(u \colon S \to T\) that are injective on each factor \(S_i\).
  The category \(\Pi\) has a symmetric monoidal structure: the product of
  \(\square_S\) where \(S = (S_i)_{i \in I}\) and \(\square_T\) where
  \(T = (T_j)_{j \in J}\) is \(\square_{S \sqcup T}\) where
  \(S \sqcup T = (U_k)_{k \in K}\), and where \(K = I \sqcup J\) and
  \[
    U_k = \begin{cases}
      S_k & \text{if}\ k \in I,\\
      T_k & \text{if}\ k \in J.
    \end{cases}
  \]
  The unit of this monoidal structure on \(\Pi\) is the family parametrized by the empty
  set.
  \par The maps \(\square^+_u\) and \(\square_u\) are strictly increasing, where
  \(\square_S = \prod_{i \in I} \square_{S_i}\) is given the product order.
\end{definition}
We now define the cubical analogue of face maps.
\begin{definition}[Face functors
  {\cite[Expos\'e I, (1.16) and (1.17)]{GNAPGP88}}]\label{def:gnapgp117}
  Let \(n \ge -1\) be an integer and let \(i\) be an integer such that \(0 \le i
  \le n+1\).
  We denote by \(\delta_i\colon \square^+_n \to \square^+_{n+1}\) the strictly
  increasing map defined by
  \begin{align*}
    \delta_i(\alpha) &=
    (\alpha_0,\alpha_1,\ldots,\alpha_{i-1},0,\alpha_i,\alpha_{i+1},\ldots,\alpha_n)
    \shortintertext{for \(\alpha \in \square^+_n\).
    For \(n = -1\), we set \(\delta_0(0) = 0\).
    A \textsl{face functor} is a composition of a finite number of maps of the
    form \(\delta_i\):}
    \delta &= \delta_{i_p} \circ \cdots \circ \delta_{i_2} \circ \delta_{i_1}
    \colon \square^+_n \longrightarrow \square^+_{n+p}.
  \end{align*}
  A face functor \(\delta\colon \square^+_n \to \square^+_{n+p}\) induces a
  functor \(\delta\colon \square_n \to \square_{n+p}\) by restriction, which we
  also call a \textsl{face functor}.
\end{definition}
\subsection{Inverse images and direct images}
We define inverse images and direct images for diagrams and
codiagrams.
We can define inverse images for diagrams and codiagrams of arbitrary type.
\begin{definition}[Inverse images of 1-diagrams and 1-codiagrams
  {\citeleft\citen{GNAPGP88}\citemid Expos\'e I, (1.2)\citepunct
  \citen{GNA02}\citemid (1.2.2)\citeright}]
  Let \(\varphi\colon I \to J\) be a functor between small categories.
  If \(Y\) is a 1-diagram (resp.\ 1\hyph{}codiagram) of \(\sC\) of type \(J\),
  the functor \(Y \circ \varphi^\op\) (resp.\ \(Y \circ \varphi\))
  is a 1-diagram (resp.\ 1-codiagram) of
  \(\sC\) of type \(I\), which we denote by \(\varphi^{\op*}(Y)\) (resp.\
  \(\varphi^*(Y)\)) or \(Y \times_I J\) depending on the context.
  \par If \(X\) is an object of \(\sC\), we have functors
  \[
    i_I \colon \sC \longrightarrow \Diag_I(\sC) \qquad \text{and} \qquad
    i_I \colon \sC \longrightarrow \Codiag_I(\sC)
  \]
  mapping \(X\) to the constant 1-diagram (resp.\ 1-codiagram)
  which sending every object of \(I^\op\) (resp.\ \(I\)) to \(X\).
  This is the same functor as the inverse image functor for \(I^\op \to
  \underline{1}\) (resp.\ \(I \to \underline{1}\)).
\end{definition}
We can use inverse images to define realization and corealization.
\begin{definition}[{\(\Real_\fI(\sC)\) and \(\Coreal_\fI(\sC)\)
  \cite[(1.2.1)]{GNA02}}]\label{def:gna121}
  Let \(\fI\) be a subcategory of \(\Cat\).
  Consider the following functors:
  \[
    \begin{tikzcd}[cramped,row sep=0,column sep=1.475em]
      \Diag_\fI(\sC)\colon &[-2.125em]
      \fI^\op \rar & \Cat & & \Codiag_\fI(\sC)\colon &[-2.125em]
      \fI^\op \rar & \Cat\\
      & I \rar[mapsto] & \Diag_I(\sC)
      & \text{and} & & I \rar[mapsto] & \Codiag_I(\sC)\\
      & \varphi \rar[mapsto] & \varphi^{\op}* & & & \varphi \rar[mapsto] &
      \varphi^*\mathrlap{.}
    \end{tikzcd}
  \]
  By the Grothendieck construction \cite[Theorem B1.3.5]{Joh021}, these functors
  correspond to categories \(\Real_\fI(\sC)\) and \(\Coreal_\fI(\sC)\) cofibered
  and fibered over \(\fI\), respectively.
\end{definition}
We only define direct images for (a slight generalization of)
\emph{cubical} diagrams and codiagrams.
\begin{definition}[Direct images of
  cubical 1-diagrams and 1-codiagrams {\cite[(1.2.2)]{GNA02}}]
  Let \(\sD\) be a category with an initial object \(0\).
  If \(\delta\colon \square \to \square'\) is a morphism of \(\Pi\), the
  \textsl{direct image of a 1-diagram} \(X\colon \square^\op \to \sD\) and
  the \textsl{direct image of a 1-codiagram} \(X\colon \square \to \sD\) are defined
  by
  \begin{align*}
    (\delta_*X)_\beta &\coloneqq \begin{cases}
      X_\alpha & \text{if}\ \beta = \delta(\alpha)\ \text{for}\ \alpha \in
      \square\\
      \hfil\emptyset & \text{if}\ \beta \in \square' - \delta(\square)
    \end{cases}\\
    (\delta_*X)^\beta &\coloneqq \begin{cases}
      X^\alpha & \text{if}\ \beta = \delta(\alpha)\ \text{for}\ \alpha \in
      \square\\
      \hfil\emptyset & \text{if}\ \beta \in \square' - \delta(\square)
    \end{cases}
  \end{align*}
  respectively.
  The direct image defines functors
  \[
    \begin{tikzcd}[cramped,row sep=0,column sep=1.475em]
      \delta_*\colon &[-2.125em] \Diag_\square(\sD) \rar & \Diag_{\square'}(\sD)\\
      \delta_*\colon &[-2.125em] \Codiag_\square(\sD) \rar & \Codiag_{\square'}(\sD)
    \end{tikzcd}
  \]
  where \(\delta_*u\) for a morphism \(u\) is defined using composition of
  functors.
\end{definition}

\begingroup
\makeatletter
\renewcommand{\@secnumfont}{\bfseries}
\part{Cubical hyperresolutions and cubical descent}
\label{part:cubical}
\makeatother
\endgroup
In this part, we construct cubical hyperresolutions for all reduced spaces of
the types
\((\ref{setup:introalgebraicspaces})\textnormal{--}(\ref{setup:introadicspaces})\).
We then prove that cubical hyperresolutions satisfy a form of
descent that applies to functors and to cohomology.\bigskip
\section{Very weak resolutions and augmented cubical hyperresolutions}
In this section, we construct very weak resolutions (Theorem \ref{thm:gnapgp26}) and 
augmented cubical hyperresolutions (Theorem \ref{thm:gnapgp215}) of \(I\)-spaces.
There are two main differences
in this section compared to \cite[Expos\'e I, \S2]{GNAPGP88}.
First, we work with spaces of the form
\((\ref{setup:introalgebraicspaces})\textnormal{--}(\ref{setup:introadicspaces})\)
and therefore generalize the results stated for complex varieties and complex
analytic spaces in
\cite{GNAPGP88} to these other categories of spaces.
Second, we adopt the definition of a \textsl{very weak resolution} (see
Definition \ref{def:veryweakres}) from
\cite[Definition 5]{Ste17} to avoid the counterexamples to \cite[Expos\'e I,
Th\'eor\`eme 2.6]{GNAPGP88} constructed by Steenbrink in \cite[Example 1]{Ste17}.
Note that Steenbrink in \cite{Ste17} explains how this new notion of a very weak resolution
suffices to construct cubical hyperresolutions of cubical varieties.
\subsection{Conventions}
\par Throughout this section, we denote by \(\Sp\) a subcategory of a
(not necessarily small)
category of \emph{reduced} spaces of one of the types
\((\ref{setup:introalgebraicspaces})\textnormal{--}(\ref{setup:introadicspaces})\)
that is essentially stable under fiber products, immersions, and proper
morphisms.
We will assume \(\Sp\) is chosen according to the strategy outlined in
Remark \ref{rem:smallspaces}.
We denote by \(I\) a small category.
While we will remind the reader of these assumptions in statements of results in
this section, we will not restate these assumptions in definitions or remarks.
\par We will also use the following terminology.
\begin{definition}\label{def:ispaces}
  An \textsl{\(I\)-space} is an object of the category
  \(\Diag_I(\Sp)\).
\end{definition}
We will also use the following notion of pairs of spaces.
\begin{definition}[cf.\ {\citeleft\citen{GNAPGP88}\citemid
  Expos\'e I, (3.11.1) and Expos\'e IV, \S1F\citeright}]
  \label{def:closedpairs}
  The \textsl{category of pairs of spaces} is the category whose objects are
  pairs \((X,X')\) where \(X\) is a space and \(X'\) is a closed subspace of
  \(X\), and whose morphisms \(f\colon (X,X') \to (Y,Y')\) are morphisms
  \(f\colon X \to Y\) of spaces such that \(f^{-1}(Y') \subseteq X'\).
  An \textsl{\(I\)-pair of spaces} is an \(I\)-object in the category of pairs
  of spaces.
  \par A pair \((X,X')\) is \textsl{regular} if \(X\) is regular
  and if \(X'\) is a union of connected components of \(X\) and normal crossing
  divisors on other connected components of \(X\).
  In the latter case, we mean that \(X'\)
  is \'etale-locally a simple normal crossing divisor on those connected
  components.
\end{definition}
Note that if \((X,X')\) is a pair of spaces, then \(X - X'\) is an open
\(I\)-subspace of \(X\) (see Definition \ref{def:propsforispaces}) but \(X'\) is
not an \(I\)-space in general.
\subsection{\emph{I}-spaces and discriminants}
We define the following terminology for \(I\)-spaces.
\begin{definition}[cf.\ {\cite[Expos\'e I, (2.1)]{GNAPGP88}}]\label{def:propsforispaces}
  Let \(X\) be an \(I\)-space and let \(f\) be a morphism of \(I\)-spaces.
  Let \(\bP\) be a property of spaces or morphisms of spaces.
  \begin{enumerate}[label=\((\roman*)\)]
    \item We say that \(X\) \textsl{is \(\bP\)} if \(X_i\) is \(\bP\) for every
      \(i \in \Ob(I)\).
    \item We say that \(X\) \textsl{is relatively \(\bP\)} if \(X_u\) is \(\bP\)
      for every \(u \in \Mor(I)\).
    \item We say that \(f\) \textsl{is \(\bP\)} if \(f_i\) is \(\bP\) for every
      \(i \in \Ob(I)\).
  \end{enumerate}
\end{definition}
We can now define proper modifications of \(I\)-spaces and the loci where these
modifications are not isomorphisms.
The notation for the discriminant is from \cite[Definition 5.14(2)]{PS08}.
\begin{definition}[Discriminants
  {\cite[Expos\'e I, (2.1) and D\'efinition 2.2]{GNAPGP88}}]
  Let \(f\colon X \to S\) be a proper morphism of spaces.
  We say that \(f\) is a \textsl{proper modification} if there
  exists a closed subspace \(Z\) of \(S\) such that \(f\) induces
  an isomorphism
  \begin{equation}
    f\bigr\rvert_{X - f^{-1}(Z)}\colon X - f^{-1}(Z)
    \overset{\sim}{\longrightarrow}
    S - Z\label{eq:propermod}
  \end{equation}
  where \(S-Z\) is dense in \(S\).
  For an arbitrary morphism \(f\colon X \to S\) of spaces,
  the \textsl{discriminant of \(f\)} is the smallest closed subspace
  \(\Delta(f)\) of \(S\) such that \(f\) induces an isomorphism \eqref{eq:propermod} for
  \(Z = \Delta(f)\).
  Note that if \(f\) is not a proper modification, then \(\Delta(f)\)
  can be the whole space \(S\).
  \par Now let \(f\colon X \to S\) be a morphism of \(I\)-spaces.
  The \textsl{discriminant of \(f\)} is the smallest closed sub-\(I\)-space \(\Delta(f)\) of
  \(S\) such that \(f\) induces an isomorphism
  \[
    f_i\bigr\rvert_{X_i - f_i^{-1}(\Delta(f)_i)}\colon X_i -
    f_i^{-1}\bigl(\Delta(f)\bigr)
    \overset{\sim}{\longrightarrow} S_i - \Delta(f)_i
  \]
  for every \(i \in \Ob(I)\).
\end{definition}
\subsection{Very weak resolutions}\label{sect:veryweakres}
To define an analogue of a resolution of singularities for \(I\)-spaces, we
first define the dimension of an \(I\)-space.
See \S\ref{sect:dimension} for the definition of dimension in 
\(\Sp\).
\begin{definition}[Dimension of an \(I\)-space
  {\cite[Expos\'e I, D\'efinition 2.9]{GNAPGP88}}]
  Let \(S\) be an \(I\)-space.
  We say that \(S\) is \textsl{finite-dimensional} if the set of integers
  \[
    \Set[\big]{\dim(S_i) \given i \in \Ob(I)}
  \]
  is bounded.
  If \(S\) is finite-dimensional, the \textsl{dimension of \(S\)} is
  \[
    \dim(S) \coloneqq \sup\Set[\big]{\dim(S_i) \given i \in \Ob(I)}.
  \]
\end{definition}
We define the analogue of a resolution of singularities for \(I\)-spaces as
follows, adjusted for \(I\)-pairs following \citeleft\citen{GNAPGP88}\citemid
Expos\'e IV, (1.24)\citeright.
\begin{definition}[Very weak resolutions;
{cf.\ \citeleft\citen{Ste17}\citemid Definition 5\citeright}]\label{def:veryweakres}
  Let \(f\colon X \to S\) be a proper morphism of \(I\)-spaces and let \(D
  \coloneqq \Delta(f)\) be the discriminant of \(f\).
  We say that \(f\) is a \textsl{very weak resolution of \(S\)} if \(X\) is
  regular and if
  \[
    \dim(D_i) < \dim(S)
    \qquad \text{and} \qquad
    \dim\bigl(f_i^{-1}(D_i)\bigr) < \dim(S)
  \]
  for all \(i \in \Ob(I)\).
  \par Now suppose \((S,S')\) is an \(I\)-pair of spaces.
  A very weak resolution \(f\colon X \to S\) is a \textsl{very weak resolution
  of the \(I\)-pair \((S,S')\)} if \((X,f^{-1}(S'))\) is a regular \(I\)-pair.
\end{definition}
\begin{remark}\label{rem:steenbrink}
  We cannot use the definition of a \textsl{resolution} in \cite[Expos\'e I, D\'efinition
  2.5]{GNAPGP88} (called a \textsl{weak resolution} in \cite[Definition 10.73]{Kol13})
  since even for complex varieties, a resolution of this form
  may not exist by an example of Steenbrink \cite[Example 1]{Ste17}.
  See Remark \ref{rem:ste17proofcomment}.
\end{remark}
We can now state one of the main results in this section.
Note that
\((\ref{thm:gnapgp26factorsthrough})\textnormal{--}(\ref{thm:gnapgp26compact})\)
are not stated in \cite{GNAPGP88},
although the construction in \cite{GNAPGP88}
yields a very weak resolution that satisfies
these two properties.
This flexibility in choosing \(X\) will be necessary in \S\ref{sect:gna02}
because we do not have versions of the Chow--Hironaka lemma from
\citeleft\citen{Hir64}\citemid pp.\ 144--145\citepunct \citen{Hir75}\citemid p.\
505\citeright.
See \S\ref{sect:blowingup} for the definition of blowups
used below.
\begin{theorem}[Very weak resolutions exist;
  cf.\ {\citeleft\citen{GNAPGP88}\citemid Expos\'e I, Th\'eor\`eme
  2.6 and (3.11.2) and Expos\'e IV, (1.24)\citepunct
  \citen{Ste17}\citemid Theorem 1\citeright}]\label{thm:gnapgp26}
  Suppose that \(\Sp\) is a subcategory of a category of reduced spaces of one of the
  types
  \((\ref{setup:introalgebraicspaces})\textnormal{--}(\ref{setup:introadicspaces})\)
  that is essentially stable under fiber products, immersions, and proper
  morphisms.
  Let \(I\) be a finite orderable category and let \((S,S')\) be an \(I\)-pair
  of spaces.
  In cases \((\ref{setup:introalgebraicspaces})\) and
  \((\ref{setup:introformalqschemes})\), suppose that
  \(S\) is quasi-excellent of equal characteristic zero.
  In cases \((\ref{setup:introberkovichspaces})\),
  \((\ref{setup:introrigidanalyticspaces})\), and
  \((\ref{setup:introadicspaces})\), suppose that \(k\) is of characteristic
  zero.
  Then, there exists a very weak resolution
  \[
    f\colon \bigl(X,f^{-1}(S')\bigr) \longrightarrow (S,S')
  \]
  of the \(I\)-pair \((S,S')\).
  Moreover:
  \begin{enumerate}[label=\((\roman*)\),ref=\roman*]
    \item\label{thm:gnapgp26factorsthrough}
      Let \(g\colon Y \to S\) be a proper morphism such that \(g_i\) is a proper
      modification for every \(i \in \Ob(I)\).
      We can construct \(f\) such that \(f\) factors through \(g\).
    \item\label{thm:gnapgp26locallyprojective}
      If the morphism \(g\) in \((\ref{thm:gnapgp26factorsthrough})\)
      is locally a birational/bimeromorphic blowup,
      then we can construct \(f\) such that the morphisms in
      \(X_i\) are locally birational/bimeromorphic blowups and the morphisms
      \(f_i\) are locally birational/bimeromorphic blowups for every \(i \in
      \Ob(I)\), where locality is in terms of affinoid subdomains of \(S\).
    \item\label{thm:gnapgp26compact}
      If \(S\) is compactifiable
      and the morphism \(g\) in
      \((\ref{thm:gnapgp26factorsthrough})\) is a birational/bimeromorphic blowup, then
      we can construct \(f\) such that the morphisms in
      \(X_i\) are all birational/bimeromorphic blowups and the morphisms
      \(f_i\) are birational/bimeromorphic blowups for every \(i \in \Ob(I)\).
  \end{enumerate}
\end{theorem}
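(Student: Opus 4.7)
The approach is Noetherian induction on the finite orderable category $I$, at each step invoking Temkin's resolution of singularities \cite{Tem12,Tem18}, which is available uniformly in all categories \((\ref{setup:introalgebraicspaces})\textnormal{--}(\ref{setup:introadicspaces})\) under our equal characteristic zero hypotheses. When $I$ has a single object, Temkin's algorithmic resolution --- applied with embedded divisor $S'$, or composed after $g$ if $g$ is given --- is a composition of blowups along regular centers yielding a proper modification $f\colon X \to S$ such that $(X, f^{-1}(S'))$ is a regular pair and both $\Delta(f)$ and $f^{-1}\Delta(f)$ have dimension $< \dim S$. Working on affinoid subdomains (respectively globally, when $S$ is compactifiable) yields (ii) (respectively (iii)).

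For the inductive step, pick a maximal $i_0 \in \Ob(I)$ (chosen also of maximum dimension among maximal objects, following \cite{Ste17}) and set $J = I - \{i_0\}$. By induction, there is a very weak resolution $f^J\colon X^J \to S|_J$ of the $J$-pair $(S|_J, S'|_J)$ (factoring through $g|_J$ if applicable) satisfying (i)--(iii). To extend $X^J$ to an $I$-space, form a proper modification $Z_{i_0} \to S_{i_0}$ as an iterated fiber product of the proper modifications $f^J_i\colon X^J_i \to S_i$ (for $i < i_0$, a finite family since $I$ is finite) along the structural maps $S_{i_0} \to S_i$; this exists in $\Sp$ by the stability hypothesis and provides a compatible cone of maps $Z_{i_0} \to X^J_i$ over the maps $S_{i_0} \to S_i$. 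If $g$ is given, further base-change along $Y_{i_0} \to S_{i_0}$. Now apply Temkin's resolution to $Z_{i_0}$ with boundary consisting of the preimage of $S'_{i_0}$ together with the exceptional locus of $Z_{i_0} \to S_{i_0}$, producing a regular $X_{i_0}$ sitting in the required regular pair. The composites $X_{i_0} \to Z_{i_0} \to X^J_i$ furnish the new structural morphisms of the extended $I$-space, whose compatibility with the morphisms of $I$ is automatic from the universal property of the iterated fiber product. Properties (ii)--(iii) propagate because strict transforms of compositions of blowups along regular centers are themselves compositions of blowups.

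The heart of the proof --- and the reason for adopting the notion of \textsl{very weak} (rather than weak) resolution in Definition \ref{def:veryweakres} --- is verifying the dimension bounds $\dim\Delta(f)_i < \dim S$ and $\dim f_i^{-1}\Delta(f)_i < \dim S$. The original statement in \cite[Expos\'e I, Th\'eor\`eme 2.6]{GNAPGP88} demanded the stronger pointwise bound $\dim\Delta(f_i) < \dim S_i$, but Steenbrink's counterexample \cite[Example 1]{Ste17} shows this is unattainable in general: pulling back a discriminant from a lower stratum of $I$ along a structural map may fill entire irreducible components of $S_{i_0}$, so the pointwise bound cannot survive the induction. With the weaker bound of Definition \ref{def:veryweakres} the induction nevertheless closes: all contributions to $\Delta(f)_i$ are closed subspaces of $S_i$ of dimension at most $\dim S_i \leq \dim S$, and our choice of $i_0$ ensures they are strictly proper closed subspaces at the critical step, giving the strict inequality \cite[Theorem 1]{Ste17}. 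The preimage bound follows by including the exceptional locus of $Z_{i_0} \to S_{i_0}$ in the boundary before applying Temkin's resolution, forcing $f_{i_0}^{-1}\Delta(f)_{i_0}$ to be a normal crossing divisor of pure codimension one in $X_{i_0}$.
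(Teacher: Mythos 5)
There is a genuine gap at the heart of the inductive step. You claim to ``form a proper modification $Z_{i_0} \to S_{i_0}$ as an iterated fiber product of the proper modifications $f^J_i\colon X^J_i \to S_i$ along the structural maps $S_{i_0} \to S_i$.'' The naive fiber product $S_{i_0} \times_{S_i} X^J_i$ is generally \emph{not} a proper modification of $S_{i_0}$. If the structure map $S_{i_0} \to S_i$ factors through the discriminant $\Delta(f^J_i)$ --- which there is no reason to exclude --- then the fiber product is the full preimage of a subspace of the discriminant, which is neither birational to $S_{i_0}$ nor of controlled dimension (its dimension can exceed $\dim(S)$ when $X^J_i \to S_i$ has positive-dimensional fibers over the relevant locus). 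In that case the discriminant of $Z_{i_0} \to S_{i_0}$ can be all of $S_{i_0}$, and if $\dim(S_{i_0}) = \dim(S)$ the very weak resolution bound $\dim(\Delta(f)_{i_0}) < \dim(S)$ fails, no matter how you then apply Temkin. Your appeal to the ``choice of $i_0$'' to force the discriminant to be a proper closed subspace does not address this: nothing about picking a maximal element (of any dimension) prevents $S_{i_0}$ from mapping into the discriminant of the lower-level resolutions.

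What is missing is exactly the machinery the paper introduces to circumvent this: the auxiliary $I$-set $\Sigma S$, strict transforms, and upper envelopes (Lemmas \ref{lem:gnapgp261} and \ref{lem:gnapgp262}). The set $\Sigma S_i$ enumerates the irreducible subspaces of $S_i$ that arise as closures of images of irreducible components of $S_j$ under the structure maps; by working one $\alpha \in \Sigma S_i$ at a time, the relevant morphisms $S_{i,\alpha} \to S_{j,\beta}$ become \emph{dominant} maps of irreducible spaces, and the strict transform --- the closure of the fiber product over the locus where the modification is an isomorphism, not the full fiber product --- is again a proper modification of $S_{i,\alpha}$ of the same dimension. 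This is what gives the inequalities \eqref{eq:veryweakineq2} and lets the induction close. Your argument replaces this by the plain fiber product, which loses both birationality and the dimension bound. (Separately, Steenbrink's contribution in \cite{Ste17} is the relaxation of the dimension bound from $\dim(S_i)$ to $\dim(S)$ in the definition of a resolution --- there is no requirement in his Theorem 1 that $i_0$ be chosen of maximal dimension; you appear to be misattributing a device.) The direction of induction (peeling off a maximal object vs.\ ascending through minimal ones) is not the issue --- both are workable --- but you cannot avoid the $\Sigma S$ bookkeeping and strict transforms.
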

To prove Theorem \ref{thm:gnapgp26},
we first prove two preliminary results.
See \S\ref{sect:irred} for a discussion on irreducibility and irreducible
components in \(\Sp\).
We use \(\abs{X}\) to denote the topological space underlying \(X\) (see
\cite[\href{https://stacks.math.columbia.edu/tag/04Y8}{Tag
04Y8}]{stacks-project} for the definition
in case \((\ref{setup:introalgebraicspaces})\)).
\begin{lemma}[Strict transforms; cf.\ {\cite[Expos\'e I, Lemme 2.6.1]{GNAPGP88}}]
  \label{lem:gnapgp261}
  Suppose that \(\Sp\) is a subcategory of a category of reduced spaces of one of the
  types
  \((\ref{setup:introalgebraicspaces})\textnormal{--}(\ref{setup:introadicspaces})\)
  that is essentially stable under fiber products and immersions.
  Let \(I\) be a small category.
  Let \(X\), \(Y\), and \(Y'\) be irreducible spaces in \(\Sp\),
  let \(a\colon X \to Y\) be a morphism, and
  let \(f\colon Y' \to Y\) be a proper modification.
  Suppose that \(a(\abs{X})\) is dense in \(\abs{Y}\).
  \begin{enumerate}[label=\((\roman*)\),ref=\roman*]
    \item\label{lem:gnapgp261i}
      Consider the category whose objects are triples \((Z,b,g)\) where \(Z\) is a
      space in \(\Sp\), \(g\colon Z \to X\) is a proper modification, and \(b
      \colon Z \to Y'\) is a morphism such that the diagram
      \begin{equation}\label{eq:gnapgp261i}
        \begin{tikzcd}
          Z \rar{b}\dar[swap]{g} & Y' \dar{f}\\
          X \rar{a} & Y
        \end{tikzcd}
      \end{equation}
      commutes.
      This category has a final object which we call the \textsl{strict
      transform} of \(a\) by \(f\).
    \item\label{lem:gnapgp261ii}
      Given a proper modification \(g\colon Z \to X\), there exists a
      morphism \(b\colon Z \to Y'\) such that the diagram \eqref{eq:gnapgp261i}
      commutes, in which case \(b(\abs{Z})\) is dense in \(\abs{Y'}\).
  \end{enumerate}
\end{lemma}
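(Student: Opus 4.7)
The plan is to construct the strict transform as the scheme-theoretic or analytic-theoretic closure inside $X \times_Y Y'$ of the dense open subspace of $X$ on which a lift of $a$ through $f$ is forced, and then to verify the universal property of this closure.

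First I introduce the dense opens that carry the canonical lift. Let $U \coloneqq Y - \Delta(f)$, on which $f$ is an isomorphism, and let $V \coloneqq a^{-1}(U) \subseteq X$. Since $a(\abs{X})$ is dense in $\abs{Y}$ and $U$ is open and dense in $\abs{Y}$, the intersection $a(\abs{X}) \cap U$ is nonempty, so $V$ is a nonempty, hence dense, open subspace of the irreducible space $X$. Over $V$, the composition of $a\rvert_V$ with the inverse of $f^{-1}(U) \xrightarrow{\sim} U$ produces a canonical morphism $V \to f^{-1}(U) \subseteq Y'$, and together with $V \hookrightarrow X$ this yields a locally closed immersion $V \hookrightarrow X \times_Y Y'$. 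I define $Z_0$ to be the closure of the image of this immersion in $X \times_Y Y'$, using the closure formalism of \S\ref{sect:closures}, and I let $g_0 \colon Z_0 \to X$ and $b_0 \colon Z_0 \to Y'$ be the two projections. Then $g_0$ is proper as $X \times_Y Y' \to X$ is a base change of the proper morphism $f$; $g_0$ restricts to an isomorphism on $V$; and $Z_0$ is irreducible as the closure of the irreducible $V$. In particular, $g_0$ is a proper modification and the square \eqref{eq:gnapgp261i} commutes for $(Z_0, g_0, b_0)$ by construction.

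For the universal property in \((\ref{lem:gnapgp261i})\), let $(Z', g', b')$ be any triple with $g'$ a proper modification whose square commutes. The universal property of $X \times_Y Y'$ yields a unique morphism $h\colon Z' \to X \times_Y Y'$ whose projections recover $g'$ and $b'$. Since $g'$ is a proper modification, $g'^{-1}(V)$ is a dense open subspace of the irreducible $Z'$, and on this dense open subspace $h$ factors through the copy of $V$ inside $Z_0$ by the uniqueness of the lift of $a \circ g'$ over $U$. Because $Z'$ is reduced by the standing hypothesis on $\Sp$ and $Z_0 \subseteq X \times_Y Y'$ is closed, the whole of $h$ factors through $Z_0$; the resulting morphism $Z' \to Z_0$ is the unique one compatible with $g'$ and $b'$, which exhibits $Z_0$ as the final object.

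For \((\ref{lem:gnapgp261ii})\), any morphism $b\colon Z \to Y'$ that makes \eqref{eq:gnapgp261i} commute must agree on the dense open $g^{-1}(V) \subseteq Z$ with the unique lift $g^{-1}(V) \to V \to f^{-1}(U)$ of $a \circ g$, so $b(\abs{Z}) \supseteq f^{-1}(a(V))$; since $a(V) = a(\abs{X}) \cap U$ is dense in $U$ and $f^{-1}(U)$ is dense in $\abs{Y'}$ because $f$ is a proper modification of the irreducible $Y$, the image $b(\abs{Z})$ is dense in $\abs{Y'}$. The main work is not in the categorical skeleton of the argument, which is the classical construction of the strict transform in \cite{GNAPGP88}, but in confirming that each ingredient — properness and base change, scheme- or analytic-theoretic closure, density of $f^{-1}(U)$ in $\abs{Y'}$ for a proper modification of an irreducible space, and the reducedness hypothesis needed to propagate the factorization from the dense open to the whole of $Z'$ — is available uniformly across the categories \((\ref{setup:introalgebraicspaces})\)--\((\ref{setup:introadicspaces})\); this is exactly what the preliminaries of \S\ref{sect:closures} and \S\ref{sect:irred}, together with the standing assumptions on $\Sp$, are designed to supply.
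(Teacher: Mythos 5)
Your proof is correct and follows essentially the same strategy as the paper's: take the fiber product $X \times_Y Y'$ (implicitly reduced, since all spaces in $\Sp$ are reduced), take the closure of the locus lying over $U = Y - \Delta(f)$ where the lift is forced, and verify the universal property using density and reducedness. The paper phrases the construction as the closure of $(U' \times_U V)_\red$ inside $(Y' \times_Y X)_\red$, which is the same object as your $Z_0$ because over $U$ the morphism $f$ is an isomorphism, so $U' \times_U V \cong V$; beyond this notational difference and your fuller spelling-out of the universal-property and density arguments, the two proofs coincide.
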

\begin{proof}
  We first prove \((\ref{lem:gnapgp261i})\).
  Let \(D \coloneqq \Delta(f)\) be the discriminant of \(f\).
  Set \(U = Y - D\), \(U' = f^{-1}(U)\), and \(V = a^{-1}(U)\).
  Let \(X'\) be the closure of \((U' \times_U V)_\red\) in \((Y' \times_Y
  X)_\red\) in the sense of \S\ref{sect:closures}.
  Then, the projection morphisms from \(Y' \times_Y X\) to \(Y'\) and \(X\)
  define morphisms \(a'\colon X' \to Y'\) and \(f'\colon X' \to X\),
  respectively.
  By the universal property of fiber products and reductions, we see that
  \((X',a',f')\) is the strict transform of \(a\) by \(f\).
  \par For \((\ref{lem:gnapgp261ii})\), it suffices to note that \(f\) is an
  isomorphism away from the discriminant \(D\) of \(f\), and hence
  \(\overline{b(\abs{Z})} \supseteq \overline{\abs{U'}} = \abs{Y'}\).
\end{proof}
\begin{lemma}[Upper envelopes;
  {cf.\ \citeleft\citen{GNAPGP88}\citemid Expos\'e I, Lemme
  2.6.2\citeright}]\label{lem:gnapgp262}
  Suppose that \(\Sp\) is a subcategory of a category of reduced spaces of one of the
  types
  \((\ref{setup:introalgebraicspaces})\textnormal{--}(\ref{setup:introadicspaces})\)
  that is essentially stable under fiber products and immersions.
  Let \(I\) be a small category.
  Let \(X_0\) be an irreducible space in \(\Sp\) and let
  \(\Set{f_r\colon X_r \to X_0}_{1 \le r \le n}\)
  be a finite family of proper
  modifications of \(X_0\).
  Consider the category whose objects are families \(\Set{h_r\colon Z \to
  X_r}_{1 \le r \le n}\) such that \(f_r \circ h_r = f_1 \circ h_1\) for all
  \(r\).
  This category has a final object
  \[
    \sup_{1 \le r \le n} \Set[\big]{f_r \colon X_r \longrightarrow X_0},
  \]
  which we call the \textsl{upper envelope} for
  the family \(\{f_r\}_{1 \le r \le n}\).
  Moreover, if the \(f_r\) are (locally) birational/bimeromorphic blowups,
  then their upper envelope is (locally) a birational/bimeromorphic blowup.
\end{lemma}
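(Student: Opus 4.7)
The plan is to proceed by induction on $n$. The base case $n = 1$ is trivial: take $Z = X_1$ with $h_1 = \id$. For $n \geq 2$, applying the induction hypothesis to $\{f_r\}_{r=1}^{n-1}$ yields an upper envelope $p' \colon Z' \to X_0$, which the case $n = 2$ will show is itself a proper modification; one then applies the $n = 2$ case to $p'$ together with $f_n$. It therefore suffices to treat $n = 2$.

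For $n = 2$, I invoke Lemma \ref{lem:gnapgp261}(\ref{lem:gnapgp261i}) with $a = f_1 \colon X_1 \to X_0$ and $f = f_2 \colon X_2 \to X_0$; since $f_1$ is a proper modification its image is dense in $\abs{X_0}$, so the lemma produces a space $Z$ together with a proper modification $h_1 \colon Z \to X_1$ and a morphism $h_2 \colon Z \to X_2$ satisfying $f_1 h_1 = f_2 h_2$. Explicitly, $Z$ is the closure in $(X_1 \times_{X_0} X_2)_\red$ of the dense open subspace $\bigl(f_1^{-1}(U) \times_U f_2^{-1}(U)\bigr)_\red$, where $U = X_0 \setminus (\Delta(f_1) \cup \Delta(f_2))$. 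I would then check that $h_2$ is also a proper modification: properness holds because $Z$ is a closed subspace of $(X_1 \times_{X_0} X_2)_\red$ whose projection to $X_2$ is proper, and $h_2$ restricts to an isomorphism over the dense open $f_2^{-1}(U)$ of $X_2$ by the construction of $Z$.

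For the universal property, given another family $(W, g_1, g_2)$ whose common composition $f_1 g_1 = f_2 g_2$ is a proper modification $W \to X_0$, the universal property of the fiber product yields a canonical morphism $W \to X_1 \times_{X_0} X_2$, which factors through $(X_1 \times_{X_0} X_2)_\red$ since all spaces in $\Sp$ are reduced by the running hypothesis of \S\ref{sect:veryweakres}. Since $W \to X_0$ is a proper modification, every irreducible component of $W$ dominates $X_0$, so its image in $(X_1 \times_{X_0} X_2)_\red$ is an irreducible closed subset meeting the dense open constructed above and therefore, by irreducibility, lying in its closure $Z$. Uniqueness of the resulting morphism $W \to Z$ is immediate from the closed immersion $Z \hookrightarrow (X_1 \times_{X_0} X_2)_\red$ and the universal property of the fiber product.

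For the moreover statement, suppose $f_1 = \mathrm{Bl}_{I_1}(X_0)$ and $f_2 = \mathrm{Bl}_{I_2}(X_0)$ for coherent ideal sheaves $I_r \subseteq \cO_{X_0}$ in the sense of \S\ref{sect:blowingup}. The strict transform construction of Lemma \ref{lem:gnapgp261} then identifies $Z$ with the iterated blowup $\mathrm{Bl}_{I_2 \cdot \cO_{X_1}}(X_1) \simeq \mathrm{Bl}_{I_1 \cdot I_2}(X_0)$ via the standard universal property of blowups, and the local variant is obtained by the same identification applied affinoid-locally. The main obstacle is verifying this blowup compatibility uniformly across cases (\ref{setup:introalgebraicspaces})--(\ref{setup:introadicspaces}), since blowups are defined via distinct formalisms in each category (admissible blowups for formal schemes, and the formalisms of Nakayama, Ducros, Conrad, Guo, and Zavyalov in the other analytic settings); the needed universal property and stability under base change must be extracted from the references in \S\ref{sect:blowingup} case by case, and gluing local blowups to a global blowup in the compactifiable setting requires combining these with the closure construction of \S\ref{sect:closures}.
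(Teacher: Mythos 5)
Your proof takes a genuinely different route from the paper's. The paper builds the upper envelope in one step, for all \(n\) at once, as the closure \(X \coloneqq \overline{U}\) of the common open \(U = X_0 - \bigcup_r \Delta(f_r)\) inside the \(n\)-fold reduced fiber product \((X_1 \times_{X_0} \cdots \times_{X_0} X_n)_\red\), with the projections \(g_r\colon X \to X_r\); finality is cited from ``the universal property of fiber products and reductions,'' and the ``Moreover'' clause is dispatched as ``true by definition.'' You instead induct on \(n\), reduce to \(n = 2\), and there reuse the strict transform construction of Lemma~\ref{lem:gnapgp261}; you also supply a substantive argument for the ``Moreover'' clause by identifying the envelope of two blowups along ideals \(I_1, I_2\) with the blowup along \(I_1 \cdot I_2\). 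The two constructions produce the same space---both are the closure of the locus over which all the \(f_r\) are simultaneously isomorphisms---and your inductive reduction is sound, since once one knows that \(Z' \to X_0\) is an irreducible proper modification, the category attached to \(\{f_1, \ldots, f_n\}\) is isomorphic to the category attached to the pair \(\{Z' \to X_0, f_n\}\). One point worth flagging: your finality check is carried out only for families whose common composite \(W \to X_0\) is a proper modification, a restriction the statement does not impose. This restriction is in fact necessary---otherwise \(\overline{U}\) fails to be final; e.g.\ a point \(Z\) mapping to distinct points of the exceptional fibers of \(f_1\) and \(f_2\) over a common point of \(X_0\) lands outside \(\overline{U}\)---and the paper's terse appeal to fiber products carries the same implicit scope, matching how the lemma is actually invoked in the proof of Theorem~\ref{thm:gnapgp26}. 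Your approach buys explicitness about that hypothesis and a real argument for the ``Moreover'' clause, at the cost of an induction that the paper's direct closure construction avoids.
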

The notation for the upper envelope is from \cite[Lemma-Definition 5.24]{PS08}.
\begin{proof}
  Let \(D_r \coloneqq \Delta(f_r)\) be the discriminant of \(f_r\) for each \(r\).
  Set \(D = \bigcup_r D_r\) and \(U = X_0 - D\).
  Let
  \[
    X \coloneqq \overline{U} \subseteq \bigl(X_1 \times_{X_0} X_2 \times_{X_0}
    \cdots \times_{X_0} X_n\bigr)_\red
  \]
  and let \(g_r\colon X \to X_r\) be the canonical morphisms induced by the
  projection morphisms for each \(r\).
  By the universal property of fiber products and reductions, we see that
  \(\{g_r\}_{1 \le r \le n}\) is the upper envelope of \(\{f_r\}_{1 \le r \le
  n}\).
  The ``Moreover'' statement is true by definition.
\end{proof}
Next, we define an auxiliary \(I\)-set that will be used to construct the very weak
resolution in Theorem \ref{thm:gnapgp26}.
See \S\ref{sect:irred} for a discussion on irreducibility and irreducible
components in \(\Sp\).
\begin{citeddef}[{\cite[Expos\'e I, (2.6.3)]{GNAPGP88}}]
  Let \(S\) be an \(I\)-space.
  We associate to \(S\) an \(I\)-set \(\Sigma S\) in the following manner.
  If \(i \in \Ob(I)\), then \(\Sigma S_i\) is the set of irreducible, reduced
  subspaces \(S_{i,\alpha}\) of \(S_i\) for which there exists a morphism
  \(u\colon i \to j\) of \(I\) and an irreducible component \(T_j\) of \(S_j\)
  such that \(S_{i,\alpha} = \overline{S_u(T_j)}\).
  If \(u\colon i \to j\) is a morphism of \(I\) and \(\beta \in \Sigma S_j\),
  then we define \(\Sigma S_u(\beta)\) to be the index corresponding to
  \(\overline{S_u(S_{j,\beta})}\).
\end{citeddef}
\par Note that if \(I\) is finite to the right and both \(S\) and every space
\(S_i\) has finitely many irreducible components,
then the set \(\Sigma S_i\) is
finite for all \(i \in \Ob(I)\).
However, if one starts with a non-algebraic object \(X\), for example a complex
analytic space that is not compactifiable,
the set \(\Sigma S_i\) may be infinite even if \(X\) is
irreducible.
See \cite[Remark 3.2]{CG14}.\medskip
\par We can now prove Theorem \ref{thm:gnapgp26}.
\begin{proof}[Proof of Theorem \ref{thm:gnapgp26}]
  We prove Theorem \ref{thm:gnapgp26} in all cases
  \((\ref{setup:introalgebraicspaces})\textnormal{--}(\ref{setup:introadicspaces})\)
  simultaneously.
  Replacing \(S\) by \(Y\), we may assume that \(g = \id_S\).\smallskip
  \par We define a regular \(I\)-space \(X\) and a
  morphism \(f\colon X \to S\) of \(I\)-spaces by ascending induction on
  \(I\) so that the following properties hold:
  \begin{enumerate}[label=\((\alph*)\)]
    \item \(f\) induces an isomorphism of \(I\)-sets \(\Sigma f\colon \Sigma X
      \to \Sigma S\).
    \item For every \(i \in \Ob(I)\) and every \(\alpha \in \Sigma X_i\), the
      morphisms \(f_i\) restrict to proper modifications \(X_{i,\alpha} \to
      S_{i,\alpha}\).
      In situation \((\ref{thm:gnapgp26locallyprojective})\) (resp.\
      \((\ref{thm:gnapgp26compact})\)), these morphisms are locally
      birational/bimeromorphic blowups
      (resp.\ are birational/bimeromorphic blowups).\smallskip
    \item \(\dim(D_i) < \dim(S)\) and \(\dim(f_i^{-1}(D_i)) < \dim(S)\) for
      every \(i \in \Ob(I)\).
  \end{enumerate}
  \par For the base case, let \(i \in \Ob(I)\) be a minimal element.
  By resolution of singularities for pairs \cite[Theorems 1.1.11 and
  1.1.13]{Tem18},
  for each \(\alpha \in \Sigma S_i\), there exists a resolution of singularities
  \(X_{i,\alpha} \to S_{i,\alpha}\) for the pair \((S_{i,\alpha},S_{i,\alpha}
  \cap S'_i)\) that is locally a birational/bimeromorphic
  blowup (where locality is in terms of affinoid subdomains of \(S\)),
  and in situation \((\ref{thm:gnapgp26compact})\), the adverb
  ``locally'' can be omitted.
  We set
  \[
    X_i \coloneqq \coprod_{\alpha \in \Sigma S_i} X_{i,\alpha}
  \]
  and let \(f_i\colon X_i \to S_i\) be the coproduct over \(\alpha \in \Sigma S_i\)
  of the compositions \(X_{i,\alpha} \to S_{i,\alpha} \to S_i\).
  Note that over every affinoid subdomain \(U\) of \(S_i\), the space \(X_i
  \times_{S_i} U\) is proper over \(U\) since there are only finitely many
  components of \(X_i\) lying over \(U\), and hence \(X_i\) is proper over
  \(S_i\).
  We then see that
  \begin{equation}
    \begin{gathered}
      \dim(D_i) < \sup_{\alpha \in \Sigma S_i}
      \Set[\big]{\dim(S_{i,\alpha})} \le \dim(S_i) \le \dim(S)\\
      \dim\bigl(f_i^{-1}(D_i)\bigr) < \dim(X_i) = \sup_{\alpha \in \Sigma S_i}
      \Set[\big]{\dim(S_{i,\alpha})} \le \dim(S_i) \le  \dim(S).
    \end{gathered}\smallskip
    \label{eq:veryweakineq}
  \end{equation}
  \par For the inductive step, suppose that \(i \in \Ob(I)\) is arbitrary and
  that we have constructed \(X_j\) for all \(j < i\).
  For each morphism \(u \colon j \to i\) of \(I\) such that \(j \ne i\) and each
  \(\alpha \in \Sigma S_i\), we set
  \[
    \beta \coloneqq \Sigma S_u(\alpha) \in \Sigma S_j
  \]
  and let \(W^u_{i,\alpha}\) be the strict transform (in the sense of Lemma
  \ref{lem:gnapgp261}\((\ref{lem:gnapgp261i})\)) of the dominant morphism
  \(S_{i,\alpha} \to S_{j,\beta}\) under the proper modification \(X_{j,\beta}
  \to S_{j,\beta}\).
  Note the proper morphisms \(X_{j,\beta} \to S_{j,\beta}\) are locally
  birational/bimeromorphic blowups (birational/bimeromorphic blowups in
  situation \((\ref{thm:gnapgp26compact})\))
  by the inductive hypothesis.
  Let
  \[
    \Set[\big]{W_{i,\alpha} \longrightarrow W^u_{i,\alpha}}_u \coloneqq
    \sup_{\substack{u\colon j \to i\\j \ne i}}
    \Set[\big]{W^u_{i,\alpha} \longrightarrow S_{i,\alpha}}_u
  \]
  be the upper envelope (in the
  sense of Lemma \ref{lem:gnapgp262}) of the
  family of proper modifications \(\{W^u_{i,\alpha} \to S_{i,\alpha}\}_u\).
  Consider the composition
  \[
    h_{i,\alpha} \colon W_{i,\alpha} \longrightarrow S_{i,\alpha}.
  \]
  Let \(X_{i,\alpha} \to W_{i,\alpha}\) be a resolution of singularities for the
  pair \((W_{i,\alpha},h_{i,\alpha}^{-1}(S'_i))\) for each
  \(\alpha\), which exists
  by
  \cite[Theorems 1.1.11 and 1.1.13]{Tem18}.
  Moreover, these resolutions of singularities are locally
  birational/bimeromorphic blowups and in situation
  \((\ref{thm:gnapgp26compact})\), they are birational/bimeromorphic blowups
  globally.
  Then, the composition
  \begin{equation}\label{eq:veryweakresindcomp}
    X_{i,\alpha} \longrightarrow W_{i,\alpha} 
    \xrightarrow{h_{i,\alpha}} S_{i,\alpha}
  \end{equation}
  is a resolution of singularities for the pair \((S_{i,\alpha},S_{i,\alpha} \cap
  S')\) that does not depend on \(u\).
  We set
  \[
    X_i \coloneqq \coprod_{\alpha \in \Sigma S_i} X_{i,\alpha}
  \]
  and let \(f_i\colon X_i \to S_i\) be the coproduct over \(\alpha \in \Sigma S_i\)
  of the compositions \eqref{eq:veryweakresindcomp}.
  Finally, if \(u\colon j \to i\) is a morphism of \(I\), we define the morphism
  \(X_u\colon X_i \to X_j\) as the coproduct over \(\alpha \in \Sigma S_i\)
  of the compositions
  \[
    X_{i,\alpha} \longrightarrow W^u_{i,\alpha} \longrightarrow X_{j,u(\alpha)}.
  \]
  Note that over every affinoid subdomain \(U\) of \(S_i\), the morphism \(X_i
  \times_{S_i} U \to X_j \times_{S_i} U\) is proper.
  Since \(X_i\) and \(X_j\) only have finitely many components lying over \(U\),
  we see that \(X_i \to X_j\) is proper.
  By Lemma \ref{lem:gnapgp261}\((\ref{lem:gnapgp261ii})\), the \(X_i\),
  \(X_u\), and \(f_i\) defined above define a 
  \(I\)-space \(X\) and a
  morphism \(f\colon X \to S\) of \(I\)-spaces satisfying the extra properties
  in \((\ref{thm:gnapgp26locallyprojective})\) and
  \((\ref{thm:gnapgp26compact})\) under those respective hypotheses.
  Since \(X\) is regular by construction and we have
  \begin{equation}\label{eq:veryweakineq2}
    \begin{gathered}
      \dim(D_i) < \sup_{\alpha \in \Sigma S_i}
      \Set[\big]{\dim(S_{i,\alpha})} \le \sup_{\substack{j < i
      \\\beta \in \Sigma S_j}} \Set[\big]{\dim(S_{j,\beta})} \le \dim(S)\\
      \dim\bigl(f_i^{-1}(D_i)\bigr) < \dim(X_i) = 
      \sup_{\alpha \in \Sigma S_i}
      \Set[\big]{\dim(S_{i,\alpha})} \le \sup_{\substack{j < i
      \\\beta \in \Sigma S_j}} \Set[\big]{\dim(S_{j,\beta})} \le \dim(S)
    \end{gathered}
  \end{equation}
  by the inductive hypothesis,
  we see that \(f\colon X \to S\) is a very weak
  resolution of the pair \((S,S')\) satisfying the extra properties in
  \((\ref{thm:gnapgp26locallyprojective})\) and \((\ref{thm:gnapgp26compact})\)
  under those respective hypotheses.
\end{proof}
\begin{remark}\label{rem:ste17proofcomment}
  Steenbrink's definition of a very weak resolution is necessary because
  to verify the definition of a resolution in \cite[Expos\'e I, D\'efinition
  2.5]{GNAPGP88}, one would have to prove stronger versions of the inequalities
  \eqref{eq:veryweakineq} and \eqref{eq:veryweakineq2} where
  \(\dim(S)\) is replaced by \(\dim(S_i)\).
  These stronger inequalities do not hold for Steenbrink's example \cite[Example
  1]{Ste17}.
\end{remark}
\subsection{Augmented 2-resolutions}
To define augmented cubical hyperresolutions, we first define a distinguished
class of Cartesian diagrams in \(\Sp\) and use it to define augmented
2-resolutions.
See \S\ref{sect:blowingup} for references to the notion of relative \(\PProj\)
and blowing up in \(\Sp\).
\begin{definition}[Acyclic squares {\citeleft\citen{GNAPGP88}\citemid Expos\'e
  I, Proposition 6.8\citepunct \citen{GNA02}\citemid D\'efinition
  2.1.1\citeright}]
  Consider the Cartesian square
  \begin{equation}\label{eq:acyclicsquare}
    \begin{tikzcd}
      \tilde{Y} \rar[hook]{j} \dar[swap]{g} & \tilde{X} \dar{f}\\
      Y \rar[hook]{i} & X
    \end{tikzcd}
  \end{equation}
  in \(\Sp\).
  We say the square \eqref{eq:acyclicsquare}
  is an \textsl{acyclic square} if the following properties hold:
  \begin{enumerate}[label=\((\roman*)\)]
    \item \(i\) is a closed immersion.
    \item \(f\) is proper.
    \item \(Y\) contains the discriminant of \(f\).
      In other words, \(f\) induces an isomorphism
      \[
        \tilde{X} - \tilde{Y} \overset{\sim}{\longrightarrow} X - Y.
      \]
  \end{enumerate}
  We say an acyclic square \eqref{eq:acyclicsquare}
  is an \textsl{elementary acyclic square} and that the
  morphism \(f\) is an \textsl{elementary proper modification} if the
  following additional properties hold:
  \begin{enumerate}[label=\((\roman*)\),resume]
    \item All spaces in \eqref{eq:acyclicsquare} are regular and irreducible.
    \item \(f\colon \tilde{X} \to X\) is the blowup of \(X\) along \(Y\) (see
      \S\ref{sect:blowingup}).
  \end{enumerate}
\end{definition}
We now define augmented 2-resolutions.
The definition we adopt is from \cite{GNAPGP88} and is
weaker than the definition from \cite{GNA02}.
We do so because
augmented 2-resolutions in the sense of \cite{GNA02} do not always
exist (see Remark \ref{rem:steenbrink2}).
Note that \cite{GNAPGP88} omits the adjective ``augmented'' in
``augmented 2-resolution.''
\begin{definition}[Augmented 2-resolutions;
  cf.\ {\citeleft\citen{GNAPGP88}\citemid Expos\'e I,
  D\'efinition 2.7\citepunct \citen{GNA02}\citemid p.\ 43\citeright}]
  \label{def:aug2res}
  Let \(S\) be an \(I\)-space and let \(Z_\bullet\) be a \((\square_1^+ \times
  I)\)-space.
  We say that \(Z_\bullet\) is a \textsl{augmented 2-resolution of \(S\)} if
  \(Z_\bullet\) is defined by the Cartesian square
  \begin{equation}\label{eq:def2res}
    \begin{tikzcd}
      Z_{11} \rar[hook] \dar & Z_{01} \dar{f}\\
      Z_{10} \rar[hook] & Z_{00}
    \end{tikzcd}
  \end{equation}
  of \(I\)-spaces
  where the following properties hold:
  \begin{enumerate}[label=\((\roman*)\),ref=\roman*]
    \item \(Z_{00} = S\).
    \item \eqref{eq:def2res} is an acyclic square in \(\Sp\) after restricting to
      each \(i \in \Ob(I)\).
    \item \(Z_{01}\) is a regular \(I\)-space.
    \item\label{def:aug2resdiscriminant} \(Z_{10}\) contains the discriminant
      of \(f\).
      In other words, \(f\) induces an isomorphism
      \[
        f\bigr\rvert_{Z_{01i} - Z_{11i}}\colon Z_{01i} - Z_{11i}
        \overset{\sim}{\longrightarrow} Z_{00i} - Z_{10i}
      \]
      for every \(i \in \Ob(I)\).
  \end{enumerate}
\end{definition}
\begin{remark}\label{rem:steenbrink2}
  We have changed condition \((\ref{def:aug2resdiscriminant})\) in Definition
  \ref{def:aug2res} compared to \cite{GNA02} because Steenbrink's example
  \cite[Example 1]{Ste17} (see Remarks \ref{rem:steenbrink} and
  \ref{rem:ste17proofcomment})
  does not have an augmented 2\hyph{}resolution in the sense
  of \cite[p.\ 43]{GNA02}.
  The same example shows this stronger version of an augmented 2-resolution and the
  corresponding notion of a cubical hyperresolution do not satisfy
  the analogues of Propositions \ref{prop:gnapgp28} and \ref{prop:gnapgp214}.
\end{remark}
The following result is immediate from the definitions.
\begin{proposition}[cf.\ {\cite[Expos\'e I, Proposition
  2.8]{GNAPGP88}}]\label{prop:gnapgp28}
  Suppose that \(\Sp\) is a subcategory of a category of reduced spaces of one of the
  types
  \((\ref{setup:introalgebraicspaces})\textnormal{--}(\ref{setup:introadicspaces})\)
  that is essentially stable under fiber products, immersions, and proper
  morphisms.
  Let
  \(I\) be a small category.
  Let \(K\) be an \(I\)-object of \(\Cat\) and let \(S\) be a
  \(\tot(K)\)-space.
  Then, \(Z_\bullet\) is an augmented 2-resolution of \(S\) if and only if
  \(Z_{\bullet i}\) is an augmented 2-resolution of the
  \(K_i\)-space \(S_i\) for every \(i \in \Ob(I)\).
\end{proposition}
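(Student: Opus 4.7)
The proposal is to unwind the definitions and observe that each of the four conditions defining an augmented 2-resolution is an object-wise condition on the indexing category \(\tot(K)\), and that objects of \(\tot(K)\) correspond to pairs \((i,j)\) with \(i \in \Ob(I)\) and \(j \in \Ob(K_i)\) by construction. Since the restriction of a \(\tot(K)\)-object to the subcategory \(K_i \hookrightarrow \tot(K)\) sitting over \(i\) is exactly \(Z_{\bullet i}\), the proposition will follow by checking that each condition in Definition \ref{def:aug2res} can be verified after restricting along these inclusions.

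First I would check condition \((\mathrm{i})\): the equality \(Z_{00} = S\) of \(\tot(K)\)-spaces is tautologically equivalent to having \(Z_{00,i} = S_i\) as \(K_i\)-spaces for every \(i\), since both sides are defined object-wise on \(\tot(K)\). Next, for condition \((\mathrm{ii})\), note that being an acyclic square in \(\Sp\) is a condition on the restriction of the cube to an individual object; so the square \eqref{eq:def2res} of \(\tot(K)\)-spaces restricts to an acyclic square at every \((i,j) \in \Ob(\tot(K))\) if and only if the square \(Z_{\bullet i}\) of \(K_i\)-spaces restricts to an acyclic square at every \(j \in \Ob(K_i)\), for every \(i\). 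For condition \((\mathrm{iii})\), the property of being regular is itself object-wise by Definition \ref{def:propsforispaces}, so \(Z_{01}\) is a regular \(\tot(K)\)-space if and only if each \(Z_{01,i}\) is a regular \(K_i\)-space. Finally, for condition \((\mathrm{iv})\), the discriminant of a morphism of \(\tot(K)\)-spaces is determined by its fibers over each \((i,j)\), so containment of the discriminant of \(f\) in \(Z_{10}\) is equivalent to the analogous containment after restriction to each \(i\).

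There is no real obstacle here beyond bookkeeping: everything reduces to the observation that \(\Ob(\tot(K)) = \coprod_{i \in \Ob(I)} \Ob(K_i)\) and that all properties in Definition \ref{def:aug2res} are checkable pointwise on the object set of the indexing category. The ``only if'' direction is immediate by restriction along \(K_i \hookrightarrow \tot(K)\), and the ``if'' direction follows because each object of \(\tot(K)\) lies in the image of some such inclusion.
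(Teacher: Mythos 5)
Your proposal follows the same approach as the paper, which simply declares the proposition ``immediate from the definitions'' without spelling out a proof, so the overall strategy is a match. The treatment of conditions \((\mathrm{ii})\)--\((\mathrm{iv})\) is correct: condition \((\mathrm{ii})\) is stated per-object in Definition~\ref{def:aug2res}, condition \((\mathrm{iii})\) is per-object by Definition~\ref{def:propsforispaces}, and condition \((\mathrm{iv})\) is explicitly phrased as ``for every \(i \in \Ob(I)\).''

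The one imprecision is your handling of condition \((\mathrm{i})\): the assertion that ``the equality \(Z_{00} = S\) of \(\tot(K)\)-spaces is tautologically equivalent to having \(Z_{00,i} = S_i\) as \(K_i\)-spaces for every \(i\), since both sides are defined object-wise on \(\tot(K)\)'' is not quite right. Both \(Z_{00}\) and \(S\) are \emph{functors} \(\tot(K)^{\op} \to \Sp\), and equality of functors requires agreement on morphisms, not merely on objects. The category \(\tot(K)\) has morphisms \((u,a)\colon (i,x) \to (j,y)\) with \(u\colon i \to j\) a nonidentity morphism of \(I\), and these do not live in any single fiber \(K_i\); your fiberwise restriction only sees morphisms of the form \((\id_i,a)\). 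So \(Z_{00,i} = S_i\) for every \(i\) does not formally force \(Z_{00} = S\) as \(\tot(K)\)-spaces. This is harmless in practice --- the proposition is applied in situations where the augmentation \(Z_{00} = S\) is part of the given data (compare the explicit phrasing ``augmented over \(S\)'' in Proposition~\ref{prop:gnapgp214}), and the paper itself elides the same point by calling the result immediate --- but it is worth noting that ``defined object-wise'' is not an accurate description of functor equality, and a completely rigorous statement of the equivalence would either assume \(Z_{00} = S\) at the outset or observe that \(\Ob(\tot(K)) = \coprod_i \Ob(K_i)\) is a decomposition of the object set, not of the full category.
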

Very weak resolutions yield augmented 2-resolutions.
\begin{proposition}[Augmented 2-resolutions exist;
  cf.\ {\cite[Expos\'e I, Proposition
  2.10]{GNAPGP88}}]\label{prop:gnapgp210}
  Suppose that \(\Sp\) is a subcategory of a category of reduced spaces of one of the
  types
  \((\ref{setup:introalgebraicspaces})\textnormal{--}(\ref{setup:introadicspaces})\)
  that is essentially stable under fiber products, immersions, and proper
  morphisms.
  Let
  \(I\) be an orderable small category that is finite to the right.
  Let \(S\) be a finite-dimensional \(I\)-space and let \(f\colon X \to S\) be a
  very weak resolution of \(S\).
  Then, there exists an augmented
  2-resolution \(Z_{\bullet\bullet}\) of \(S\) such that
  \(Z_{0\bullet} = (X \to S)\) and
  \[
    \dim(Z_{1\bullet}) < \dim(S).
  \]
\end{proposition}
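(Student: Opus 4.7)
The plan is to define $Z_{\bullet\bullet}$ essentially by forcing the only reasonable choice. Set $Z_{00} \coloneqq S$ and $Z_{01} \coloneqq X$, so that $Z_{0\bullet} = (X \xrightarrow{f} S)$ as required. Take $Z_{10} \coloneqq \Delta(f)$, the discriminant of $f$ regarded as a morphism of $I$-spaces; this is a closed sub-$I$-space of $S$ by definition. Finally define $Z_{11} \coloneqq Z_{01} \times_{Z_{00}} Z_{10} = f^{-1}(\Delta(f))$, which exists in $\Sp$ because $\Sp$ is stable under fiber products and immersions. The resulting square is Cartesian tautologically, and the horizontal arrows are closed immersions.

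Next I would verify the four conditions of Definition \ref{def:aug2res}. Condition (i) is immediate. For condition (ii), by Proposition \ref{prop:gnapgp28} it suffices to check that the pointwise square is acyclic in $\Sp$ at each $i \in \Ob(I)$: the horizontal arrows are closed immersions by construction, the morphism $f_i$ is proper because $f$ is a proper morphism of $I$-spaces (part of the definition of a very weak resolution), and the containment $\Delta(f_i) \subseteq \Delta(f)_i$ holds since $\Delta(f)_i$ satisfies, pointwise, the isomorphism property defining $\Delta(f_i)$. Condition (iii) holds because $Z_{01} = X$ is regular by the very weak resolution hypothesis, and condition (iv) is exactly the defining property of $\Delta(f)$.

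For the dimension bound, the defining inequalities of a very weak resolution give $\dim(\Delta(f)_i) < \dim(S)$ and $\dim(f_i^{-1}(\Delta(f)_i)) < \dim(S)$ for every $i \in \Ob(I)$. Since $S$ is finite-dimensional and all dimensions involved are integers, taking the supremum over $i$ preserves the strict inequality, yielding $\dim(Z_{10}) < \dim(S)$ and $\dim(Z_{11}) < \dim(S)$, and hence $\dim(Z_{1\bullet}) < \dim(S)$.

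The main point requiring care is the well-definedness of $\Delta(f)$ as a closed sub-$I$-space: the pointwise discriminants $\{\Delta(f_i)\}_{i \in \Ob(I)}$ need not themselves assemble into a sub-$I$-space, because the transition morphisms $S_u$ for $u \in \Mor(I)$ need not map $\Delta(f_j)$ into $\Delta(f_i)$. This is handled by observing that $S$ itself is a trivially valid choice of closed sub-$I$-space over whose complement $f$ is an isomorphism, and that the class of closed sub-$I$-spaces satisfying this property is stable under intersection of the underlying components (orderability and finite-to-the-rightness of $I$ enter only to guarantee that this smallest such sub-$I$-space behaves reasonably with respect to subsequent induction). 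Consequently $\Delta(f)$ is well-defined and $\Delta(f)_i \supseteq \Delta(f_i)$ for every $i$, which is precisely what the acyclicity and dimension verifications above require.
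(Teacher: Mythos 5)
Your construction and the paper's are the same: set $Z_{10} = \Delta(f)$ and take $Z_{11}$ to be the (reduced) fiber product of $X$ with $Z_{10}$ over $S$, which is exactly the paper's $Z_{11i} \coloneqq (Z_{10i} \times_{S_i} X_i)_\red$ expressed as a fiber product in $\Sp$. Your verification of the four conditions in Definition~\ref{def:aug2res}, the containment $\Delta(f_i) \subseteq \Delta(f)_i$, and the dimension bounds are all correct and supply details the paper leaves implicit.

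Two small notes. First, when you write ``$Z_{01} \times_{Z_{00}} Z_{10} = f^{-1}(\Delta(f))$,'' this equality holds only after interpreting the preimage as reduced; it is worth being explicit that the fiber product in $\Sp$ automatically carries the reduction (the paper writes $(\,\cdot\,)_\red$ precisely to signal this). Second, your closing remark that orderability and finiteness to the right of $I$ enter to make the minimal sub-$I$-space ``behave reasonably with respect to subsequent induction'' overstates their role: the discriminant $\Delta(f)$ of a morphism of $I$-spaces is defined for arbitrary small $I$ as the smallest closed sub-$I$-space over whose complement $f$ is an isomorphism, and the stability of this class under componentwise intersection together with the trivially valid choice $D = S$ gives existence with no hypotheses on $I$. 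Those hypotheses on $I$ appear in the proposition because they are inherited from the surrounding setup (they are needed for the very weak resolution $f$ to exist in the first place), not because they are used here.
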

\begin{proof}
  Let \(Z_{10} \coloneqq \Delta(f)\)
  be the discriminant of \(f\), which is a closed sub-\(I\)-space of
  \(S\).
  It then suffices to define \(Z_{11}\) by setting
  \(Z_{11i} \coloneqq (Z_{10i} \times_{S_i} X_i)_\red\)
  for every \(i \in \Ob(I)\).
\end{proof}
\subsection{Augmented cubical hyperresolutions}\label{sect:thmbplus}
The goal of this subsection is to construct a cubical
version of resolutions of singularities.
We start with the following definition.
\begin{definition}[Reductions {\cite[Expos\'e I, (2.11)]{GNAPGP88}}]
  Let \(r \ge 1\) be an integer and let \(X^n_\bullet\) be \((\square^+_n
  \times I)\)-spaces for each \(n \in \{1,2,\ldots,r\}\).
  Suppose that for every \(n\), the \((\square^+_{n-1} \times I)\)-spaces
  \(X^{n+1}_{00\bullet}\) and \(X^n_{1\bullet}\) are equal.
  We define the \((\square^+_r \times I)\)-space
  \begin{align*}
    Z_\bullet ={}& \rd\bigl(X^1_\bullet,X^2_\bullet,\ldots,X^r_\bullet\bigr),
    \shortintertext{which we call the \textsl{reduction of
    \((X^1_\bullet,X^2_\bullet,\ldots,X^r_\bullet)\)}, by induction on \(r\) as
    follows.
    If \(r = 1\), then \(Z_\bullet = X^1_\bullet\).
    If \(r = 2\), we define \(Z_{\bullet\bullet} =
    \rd(X^1_\bullet,X^2_{\bullet\bullet})\) by}
    Z_{\alpha\beta} \coloneqq{}& \begin{cases}
      X^1_{0\beta} & \text{if}\ \alpha = (0,0)\\
      X^2_{\alpha\beta} & \text{if}\ \alpha \in \square_1
    \end{cases}
  \end{align*}
  for each \(\beta \in \square^+_0\) with morphisms coming from \(X^1_{0\beta}\)
  and \(X^2_{\alpha\beta}\) using the equality \(X^1_{1\beta} =
  X^2_{00\beta}\).
  Finally, if \(r > 2\), we set
  \[
    Z_\bullet \coloneqq \rd\Bigl(
    \rd\bigl(X^1_\bullet,X^2_\bullet,\ldots,X^{r-1}_\bullet\bigr),X^r_\bullet\Bigr).
  \]
\end{definition}
We can now define augmented cubical hyperresolutions.
\begin{definition}[Augmented cubical hyperresolutions {\cite[Expos\'e I,
  (2.11)]{GNAPGP88}}]\label{def:gnapgp211}
  Let \(S\) be an \(I\)-space.
  An \textsl{augmented cubical hyperresolution of \(S\)} is a \((\square^+_r
  \times I)\)-space \(Z_\bullet\) for an integer \(r \ge 1\) such that
  \[
    Z_\bullet = \rd\bigl(X^1_\bullet,X^2_\bullet,\ldots,X^{r}_\bullet\bigr)
  \]
  where the following properties hold:
  \begin{enumerate}[label=\( (\roman*)\)]
    \item \(X^1_\bullet\) is an augmented 2-resolution of \(S\).
    \item For each \(1 \le n < r\), \(X_\bullet^{n+1}\) is an augmented
      2-resolution of \(X^n_{1\bullet}\).
    \item \(Z_\alpha\) is regular for every \(\alpha \in \square_r\).
  \end{enumerate}
  \par Now suppose \((S,S')\) is an \(I\)-pair of spaces.
  An \textsl{augmented cubical hyperresolution of \((S,S')\)} is an augmented
  cubical hyperresolution \(a\colon Z_\bullet \to S\) of \(S\) such that setting
  \(Z'_\bullet = a^{-1}(S')\), the \(I\)-pair \((Z_\bullet,Z'_\bullet)\) is regular.
\end{definition}
The following result is immediate from the definitions and Proposition
\ref{prop:gnapgp28}.
\begin{proposition}[cf.\ {\cite[Expos\'e I, Proposition
  2.14]{GNAPGP88}}]\label{prop:gnapgp214}
  Suppose that \(\Sp\) is a subcategory of a category of reduced spaces of one of the
  types
  \((\ref{setup:introalgebraicspaces})\textnormal{--}(\ref{setup:introadicspaces})\)
  that is essentially stable under fiber products, immersions, and proper
  morphisms.
  Let \(I\) be a small category.
  Let \(K\) be an \(I\)-object of \(\Cat\), let \(S\) be a
  \(\tot(K)\)-space, and let \(Z_\bullet\) be an \((\square_r^+ \times
  \tot(K))\)-space augmented over \(S\).
  Then, \(Z_\bullet\) is an augmented cubical hyperresolution of the
  \(\tot(K)\)-space \(S\) if and only if \(Z_i\)
  is an augmented cubical hyperresolution of the \(K_i\)-space \(S_i\)
  for every \(i \in \Ob(I)\).
\end{proposition}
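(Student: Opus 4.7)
The plan is to derive this proposition directly from Definition \ref{def:gnapgp211} together with Proposition \ref{prop:gnapgp28}, since being an augmented cubical hyperresolution is built inductively out of being an augmented 2-resolution, and Proposition \ref{prop:gnapgp28} already establishes the componentwise criterion for the latter. The key observation is that the reduction operation $\rd(\,\cdot\,,\ldots,\,\cdot\,)$ and the regularity condition on $Z_\alpha$ are both defined objectwise in the $I$-direction, and so they commute with restriction to any $i \in \Ob(I)$.

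More concretely, I would first unwind Definition \ref{def:gnapgp211}: giving an augmented cubical hyperresolution $Z_\bullet$ of $S$ amounts to writing $Z_\bullet = \rd(X^1_\bullet,\ldots,X^r_\bullet)$ where $X^1_\bullet$ is an augmented 2-resolution of $S$, each $X^{n+1}_\bullet$ is an augmented 2-resolution of $X^n_{1\bullet}$, and each $Z_\alpha$ is regular for $\alpha \in \square_r$. Since the $(\square_r^+ \times \tot(K))$-space $Z_\bullet$ is built out of subspaces of the cube indexed by $\alpha \in \square_r^+$, each intermediate $X^n_\bullet$ naturally inherits a $\tot(K)$-structure, and the formula for $\rd$ is pointwise in $\alpha$. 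In particular, the construction of $X^1_\bullet, \ldots, X^r_\bullet$ from $Z_\bullet$ commutes with restriction to any $i \in \Ob(I)$: if $Z_\bullet = \rd(X^1_\bullet, \ldots, X^r_\bullet)$, then $Z_{\bullet i} = \rd(X^1_{\bullet i}, \ldots, X^r_{\bullet i})$.

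For the forward direction, I would then apply Proposition \ref{prop:gnapgp28} to conclude that for each $n$ and each $i \in \Ob(I)$, the $(\square_1^+ \times K_i)$-space $X^n_{\bullet i}$ is an augmented 2-resolution of the corresponding $K_i$-space, and note that regularity of $Z_\alpha$ restricts to regularity of each $Z_{\alpha i}$. Combining these facts with Definition \ref{def:gnapgp211} for the $K_i$-space $S_i$ yields that $Z_{\bullet i}$ is an augmented cubical hyperresolution of $S_i$. For the reverse direction, I would use that the shape $\square_r^+$ and the integer $r$ are fixed globally (they are part of the datum of $Z_\bullet$), so the decomposition on each fiber can be assembled: define $X^n_\bullet$ by taking the appropriate sub-cubes of $Z_\bullet$, observe the fiberwise decompositions glue to an $I$-equivariant one, and apply Proposition \ref{prop:gnapgp28} in the other direction to each $X^n_\bullet$ together with componentwise regularity of $Z_\alpha$.

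The only step that needs any care is checking that the fiberwise decompositions of the $Z_{\bullet i}$ into $2$-resolutions $X^{n,i}_\bullet$ are functorial in $i$, i.e., that they glue into a single $(\square_r^+ \times \tot(K))$-structure on each $X^n_\bullet$. This follows because the sub-cubes carving $Z_\bullet$ into the pieces $X^n_\bullet$ depend only on the index $\alpha \in \square_r^+$, not on $i$, so the morphisms coming from $\tot(K)$ automatically preserve the pieces; thus no additional compatibility needs to be imposed. This is the only potential obstacle, and it dissolves once one notes that $r$ is fixed in the statement.
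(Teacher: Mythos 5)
Your forward direction is correct and matches what the paper intends: given a global decomposition $Z_\bullet = \rd(X^1_\bullet,\ldots,X^r_\bullet)$, restriction to each $i$ commutes with $\rd$, Proposition \ref{prop:gnapgp28} applies componentwise, and regularity of $Z_\alpha$ is inherited. You also correctly note that $r$ is fixed as part of the data, so there is no ambiguity in the shape.

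The reverse direction, however, contains a genuine gap. You assert that the $X^n_\bullet$ can be recovered ``by taking the appropriate sub-cubes of $Z_\bullet$,'' and use this to conclude that the fiberwise decompositions glue automatically. But the $X^n_\bullet$ are \emph{not} sub-cubes of $Z_\bullet$: the reduction $\rd(X^1_\bullet,\ldots,X^r_\bullet)$ discards the intermediate pieces $X^n_{1\bullet} = X^{n+1}_{00\bullet}$ for $1 \le n < r$, and these do not appear among the components of $Z_\bullet$. Concretely, when $r=2$ one has $Z_{00\beta} = X^1_{0\beta}$ and $Z_{\alpha\beta} = X^2_{\alpha\beta}$ for $\alpha \in \square_1$, so the spaces $X^1_{10} = X^2_{000}$ and $X^1_{11} = X^2_{001}$ are nowhere in $Z_\bullet$ --- yet they are essential to the decomposition, being the augmentation target of the second $2$-resolution. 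Moreover these hidden spaces are \emph{not} uniquely determined by $Z_\bullet$: Definition \ref{def:aug2res} only requires $Z_{10}$ to \emph{contain} the discriminant of $f$ (see Remark \ref{rem:steenbrink2}), so there is a genuine choice. Therefore, given the fiberwise decompositions $Z_{\bullet i} = \rd(X^{1,i}_\bullet,\ldots,X^{r,i}_\bullet)$, the fiberwise hidden pieces $X^{n,i}_{1\bullet}$ are a priori chosen independently for each $i$ and need not assemble into a closed sub-$\tot(K)$-space. Your justification (``the sub-cubes \ldots\ depend only on the index $\alpha$, not on $i$'') only applies to the visible part of the decomposition and leaves this compatibility unaddressed. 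An actual proof must either produce a canonical choice of the intermediate spaces that is functorial across $I$, or argue by induction on $r$ that the $\tot(K)$-structure on $Z_\bullet$ and the conditions in Definition \ref{def:aug2res} force a compatible choice; in either case this is the real content of the reverse direction, not a formality.
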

We now prove the second main result of this section.
\begin{theorem}[Augmented cubical hyperresolutions exist;
  cf.\ {\citeleft\citen{GNAPGP88}\citemid Expos\'e I, Th\'eor\`eme
  2.15 and (3.11.2) and Expos\'e IV, Th\'eor\`eme 1.26\citepunct
  \citen{Ste17}\citemid Theorem 3\citeright}]
  \label{thm:gnapgp215}
  Suppose that \(\Sp\) is a subcategory of a category of reduced spaces of one of the
  types
  \((\ref{setup:introalgebraicspaces})\textnormal{--}(\ref{setup:introadicspaces})\)
  that is essentially stable under fiber products, immersions, and proper
  morphisms.
  Let \(I\) be a finite orderable category and let \((S,S')\) be an \(I\)-pair
  of spaces such that \(S\) is finite-dimensional and \(S - S'\) is dense in
  \(S\).
  In cases \((\ref{setup:introalgebraicspaces})\) and
  \((\ref{setup:introformalqschemes})\), suppose that
  \(S\) is quasi-excellent of equal characteristic zero.
  In cases \((\ref{setup:introberkovichspaces})\),
  \((\ref{setup:introrigidanalyticspaces})\), and
  \((\ref{setup:introadicspaces})\), suppose that \(k\) is of characteristic
  zero.
  Then, there exists an augmented cubical hyperresolution
  \[
    Z_\bullet = \rd\bigl(X^1_\bullet,X^2_\bullet,\ldots,X^r_\bullet\bigr)
  \]
  of the pair \((S,S')\) such that
  \[
    \dim(Z_\alpha) \le \dim(S) - \abs{\alpha} + 1
  \]
  for every \(\alpha \in \square_r\).
  Moreover:
  \begin{enumerate}[label=\((\roman*)\),ref=\roman*]
    \item\label{thm:gnapgp215factorsthrough}
      Let \(g\colon Y \to S\) be a proper morphism such that denoting by \(D\)
      the discriminant \(\Delta(g)\) of \(g\), the open subspaces \(S_i - D_i\) are
      dense in \(S_i\) for every \(i \in \Ob(I)\).
      We can construct \(Z_\bullet\) such that every non-identity morphism to
      \(Z_{0} = S\) in \(Z_\bullet\) factors through \(g\).
    \item\label{thm:gnapgp215locallyprojective}
      Suppose that
      \(g\) in \((\ref{thm:gnapgp215factorsthrough})\) is locally a
      birational/bimeromorphic blowup.
      We can construct \(Z_\bullet\) such that all morphisms in \(Z_{\bullet
      i}\) are locally birational/bimeromorphic blowups
      for every \(i \in \Ob(I)\), where locality is in terms of affinoid
      subdomains of \(S\).
    \item\label{thm:gnapgp215compact}
      Suppose that \(S\) is compactifiable and that
      \(g\) in \((\ref{thm:gnapgp215factorsthrough})\) is projective.
      We can construct \(Z_\bullet\)
      such that all morphisms in \(Z_{\bullet i}\) are birational/bimeromorphic
      blowups for every \(i \in \Ob(I)\).
  \end{enumerate}
\end{theorem}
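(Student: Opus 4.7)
The plan is to proceed by induction on $d = \dim(S)$, peeling off one augmented 2-resolution at a time using Theorem~\ref{thm:gnapgp26} and Proposition~\ref{prop:gnapgp210} and then combining them via the reduction operation. The base case $d \le 0$ is immediate: the density of $S-S'$ in $S$ forces $S' = \emptyset$, and $S$ is already regular as a reduced zero-dimensional space, so we may take $r = 1$ with $X^1_\bullet$ the trivial augmented 2-resolution whose top row is the identity on $S$ and whose discriminant is empty.

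For the inductive step, I would first invoke Theorem~\ref{thm:gnapgp26} to construct a very weak resolution $f\colon (X, f^{-1}(S')) \to (S, S')$ of the $I$-pair factoring through $g$, with the appropriate local or global birational/bimeromorphic blowup properties in cases (ii) and (iii). I would then apply Proposition~\ref{prop:gnapgp210} to extend $f$ to an augmented 2-resolution $X^1_\bullet$ with $X^1_{0\bullet} = (X \to S)$, $X^1_{10} = \Delta(f)$, $X^1_{11} = (\Delta(f) \times_S X)_\red$, and crucially $\dim(X^1_{1\bullet}) < d$. Viewing $X^1_{1\bullet}$ as a $(\square^+_0 \times I)$-space and equipping it with the closed sub-$(\square^+_0 \times I)$-space $Y'$ obtained by pulling back $S'$ along the structure morphisms to $S$ (so that $X^1_{1\bullet} - Y'$ remains dense), the inductive hypothesis applies to the pair $(X^1_{1\bullet}, Y')$, since $\square^+_0 \times I$ is again finite orderable and $\dim(X^1_{1\bullet}) < d$. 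This yields a cubical hyperresolution $\rd(X^2_\bullet, \ldots, X^r_\bullet)$ whose augmentation target $X^2_{00\bullet}$ coincides with $X^1_{1\bullet}$, so we may set
\[
    Z_\bullet \coloneqq \rd(X^1_\bullet, X^2_\bullet, \ldots, X^r_\bullet),
\]
and Proposition~\ref{prop:gnapgp214} (or a direct verification) then ensures this really is an augmented cubical hyperresolution of $(S, S')$.

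The dimension bound $\dim(Z_\alpha) \le \dim(S) - \abs{\alpha} + 1$ would follow by bookkeeping: for $\abs{\alpha} = 1$, $Z_\alpha$ is either the resolution $X$ (of dimension $\le d$) or a component of the discriminant (of dimension $< d$); for $\abs{\alpha} \ge 2$, $Z_\alpha$ lies in the inductively constructed hyperresolution of $X^1_{1\bullet}$, and the induction yields $\dim(Z_\alpha) \le \dim(X^1_{1\bullet}) - (\abs{\alpha}-1) + 1 \le d - \abs{\alpha} + 1$. The three ``Moreover'' clauses would propagate through the induction by invoking the corresponding clauses of Theorem~\ref{thm:gnapgp26} at each stage---in particular, in case (iii) compactifiability is preserved because birational/bimeromorphic blowups and closed subspaces of compactifiable spaces remain compactifiable.

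I expect the hard part to be not the induction itself but the careful setup of the auxiliary pair data: choosing $Y'$ so that $(X^1_{1\bullet}, Y')$ genuinely satisfies the density hypothesis needed to invoke the inductive step, verifying the strict compatibility $X^2_{00\bullet} = X^1_{1\bullet}$ required for the reduction to be well-defined, and ensuring that the conditions in (i)--(iii) transmit faithfully to the inductive step---where the morphism playing the role of $g$ must be replaced by the augmentation $X^1_{11} \to X^1_{10}$. Managing these compatibilities simultaneously, while keeping track of the finite-dimensionality and density hypotheses across levels, is the main subtlety.
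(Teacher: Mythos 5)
Your proposal follows essentially the same approach as the paper's proof: induct on $\dim(S)$, use Theorem~\ref{thm:gnapgp26} and Proposition~\ref{prop:gnapgp210} to produce an augmented 2-resolution $X^1_\bullet$ with lower-dimensional bottom row $X^1_{1\bullet}$, apply the inductive hypothesis to the $(\square^+_0 \times I)$-space $X^1_{1\bullet}$, and glue via the reduction $\rd(-)$. The subtleties you flag at the end (tracking the pair structure and density through the induction, and the transmission of conditions (i)--(iii) to the inductive step) are real and are also handled only briefly in the paper; they do not represent a gap in your argument relative to the paper's own level of detail.
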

\begin{proof}
  We proceed by induction on \(\dim(S)\).
  If \(\dim(S) = 0\), we set \(Z_\bullet = S\), which we think of as a
  \((\square_{-1}^+ \times I)\)-space.
  \par It remains to show the inductive case, when \(\dim(S) > 0\).
  By Theorem \ref{thm:gnapgp26} and Proposition \ref{prop:gnapgp210}, there
  exists an augmented 2-resolution \(X^1_{\bullet\bullet}\) of \(S\) such that
  \(\dim(X^1_{1\bullet}) < \dim(S)\) and the augmentation morphism factors
  through \(g\).
  Note that \(X^1_{\bullet\bullet}\) is a \((\square^+_1 \times I)\)-space and
  that \(X^1_{1\bullet}\) is a \((\square^+_0 \times I)\)-space.
  By the inductive hypothesis, there exists an augmented cubical hyperresolution
  \begin{align*}
    X'_\bullet &= \rd\bigl(X^2_\bullet,X^3_\bullet,\ldots,X^{r}_\bullet\bigr)
    \shortintertext{of \(X^1_{1\bullet}\) such that
    \[
      \dim\bigl(X'_{\alpha\beta}\bigr) \le \dim\bigl(X^1_{1\bullet}\bigr) -
      \abs{\alpha} + 1
    \]
    for every \((\alpha,\beta) \in \square_{r-1} \times \square_0^+\).
    Note that}
    \typ\bigl(X^i_\bullet\bigr) &=
    \square^+_{i-1} \times \square^+_0 \times I = \square^+_i \times I
    \shortintertext{for every \(i \in \{2,3,\ldots,r'\}\).
    Thus, we can define the reduction}
    X_\bullet &= \rd\bigl(X^1_\bullet,X^2_\bullet,\ldots,X^{r}_\bullet\bigr)
    \shortintertext{which is an augmented cubical hyperresolution of \(S\) such that}
    X_{\alpha\beta} &= \begin{cases}
      X^1_{0\beta} & \text{if}\ \alpha = 0\\
      X'_{\alpha\beta} & \text{if}\ \alpha \in \square_{r-1}
    \end{cases}
  \end{align*}
  for every \(\beta \in \square^+_0\).
  Moreover, \(X_\bullet\) satisfies the inequality
  \begin{align*}
    \dim\bigl(X_{0\beta}\bigr) &\le \dim(S) - \abs{\beta} + 1
    \shortintertext{for every \(\beta \in \square_0\) by the definition of a
    very weak resolution (Definition \ref{def:veryweakres}) and the inequality}
    \dim\bigl(X_{\alpha\beta}\bigr) \le \dim\bigl(X^1_{1\bullet}\bigr) &-
    \abs{\alpha} + 1 \le \dim(S) - \abs{\alpha} - \abs{\beta} + 1
  \end{align*}
  for every \((\alpha,\beta) \in \square_{r-1} \times \square_0^+\).
  The ``Moreover'' statements
  are true by construction because of what we showed in
  Theorem \ref{thm:gnapgp26} and by construction.
\end{proof}
As a consequence, we obtain the following.
\begin{corollary}[{cf.\ \citeleft\citen{Del74}\citemid \S6.2\citepunct
  \citen{Gui87}\citemid Corollaire 2.5.6\citeright}]
  Suppose that \(\Sp\) is a subcategory of a category of reduced spaces of one of the
  types
  \((\ref{setup:introalgebraicspaces})\textnormal{--}(\ref{setup:introadicspaces})\)
  that is essentially stable under fiber products, immersions, and proper
  morphisms.
  Let \((S,S')\) be a pair of spaces such that \(S\) is finite-dimensional and
  \(S - S'\) is dense in \(S\).
  In cases \((\ref{setup:introalgebraicspaces})\) and
  \((\ref{setup:introformalqschemes})\), suppose that
  \(S\) is quasi-excellent of equal characteristic zero.
  In cases \((\ref{setup:introberkovichspaces})\),
  \((\ref{setup:introrigidanalyticspaces})\), and
  \((\ref{setup:introadicspaces})\), suppose that \(k\) is of characteristic
  zero.
  Then, there exists a simplicial resolution
  \(X_\bullet \to S\) of the pair \((S,S')\).
  There also exists a semi-simplicial resolution of the pair \((S,S')\)
  such that
  \[
    \dim(X_n) \le \dim(S) - n
  \]
  for every \(n \ge 0\).
\end{corollary}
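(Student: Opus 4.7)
The plan is to derive the (semi-)simplicial resolutions directly from the augmented cubical hyperresolution constructed in Theorem \ref{thm:gnapgp215}. Applying that theorem with $I = \underline{1}$ yields an augmented cubical hyperresolution $Z_\bullet \colon (\square_r^+)^\op \to \Sp$ of the pair $(S, S')$ satisfying $\dim(Z_\alpha) \le \dim(S) - \abs{\alpha} + 1$ for every $\alpha \in \square_r$.

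To produce the semi-simplicial resolution, I would set
\[
  X_n \coloneqq \coprod_{\substack{\alpha \in \square_r \\ \abs{\alpha} = n+1}} Z_\alpha
\]
for $n \ge 0$, with face maps $d_i \colon X_n \to X_{n-1}$ defined on the summand indexed by $\alpha = \{j_0 < \cdots < j_n\}$ as the contravariant structure morphism $Z_\alpha \to Z_{\alpha \setminus \{j_i\}}$ coming from the inclusion $\alpha \setminus \{j_i\} \subseteq \alpha$ in the poset $\square_r$. The dimension bound $\dim(X_n) \le \dim(S) - n$ is immediate from the bound on $Z_\alpha$, and the augmentation of $Z_\bullet$ by $S$ induces an augmentation $X_\bullet \to S$. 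Regularity of each pair $(X_n, a_n^{-1}(S'))$, where $a_n$ denotes the induced augmentation morphism, is inherited from regularity of each piece $(Z_\alpha, Z_\alpha')$ supplied by Theorem \ref{thm:gnapgp215}.

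For the first assertion, namely existence of a full simplicial resolution, I would pass from the semi-simplicial object just constructed to a simplicial object by left Kan extension along the inclusion $\Delta_{\mathrm{inj}}^\op \hookrightarrow \Delta^\op$, equivalently by freely adjoining degeneracies. This operation preserves both the augmentation and the cohomological descent property, so the resulting simplicial object is a simplicial resolution of $(S, S')$. The dimension bound is lost under this extension, which is consistent with the corollary only claiming the bound for the semi-simplicial version.

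The main obstacle I anticipate is verifying that the semi-simplicial object obtained above is actually a \emph{resolution}, i.e.\ that its augmentation satisfies cohomological descent in the sense of Deligne. This should reduce, by induction on $r$ and by unpacking the reduction functor $\rd(X^1_\bullet, \ldots, X^r_\bullet)$ of Definition \ref{def:gnapgp211}, to the descent statement for a single acyclic square, which is precisely what the cubical descent theory developed later in Part \ref{part:cubical} will provide. Once the elementary acyclic-square descent is in hand, the combinatorial repackaging of the cube into a semi-simplex should be essentially formal, since the two indexing posets both encode the partial order of nonempty subsets of $\underline{r}$ by inclusion.
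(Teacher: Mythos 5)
Your proposal takes essentially the same approach as the paper: apply Theorem \ref{thm:gnapgp215} to obtain a cubical hyperresolution of the pair, pass to a semi-simplicial object, and then freely adjoin degeneracies. The explicit construction you write out (coproducts over $\abs{\alpha} = n+1$ with face maps given by deleting indices) is exactly what the paper cites from \cite[(1.1.5) and (2.1.6)]{Gui87}, your left Kan extension along $\Delta_{\mathrm{inj}}^\op \hookrightarrow \Delta^\op$ is Guill\'en's Dold--Puppe transform \cite[(1.2.1) and Proposition 1.3.7]{Gui87}, and the descent verification you flag as a remaining obstacle is precisely what those citations (together with \cite[Corollaire 2.5.6]{Gui87}) discharge.
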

\begin{proof}
  The statement for semi-simplicial resolutions holds by applying the
  construction in \cite[(1.1.5) and (2.1.6)]{Gui87} to the cubical hyperresolution
  constructed in Theorem \ref{thm:gnapgp215}.
  We can then apply Guill\'en's version of the Dold--Puppe transform
  \cite[(1.2.1) and Proposition 1.3.7]{Gui87} to obtain a simplicial
  resolution.
\end{proof}

\section{The homotopy category of iterated cubical
hyperresolutions\texorpdfstring{\except{toc}{\\}}{} and cubical descent}
In this section, we discuss the homotopy category
\(\Ho\Hrc(\Diag_I(\Sp))\)
of iterated cubical hyperresolutions.
This is new for spaces of the form
\((\ref{setup:introalgebraicspaces})\textnormal{--}(\ref{setup:introadicspaces})\)
that are not varieties or complex analytic spaces.
We will show that \(\Ho\Hrc(\Diag_I(\Sp))\) is equivalent to
\(\Diag_I(\Sp)\) (Theorem \ref{thm:gnapgp38}),
which allows us to prove a version of descent for
cubical spaces (Corollary \ref{cor:gnapgp310}).
This gives a precise analogue of the fact that any two resolutions of
singularities can be dominated by another resolution.
As in \cite{GNAPGP88},
we will show that any two cubical hyperresolutions are connected
by a sequence of \emph{iterated} cubical hyperresolutions (see Lemma
\ref{lem:how-1connected} and Remark \ref{rem:how-1connected}).
Although in applications we compute objects and functors like
\(\underline{\Omega}_X^\bullet\) using cubical hyperresolutions, \emph{iterated}
cubical
hyperresolutions are unavoidable when showing these objects and functors are
well-defined.
See Definition \ref{def:gnapgp32} for the definition of an iterated cubical
hyperresolution.
\subsection{Conventions}
\par This section is where we start imposing smallness conditions on \(\Sp\).
Throughout this section, we denote by \(\Sp\) a 
subcategory of a small category of \emph{reduced} spaces of one of the types
\((\ref{setup:introalgebraicspaces})\textnormal{--}(\ref{setup:introadicspaces})\)
that is essentially stable under fiber products, immersions, and proper
morphisms.
We will assume \(\Sp\) is chosen according to the strategy outlined in
Remark \ref{rem:smallspaces}.
We denote by \(I\) a small category.
While we will remind the reader of these assumptions in statements of results in
this section, we will not restate these assumptions in definitions or remarks.
\par We will also continue to use the terminology \textsl{\(I\)-space} from
Definition \ref{def:ispaces} to refer to objects in \(\Diag_I(\Sp)\).
\begin{remark}\label{rem:iteratednotpairs}
  For simplicity, we do not work with pairs in the sense of Definition
  \ref{def:closedpairs} in this section.
  However, one can readily adapt the proofs to the context of pairs 
  using our results for pairs in the previous section as was pointed out
  in the context of varieties by Puerta \cite[Expos\'e IV, (1.27)]{GNAPGP88}.
  We will use this generality in the proof of the
  extension criterion for pairs (Theorem \ref{thm:cg39}).
\end{remark}
\subsection{Iterated and non-augmented cubical hyperresolutions}
We define iterated and non-augmented versions of augmented cubical
hyperresolutions.
These notions will be used to define a category of cubical hyperresolutions in
the next subsection.
The two definitions in this subsection do not require \(\Sp\) to be
small despite our conventions in the rest of this section.
\begin{definition}[Iterated augmented cubical hyperresolutions;
  cf.\ {\cite[Expos\'e I, D\'e\-fi\-ni\-tion 3.1]{GNAPGP88}}]
  Let \(S\) be an \(I\)-space.
  For every integer \(n \ge 1\), we define the notion of an \(n\)-iterated
  augmented cubical hyperresolution \(X_\bullet\) of \(S\) as follows.
  A \textsl{1-iterated augmented cubical hyperresolution \(X_\bullet\) of \(S\)}
  is an augmented cubical hyperresolution of \(S\).
  For each \(n \ge 2\), an \textsl{\(n\)-iterated augmented cubical
  hyperresolution \(X_\bullet\) of \(S\)} is a 1-iterated augmented cubical
  hyperresolution of a \((n-1)\)-iterated augmented cubical hyperresolution of
  \(S\).
  \par An \textsl{iterated augmented cubical hyperresolution of
  \(S\)} refers to an \(n\)-iterated augmented cubical hyperresolution of
  \(S\) for some integer \(n \ge 1\).
\end{definition}
\begin{remark}
  In \cite[Expos\'e I, D\'efinition 3.1]{GNAPGP88}, Guill\'en drops the
  adjective ``iterated'' in ``iterated augmented cubical hyperresolution.''
  This new terminology is used in the rest of \cite{GNAPGP88}.
  We keep the adjective ``iterated'' since these iterations are
  important for the proofs of the results in \cite{GNAPGP88,GNA02} and in
  the rest of this paper.
\end{remark}
\begin{definition}[Cubical hyperresolutions and iterated cubical
    hyperresolutions
  {\cite[Expos\'e I, D\'efinition 3.2]{GNAPGP88}}]
  \label{def:gnapgp32}
  Let \(X^+_\bullet\) be a \((\square_r^+ \times I)\)-space and let
  \(\pi\colon \square_r \times I \rightarrow I\)
  be the projection functor.
  If \(X^+_\bullet\) is an augmented cubical hyperresolution (resp.\ an iterated
  augmented cubical hyperresolution, an \(n\)-iterated augmented cubical
  hyperresolution for an integer \(n \ge 1\)) of an
  \(I\)-space \(S\), the restriction of \(X^+_\bullet\) to \(\square_r \times
  I\) defines a \((\square_r \times I)\)-space \(X_\bullet\) together with a
  \(\pi\)-augmentation \(a \colon X_\bullet \to S\) such that \(X^+_\bullet =
  \tot(a)\).
  In this situation, we will call \(a\) or \(X_\bullet\) a \textsl{cubical
  hyperresolution of \(S\)} (resp.\ \textsl{iterated cubical
  hyperresolution of \(S\)}, \textsl{\(n\)-iterated cubical
  hyperresolution of \(S\)}).
\end{definition}
\subsection{The homotopy category
\texorpdfstring{\(\Ho\Hrc(\Diag_I(\Sp))\)}{Ho Hrc(DiagI(Sp))}
of iterated cubical hyperresolutions}
We now define the category of iterated cubical hyperresolutions.
Starting here, we need to impose the condition that \(\Sp\) is small.
\begin{definition}[The category of iterated cubical hyperresolutions
  {\cite[Expos\'e I, D\'e\-fi\-ni\-tion 3.3]{GNAPGP88}}]\label{def:gnapgp33}
  Let \(u \colon S \to T\) be a morphism of \(I\)-spaces.
  Suppose \(a\colon X_\bullet \to S\) and \(b\colon Y_\bullet \to T\) are
  iterated cubical hyperresolutions of \(S\) and \(T\), respectively.
  A \textsl{morphism of iterated cubical hyperresolutions from \(X_\bullet\) to
  \(Y_\bullet\) over \(u\)} is a morphism
  \(f_\bullet\colon X_\bullet \to Y_\bullet\)
  of 1-diagrams of \(\Sp\) augmented by \(u\)
  such that
  \(\typ(f_\bullet)\colon I \times \square_r \to I \times \square_s\)
  is \(\id_I \times \delta\) for a face
  functor \(\delta\) (see Definition \ref{def:gnapgp117}).
  By Definition \ref{def:gnapgp13}, \(f_\bullet\) is a
  morphism of augmented 1-diagrams of \(\Sp\)
  if and only if the
  diagram
  \[
    \begin{tikzcd}
      X_\bullet \rar{f_\bullet}\dar[swap]{a} & Y_\bullet\dar{b}\\
      S \rar{u} & T
    \end{tikzcd}
  \]
  of morphisms of 1-diagrams of spaces commutes.
  \par We denote by \(\Hrc(\Diag_I(\Sp))\) the category of iterated cubical
  hyperresolutions of \(I\)-spaces with the notion of morphisms defined above.
  We obtain a functor
  \[
    w_I\colon \Hrc\bigl(\Diag_I(\Sp)\bigr) \longrightarrow \Diag_I(\Sp)
  \]
  defined by \(w_I(X_\bullet) = S\) if \(X_\bullet\) is an iterated
  cubical hyperresolution
  of \(S\) and \(w_I(f_\bullet) = u\) if \(f_\bullet\) is a morphism of iterated
  cubical hyperresolutions over \(u\).
  We denote by \(\Hrc_\locproj(\Diag_I(\Sp))\) (resp.\
  \(\Hrc_\proj(\Diag_I(\Sp))\)) the full subcategories of
  \(\Hrc(\Diag_I(\Sp))\) whose objects are the iterated cubical hyperresolutions
  \(X_\bullet\) of \(S\) such that all morphisms in \(X_{\bullet i}\) are
  locally birational/bimeromorphic blowups
  (resp.\ birational/bimeromorphic blowups)
  and such that \(X_{\bullet i} \to S_i\) is locally a birational/bimeromorphic blowup
  (resp.\ a birational/bimeromorphic blowup) for every \(i \in \Ob(I)\).
\end{definition}
To define the homotopy category of iterated
cubical hyperresolutions, we use the notion
of a fiber category recalled in \cite[Expos\'e I, (3.4)]{GNAPGP88}
(see also \citeleft\citen{stacks-project}\citemid
\href{https://stacks.math.columbia.edu/tag/02XJ}{Tag 02XJ}\citepunct
\citen{Joh021}\citemid \S B1.3\citeright).
We start by defining a set of morphisms in \(\Hrc(\Diag_I(\Sp))\) that we
will later invert.
\begin{citeddef}[{\cite[(3.5) and (3.6)]{GNAPGP88}}]
  Let \(S\) be an \(I\)-space.
  We denote by \(\Hrc(S)\) the fiber category \(w_I^{-1}(S)\).
  If \(u \colon S \to T\) is a morphism of \(I\)-spaces, we denote by
  \(\Hrc(u)\) the category \(w_I^{-1}(u)\).
  We denote by \(\Sigma_{\Diag_I(\Sp)}\) the set of morphisms \(f\) of
  \(\Hrc(\Diag_I(\Sp))\) such that \(w_I(f)\) is an identity morphism in
  \(\Diag_I(\Sp)\).
  Similarly, if \(S\) is an \(I\)-space, we denote by \(\Sigma_S\) the set
  of all morphisms of \(\Hrc(S)\).
  We use the same notation for \(\Hrc_\locproj\) and \(\Hrc_\proj\).
\end{citeddef}
We now localize the category of iterated cubical hyperresolutions with
respect to the sets of morphisms defined above using categories of fractions
\cite[\S5.2]{Bor94}.
Localizing the categories \(\Hrc(\Diag_I(\Sp))\) and \(\Hrc(S)\)
with respect to the sets \(\Sigma_{\Diag_I(\Sp)}\) and \(\Sigma_S\),
respectively, we obtain the localization functors
\[
  \begin{tikzcd}[cramped,row sep=0,column sep=1.475em]
    \Hrc\bigl(\Diag_I(\Sp)\bigr) \rar &
    \Ho\Hrc\bigl(\Diag_I(\Sp)\bigr)\\
    \Hrc(S) \rar & \Ho\Hrc(S)
  \end{tikzcd}
\]
We use the same notation for \(\Hrc_\locproj\) and \(\Hrc_\proj\).
By the universal property of categories of fractions
\citeleft\citen{Bor94}\citemid
Proposition 5.2.2\citeright,
we can define the following functors on
\(\Ho\Hrc(\Diag_I(\Sp))\).
\begin{definition}[cf.\ {\cite[(3.6) and (3.7)]{GNAPGP88}}]
  Suppose that \(\Sp\) is a subcategory of a small category of reduced spaces of one of the
  types
  \((\ref{setup:introalgebraicspaces})\textnormal{--}(\ref{setup:introadicspaces})\)
  that is essentially stable under fiber products, immersions, and proper
  morphisms.
  In cases \((\ref{setup:introalgebraicspaces})\) and
  \((\ref{setup:introformalqschemes})\), suppose that all objects in \(\Sp\) are
  quasi-excellent of equal characteristic zero.
  In cases \((\ref{setup:introberkovichspaces})\),
  \((\ref{setup:introrigidanalyticspaces})\), and
  \((\ref{setup:introadicspaces})\), suppose that
  \(k\) is of characteristic zero.
  \par Let \(I\) be a finite orderable small
  category and let \(S\) be an \(I\)-space.
  By the universal property of categories of fractions
  \citeleft\citen{Bor94}\citemid
  Proposition 5.2.2\citeright,
  we have the commutative diagram of functors
  \[
    \begin{tikzcd}[column sep=huge]
      \Ho\Hrc(S) \rar{\Ho i_S} & \Ho\Hrc\bigl(\Diag_I(\Sp)\bigr) \rar{\Ho w_I} &
      \Diag_I(\Sp)\\
      \Ho\Hrc_\locproj(S) \rar{\Ho i_{S,\locproj}} \uar[hook]
      & \Ho\Hrc_\locproj\bigl(\Diag_I(\Sp)\bigr) \rar{\Ho w_{I,\locproj}}
      \uar[hook] &
      \Diag_I(\Sp) \uar[equal]\\
      \Ho\Hrc_\proj(S) \rar{\Ho i_{S,\proj}} \uar[hook]
      & \Ho\Hrc_\proj\bigl(\Diag_I(\Sp_{\comp})\bigr)
      \rar{\Ho w_{I,\proj}} \uar[hook] &
      \Diag_I(\Sp_{\comp})\mathrlap{.} \uar[hook]
    \end{tikzcd}
  \]
  In the third line, we assume that \(S\) is an object of \(\Sp_{\comp}\),
  the subcategory whose objects are compactifiable.
  Moreover,
  \(i_S\colon \Hrc(S) \to \Hrc(\Diag_I(\Sp))\)
  is the inclusion functor, and we use analogous notation for
  \(\Hrc_\locproj\) (resp.\ \(\Hrc_\proj\)).
\end{definition}
\subsection{\texorpdfstring{\(\Ho\Hrc(\Diag_I(\Sp))\)}{Ho Hrc(DiagI(Sp))} and
\texorpdfstring{\(\Diag_I(\Sp)\)}{DiagI(Sp)} are equivalent}
We can now state the main result of this section.
\begin{theorem}[cf.\ {\cite[Th\'eor\`eme 3.8, Remarque 3.8.4, and (3.11.2)]{GNAPGP88}}]
  \label{thm:gnapgp38}
  Suppose that \(\Sp\) is a subcategory of a small category of
  finite-dimensional reduced spaces of one of the
  types
  \((\ref{setup:introalgebraicspaces})\textnormal{--}(\ref{setup:introadicspaces})\)
  that is essentially stable under fiber products, immersions, and proper
  morphisms.
  In cases \((\ref{setup:introalgebraicspaces})\) and
  \((\ref{setup:introformalqschemes})\), suppose that all objects in \(\Sp\) are
  quasi-excellent of equal characteristic zero.
  In cases \((\ref{setup:introberkovichspaces})\),
  \((\ref{setup:introrigidanalyticspaces})\), and
  \((\ref{setup:introadicspaces})\), suppose that
  \(k\) is of characteristic zero.
  \par Let \(I\) be a finite orderable category and let \(S\) be an
  \(I\)-space.
  Consider the functors
  \[
    \begin{tikzcd}[cramped,row sep=0,column sep=scriptsize,
      /tikz/column 1/.append style={anchor=base east}]
      \Ho w_I\colon&[-2.375em] \Ho\Hrc\bigl(\Diag_I(\Sp)\bigr) \rar &
      \Diag_I(\Sp)\\
      \Ho w_{I,\locproj}
      \colon& \Ho\Hrc_\locproj\bigl(\Diag_I(\Sp)\bigr) \rar &
      \Diag_I(\Sp)\\
      \Ho w_{I,\proj}
      \colon& \Ho\Hrc_\proj\bigl(\Diag_I(\Sp_{\comp})\bigr) \rar &
      \Diag_I(\Sp_{\comp})\\
      \Ho i_S\colon& \Ho\Hrc(S) \rar &
      (\Ho w_I)^{-1}(S)\\
      \Ho i_{S,\locproj}
      \colon& \Ho\Hrc_\locproj(S) \rar &
      (\Ho w_{I,\locproj})^{-1}(S)\\
      \Ho i_{S,\proj}\colon& \Ho\Hrc_\proj(S) \rar &
      (\Ho w_{I,\proj})^{-1}(S)
    \end{tikzcd}
  \]
  where \(S\) is compactifiable in the last line.
  We have the following:
  \begin{enumerate}[label=\((\roman*)\),ref=\roman*]
    \item The functors \(\Ho w_I\) and \(\Ho i_S\) are equivalences of
      categories.
      The source and target of \(\Ho i_S\) are equivalent to the punctual
      category \(\underline{1}\).
    \item The functors \(\Ho w_{I,\locproj}\) and \(\Ho
      i_{S,\locproj}\) are equivalences of categories.
      The source and target of \(\Ho
      i_{S,\locproj}\) are equivalent to the punctual
      category \(\underline{1}\).
    \item\label{thm:gnapgp38compact}
      Suppose \(S\) is compactifiable.
      The functors \(\Ho w_{I,\proj}\) and \(\Ho i_{S,\proj}\) are equivalences of
      categories.
      The source and target
      of \(\Ho i_{S,\proj}\) are equivalent to the punctual
      category \(\underline{1}\).
  \end{enumerate}
\end{theorem}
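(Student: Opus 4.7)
The plan is to prove the three parts (i)--(iii) of Theorem \ref{thm:gnapgp38} in parallel, following the strategy of \cite[Expos\'e I, Th\'eor\`eme 3.8]{GNAPGP88} but with the existence result and its refinements provided by Theorem \ref{thm:gnapgp215}. The essential surjectivity of \(\Ho w_I\), \(\Ho w_{I,\locproj}\), and \(\Ho w_{I,\proj}\) is immediate: any \(I\)-space \(S\) (resp.\ compactifiable \(S\)) admits an augmented cubical hyperresolution by Theorem \ref{thm:gnapgp215}, and in the locally projective and projective cases we invoke parts \((\ref{thm:gnapgp215locallyprojective})\) and \((\ref{thm:gnapgp215compact})\) respectively to stay in the relevant subcategory.

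The heart of the proof is to show that for a fixed \(I\)-space \(S\), the fiber \(\Ho\Hrc(S)\) is equivalent to the punctual category \(\underline{1}\). This amounts to two claims. First, \emph{domination}: given two iterated cubical hyperresolutions \(a\colon X_\bullet \to S\) and \(b\colon Y_\bullet \to S\), there exists a third iterated cubical hyperresolution \(Z_\bullet \to S\) together with morphisms \(Z_\bullet \to X_\bullet\) and \(Z_\bullet \to Y_\bullet\) in \(\Sigma_S\). I would construct \(Z_\bullet\) by applying Theorem \ref{thm:gnapgp215} to the \((\typ(X_\bullet) \times_I \typ(Y_\bullet))\)-space obtained from the fiber product \(X_\bullet \times_S Y_\bullet\) in \(\Diag_I(\Sp)\), with the proper morphism \(g = \id\) and using part \((\ref{thm:gnapgp215factorsthrough})\) so that the new resolution factors through both \(X_\bullet\) and \(Y_\bullet\). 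The result is an iterated hyperresolution of each of \(X_\bullet\) and \(Y_\bullet\), hence an iterated hyperresolution of \(S\). Second, \emph{equalization}: two parallel morphisms \(f,g\colon X_\bullet \rightrightarrows Y_\bullet\) in \(\Hrc(S)\) become equal after precomposition with some morphism in \(\Sigma_S\). This is again produced by Theorem \ref{thm:gnapgp215} applied to a cubical hyperresolution of \(X_\bullet\) that factors through the equalizing subspace where \(f\) and \(g\) agree. The combination of domination and equalization shows that \(\Hrc(S)\) admits a calculus of right fractions with respect to \(\Sigma_S\) and that any two objects become canonically isomorphic in \(\Ho\Hrc(S)\), giving \(\Ho\Hrc(S) \simeq \underline{1}\).

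Once the fibers are trivialized, I would upgrade to the statement for \(\Ho w_I\) as follows. To show full faithfulness of \(\Ho w_I\), use that every morphism \(u\colon S \to T\) in \(\Diag_I(\Sp)\) can be lifted to a morphism of iterated cubical hyperresolutions: given \(b\colon Y_\bullet \to T\), apply Theorem \ref{thm:gnapgp215} to build a hyperresolution \(a\colon X_\bullet \to S\) factoring through the base change \(Y_\bullet \times_T S\), which produces an element of \(\Hrc(u)\). Uniqueness of the lift up to \(\Sigma\)-inversion follows from the triviality of \(\Ho\Hrc(S)\) established above. Essential surjectivity of \(\Ho i_S\) onto \((\Ho w_I)^{-1}(S)\) is formal from the fibered structure of \(\Hrc(\Diag_I(\Sp))\) over \(\Diag_I(\Sp)\), combined with the fact that \(\Sigma_S = \Sigma_{\Diag_I(\Sp)} \cap \Hrc(S)\) by construction.

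The main obstacle will be executing the domination and equalization steps in the restricted subcategories \(\Hrc_\locproj\) and \(\Hrc_\proj\), where every morphism and augmentation must be (locally) a birational/bimeromorphic blowup. Here the base-change construction \(X_\bullet \times_S Y_\bullet\) need not land in the blowup subcategory, so we must invoke parts \((\ref{thm:gnapgp215locallyprojective})\) and \((\ref{thm:gnapgp215compact})\) of Theorem \ref{thm:gnapgp215} with the morphism \(g\) chosen to be the composition of the structure morphisms of \(X_\bullet\) and \(Y_\bullet\); for part \((\ref{thm:gnapgp38compact})\) one additionally needs \(S\) compactifiable so that the projective refinement applies. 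With these refined existence results, the domination and equalization arguments go through verbatim in the subcategory versions, yielding the equivalences \(\Ho w_{I,\locproj}\) and \(\Ho w_{I,\proj}\) and the triviality of their fibers.
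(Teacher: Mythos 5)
The paper's own proof is a deferral: it invokes the arguments of \cite[Expos\'e I, Lemmes 3.8.1--3.8.6 and Th\'eor\`eme 3.8]{GNAPGP88} verbatim, substituting Theorem~\ref{thm:gnapgp215} for the existence input. The key structural fact those lemmas establish, recorded in Remark~\ref{rem:how-1connected}, is that any two iterated cubical hyperresolutions \(X\) and \(Y\) of \(S\) are related by a \emph{zig-zag} \(X \to X^t \leftarrow S' \to Y^t \leftarrow Y\) of morphisms in \(\Sigma_S\), and that the fiber categories are (connected, simply connected) groupoids. Your proposal replaces this with a direct domination-plus-equalization (calculus of right fractions) argument, which is both a different route and, as written, incorrect.

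The domination step fails on account of the type constraint in Definition~\ref{def:gnapgp33}: a morphism \(f_\bullet\colon X_\bullet \to Y_\bullet\) in \(\Hrc(\Diag_I(\Sp))\) must have \(\typ(f_\bullet) = \id_I \times \delta\) for a \emph{face functor} \(\delta\), and face functors (Definition~\ref{def:gnapgp117}) are \emph{injective} maps from a smaller cube into a larger one. Thus morphisms of iterated cubical hyperresolutions always go from lower iteration depth to higher iteration depth. When you form \(X_\bullet \times_S Y_\bullet\) and then apply Theorem~\ref{thm:gnapgp215}, the result \(Z_\bullet\) has type a product of \emph{more} cube factors than either \(\typ(X_\bullet)\) or \(\typ(Y_\bullet)\); the natural maps from \(Z_\bullet\) to \(X_\bullet\) and to \(Y_\bullet\) are compositions of augmentations with projections, not face-type morphisms, and hence are not morphisms of \(\Hrc(\Diag_I(\Sp))\) at all. (You cannot flip the arrows either, since \(X_\bullet\) is not a cubical face of the product — there is no distinguished basepoint in \(\square_s\) giving a face inclusion \(\square_r \hookrightarrow \square_t \times \square_r \times \square_s\).) This is precisely why the paper emphasizes, before Lemma~\ref{lem:how-1connected}, that ``iterated cubical hyperresolutions are unavoidable'': the argument must go through a zig-zag in which both \(X_\bullet\) and \(Y_\bullet\) are further iterated before a common source \(S'\) can be produced. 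Your equalization step suffers from a related confusion: a cubical hyperresolution cannot ``factor through the equalizing subspace where \(f\) and \(g\) agree,'' since it must be proper and birational over \(X_\bullet\) and cannot be supported on a proper closed subspace. Part (\ref{thm:gnapgp215factorsthrough}) of Theorem~\ref{thm:gnapgp215} also concerns factoring the \emph{augmentation} through a single proper morphism \(g\), not producing face-type morphisms to two given hyperresolutions.
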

Given the results in this paper so far, the
proof of Theorem \ref{thm:gnapgp38} is almost identical to that in
\cite{GNAPGP88}.
We state some of the preliminary results from \cite{GNAPGP88}
explicitly.
\begin{lemma}[cf.\ {\cite[Expos\'e I, Lemme 3.8.3, Remarque 3.8.4, and Lemme
  3.8.5]{GNAPGP88}}]\label{lem:how-1connected}
  Suppose that \(\Sp\) is a subcategory of a small category of
  finite-dimensional reduced spaces of one of the
  types
  \((\ref{setup:introalgebraicspaces})\textnormal{--}(\ref{setup:introadicspaces})\)
  that is essentially stable under fiber products, immersions, and proper
  morphisms.
  In cases \((\ref{setup:introalgebraicspaces})\) and 
  \((\ref{setup:introformalqschemes})\), suppose that all objects in \(\Sp\) are
  quasi-excellent of equal characteristic zero.
  In cases \((\ref{setup:introberkovichspaces})\),
  \((\ref{setup:introrigidanalyticspaces})\), and
  \((\ref{setup:introadicspaces})\), suppose that
  \(k\) is of characteristic zero.
  \par Let \(I\) be a finite orderable category and
  let \(u\colon S \to T\) be a morphism of \(I\)-spaces.
  We have the following:
  \begin{enumerate}[label=\((\roman*)\),ref=\roman*]
    \item\label{lem:fiberoversconn}
      The categories \((\Ho w_I)^{-1}(S)\), \((\Ho w_{I,\locproj})^{-1}(S)\),
      \(\Ho\Hrc(S)\), and \(\Ho\Hrc_\locproj(S)\)
      are groupoids that are connected and simply connected.
      If \(S\) is compactifiable, the categories \((\Ho w_{I,\proj})^{-1}(S)\) and
      \(\Ho\Hrc_\proj(S)\) are groupoids that are connected and simply connected.
    \item The fiber categories \((\Ho w_I)^{-1}(u)\) and \((\Ho
      w_{I,\locproj})^{-1}(u)\) are groupoids that are connected and nonempty.
      If \(S\) and \(T\) are compactifiable, then \((\Ho w_{I,\proj})^{-1}(u)\)
      is a groupoid that is connected and nonempty.
  \end{enumerate}
\end{lemma}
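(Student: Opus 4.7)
The plan is to deduce both statements from the existence result Theorem~\ref{thm:gnapgp215} together with its factoring-through clauses \((\ref{thm:gnapgp215factorsthrough})\)--\((\ref{thm:gnapgp215compact})\), which allow us to dominate any finite collection of iterated cubical hyperresolutions by a further iterated cubical hyperresolution while preserving whichever of the three classes (unrestricted, locally projective, or projective under compactifiability) we are in. The groupoid property of the localized categories is automatic from the construction: by definition every morphism in $\Hrc(S)$ lies in $\Sigma_S$, so after localization every such morphism becomes invertible, and the same holds for the other fiber categories. The substantive content of the lemma is therefore connectedness, simple connectedness, and nonemptiness.

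For \((\ref{lem:fiberoversconn})\), given two iterated cubical hyperresolutions $a\colon X_\bullet \to S$ and $b\colon Y_\bullet \to S$, I would construct a common refinement as follows. Form the componentwise upper envelope $W_\bullet \to S$ of $a$ and $b$ in the sense of Lemma~\ref{lem:gnapgp262}; this is a proper morphism whose discriminant avoids a dense open in each $S_i$ and which factors through both $a$ and $b$. Applying Theorem~\ref{thm:gnapgp215} with $g \coloneqq W_\bullet \to S$ yields an iterated cubical hyperresolution $Z_\bullet \to S$ every non-identity arrow of which to $S$ factors through $W_\bullet$, and hence through both $X_\bullet$ and $Y_\bullet$. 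The resulting morphisms $Z_\bullet \to X_\bullet$ and $Z_\bullet \to Y_\bullet$ in $\Hrc(S)$ connect $a$ and $b$ in $\Ho\Hrc(S)$. Simple connectedness is obtained similarly: given two parallel morphisms $f_\bullet, f'_\bullet\colon X_\bullet \to Y_\bullet$ in $\Hrc(S)$, a further refinement $Z_\bullet$ of $X_\bullet$ can be chosen so that the two compositions $Z_\bullet \to X_\bullet \to Y_\bullet$ agree, because on the dense regular stratum where $a$ is an isomorphism both compositions must be induced by $\id_S$. The statements for $(\Ho w_I)^{-1}(S)$ follow since the inclusion $\Ho\Hrc(S) \hookrightarrow (\Ho w_I)^{-1}(S)$ is an equivalence, and the locally projective and projective variants are handled identically using the corresponding clauses of Theorem~\ref{thm:gnapgp215}.

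For part~(ii), nonemptiness of the fiber over $u\colon S \to T$ amounts to producing some pair of iterated cubical hyperresolutions $X_\bullet \to S$ and $Y_\bullet \to T$ together with a compatible morphism over $u$. I would first apply Theorem~\ref{thm:gnapgp215} to $T$ to obtain $Y_\bullet \to T$, then form the strict transform (Lemma~\ref{lem:gnapgp261}) of $u\colon S \to T$ along $Y_\bullet \to T$ to obtain a proper morphism $X'_\bullet \to S$ equipped with a canonical map $X'_\bullet \to Y_\bullet$ over $u$, and finally apply Theorem~\ref{thm:gnapgp215} to $S$ with $g = X'_\bullet \to S$ to obtain $X_\bullet \to S$ factoring through $X'_\bullet$. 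Connectedness of the fiber over $u$ follows from a relative version of the argument for \((\ref{lem:fiberoversconn})\), dominating any two such configurations by a third.

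The main obstacle is combinatorial: in Definition~\ref{def:gnapgp33} morphisms in $\Hrc$ are constrained to have type $\id_I \times \delta$ for a face functor $\delta$, whereas the refinements produced by the dominations above a priori only come with morphisms of more general type. This is precisely what forces us to work with \emph{iterated} cubical hyperresolutions rather than single ones: each application of the 2-resolution step introduces an additional cubical coordinate, and a careful choice of such iterations arranges the comparison morphism to be a composition of face inclusions. Verifying this compatibility---and checking that each of the three variants produces morphisms of the correct face type while respecting the locally projective or projective hypothesis---is the technical heart of the argument, and follows the pattern of \cite[Expos\'e I, Lemmes 3.8.3 and 3.8.5]{GNAPGP88} once Theorem~\ref{thm:gnapgp215} with its refined factoring-through clauses is available.
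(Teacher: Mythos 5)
Your proposal has the direction of morphisms in $\Hrc(S)$ backward, and this is not a detail that can be repaired by a careful choice of iterations. Recall from Definition~\ref{def:gnapgp33} that a morphism $f_\bullet\colon X_\bullet \to Y_\bullet$ in $\Hrc(S)$ has type $\id_I \times \delta$ for a face functor $\delta\colon \square_r \to \square_s$ (so $r \le s$), meaning the source $X_\bullet$ is identified with a \emph{face} of the target $Y_\bullet$: it is the target that has more cubical coordinates. In particular, a ``refinement'' of $X_\bullet$ obtained by further iterating 2-resolutions is a hyperresolution that $X_\bullet$ maps \emph{into}. Your argument builds a common refinement $Z_\bullet$ and then asserts morphisms $Z_\bullet \to X_\bullet$ and $Z_\bullet \to Y_\bullet$; since $Z_\bullet$ is supposed to be bigger, these arrows cannot be morphisms of iterated cubical hyperresolutions. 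The correct shape of the argument, which is exactly what the paper records in Remark~\ref{rem:how-1connected}, is a zig-zag $X \to X^t \leftarrow S' \to Y^t \leftarrow Y$ with all arrows pointing into larger cubes and mapping to $\id_S$ under $w$; one connects $X$ and $Y$ by inverting the $\Sigma_S$-arrows after localizing. Your last paragraph does flag the combinatorial constraint that comparison morphisms must have face type, but the issue is not that your arrows have the wrong type---it is that they point the wrong way, and no amount of iteration fixes that.

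Two secondary issues. First, you invoke a ``componentwise upper envelope $W_\bullet \to S$ of $a$ and $b$ in the sense of Lemma~\ref{lem:gnapgp262},'' but that lemma takes a finite family of proper modifications of a single irreducible space; it does not produce a cubical diagram from two cubical diagrams of possibly different types $\square_r \times I$ and $\square_s \times I$, and it does not interact with the augmentation structure needed for a hyperresolution. Second, the simple connectedness step (``a further refinement can be chosen so that the two compositions agree, because on the dense regular stratum both must be induced by $\id_S$'') is not an argument: agreement on a dense open does not force agreement of morphisms of cubical diagrams, which is precisely why Lemme 3.8.5 in \cite{GNAPGP88} requires a separate construction. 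The paper's own proof is a one-line deferral to \cite[Expos\'e I, Lemmes 3.8.1--3.8.6]{GNAPGP88}, fed by the existence result Theorem~\ref{thm:gnapgp215}; if you want to reconstruct it from scratch, you need to build the refinements by applying 2-resolution to the $1$-faces as in Guill\'en's proof, so that the natural inclusions of the original hyperresolutions are face morphisms, and you need the reverse direction of arrows throughout.
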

\begin{proof}[Proof of Theorem \ref{thm:gnapgp38} and
  Lemma \ref{lem:how-1connected}]
  The proofs of \cite[Expos\'e I, Lemme 3.8.1, Lemme 3.8.2, Lemme 3.8.3, Lemme 3.8.5, Lemme
  3.8.6, and Th\'eor\`eme 3.8]{GNAPGP88} apply using our existence result for
  augmented cubical hyperresolutions (Theorem \ref{thm:gnapgp215}).
\end{proof}
\begin{remark}\label{rem:how-1connected}
  In the proof of Lemma \ref{lem:how-1connected}\((\ref{lem:fiberoversconn})\)
  in \cite[Expos\'e I, Lemme 3.8.3]{GNAPGP88}, Guill\'en
  shows the following:
  With assumptions as in Lemma \ref{lem:how-1connected}, if \(S\) is an
  \(I\)-space and if \(X\) and \(Y\) are two iterated cubical hyperresolutions
  of \(S\), then there is a diagram
  \[
    \begin{tikzcd}[column sep=small]
      X \arrow{dr} & & S'\arrow{dl}\arrow{dr} & &
      Y\arrow{dl}\\
      & X^t & & Y^t
    \end{tikzcd}
  \]
  of morphisms of iterated cubical hyperresolutions of \(S\), where every morphism
  in this diagram maps to the identity on \(S\) under \(w\).
\end{remark}
Using the Axiom of Choice, we can specify an inverse functor for \(\Ho w\) by
specifying its behavior on objects.
\begin{corollary}[cf.\ {\cite[Expos\'e I, (3.9)]{GNAPGP88}}]\label{cor:gnapgp39}
  Suppose that \(\Sp\) is a subcategory of a small category of
  finite-dimensional reduced spaces of one of the
  types
  \((\ref{setup:introalgebraicspaces})\textnormal{--}(\ref{setup:introadicspaces})\).
  Suppose that \(\Sp\) is a subcategory of a category of finite-dimensional
  reduced spaces of one of the
  types
  \((\ref{setup:introalgebraicspaces})\textnormal{--}(\ref{setup:introadicspaces})\)
  that is essentially stable under fiber products, immersions, and proper
  morphisms.
  In cases \((\ref{setup:introalgebraicspaces})\) and
  \((\ref{setup:introformalqschemes})\), suppose that all objects in \(\Sp\) are
  quasi-excellent of equal characteristic zero.
  In cases \((\ref{setup:introberkovichspaces})\),
  \((\ref{setup:introrigidanalyticspaces})\), and
  \((\ref{setup:introadicspaces})\), suppose that
  \(k\) is of characteristic zero.
  \par Let \(I\) be a finite orderable category.
  Consider a section
  \begin{align*}
    \Ob(\eta) \colon{}& \Ob\bigl(\Diag_I(\Sp)\bigr) \longrightarrow
    \Ob\bigl(\Hrc\bigl(\Diag_I(\Sp)\bigr)\bigr)
    \shortintertext{of the map \(\Ob(w_I)\).
    Then, there exists a unique extension of \(\Ob(\eta)\) to a functor}
    \eta\colon{}& \Diag_I(\Sp) \longrightarrow \Ho\Hrc\bigl(\Diag_I(\Sp)\bigr)
  \end{align*}
  that is quasi-inverse to \(\Ho(w_I)\).
  Moreover, if \(\Ob(\eta)\) and \(\Ob(\eta')\) are two sections of \(\Ob(w_I)\),
  there exists a unique natural equivalence of functors \(\Theta \colon \eta'
  \Leftrightarrow \eta\).
  In particular, by Theorem \ref{thm:gnapgp215} and the Axiom of Choice,
  there exists a quasi-inverse \(\eta\) of \(\Ho w_I\) such that
  \begin{enumerate}[label=\((\roman*)\),ref=\roman*]
    \item\label{cor:gnapgp39i} \(\eta(S) = S\) if \(S\) is a regular \(I\)-space.
    \item\label{cor:gnapgp39ii}
      \(\dim(\eta(S)_\alpha) \le \dim(S) - \abs{\alpha} + 1\) for every
      \(\alpha \in \square_r\).
  \end{enumerate}
  \par The analogous statement holds for \(\Hrc_\locproj\).
  If we replace \(\Sp\) with \(\Sp_\comp\), then the analogous statement holds
  for \(\Hrc_\proj\).
\end{corollary}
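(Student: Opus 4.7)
The plan is to derive Corollary \ref{cor:gnapgp39} formally from the fact that $\Ho w_I$ is an equivalence of categories (Theorem \ref{thm:gnapgp38}), combined with the more precise information from Lemma \ref{lem:how-1connected} that the fibers $\Ho\Hrc(S)$ over objects are groupoids equivalent to the punctual category $\underline{1}$ and that the fibers $(\Ho w_I)^{-1}(u)$ over morphisms are nonempty connected groupoids. These two properties already encode all the data needed to specify a quasi-inverse of $\Ho w_I$ up to unique natural equivalence once one has chosen its values on objects.

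Given a section $\Ob(\eta)$, I would first extend it to morphisms $u\colon S \to T$ as follows. Since $(\Ho w_I)^{-1}(u)$ is nonempty, some morphism $g\colon X_\bullet \to Y_\bullet$ of $\Ho\Hrc(\Diag_I(\Sp))$ lies over $u$; composing with the unique isomorphisms $\eta(S) \xrightarrow{\sim} X_\bullet$ and $Y_\bullet \xrightarrow{\sim} \eta(T)$ available in $\Ho\Hrc(S) \simeq \underline{1}$ and $\Ho\Hrc(T) \simeq \underline{1}$ yields a morphism $\eta(u)\colon \eta(S) \to \eta(T)$ in $\Ho\Hrc(\Diag_I(\Sp))$. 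Connectedness of $(\Ho w_I)^{-1}(u)$, together with the triviality of automorphisms in the fibers over identities, forces $\eta(u)$ to be independent of the choice of $g$. Functoriality is then immediate: for composable $u$ and $v$, both $\eta(v)\circ\eta(u)$ and $\eta(v\circ u)$ lie in $(\Ho w_I)^{-1}(v\circ u)$ with identical source and target and so coincide by the same uniqueness argument. The resulting $\eta$ is a quasi-inverse of $\Ho w_I$ because $\Ho w_I \circ \eta$ restricts to the identity on objects and is then forced to be the identity on morphisms by the same fiberwise uniqueness, while $\eta \circ \Ho w_I$ is canonically isomorphic to the identity via the unique isomorphisms in each $\Ho\Hrc(S)$.

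For a natural equivalence $\Theta\colon \eta' \Rightarrow \eta$ between any two such extensions, one uses the unique isomorphism $\Theta_S\colon \eta'(S) \to \eta(S)$ in $\Ho\Hrc(S) \simeq \underline{1}$; naturality in $S$ is again forced by uniqueness of morphisms within each connected fiber $(\Ho w_I)^{-1}(u)$. Finally, to produce the specific quasi-inverse satisfying \((\ref{cor:gnapgp39i})\)--\((\ref{cor:gnapgp39ii})\), apply Theorem \ref{thm:gnapgp215} and the Axiom of Choice to select, for every nonregular $S$, an iterated cubical hyperresolution $Z_\bullet$ of $S$ satisfying the dimension bound $\dim(Z_\alpha) \le \dim(S) - \abs{\alpha} + 1$; for regular $S$, take $S$ itself viewed as an augmented cubical hyperresolution of type $\square^+_{-1}\times I$. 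The same argument goes through verbatim for $\Hrc_\locproj$ using Theorem \ref{thm:gnapgp215}\((\ref{thm:gnapgp215locallyprojective})\) and for $\Hrc_\proj$ on $\Sp_\comp$ using \((\ref{thm:gnapgp215compact})\). The main obstacle is bookkeeping rather than conceptual: one must keep track of the fact that in each of the six variants displayed in the statement, the relevant fiber category is literally equivalent to $\underline{1}$ (not merely connected), which is exactly the content of Theorem \ref{thm:gnapgp38} in each case.
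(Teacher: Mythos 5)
Your proof is correct, and it essentially reproduces the argument that the paper delegates to a citation: the paper's own proof is a one-liner deducing the corollary from Theorem \ref{thm:gnapgp38} together with \cite[Proposition 3.4.3]{Bor94} (the standard categorical fact that once an equivalence and a choice of quasi-inverse on objects are fixed, the quasi-inverse is uniquely determined as a functor, with uniqueness of natural equivalence between any two). Your unpacking in terms of the fiber categories from Lemma \ref{lem:how-1connected} --- using that fibers over objects are equivalent to $\underline{1}$ and fibers over morphisms are nonempty connected groupoids --- is a hands-on reconstruction of exactly the content of that proposition, and your handling of functoriality, the natural equivalence $\Theta$, and the application of Theorem \ref{thm:gnapgp215} to get conditions \((\ref{cor:gnapgp39i})\)--\((\ref{cor:gnapgp39ii})\) are all sound.
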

\begin{proof}
  This result follows from Theorem \ref{thm:gnapgp38} and
  \cite[Proposition 3.4.3]{Bor94}.
\end{proof}
\subsection{Cubical descent for functors}
We also obtain the following descent result as an immediate consequence of
Theorem \ref{thm:gnapgp38}.
\begin{corollary}[Cubical descent for functors;
  {cf.\ 
  \citeleft\citen{GNAPGP88}\citemid Expos\'e I, Corollaire
  3.10\citeright}]\label{cor:gnapgp310}
  Suppose that \(\Sp\) is a subcategory of a small category of
  finite-dimensional reduced spaces of one of the
  types
  \((\ref{setup:introalgebraicspaces})\textnormal{--}(\ref{setup:introadicspaces})\)
  that is essentially stable under fiber products, immersions, and proper morphisms.
  In cases \((\ref{setup:introalgebraicspaces})\) and
  \((\ref{setup:introformalqschemes})\), suppose that all objects in \(\Sp\) are
  quasi-excellent of equal characteristic zero.
  In cases \((\ref{setup:introberkovichspaces})\),
  \((\ref{setup:introrigidanalyticspaces})\), and
  \((\ref{setup:introadicspaces})\), suppose that
  \(k\) is of characteristic zero.
  \par Let \(I\) be a finite orderable category and
  let \(\sC\) be a category.
  We denote by
  \[
    w_I^*\colon \Hom\bigl(\Diag_I(\Sp),\sC\bigr) \longrightarrow
    \Hom\bigl(\Hrc\bigl(\Diag_I(\Sp)\bigr),\sC\bigr)
  \]
  the functor defined by \(w_I^*(F) = F \circ w_I\).
  Then, \(w_I^*\) induces an equivalence of categories between
  \(\Hom(\Diag_I(\Sp),\sC)\) and the full subcategory of
  \(\Hom(\Hrc(\Diag_I(\Sp)),\sC)\) consisting of functors
  \[
    G\colon \Hrc\bigl(\Diag_I(\Sp)\bigr) \longrightarrow \sC
  \]
  satisfying the following condition:
  \begin{enumerate}[label={\((\mathrm{DC})\)},ref=\mathrm{DC}]
    \item\label{cond:descentcubic} For every morphism \(f \in
      \Sigma_{\Diag_I(\Sp)}\), its image \(G(f)\) is an isomorphism in \(\sC\).
  \end{enumerate}
  In particular, if \(\eta\) is a quasi-inverse of \(\Ho w\), for every
  functor \(G\colon \Hrc(\Diag_I(\Sp)) \to \sC\) satisfying 
  \((\ref{cond:descentcubic})\), there exists a natural equivalence of functors
  \(\tau_\eta\colon w_I^*(G_\eta) \Leftrightarrow G\),
  where
  \[
    G_\eta \coloneqq (\Ho G) \circ \eta \colon \Diag_I(\Sp) \longrightarrow \sC.
  \]
  If \(\eta'\) is another quasi-inverse of \(\Ho w_I\), there exists an
  equivalence of functors
  \(G_\Theta\colon G_{\eta'} \Leftrightarrow G_\eta\)
  such that \(\tau_{\eta'} = \tau_\eta \circ (G_\Theta \star w_I)\), where
  \(G_\Theta\) is defined by \(G_\Theta = (\Ho G) \star \Theta\).
  \par The analogous statement holds for \(\Hrc_\locproj\).
  If we replace \(\Sp\) with \(\Sp_\comp\), then the analogous statement holds
  for \(\Hrc_\proj\).
\end{corollary}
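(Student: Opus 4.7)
The plan is to deduce this from Theorem \ref{thm:gnapgp38}, which shows that $\Ho w_I$ is an equivalence of categories, combined with the universal property of the localization $\Hrc(\Diag_I(\Sp)) \to \Ho\Hrc(\Diag_I(\Sp))$ at $\Sigma_{\Diag_I(\Sp)}$. First I would observe that by the universal property of categories of fractions \cite[Proposition 5.2.2]{Bor94}, a functor $G\colon \Hrc(\Diag_I(\Sp)) \to \sC$ satisfies $(\ref{cond:descentcubic})$ if and only if it factors (uniquely up to unique natural isomorphism) through a functor $\Ho G\colon \Ho\Hrc(\Diag_I(\Sp)) \to \sC$, and the assignment $G \mapsto \Ho G$ is an equivalence from the full subcategory of $\Hom(\Hrc(\Diag_I(\Sp)),\sC)$ cut out by $(\ref{cond:descentcubic})$ onto $\Hom(\Ho\Hrc(\Diag_I(\Sp)),\sC)$.

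Next, using a quasi-inverse $\eta\colon \Diag_I(\Sp) \to \Ho\Hrc(\Diag_I(\Sp))$ of $\Ho w_I$ as supplied by Corollary \ref{cor:gnapgp39} via the Axiom of Choice, precomposition with $\eta$ gives an equivalence $\Hom(\Ho\Hrc(\Diag_I(\Sp)),\sC) \to \Hom(\Diag_I(\Sp),\sC)$. Composing the two equivalences, I obtain the functor $G \mapsto G_\eta \coloneqq (\Ho G) \circ \eta$, which is quasi-inverse to $w_I^*$. To construct the natural equivalence $\tau_\eta\colon w_I^*(G_\eta) \Leftrightarrow G$, I would horizontally compose $\Ho G$ with the natural equivalence $\eta \circ \Ho w_I \Leftrightarrow \id$ furnished by the quasi-inverse property of $\eta$, and then pull back along the localization functor $\Hrc(\Diag_I(\Sp)) \to \Ho\Hrc(\Diag_I(\Sp))$, noting that $G$ agrees with $\Ho G$ after this pullback.

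For two quasi-inverses $\eta,\eta'$, the uniqueness clause of Corollary \ref{cor:gnapgp39} produces a unique natural equivalence $\Theta\colon \eta' \Leftrightarrow \eta$, and setting $G_\Theta \coloneqq (\Ho G) \star \Theta$ yields the claimed equivalence $G_{\eta'} \Leftrightarrow G_\eta$. The identity $\tau_{\eta'} = \tau_\eta \circ (G_\Theta \star w_I)$ is then a formal consequence of the interchange law between horizontal and vertical composition in the 2-category $\Cat$; explicitly, both sides unwind to the same pasting diagram built from $\Theta$, the counit of the equivalence $\Ho w_I$, and $\Ho G$.

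The main obstacle is purely bookkeeping: keeping track of the horizontal compositions and checking the coherence identity $\tau_{\eta'} = \tau_\eta \circ (G_\Theta \star w_I)$. All the substantive content is already encoded in Theorem \ref{thm:gnapgp38} and Corollary \ref{cor:gnapgp39}, and no further geometric input is needed. The analogous statements for $\Hrc_\locproj$ and, when $\Sp$ is replaced by $\Sp_\comp$, for $\Hrc_\proj$ follow by exactly the same argument, invoking the corresponding cases of Theorem \ref{thm:gnapgp38} and Corollary \ref{cor:gnapgp39}.
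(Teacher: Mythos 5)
Your proof is correct and follows the same route as the paper's (the paper's own proof is the single sentence ``Use the universal property of categories of fractions [Bor94, Proposition 5.2.2]''). You have simply unpacked the details — identifying functors satisfying \((\mathrm{DC})\) with functors on \(\Ho\Hrc(\Diag_I(\Sp))\), then precomposing with the quasi-inverse \(\eta\) from Corollary \ref{cor:gnapgp39} and checking the naturality/coherence claims — which is exactly what the paper leaves implicit.
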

\begin{proof}
  Use
  the universal property of categories of fractions
  \citeleft\citen{Bor94}\citemid
  Proposition 5.2.2\citeright.
\end{proof}
We therefore obtain the following diagram of functors:
\[
  \begin{tikzcd}[row sep=large]
    \Diag_I\bigl(\Sp_\reg\bigr)\arrow{dd}[swap]{i}\arrow{dr}{j}
    \arrow[bend left=15]{drr}{G \circ j} &[-1em] &[1.5em]\\
    & \Hrc\bigl(\Diag_I(\Sp)\bigr)
    \arrow[r,"G","\displaystyle\Updownarrow"'{pos=0.33,yshift=-2pt}]
    \arrow{dl}[swap]{w_I} & \sC\mathrlap{.}\\
    \Diag_I(\Sp)
    \arrow[bend right=15]{urr}[swap]{G_\eta}
  \end{tikzcd}
\]
Here,
\(i\) and \(j\) are the inclusion functors and \(G\) is
a functor satisfying \((\ref{cond:descentcubic})\).
The left triangle is commutative and the bottom right triangle commutes up to
the equivalence \(\tau_\eta\) that we denote by \(\Updownarrow\).
The outermost triangle is commutative
as long as \(\eta\) satisfies condition \((\ref{cor:gnapgp39i})\) from Corollary
\ref{cor:gnapgp39}.
\subsection{Cubical descent for cohomology}
Finally, we discuss (lisse-)\'etale and pro-\'etale
sheaves on \(I\)-spaces and their cohomology.
In this subsection, we do not need to assume the category \(\Sp\) is
small.\medskip
\par
We will mostly work with the \'etale topology \(X_\et\) on spaces \(X\) (see
\S\ref{sect:etaletop}).
In case \((\ref{setup:introalgebraicspaces})\)
where we use the lisse-\'etale topology \(X_{\lisse\mhyphen\et}\).
As mentioned in \S\ref{sect:etaletop},
in the complex analytic case \((\ref{setup:introcomplexanalyticgerms})\), we
will call the Euclidean topology the \'etale topology to treat all cases
\((\ref{setup:introalgebraicspaces})\textnormal{--}(\ref{setup:introadicspaces})\)
at once.
\par We will also work with the pro-\'etale topology when it is defined.
\begin{definition}[Sheaves on 1-diagrams of spaces
  {\cite[Expos\'e I, (5.3) and D\'efinition 5.4]{GNAPGP88}}]
  Let \(X_\bullet\) be an \(I\)-space and let \(\sC\) be a category.
  A \textsl{(lisse-)\'etale presheaf on \(X_\bullet\) with values in \(\sC\)} is a 2-diagram
  \begin{align*}
    \sF^\bullet\colon{}& I \longrightarrow \Diag_1(\sC)
    \shortintertext{of \(\sC\) such that \(\typ_1(\sF^\bullet) =
    X_{\et\bullet}\) (\(X_{\lisse\mhyphen\et\bullet}\) in case
    \((\ref{setup:introalgebraicspaces})\)).
    Equivalently (see Definition \ref{def:gnapgp16}),
    \(\sF^\bullet\) is a \(\tot(X_{\et\bullet})\)-object (a
    \(\tot(X_{\lisse\mhyphen\et\bullet})\)-object in case
    \((\ref{setup:introalgebraicspaces})\)) of \(\sC\):}
    \tot(\sF^\bullet)\colon{}& \tot(X_{\et\bullet})^\op \longrightarrow \sC,\\
    \tot(\sF^\bullet)\colon{}& \tot(X_{\lisse\mhyphen\et\bullet})^\op \longrightarrow \sC.
  \end{align*}
  The data of \(\sF^\bullet\) is equivalent to specifying the following data:
  \begin{enumerate}[label=\((\roman*)\),ref=\roman*]
    \item A (lisse-)\'etale presheaf \(\sF^i\) on \(X_i\) with values in \(\sC\) for every \(i
      \in \Ob(I)\), and
    \item An \(X_u\)-morphism of (lisse-)\'etale presheaves \(\sF^u\colon \sF^i \to \sF^j\),
      i.e., a morphism \(X_u^{-1}\sF^i \to \sF^j\), for
      every morphism \(u\colon i \to j\) of \(I\).
  \end{enumerate}
  We say that a (lisse-)\'etale presheaf \(\sF^\bullet\) is a \textsl{(lisse-)\'etale
  sheaf on \(X_\bullet\)}
  if for every \(i \in \Ob(I)\), the (lisse-)\'etale presheaf \(\sF^i\) is a
  (lisse-)\'etale
  sheaf on \(X_i\).
  We denote by \(\Sh(X_{\et\bullet},\sC)\)
  (\(\Sh(X_{\lisse\mhyphen\et\bullet},\sC)\) in case
  \((\ref{setup:introalgebraicspaces})\)) the category of (lisse-)\'etale
  sheaves on \(X_\bullet\) with values in \(\sC\).
  \par We define pro-\'etale (pre)sheaves in the same way in cases
  \((\ref{setup:introalgebraicspaces})\) and \((\ref{setup:introadicspaces})\).
\end{definition}
We can compare this definition to the definition of the total topos associated
to a fibered site from \cite[Expos\'e VI, \S7]{SGA42} and use this description
to discuss derived categories of sheaves on \(X_\bullet\).
\begin{remark}[Total topoi; cf.\ {\cite[Expos\'e I, (5.8) and Remarque 5.18]{GNAPGP88}}]
  By the Grothendieck construction
  \citeleft\citen{Ill72}\citemid Chapitre VI, Exemple
  5.6\((b)\)\citepunct \citen{Joh021}\citemid Theorem B1.3.5\citeright,
  we can think of
  \(\Sh(X_{\et\bullet},\Sets)\) (\(\Sh(X_{\lisse\mhyphen\et\bullet},\sC)\) in case
  \((\ref{setup:introalgebraicspaces})\)) as the total topos associated to a fibered site as
  follows.
  First, by taking associated sites for each \(i \in \Ob(I)\), the
  (lisse-)\'etale sites on each \(X_i\)
  define a fibered site over \(I^\op\) in the
  sense of \cite[Expos\'e VI, D\'efinition 7.1.1]{SGA42}.
  The associated total site \cite[Expos\'e VI, D\'efinition 7.4.1]{SGA42} has an
  associated topos of sheaves, which is called the \textsl{total topos}
  \(\Top(X_{\et\bullet})\) (\(\Top(X_{\lisse\mhyphen\et\bullet})\) in case
  \((\ref{setup:introalgebraicspaces})\)) \cite[Expos\'e VI, Remarque 7.4.3(3)]{SGA42}.
  This topos is equivalent to the category
  \(\Sh(X_{\et\bullet},\Sets)\) (\(\Sh(X_{\lisse\mhyphen\et\bullet},\Sets)\) in case
  \((\ref{setup:introalgebraicspaces})\)) by \citeleft\citen{Ill72}\citemid
  Chapitre VI, Exemple 5.6\((b)\)\citeright.
  \par Now let \(A\) be a torsion ring whose characteristic is a unit on every
  \(X_i\).
  Since \(\Top(X_{\et\bullet})\) (\(\Top(X_{\lisse\mhyphen\et\bullet})\) in case
  \((\ref{setup:introalgebraicspaces})\)) arises as the topos of sheaves on a site, the
  subcategory \(\Sh(X_{\et\bullet},A\mhyphen\Mod)\)
  (\(\Sh(X_{\lisse\mhyphen\et\bullet},A\mhyphen\Mod)\) in case
  \((\ref{setup:introalgebraicspaces})\)) of sheaves of \(A\)-modules is
  Grothendieck Abelian by \cite[\href{https://stacks.math.columbia.edu/tag/07A5}{Tag
  07A5}]{stacks-project}.
  Even though this category is not small, we can still consider the associated
  derived categories
  \(\D^{\natural}(X_{\et\bullet},A\mhyphen\Mod)\)
  (\(\D^{\natural}(X_{\lisse\mhyphen\et\bullet},A\mhyphen\Mod)\) in case
  \((\ref{setup:introalgebraicspaces})\))
  for \(\natural \in \{*,+,-,b\}\) by
  \cite[\href{https://stacks.math.columbia.edu/tag/05RU}{Tag
  05RU} and \href{https://stacks.math.columbia.edu/tag/09PA}{Tag
  09PA}]{stacks-project}.
  We can then define right derived functors for additive functors out of
  \(\Sh(X_{\et\bullet},A\mhyphen\Mod)\) (\(\Sh(X_{\lisse\mhyphen\et\bullet},\Sets)\) in case
  \((\ref{setup:introalgebraicspaces})\))
  using \(K\)-injective
  resolutions 
  \citeleft\citen{stacks-project}\citemid
  \href{https://stacks.math.columbia.edu/tag/07A5}{Tag 07A5}\citeright.
  \par The same discussion applies to the pro-\'etale site \(X_\proet\) in cases
  \((\ref{setup:introalgebraicspaces})\) and \((\ref{setup:introadicspaces})\).
\end{remark}
We will use this translation between the language
of 1-diagrams and of total topoi to utilize results from
\cite[Chapitre VI]{Ill72} instead of reproving them in the context of
1-diagrams.\medskip
\par Inverse images and direct images for morphisms of 1-diagrams of
spaces are defined as follows.
\begin{definition}[Inverse images and direct images of sheaves on 1-diagrams of
  spaces {\citeleft\citen{Ill72}\citemid Chapitre VI, (5.5) and (5.8)\citepunct
  \citen{GNAPGP88}\citemid Expos\'e I, (5.5)\citeright}]
  Let \(X_\bullet\) and \(Y_\bullet\) be 1-diagrams of spaces of types \(I\) and
  \(J\), respectively.
  Let \(\varphi\colon I \to J\) be a functor and let \(f_\bullet\colon X_\bullet
  \to Y_\bullet\) be a \(\varphi\)-morphism of 1-diagrams of spaces.
  If \(\sG^\bullet\) is a sheaf on \(Y_\bullet\) with values in a cocomplete
  category \(\sC\), we denote by \(f_\bullet^{-1}\sG^\bullet\) the sheaf on
  \(X_\bullet\) defined by
  \[
    \bigl(f_\bullet^{-1}\sG^\bullet\bigr)^i \coloneqq
    f_i^{-1}\bigl(\sG^{\varphi(i)}\bigr)
  \]
  for every \(i \in \Ob(I)\).
  This definition yields a functor
  \begin{alignat*}{3}
    f_\bullet^{-1}&{}\colon{}& \Sh(Y_\bullet,\sC) &\longrightarrow
    \Sh(X_\bullet,\sC).
    \shortintertext{If \(\sC\) is also complete, then \(f_\bullet^{-1}\) admits a
    right adjoint}
    f_{\bullet*}&{}\colon{}& \Sh(X_\bullet,\sC) &\longrightarrow \Sh(Y_\bullet,\sC)
  \end{alignat*}
  (see \cite[Expos\'e I, Proposition 5.1]{SGA41}), defined as follows.
  If \(\sF^\bullet\) is a sheaf on \(X_\bullet\) with values in \(\sC\), then
  \(f_{\bullet*}\sF^\bullet\) is the sheaf on \(Y_\bullet\) defined by
  \[
    \bigl(f_{\bullet*}\sF^\bullet\bigr)^j \coloneqq
    \varprojlim_{(i,u) \in \Ob(j \backslash \varphi)} (Y_{u})_*\bigl(f_{i*}\sF^i\bigr)
  \]
  for every \(j \in \Ob(J)\).
\end{definition}
We can compute derived direct images component-wise as follows.
\begin{proposition}[cf.\ {\citeleft\citen{Ill72}\citemid Chapitre VI, (5.8) and
  Proposition 6.3.1\citepunct \citen{GNAPGP88}\citemid Expos\'e I, Proposition
  5.10, Proposition 5.11, (5.12), Corollaire 5.1, and Corollaire 5.14\citeright}]
  \label{prop:componentwise}
  Suppose that \(\Sp\) is a subcategory of a category of spaces of one of the types
  \((\ref{setup:introalgebraicspaces})\textnormal{--}(\ref{setup:introadicspaces})\)
  that is essentially stable under fiber products, immersions, and proper
  morphisms.
  Let \(\varphi\colon I \to J\) be a functor between small categories.
  Let \(X\) and \(Y\) be an \(I\)-space and a \(J\)-space, respectively, and
  consider a \(\varphi\)-morphism \(f_\bullet\colon X_{\bullet} \to
  Y_\bullet\).
  Consider the factorization
  \[
    f_\bullet\colon X_{\bullet} \xrightarrow{f_{\bullet\bullet}} Y_\bullet
    \times_J I \overset{\varphi_\bullet}{\longrightarrow} Y_\bullet.
  \]
  Consider an object \(\sF^\bullet\) of \(\D^*(X_{\bullet\et},A\mhyphen\Mod)\)
  and an object
  \(\sG^\bullet\) of \(\D^*(Y_{\bullet\et} \times_J I,A\mhyphen\Mod)\).
  For every \(i \in \Ob(I)\) and every \(j \in \Ob(J)\), we have
  \begin{align*}
    \RR f_{\bullet*} &= \RR\varphi_{\bullet*} \circ \RR
    f_{\bullet\bullet*},\\
    \bigl(\RR f_{\bullet*}\sF^\bullet\bigr)^j &= \RRvarprojlim_{(i,u) \in \Ob(j
    \backslash \varphi)} \bigl(\RR f_{i\bullet} \sF^i\bigr),\\
    \bigl(\RR \varphi_{\bullet*}\sG^\bullet\bigr)^j &= \RRvarprojlim_{i \in
    \Ob(\varphi^{-1}(j))} \sG^i,\\
    \bigl(\RR f_{\bullet\bullet*}\sF^\bullet\bigr)^i &= \RR f_{i*}\sF^i.
  \end{align*}
  Now suppose \(K_\bullet\) is an \(I\)-object of \(\Cat\) and let \(\pi\colon
  \tot(K_\bullet) \to I\) be the projection functor.
  Let \(X_{\bullet\bullet}\) be a \(\tot(K_\bullet)\)-space with a
  \(\pi\)-augmentation \(a_\bullet\colon X_{\bullet\bullet} \to S_\bullet\) to
  an \(I\)-space \(S_\bullet\).
  Denote by \(a_i \colon X_{i\bullet} \to S_i\) the augmentation for the
  \(K_i\)-space \(X_{i\bullet}\) induced by \(a_\bullet\).
  Consider an object \(\sF^{\bullet\bullet}\) of
  \(\D^*(X_{\bullet\bullet\et},A\mhyphen\Mod)\).
  For every \(i \in \Ob(I)\), we have
  \[
    \bigl(\RR a_{\bullet*}\sF^{\bullet\bullet}\bigr)^i
    = \RR a_{i*}\sF^{i\bullet}
    = \RRvarprojlim_{k \in \Ob(K_i)} \RR a_{ik*} \sF^{ik}.
  \]
  The same statements hold for the pro-\'etale topology in cases
  \((\ref{setup:introalgebraicspaces})\) and \((\ref{setup:introadicspaces})\).
\end{proposition}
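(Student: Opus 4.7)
The plan is to reduce the statement to the general topos-theoretic formalism of Illusie \cite[Chapitre VI]{Ill72}, via the identification of $\Sh(X_{\bullet\et},A\mhyphen\Mod)$ (and $\Sh(X_{\bullet\lisse\mhyphen\et},A\mhyphen\Mod)$ in case $(\ref{setup:introalgebraicspaces})$) with the category of sheaves on the total topos of the fibered (lisse-)\'etale site over $I^\op$, as indicated in the remark on total topoi. Once this translation is made, none of the arguments of \cite{Ill72} or of \cite[Expos\'e I, \S5]{GNAPGP88} care about which of the categories $(\ref{setup:introalgebraicspaces})\textnormal{--}(\ref{setup:introadicspaces})$ the underlying spaces come from: the computations only use that each $X_i$ carries a reasonable site with enough $K$-injective resolutions and that pullback/pushforward along the component morphisms $f_i$ are well-behaved. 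Our assumptions on $\Sp$ and on $A$ (a torsion ring whose characteristic is a unit on every $X_i$) ensure both.

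With this in place, the main computations proceed in three steps. First, one checks the factorization formula $\RR f_{\bullet*} = \RR\varphi_{\bullet*} \circ \RR f_{\bullet\bullet*}$ by observing that at the level of underived functors the factorization holds from the universal property of $Y_\bullet\times_J I$, and then applying Grothendieck composition of derived functors: $\RR f_{\bullet\bullet*}$ preserves $K$\hyph{}injectives because $f_{\bullet\bullet}^{-1}$ is exact and admits a left adjoint, so the composition equals the derived composition. Second, the $I$-morphism case $(\RR f_{\bullet\bullet*}\sF^\bullet)^i = \RR f_{i*}\sF^i$ follows from the fact that for an $I$-morphism the underlying map of total sites is fibered over $I^\op$, and the restriction of a $K$-injective sheaf on the total topos to each component is $K$-injective on $X_{i\et}$; this is \cite[Chapitre VI, Proposition 6.3.1]{Ill72}. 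Third, for $\RR\varphi_{\bullet*}$ the underlying map on spaces is an isomorphism over each fiber, so the direct image is pure Kan extension and the formula $(\RR\varphi_{\bullet*}\sG^\bullet)^j = \RRvarprojlim_{i \in \Ob(\varphi^{-1}(j))} \sG^i$ is the derived right Kan extension along $\varphi$, which simplifies because each $u\in\Mor(j\backslash\varphi)$ with nontrivial domain factors through an object of $\varphi^{-1}(j)$ via an initial subcategory argument. Combining the three gives the general formula for $(\RR f_{\bullet*}\sF^\bullet)^j$.

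For the augmented case, one specializes the preceding discussion to the projection functor $\pi\colon \tot(K_\bullet) \to I$ and the $\pi$-augmentation $a_\bullet$. Since $\pi^{-1}(i) = K_i$, the fiber of $a_\bullet$ over $i$ is precisely the augmented $K_i$-space $a_i\colon X_{i\bullet}\to S_i$, so the general formulas specialize to $(\RR a_{\bullet*}\sF^{\bullet\bullet})^i = \RR a_{i*}\sF^{i\bullet}$ and then further to $\RRvarprojlim_{k\in\Ob(K_i)} \RR a_{ik*}\sF^{ik}$ by a second application of the component-wise formula to the $K_i$-morphism underlying $a_i$. The statement for the pro-\'etale topology in cases $(\ref{setup:introalgebraicspaces})$ and $(\ref{setup:introadicspaces})$ is handled identically, using the pro-\'etale sites constructed in \cite{BS15,Cho20,Sch13}; nothing in the argument is specific to the \'etale topology.

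The main obstacle, as is typical when transplanting the formalism of \cite{Ill72} to categories outside of schemes, lies in verifying the set-theoretic and homological prerequisites for the total topos: namely, that $\Sh(X_{\bullet\et},A\mhyphen\Mod)$ is Grothendieck Abelian so $K$-injective resolutions exist, and that the derived pushforwards along the fiber maps behave well with respect to homotopy limits in the total topos. These are handled uniformly by the discussion in the remark on total topoi together with the references to \cite{stacks-project} cited there, and in the lisse-\'etale case by the six-functor formalism of \cite{LO08,LZ}. Once these foundations are in place, the content of Proposition \ref{prop:componentwise} is a formal consequence and the proof reduces to verifying that no step requires a hypothesis beyond what we have assumed on $\Sp$ and $A$.
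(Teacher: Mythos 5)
Your overall approach matches the paper's: translate to Illusie's total topos, observe that restriction to each component preserves $K$-injectives, and compute component-wise. The paper's own proof is considerably terser than yours --- it simply notes that restriction to each $X_i$ or $(Y_\bullet\times_J I)_i$ has an exact left adjoint (Illusie \cite[Chapitre VI, (5.3)]{Ill72}), hence preserves $K$-injectives \cite[\href{https://stacks.math.columbia.edu/tag/08BJ}{Tag 08BJ}]{stacks-project}, so that every formula follows from the underived definitions applied to a $K$-injective resolution; and then for the last equation invokes coinitiality of $K_i$ in the fiber category $\pi^{-1}(i)$. Your step-by-step breakdown is consistent with that plan.

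However, your third step has a gap. You write that the formula $(\RR\varphi_{\bullet*}\sG^\bullet)^j = \RRvarprojlim_{i \in \Ob(\varphi^{-1}(j))}\sG^i$ ``simplifies because each $u\in\Mor(j\backslash\varphi)$ with nontrivial domain factors through an object of $\varphi^{-1}(j)$ via an initial subcategory argument.'' If $\varphi^{-1}(j)$ is the strict fiber of $\varphi$, this factorization claim is false for an arbitrary functor $\varphi\colon I \to J$: take $I$ discrete with objects $a,b$, take $J = \{0 \to 1\}$, and set $\varphi(a)=0$, $\varphi(b)=1$. Then $\varphi^{-1}(0) = \{a\}$, while $0\backslash\varphi$ also contains the object $(b, 0\to 1)$, which receives no morphism from $(a,\id_0)$ since $I$ has no nonidentity morphisms. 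So the inclusion of the strict fiber into the comma category is not initial, and the limit does not reduce. The coinitiality reduction to the strict fiber only becomes available for the projection $\pi\colon\tot(K_\bullet)\to I$ from the Grothendieck construction, which is a cofibration in $\Cat$; that is exactly the only place the paper's proof invokes coinitiality (namely, for the final displayed equation). For the third formula you should instead read $\varphi^{-1}(j)$ as the comma category appearing in the Kan extension formula (the same indexing category that appears in the second formula), so no reduction is needed at that stage.

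A smaller point: the justification that ``$\RR f_{\bullet\bullet*}$ preserves $K$-injectives because $f_{\bullet\bullet}^{-1}$ is exact and admits a left adjoint'' conflates two statements. The relevant fact is that $f_{\bullet\bullet}^{-1}$ \emph{is} the left adjoint of $f_{\bullet\bullet*}$ and is exact, hence $f_{\bullet\bullet*}$ preserves $K$-injectives; whether $f_{\bullet\bullet}^{-1}$ itself admits a further left adjoint is irrelevant to the composition law $\RR f_{\bullet*} = \RR\varphi_{\bullet*}\circ\RR f_{\bullet\bullet*}$.
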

\begin{proof}
  Since restricting to each \(X_i\) or \((Y_\bullet
  \times_J I)_i\) has a left adjoint that is exact by definition in
  \cite[Chapitre VI, (5.3)]{Ill72},
  restricting to each \(X_i\) or \((Y_\bullet
  \times_J I)_i\) preserves \(K\)-injective objects by
  \cite[\href{https://stacks.math.columbia.edu/tag/08BJ}{Tag
  08BJ}]{stacks-project}.
  Thus, all properties follow from taking \(K\)-injective resolutions and
  computing component-wise because direct images are
  defined component-wise.
  For the last equation, we apply the third equation and the fact that \(K_i\)
  is coinitial with the fiber category \(\pi^{-1}(i)\).
\end{proof}
\par To state the main result of this subsection,
we define morphisms of cohomological descent.
\begin{definition}[Morphisms of cohomological descent; cf.\ {\cite[Expos\'e I, D\'efinition
  5.16]{GNAPGP88}}]\label{def:gnapgp516}
  Suppose \(K_\bullet\) is an \(I\)-object of \(\Cat\) and let \(\pi\colon
  \tot(K_\bullet) \to I\) be the projection functor.
  Let \(X_{\bullet\bullet}\) be a \(\tot(K_\bullet)\)-space with a
  \(\pi\)-augmentation
  \(a_\bullet\colon X_{\bullet\bullet} \rightarrow S_\bullet\)
  to an \(I\)-space \(S_\bullet\).
  We say that \(a_\bullet\) or \(X_{\bullet\bullet}\) is \textsl{of
  (lisse-)\'etale} (resp.\ \textsl{pro-\'etale}) \textsl{cohomological descent
  over \(S_\bullet\)}
  if, for every torsion ring \(A\) (resp.\ every ring \(A\)) and
  every (lisse-)\'etale (resp.\ pro-\'etale) sheaf \(\sF^\bullet\) of
  \(A\)-modules, the unit morphism
  \[
    \sF^\bullet \longrightarrow \RR a_{\bullet*}a_\bullet^{-1}\sF^\bullet
  \]
  is a quasi-isomorphism.
\end{definition}
Since \(K\)-injectivity in \(\D^\natural(X_{\et\bullet},A\mhyphen\Mod)\) can be checked
component-wise by the proof of Proposition \ref{prop:componentwise}
and derived functors are computed component-wise,
we can check whether a morphism
is of cohomological descent component-wise.
\begin{proposition}[cf.\ {\cite[Expos\'e I, Proposition 5.17]{GNAPGP88}}]
  \label{prop:gnapgp517}
  Suppose that \(\Sp\) is a subcategory of a category of spaces of one of the types
  \((\ref{setup:introalgebraicspaces})\textnormal{--}(\ref{setup:introadicspaces})\)
  that is essentially stable under fiber products, immersions, and proper
  morphisms.
  Let \(I\) be a small category.
  Suppose \(K_\bullet\) is an \(I\)-object of \(\Cat\) and let \(\pi\colon
  \tot(K_\bullet) \to I\) be the projection functor.
  Let \(X_{\bullet\bullet}\) be a \(\tot(K_\bullet)\)-space with a
  \(\pi\)-augmentation
  \(a_\bullet\colon X_{\bullet\bullet} \to S_\bullet\)
  to an \(I\)-space \(S_\bullet\).
  Then, \(a_\bullet\) is of (lisse-)\'etale (resp.\ pro-\'etale)
  cohomological descent over \(S_\bullet\) if and only
  if for every \(i \in \Ob(I)\), the augmentation
  \(a_i\colon X_{i\bullet} \to S_i\)
  over the \(K_i\)-space \(X_{i\bullet}\) is of (lisse-)\'etale (resp.\
  pro-\'etale) cohomological descent over
  \(S_i\).
\end{proposition}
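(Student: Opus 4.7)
The plan is to deduce both directions from Proposition \ref{prop:componentwise} by verifying the defining quasi-isomorphism condition component by component. Unraveling Definition \ref{def:gnapgp516}, cohomological descent for \(a_\bullet\) asserts that for every torsion ring \(A\) (or arbitrary ring \(A\) in the pro-\'etale case) and every sheaf \(\sF^\bullet\) of \(A\)-modules on \(S_\bullet\), the adjunction unit
\[
\eta_{\sF^\bullet}\colon \sF^\bullet \longrightarrow \RR a_{\bullet*}a_\bullet^{-1}\sF^\bullet
\]
is a quasi-isomorphism in \(\D^+(S_{\bullet\et},A\mhyphen\Mod)\). As recalled in the proof of Proposition \ref{prop:componentwise}, for each \(i \in \Ob(I)\) the component-restriction functor \(\sF^\bullet \mapsto \sF^i\) on \(\Sh(S_{\bullet\et},A\mhyphen\Mod)\) is exact and admits an exact left adjoint. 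Consequently, a morphism in \(\D^+(S_{\bullet\et},A\mhyphen\Mod)\) is a quasi-isomorphism if and only if its \(i\)-th component is a quasi-isomorphism for every \(i \in \Ob(I)\).

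Next I identify the \(i\)-th component of \(\eta_{\sF^\bullet}\). By the definition of inverse image of diagrams of sheaves, \((a_\bullet^{-1}\sF^\bullet)^{ik} = a_{ik}^{-1}\sF^i\), so the \(i\)-th slice of \(a_\bullet^{-1}\sF^\bullet\) over \(X_{i\bullet}\) is just \(a_i^{-1}\sF^i\). Applying Proposition \ref{prop:componentwise} to the \(\pi\)-augmentation \(a_\bullet\) then yields
\[
\bigl(\RR a_{\bullet*}a_\bullet^{-1}\sF^\bullet\bigr)^i \;\simeq\; \RR a_{i*}a_i^{-1}\sF^i,
\]
and under this identification the \(i\)-th component of \(\eta_{\sF^\bullet}\) becomes the unit \(\sF^i \to \RR a_{i*}a_i^{-1}\sF^i\) associated to the augmentation \(a_i\colon X_{i\bullet} \to S_i\). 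The backward direction of the proposition is therefore immediate: if every \(a_i\) is of cohomological descent, then each component of \(\eta_{\sF^\bullet}\) is a quasi-isomorphism, hence so is \(\eta_{\sF^\bullet}\).

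For the forward direction I must ensure that every sheaf of \(A\)-modules \(\sG\) on a fixed \(S_{i_0}\) arises as the \(i_0\)-component of some sheaf \(\sF^\bullet\) on \(S_\bullet\); together with the previous paragraph this yields the desired quasi-isomorphism \(\sG \to \RR a_{i_0*}a_{i_0}^{-1}\sG\). Such an \(\sF^\bullet\) is supplied by applying the left adjoint to the component-restriction functor to \(\sG\), the fully faithfulness of this adjoint providing a canonical identification of its \(i_0\)-component with \(\sG\). The pro-\'etale case in \((\ref{setup:introalgebraicspaces})\) and \((\ref{setup:introadicspaces})\) is proved by exactly the same argument, invoking the pro-\'etale version of Proposition \ref{prop:componentwise}. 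There is essentially no real obstacle here: the only point worth flagging is the standard fact, used implicitly in \cite[Chapitre VI]{Ill72}, that the restriction-to-a-component functor on diagrams of topoi admits a fully faithful exact left adjoint, which is precisely what promotes the component-wise statement to a global one.
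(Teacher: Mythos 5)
Your proof takes essentially the same approach as the paper: verify the defining quasi-isomorphism condition component-wise using Proposition~\ref{prop:componentwise}. The backward direction (``if'') is correct, and the identification of the \(i\)-th component of the unit with the unit for \(a_i\) via Proposition~\ref{prop:componentwise} is exactly the right mechanism.

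There is, however, a genuine gap in the forward direction (``only if''). The claim that the left adjoint \(e_{i!}\) to component-restriction \(e_i^*\) is \emph{fully faithful}---equivalently, that the unit \(\sG \to e_i^*e_{i!}\sG\) is an isomorphism---fails for general small \(I\). The left Kan extension formula (using that the comma category \((\{i\}\downarrow j)\) is the discrete set \(\Hom_I(i,j)\)) gives, for sheaves of \(A\)-modules,
\[
  \bigl(e_{i!}\sG\bigr)^j \;\simeq\; \bigoplus_{u \in \Hom_I(i,j)} S_u^{-1}\sG,
\]
and the unit at \(\sG\) is the inclusion of the \(u = \id\)-summand into \(\bigoplus_{u \in \Hom_I(i,i)} S_u^{-1}\sG\). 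This is an isomorphism precisely when \(\Hom_I(i,i) = \{\id\}\), i.e., when \(I\) is rigid at \(i\) (as for orderable \(I\), which covers the paper's applications, but is not assumed in the proposition). Moreover, \cite[Chapitre VI, (5.3)]{Ill72} gives exactness of this left adjoint but does \emph{not} assert fully faithfulness; your attribution there is inaccurate.

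The fix is small: even when \(\Hom_I(i,i)\) is larger, the unit \(\sG \to e_i^*e_{i!}\sG\) is the inclusion of a \emph{direct summand}. Since \(\RR a_{i*}a_i^{-1}\) is additive and adjunction units are natural, the unit for \(e_i^*e_{i!}\sG\) decomposes as the block-diagonal direct sum of the units for its summands; if the whole is a quasi-isomorphism, then so is each block, in particular the one for \(\sG\). This closes the gap with no change to the overall strategy. (Incidentally, the paper's own one-sentence justification preceding the proposition only clearly covers the backward implication, which is the only direction used downstream in Theorem~\ref{thm:gnapgp69}; your write-up is actually more explicit about what the ``only if'' requires.)
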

Using Proposition \ref{prop:gnapgp517}, the proof of the following cubical
descent result is identical to that in \cite{GNAPGP88} after replacing
topological spaces with sites.
\begin{theorem}[Cubical descent for cohomology;
  cf.\ {\cite[Expos\'e I, Th\'eor\`eme 6.9]{GNAPGP88}}]\label{thm:gnapgp69}
  Suppose that \(\Sp\) is a subcategory of a category of spaces of one of the types
  \((\ref{setup:introalgebraicspaces})\textnormal{--}(\ref{setup:introadicspaces})\)
  that is essentially stable under fiber products, immersions, and proper
  morphisms.
  Let \(I\) be a small category and
  let \(S\) be an \(I\)-space.
  If \(X_\bullet \to S\) is an iterated cubical hyperresolution of \(S\), then
  \(X_\bullet\) is of cohomological descent over \(S\).
\end{theorem}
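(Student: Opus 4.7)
The plan is to reduce to the case of a single elementary acyclic square and invoke proper base change in each of the topologies at hand. Because the theorem promises descent both in the \'etale/lisse-\'etale and (where defined) in the pro-\'etale setting, I will carry out each reduction uniformly and only invoke the proper base change statement at the very end.

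First I would use Proposition~\ref{prop:gnapgp517} to reduce to the case where the base \(I\) is a single object: cohomological descent for \(a_\bullet \colon X_{\bullet\bullet}\to S_\bullet\) over \(I\) is equivalent to cohomological descent for each \(a_i\colon X_{i\bullet}\to S_i\). So I may assume \(S\) is a single space in \(\Sp\). Next, by induction on the number \(n\) of iterations, I would reduce to the non-iterated case: an \(n\)-iterated cubical hyperresolution is a 1-iterated cubical hyperresolution of an \((n-1)\)-iterated cubical hyperresolution, and cohomological descent is transitive (the unit \(\sF\to R a_{*}a^{-1}\sF\) composes with \(a^{-1}\sF\to Rb_*b^{-1}a^{-1}\sF\) to give the unit for the composite augmentation).

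Given a (non-iterated) augmented cubical hyperresolution
\[
Z_\bullet = \rd\bigl(X^1_\bullet,X^2_\bullet,\ldots,X^{r}_\bullet\bigr)\longrightarrow S,
\]
I would induct on \(r\). The reduction construction identifies \(X^n_{1\bullet}\) with \(X^{n+1}_{00\bullet}\), so \(Z_\bullet\) is obtained by replacing the face \(X^1_{1\bullet}\) inside \(X^1_\bullet\) by an iterated cubical hyperresolution of that face. Applying Proposition~\ref{prop:componentwise} to the relevant total diagram (so that \(R a_*\) can be computed over \(X^1_{1\bullet}\) first and then pushed forward) reduces cohomological descent for \(Z_\bullet\) to cohomological descent for the augmented 2-resolution \(X^1_\bullet\to S\) together with the inductive hypothesis applied to \(\rd(X^2_\bullet,\ldots,X^r_\bullet)\to X^1_{1\bullet}\).

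This reduces everything to the base case of an augmented 2-resolution, i.e., an acyclic square
\[
\begin{tikzcd}
\tilde Y \rar[hook]{j}\dar[swap]{g} & \tilde X\dar{f}\\
Y \rar[hook]{i} & X
\end{tikzcd}
\]
with \(f\) proper and \(Y\supseteq \Delta(f)\). Writing \(a\colon Z_\bullet\to X\) for the augmentation, I need the natural morphism \(\sF\to Ra_*a^{-1}\sF\) to be a quasi-isomorphism for any torsion (resp.\ arbitrary, in the pro-\'etale case) sheaf \(\sF\). Unwinding the total topos, this is exactly the statement that the square of functors
\[
\begin{tikzcd}
\sF \rar\dar & Rf_*f^{-1}\sF\dar\\
i_*i^{-1}\sF \rar & R(i\circ g)_*(i\circ g)^{-1}\sF
\end{tikzcd}
\]
is homotopy Cartesian. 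I would verify this locally on \(X\): over the open \(U=X-Y\), the morphism \(f\) is an isomorphism and both \(i_*i^{-1}\sF\) and \(R(i\circ g)_*(i\circ g)^{-1}\sF\) vanish, while \(\sF\to Rf_*f^{-1}\sF\) is tautologically an isomorphism. Over \(Y\), proper base change identifies \(i^{-1}Rf_*f^{-1}\sF\simeq Rg_*\tilde\imath^{-1}f^{-1}\sF = Rg_*g^{-1}i^{-1}\sF\), and the square collapses to the trivial identity \(i^{-1}\sF\simeq i^{-1}\sF\).

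The main obstacle is ensuring that proper base change actually holds uniformly across all six setups. For schemes with torsion coefficients this is classical (SGA~4), for algebraic stacks one uses the six-functor formalism of \cite{LO08,LZ} on the lisse-\'etale site, for formal schemes one reduces via \cite[Expos\'e~I, Corollaire~8.4]{SGA1new}, for complex analytic germs one uses proper base change in the Euclidean topology, for Berkovich, rigid, and adic spaces one invokes the proper base change theorems in \cite{Ber93,Hub96,Sch13}, and for the pro-\'etale case one cites \cite{BS15,Sch13}. Granted these inputs, the argument of \cite[Expos\'e~I, Th\'eor\`eme~6.9]{GNAPGP88} transposes verbatim by replacing topological spaces with the appropriate sites.
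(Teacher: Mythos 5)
Your proof is correct and takes the same approach as the paper; the paper's proof simply cites \cite[Expos\'e I, Th\'eor\`eme 6.9]{GNAPGP88} after noting that the six-functor formalism (in particular proper base change, the key input in your argument) is available in each of the categories \((\ref{setup:introalgebraicspaces})\textnormal{--}(\ref{setup:introadicspaces})\) for the relevant topology. Your unfolding of that argument---reducing via Proposition~\ref{prop:gnapgp517} and transitivity of descent to a single acyclic square, then invoking proper base change over the closed subscheme and triviality over the open complement---is precisely the structure of the cited proof; a minor point of care in the inductive step on \(r\) is that \(\rd(X^2_\bullet,\ldots,X^r_\bullet)\to X^1_{1\bullet}\) is an augmentation over a \(\square^+_0\)-space rather than a single space, so Proposition~\ref{prop:gnapgp517} must be reapplied before invoking the inductive hypothesis, but this is harmless since the induction is on \(r\) alone.
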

\begin{proof}
  The proof of \cite[Expos\'e I, Th\'eor\`eme 6.9]{GNAPGP88} applies directly to case
  \((\ref{setup:introcomplexanalyticgerms})\) and applies to the other cases
  once we have a six functor formalism for (lisse-)\'etale sheaves and
  pro-\'etale sheaves (when the latter are defined).
  For (lisse-)\'etale sheaves, see
  \cite{SGA43,LO08,LZ} in case \((\ref{setup:introalgebraicspaces})\) (which also
  applies to formal schemes by \cite[Expos\'e I, Corollaire 8.4]{SGA1new})
  and \cite{Ber93,Hub96} in cases \((\ref{setup:introberkovichspaces})\),
  \((\ref{setup:introrigidanalyticspaces})\), and
  \((\ref{setup:introadicspaces})\).
  For pro-\'etale sheaves, see \citeleft\citen{BS15}\citemid \S6\citepunct
  \citen{Cho20}\citeright\ in case
  \((\ref{setup:introalgebraicspaces})\) and see \cite[\S3]{Sch13} in case
  \((\ref{setup:introadicspaces})\).
\end{proof}

\begingroup
\makeatletter
\renewcommand{\@secnumfont}{\bfseries}
\part{Applications}
\label{part:applications}
\makeatother
\endgroup
In this part, we prove our applications of our results on cubical
hyperresolutions from Part \ref{part:cubical}.\bigskip
\section{An extension criterion for functors defined on regular spaces}\label{sect:gna02}
The goal of this section is to generalize the extension criterion for functors
defined on regular spaces, originally due to Guill\'en and Navarro Aznar for
smooth varieties
\cite[Th\'eor\`eme 2.1.5]{GNA02} and to Cirici and Guill\'en for
compactifiable complex analytic spaces \cite[Theorem
3.3]{CG14} to spaces of the form
\((\ref{setup:introalgebraicspaces})\textnormal{--}(\ref{setup:introadicspaces})\).
\subsection{Conventions}
We denote by \(\Sp\) a subcategory of a small category of reduced spaces of one of the types
\((\ref{setup:introalgebraicspaces})\textnormal{--}(\ref{setup:introadicspaces})\)
that is essentially stable under fiber products, immersions, and proper
morphisms.
We will assume \(\Sp\) is chosen according to the strategy outlined in
Remark \ref{rem:smallspaces}.
We denote by \(\Sp_\reg\) the subcategory of \(\Sp\) consisting of regular spaces.
We denote by \(I\) a small category.
While we will remind the reader of these assumptions in statements of results in
this section, we will not restate these assumptions in definitions or remarks.
\par We will also continue to use the terminology \textsl{\(I\)-space} from
Definition \ref{def:ispaces} to refer to objects in \(\Diag_I(\Sp)\).
\par The categories \(\Sp\) and \(\Sp_\reg\) correspond to what Guill\'en and Navarro
Aznar denote as \(\sM'\) and \(\sM\) in \cite{GNA02}, respectively.
\subsection{Categories of cohomological descent}
We define categories of cohomological descent.
See \cite[(1.5.2)]{GNA02} for the definition of a \textsl{quasi-strict monoidal
functor} and see \cite[D\'efinition 1.5.3]{GNA02} for a more explicit
description of some of the properties listed below.
\begin{definition}[Categories of cohomological descent
  {\cite[D\'efinitions 1.5.3 and 1.7.1]{GNA02}}]\label{def:catcohdescent}
  A \textsl{category of cohomological descent} is a quintuple
  \((\sD,E,\bfs,\mu,\lambda)\) satisfying the following properties.
  \begin{enumerate}[label=\((\mathrm{CD\arabic*})^{\op}\),
      ref=\mathrm{CD\arabic*}]
    \item \(\sD\) is a (not necessarily small) Cartesian category and has an
      initial object \(0\).
    \item \(E\) is a saturated class of morphisms of \(\sD\)
      stable under finite products.
    \item\label{def:cd3}
      \(\bfs\colon \Codiag_\Pi(\sD) \to \sD\) is a functor such that if
      \(\delta\colon \square \to \square'\) is a morphism of \(\Pi\) and
      \(X\colon \square \to \sD\) is a 1-codiagram of \(\sD\), then the morphism
      \(\bfs_\square X \to \bfs_{\square'}\delta_*X\)
      induced by the canonical morphism \(X \to \delta_*X\) is in \(E\).
    \item\label{def:cd4}
      For every object \(\square\) of \(\Pi\), the functor
      \(\bfs_\square\colon \Codiag_\square(\sD) \to \sD\)
      obtained from \(\bfs\) by restriction is a quasi-strict monoidal
      functor.
    \item Let \(f\colon X \to Y\) be a morphism of 1-codiagrams of \(\sD\) of
      type \(\square\) for an object \(\square\) of \(\Pi\).
      If \(f_\alpha\) is a member of \(E\) for every \(\alpha \in \square\), then
      \(\bfs_\square(f)\colon \bfs_\square X \to \bfs_\square Y\) is in \(E\).
    \item The functor
      \[
        \begin{tikzcd}[cramped,column sep=1.475em,row sep=0]
          (\bfs,\mu,\lambda_0)\colon &[-2.2em] \Pi \rar & \Coreal_\Pi(\sD)\\
          & \square \rar[mapsto] & \bigl(\bfs_\square\colon
          \Codiag_\square(\sD)
          \longrightarrow \sD\bigr)
        \end{tikzcd}
      \]
      is a quasi-strict monoidal functor.
    \item \(\lambda\) is a monoidal natural transformation between the monoidal functors
      \(\square \mapsto \id_{\sD}\) to the functor \(\square \mapsto \bfs_{\square} \circ
      i_{\square}\), which coincides with \(\lambda_0\) over \(\square_0\).
    \item\label{def:cd8}
      Let \(X\) be a 1-codiagram of \(\sD\) of type \(\square \coloneqq
      \square_S\), where \(S\) is a finite nonempty set, and consider an
      augmentation \(\varepsilon\colon X_0 \to X\).
      The morphism
      \(\lambda_\varepsilon \coloneqq \bfs_\square(\varepsilon) \circ
      \lambda_\square(X_0) \colon X_0 \to \bfs_\square X\)
      is in \(E\) if
      and only if the canonical morphism \(0 \to \bfs_{\square^+}X^+\) is
      in \(E\).
  \end{enumerate}
  By \cite[(1.5.1)]{GNA02},
  the product on \(\sD\) induces a monoidal structure
  on the category of fractions
  \(\Ho\sD \coloneqq \sD[E^{-1}]\)
  defined as in \cite[\S5.2]{Bor94} as long as \(\sD[E^{-1}]\) exists.
  \par The functor \(\bfs\) is called the \textsl{simple functor on \(\sD\)}.
  Morphisms in \(E\) are called \textsl{quasi-isomorphisms} or \textsl{weak
  equivalences} and objects of
  \(\sD\) quasi-isomorphic to \(0\) are called \textsl{acyclic}.
\end{definition}
\begin{examples}[Categories of cohomological descent]\label{ex:cohdescent}
  The following are some examples of categories of cohomological descent.
  We do not specify all the data necessary for the definition.
  See the references for details.
  \par Note that if \(\sA\) is a Grothendieck Abelian category in cases
  \((\ref{ex:cohdescentcochain})\) or \((\ref{ex:erqisos})\) below, then the
  homotopy category \(\Ho\sD \coloneqq \sD[E^{-1}]\) exists even though \(E\) is
  not a set by \cite[\href{https://stacks.math.columbia.edu/tag/05RU}{Tag
  05RU} and \href{https://stacks.math.columbia.edu/tag/09PA}{Tag
  09PA}]{stacks-project}.
  \begin{enumerate}[label=\((\roman*)\),ref=\roman*]
    \item\label{ex:cohdescentcochain}
      Let \(\sA\) be an additive category and consider the category
      \(\C^\natural(\sA)\)
      of cochain complexes of \(\sA\), where \(\natural \in \{*,+,b\}\).
      By \cite[Proposition 1.7.2]{GNA02} (resp.\ \cite[Proposition
      1.7.7]{GNA02}), \(\C^\natural(\sA)\) is of cohomological descent
      where \(E\) is the class of quasi-isomorphisms (resp.\ homotopy
      equivalences) and
      \(\bfs\) is the totalization functor.
    \item\label{ex:erqisos}
      Let \(\sA\) be an additive category and consider the category
      \(\C^\natural(\FF\sA)\) of cochain complexes of \(\sA\) together with a
      decreasing biregular filtration \(F\), where
      \(\natural \in \{*,+,b\}\).
      By \cite[Proposition 1.7.5]{GNA02}, \(\C^\natural(\FF\sA)\) is of cohomological descent
      where \(E\) is the class of filtered quasi-isomorphisms and
      \(\bfs\) is the filtered totalization functor.
      \par If \(\sA\) is Abelian, then \(\C^+(\FF\sA)\) is also of cohomological
      descent with respect to the class \(\cE_r\) of filtered morphisms that induce
      quasi-isomorphisms on the \(r\)-th page of the spectral sequence
      associated to the filtrations \cite[Theorem 2.8]{CG14}.
    \item\label{ex:cohdescentdubois} Let \(X\) be a complex analytic space.
      Following \cite[\S1]{DB81}, consider the category
      \(\CF_{\kern-1pt\diff}(X)\) (resp.\ \(\CF_{\kern-1pt\diff\kern-.5pt,\coh}(X)\))
      of cochain complexes \(K\) of \(\cO_X\)-modules that are bounded from below,
      together with a decreasing biregular filtration \(F\) by
      sub-\(\cO_X\)-modules and with a differential \(d\) that is a
      differential operator of order \(\le 1\) compatible with \(F\) such that
      \(\Gr_F(d)\) is \(\cO_X\)-linear (resp.\ such that \(\Gr_F(d)\) is
      \(\cO_X\)-linear and the cohomology of \(\Gr_F(K)\) is coherent).
      By \cite[Proposition 1.7.6]{GNA02}, \(\CF_{\kern-1pt\diff}(X)\) (resp.\
      \(\CF_{\kern-1pt\diff\kern-.5pt,\coh}(X)\)) is of cohomological descent
      where \(E\) is the class of filtered quasi-isomorphisms and
      \(\bfs\) is the filtered totalization functor.
      \par The same proof works when \(X\) is a space of type
      \((\ref{setup:introcomplexanalyticgerms})\textnormal{--}(\ref{setup:introadicspaces})\)
      or whenever \(X\) is differentially smooth over a field of characteristic
      zero and \(\Omega_X^1\) is of finite type
      in cases \((\ref{setup:introalgebraicspaces})\) and
      \((\ref{setup:introformalqschemes})\).
    \item Let \(k\) be a field of characteristic zero and consider the category
      \(\DGA(k)\) of anticommutative unital \(k\)-algebras
      differentially graded in degrees \(\ge 0\).
      By \cite[Proposition 1.7.4]{GNA02},
      \(\DGA(k)\) is of cohomological descent
      where \(E\) is the class of quasi-isomorphisms and
      \(\bfs\) is a cubical variant of the
      Thom--Whitney simple functor defined in \cite[(3.2)]{NA87}.
    \item By \cite[Example 1.4(3)]{CG14} (resp.\ \cite[Theorem 4.3]{CG14e1}),
      the category of mixed Hodge complexes \(\MHC\) (resp.\ the category of
      mixed Hodge diagrams \(\MHD\))
      together with a cubical analogue of Deligne's simple functor from
      \cite[\S8.1]{Del74} defined in \cite[Definition 2.5]{CG14}
      (resp.\ the Thom--Whitney simple functor
      \citeleft\citen{NA87}\citemid (3.2)\citepunct \citen{GNA02}\citemid
      Proposition 1.7.4\citeright) is of cohomological descent.
    \item By \cite[Theorem 6.1]{RiP07},
      the category \(\cM_f\) of fibrant objects of a pointed simplicial
      model category where the acyclicity criterion of 
      \cite[Definition 3.3.1]{RiP07} holds is of cohomological descent, where
      \(E\) is the class of weak equivalences.
    \item By \cite[Proposition 3.6]{PRiP09}, the category of fibrant spectra
      is of cohomological descent, where \(E\) is the class of homotopy
      equivalences and \(\bfs\) is the homotopy limit.
    \item Let \(R\) be a ring and let \(\mathcal{E}\) be a cofibrant
      \(E_\infty\)-operad over \(R\).
      By \cite[Theorem 4.7]{CC22}, the category \(\mathbf{E}_R\) of
      \(\mathcal{E}\)-algebras over \(R\) is of cohomological descent,
      where \(E\) is the class of quasi-isomorphisms.
  \end{enumerate}
\end{examples}
\subsection{Rectified functors}
To define rectified functors, we first describe how the Grothendieck
construction can be applied to the indexed category of 1-diagrams and
1-codiagrams over a homotopy category, where the indexing is by type.
This is a localized version of \(\Real_\fI(\sC)\) and \(\Coreal_\fI(\sC)\) from
Definition \ref{def:gna121}.
\begin{citeddef}[{\cite[(1.6.1)]{GNA02}}]\label{def:gna161}
  Let \(\fI\) be a subcategory of \(\Cat\).
  Let \(\sD\) be a category together with a saturated set of morphisms \(E\).
  We can form the localization functor
  \(\gamma\colon \sD \to \Ho\sD\)
  and if \(\sD\) is small, then \(\Ho\sD\) is also small
  \cite[Proposition 5.2.2]{Bor94}.
  The same discussion applies to the categories of 1-diagrams \(\Diag_I(\sD)\)
  and 1-codiagrams \(\Codiag_I(\sD)\) of \(\sD\) of type \(I\), where \(I\) is
  an object of \(\fI\).
  We therefore obtain functors
  \begin{alignat*}{3}
    \begin{tikzcd}[cramped,row sep=0,column sep=1.475em,ampersand replacement=\&,
      baseline=(HoDiag.base)]
      \fI^\op \rar \& \Cat\\
      I \rar[mapsto] \&|[alias=HoDiag]| \Ho\Diag_I(\sD)\\
      \varphi \rar[mapsto] \& \varphi^{\op*}\\
    \end{tikzcd}
    &\qquad \text{and} \qquad&
    \begin{tikzcd}[cramped,row sep=0,column sep=1.475em,ampersand replacement=\&,
      baseline=(HoCodiag.base)]
      \fI^\op \rar \& \Cat\\
      I \rar[mapsto] \&|[alias=HoCodiag]| \Ho\Codiag_I(\sD)\\
      \varphi \rar[mapsto] \& \varphi^*
    \end{tikzcd}
    \shortintertext{which by the Grothendieck construction \cite[Theorem B1.3.5]{Joh021}
    correspond to the categories}
    \omit\hfill\(\Ho_\fI \Diag_\fI(\sD)\)\hfill
    &\qquad \text{and} \qquad&
    \omit\hfill\(\Ho_\fI \Codiag_\fI(\sD)\)\hfill
  \end{alignat*}
  cofibered and fibered over \(\fI\), respectively.
\end{citeddef}
We need a 2-categorical version of the category of small categories \(\Cat\).
\begin{definition}[2-categories of types of diagrams {\cite[D\'efinition
  1.6.3]{GNA02}}]
  Let \(\fI\) be a 2-subcategory of the 2-category of small categories.
  We say that \(\fI\) is a \textsl{2-category of types of diagrams} if, for
  every pair of objects \(I,J\) of \(\fI\), we have
  \begin{equation}\label{eq:2catoftypesfaithful}
    \Hom_\fI(I,J) = \Hom_\TwoCat(I,J)
  \end{equation}
  and if the punctual category \(\underline{1}\) is an object of \(\fI\).
\end{definition}
\begin{citedex}[{\cite[(1.6.3)]{GNA02}}]
  The category \(\Phi\) of finite orderable categories is a 2-category of types
  of diagrams with the induced 2-categorical structure induced from \(\TwoCat\),
  but \(\Pi\) and \(\Pi^+\) are not 2-categories of types of diagrams
  since they do not satisfy
  \eqref{eq:2catoftypesfaithful}.
\end{citedex}
\par To discuss rectified functors, we need to consider the category of
1-diagrams as a 2-functor.
\begin{citeddef}[{\cite[(1.6.4)]{GNA02}}]
  Let \(\sC\) be a small category.
  Let \(\fI\) be a 2-category of types of diagrams.
  We then have the 2-functors
  \[
    \begin{tikzcd}[cramped,row sep=0,column sep=1.475em]
      \Diag_\fI(\sC)\colon &[-2.125em] \fI \rar & \TwoCat\\
      & I \rar[mapsto] & \Diag_I(\sC)\\
      & \varphi \rar[mapsto] & \varphi^{\op*}\\
      & \tau \rar[mapsto] & \tau^{\op*}
    \end{tikzcd}
    \qquad\text{and}\qquad
    \begin{tikzcd}[cramped,row sep=0,column sep=1.475em]
      \Codiag_\fI(\sC)\colon &[-2.125em] \fI \rar & \TwoCat\\
      & I \rar[mapsto] & \Codiag_I(\sC)\\
      & \varphi \rar[mapsto] & \varphi^{*}\\
      & \tau \rar[mapsto] & \tau^{*}
    \end{tikzcd}
  \]
  where the 2-categorical structures on 1-diagrams and 1-codiagrams of type \(I\)
  are given by the diagrams \eqref{eq:diagrams2cat}, and
  where for every 1-diagram \(Y\colon I^\op \to \sC\), the natural
  transformation
  \[
    \tau^{\op*}(Y)\colon \varphi^{\prime\op*}(Y) \longrightarrow \varphi^{\op*}Y
  \]
  is defined by the family \((Y(\tau_i))_{i \in I}\).
  If \(\sD\) is a category together with a saturated set of morphisms
  \(E\), we also consider the corresponding 2-functors
  \[
    \begin{tikzcd}[cramped,row sep=0,column sep=1.475em,
      /tikz/column 1/.append style={anchor=base east}]
      \Ho_\fI \Codiag_\fI(\sD) \colon &[-2.125em] \fI \rar & \TwoCat\\
      & I \rar[mapsto] & \Ho_\fI\Diag_\fI(\sD)\\
      \Ho_\fI \Diag_\fI(\sD) \colon &[-2.125em] \fI \rar & \TwoCat\\
      & I \rar[mapsto] & \Ho_\fI\Codiag_\fI(\sD)
    \end{tikzcd}
  \]
  using the definitions in Definition \ref{def:gna161}.
\end{citeddef}
\par We can now define rectified functors.
The class of functors defined below when \(\fI\) is the category \(\Phi\) of
finite orderable categories is the
class of functors that we will be able to extend.
\begin{definition}[Rectified functors {\cite[D\'efinition
  1.6.5 and (2.1.4)]{GNA02}}]\label{def:rectified}
  Let \(\fI\) be a 2-category of types of diagrams.
  Let \(\sD\) be a category together with a saturated set of morphisms \(E\).
  Consider a functor \(G\colon \sC^\op \to \Ho\sD\).
  A \textsl{\(\fI\)-rectification of \(G\)} is a pseudo-2-natural
  transformation of 2-functors
  \[
    G_\fI\colon \bigl(\Diag_\fI(\sC)\bigr)^\op
    \longrightarrow \Ho_\fI\Codiag_\fI(\sD)
  \]
  whose restriction to \(\sC\) under the restriction pseudo-2-natural
  transformation induced by \(\underline{1} \to \sC\) is \(G\).
  We say that \(G\colon \sC^\op \to \Ho\sD\) is a \textsl{\(\fI\)-rectified
  functor} if there exists a \(\fI\)-rectification of \(G\).
  \par Now consider two functors \(G\) and \(F\) with \(\fI\)-rectifications
  \(G_\fI\) and \(F_\fI\).
  A \textsl{morphism of \(\fI\)-rectified functors from \(G\) to \(F\)} is a
  modification \(\theta\colon G_\fI \to F_\fI\).
\end{definition}
See \cite[pp.\ 33--34]{GNA02} for an explicit list of conditions for a
rectification and a morphism of rectified functors.\medskip

\subsection{An extension criterion for functors defined on compactifiable
regular spaces}
\par We are now ready to state the main result of this section.
As mentioned before, the case for varieties
is due to Guill\'en and Navarro Aznar \cite[Th\'eor\`eme
2.1.5]{GNA02} and the case for complex analytic spaces is due to Cirici and
Guill\'en \cite[Theorem 3.3]{CG14}.
The key ingredients used to prove their results are the following:
\begin{enumerate}[label=\((\mathrm{\arabic*})\),ref=\mathrm{\arabic*}]
  \item\label{ingredient:gnares}
    Hironaka's resolutions of singularities \cite[Chapter 0, \S3, Main theorem
    I]{Hir64}.
  \item\label{ingredient:gnachow}
    A birational version of Chow's lemma using elementary proper modifications
    \citeleft\citen{Hir64}\citemid pp.\ 144--145\citepunct \citen{Hir75}\citemid p.\
    505\citeright,
    called the \textsl{Chow--Hironaka lemma} in
    \cite[(2.1.3)]{GNA02}.
\end{enumerate}
\par We extend the results of Guill\'en--Navarro Aznar \cite{GNA02}
and Cirici--Guill\'en \cite{CG14}
to compactifiable spaces of types
\((\ref{setup:introalgebraicspaces})\textnormal{--}(\ref{setup:introadicspaces})\).
The key innovation in our proof is to use the version of the weak factorization theorem
\cite{AKMW02,Wlo03} due to Abramovich and Temkin \cite{AT19} to replace the
Chow--Hironaka lemma.
This is necessary because appropriate versions of the Chow--Hironaka lemma are
not yet known for arbitrary spaces of the form
\((\ref{setup:introalgebraicspaces})\textnormal{--}(\ref{setup:introadicspaces})\).
\begin{theorem}[An extension criterion for functors defined on compactifiable
  regular spaces]
  \label{thm:gna215}
  Suppose that \(\Sp\) is a subcategory of a small category of
  finite-dimensional reduced spaces of one of the
  types
  \((\ref{setup:introalgebraicspaces})\textnormal{--}(\ref{setup:introadicspaces})\).
  In cases \((\ref{setup:introalgebraicspaces})\) and
  \((\ref{setup:introformalqschemes})\), suppose that all objects in \(\Sp\) are
  Noetherian and quasi-excellent of equal characteristic zero.
  In cases \((\ref{setup:introberkovichspaces})\),
  \((\ref{setup:introrigidanalyticspaces})\), and
  \((\ref{setup:introadicspaces})\), suppose that
  \(k\) is of characteristic zero.
  \par Let \(\Sp_\reg\) be the full subcategory of \(\Sp\) consisting of regular
  spaces.
  Let \(\sD\) be a small category of cohomological descent and let
  \[
    G\colon \bigl(\Sp_\reg\bigr)^{\op} \longrightarrow \Ho(\sD)
  \]
  be a \(\Phi\)-rectified functor
  satisfying the following conditions:
  \begin{enumerate}[label=\((\mathrm{F}\arabic*)\),ref=\mathrm{F\arabic*}]
    \item\label{thm:gna251f1}
      \(G(\emptyset) = 1\) and the canonical morphism
      \(G(X \sqcup Y) \to G(X) \times G(Y)\)
      is an isomorphism.
    \item\label{thm:gna251f2}
      For every elementary acyclic square \(X_\bullet\) in \(\Sp_\reg\),
      \(\bfs G(X_\bullet)\) is acyclic.
  \end{enumerate}
  Then, there exists an extension of \(G\) to a \(\Phi\)-rectified functor
  \[
    G'\colon \Sp^{\op} \longrightarrow \Ho(\sD)
  \]
  satisfying the following condition:
  \begin{enumerate}[label=\((\mathrm{D})\),ref=\mathrm{D}]
    \item\label{thm:gna251d}
      For every acyclic square \(X_\bullet\) in \(\Sp\), \(\bfs
      G'(X_\bullet)\) is acyclic.
  \end{enumerate}
  \par Moreover, suppose
  \(F\colon (\Sp_\reg)^\op \to \Ho(\sD)\) and
  \(G\colon (\Sp_\reg)^\op \to \Ho(\sD)\)
  are \(\Phi\)-rectified
  functors satisfying conditions \((\ref{thm:gna251f1})\) and
  \((\ref{thm:gna251f2})\) with extensions \(F'\) and \(G'\) satisfying
  \((\ref{thm:gna251d})\).
  If \(\tau\colon F \to G\) is a morphism of \(\Phi\)-rectified functors, then
  \(\tau\) extends uniquely to a morphism of \(\Phi\)-rectified functors
  \(\tau'\colon F' \to G'\).
\end{theorem}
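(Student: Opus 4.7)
The plan is to extend $G$ to a functor $G'$ on all of $\Sp$ by choosing, for each space $X$, a projective augmented cubical hyperresolution $Z_\bullet^+ \to X$ with all $Z_\alpha \in \Sp_\reg$ (provided by Theorem \ref{thm:gnapgp215}\((\ref{thm:gnapgp215compact})\)), and setting $G'(X) := \bfs_\square(G(Z_\bullet))$, where $\bfs$ is the simple functor on cubical codiagrams in $\sD$ from Definition \ref{def:catcohdescent}\((\ref{def:cd3})\). More precisely, I would work inside the larger category $\Hrc_\proj(\Diag_I(\Sp_{\comp}))$ of iterated projective cubical hyperresolutions, defining
\[
  \tilde G\colon \Hrc_\proj\bigl(\Diag_I(\Sp_\comp)\bigr)^{\op} \longrightarrow \Ho(\sD)
\]
by applying $G$ componentwise to the regular cubical object and then forming the simple complex, and then descending along the equivalence $\Ho w_{I,\proj}$ from Theorem \ref{thm:gnapgp38}\((\ref{thm:gnapgp38compact})\) using the cubical descent Corollary \ref{cor:gnapgp310}. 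The extension to $\Sp$ (without compactness) is then carried out by writing a general $X \in \Sp$ as a union of compactifiable pieces and sheafifying appropriately, following the strategy of \cite[\S2]{GNA02}.

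The heart of the argument is verifying the descent condition $(\ref{cond:descentcubic})$ of Corollary \ref{cor:gnapgp310}: namely, that $\tilde G$ sends every morphism in $\Sigma_{\Diag_I(\Sp_\comp)}$ to a weak equivalence in $\sD$. I would proceed by induction on $\dim(S)$. The dimension-$0$ case reduces to $(\ref{thm:gna251f1})$. For the inductive step, Lemma \ref{lem:how-1connected} and Remark \ref{rem:how-1connected} reduce this to comparing two cubical hyperresolutions $Z_\bullet$ and $Z'_\bullet$ of the same space sharing a common 1-refinement; by Proposition \ref{prop:gnapgp28} and induction on the cubical order $r$, the essential case becomes a comparison of two very weak resolutions $Z_0 \to X$ and $Z'_0 \to X$ together with the associated augmented 2-resolutions.

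The main obstacle -- and the place where our proof genuinely diverges from \cite{GNA02} -- is this last comparison of regular resolutions. Guill\'en and Navarro Aznar invoke the Chow--Hironaka lemma to dominate two regular resolutions by a common regular blowup and then appeal to $(\ref{thm:gna251f2})$; no such statement is available for arbitrary spaces of types $(\ref{setup:introalgebraicspaces})\textnormal{--}(\ref{setup:introadicspaces})$. Instead, I would invoke the weak factorization theorem of Abramovich and Temkin \cite{AT19} to factor any proper birational/bimeromorphic modification between regular compactifiable spaces as a zigzag of blowups along regular centers. Each elementary step of this zigzag is an elementary acyclic square in $\Sp_\reg$, and applying $\bfs \circ G$ to it yields a weak equivalence by $(\ref{thm:gna251f2})$. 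Composing these equivalences along the zigzag, together with the induction on dimension to handle the lower-dimensional boundaries produced during factorization, shows that the two choices of hyperresolution give canonically weakly equivalent outputs. This is also the reason we required Theorem \ref{thm:gnapgp215}\((\ref{thm:gnapgp215locallyprojective})\)--\((\ref{thm:gnapgp215compact})\): the weak factorization theorem only applies to projective modifications, so our hyperresolutions must be assembled from such morphisms.

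Having established descent, the universal property of the localization yields the extension $G'$ on objects. To upgrade this to a $\Phi$-rectified functor, I would perform the construction uniformly on $1$-diagrams: apply Theorem \ref{thm:gnapgp215} relatively to an $I$-space $S_\bullet$ to obtain a cubical hyperresolution by regular $I$-spaces, and then use the pseudo-$2$-naturality structure on $G$ (part of the given $\Phi$-rectification) together with the functoriality of $\bfs$ and $\Ho w_{I,\proj}$ to produce the required pseudo-$2$-natural transformation for $G'$. Condition $(\ref{thm:gna251d})$ follows because an acyclic square in $\Sp$ may be refined to an acyclic cubical hyperresolution in $\Sp_\reg$ compatibly on each corner, reducing to $(\ref{thm:gna251f2})$ via a totalization argument exactly as in \cite[(2.2.3)]{GNA02}. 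Finally, for the uniqueness and for extending morphisms $\tau\colon F \to G$, one repeats the descent argument with the arrow category $\Sp^{\to}$ in place of $\Sp$, observing that a modification of $\Phi$-rectifications is uniquely determined by its values on $\Sp_\reg$ because any $X \in \Sp$ admits a cubical hyperresolution there.
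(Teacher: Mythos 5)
Your proposal takes essentially the same route as the paper: form $G'$ by evaluating $G$ on a projective cubical hyperresolution (Theorem \ref{thm:gnapgp215}\((\ref{thm:gnapgp215compact})\)) and applying the simple functor, verify the descent condition of Corollary \ref{cor:gnapgp310} inside $\Hrc_\proj$ via Lemma \ref{lem:how-1connected} and Remark \ref{rem:how-1connected}, reduce to the case $I = \underline{1}$ by Proposition \ref{prop:gnapgp214}, and replace the Chow--Hironaka lemma with the weak factorization theorem of Abramovich and Temkin. Your sketch of the weak factorization step is correct in spirit but slightly compressed: the paper's Lemma \ref{lem:gna1} first exploits the projectivity built into $\Hrc_\proj$ to reduce to the case where $f\colon \tilde X \to X$ is itself a blowup, then inducts on the \emph{length} $l$ of the weak factorization (not directly on dimension), treating separately the cases where $\varphi_i$ or $\varphi_i^{-1}$ is the blowup; the global induction on dimension is carried instead by the ancillary conditions $(\ref{cond:tnu}_n)$ and $(\ref{cond:tnu}_{n,0})$, which reduce the whole problem to irreducible regular spaces of pure dimension $n$.

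The one claim to drop is the proposed ``extension to $\Sp$ without compactness by writing $X$ as a union of compactifiable pieces and sheafifying, following \cite[\S2]{GNA02}.'' This theorem is stated and proved only for compactifiable spaces (see the title, the introduction's Theorem C, and the explicit remark in the proof that compactifiability is used to ensure finitely many irreducible components, both in the analogue of \cite[Lemme 1]{GNA02} and in the appeal to \cite[(2.1.7)]{GNA02}), and nothing of the sort happens in \cite[\S2]{GNA02}, where all varieties are automatically compactifiable. When the paper does later need a variant without compactifiability (in the proof of Theorem \ref{thm:gna41}), it does not sheafify the output of $G'$; rather, it observes that the crucial quasi-isomorphism \eqref{eq:wanttoshow215} may be checked \'etale-locally on an affinoid subdomain, which \emph{is} compactifiable, after which the argument for the compactifiable case applies. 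Apart from that aside, your outline is sound.
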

\begin{proof}
  We follow the proofs in \cite{GNA02,CG14}, using \(\Hrc_\proj\) instead of
  \(\Hrc\) and replacing the definition of an augmented 2-resolution in
  \cite[p.\ 43]{GNA02} with ours (see Remark \ref{rem:steenbrink2}).
  The main idea is to consider the diagram
  \begin{equation}\label{eq:2fun}
    \begin{tikzcd}[baseline=(midline.base)]
      \bigl(\Hrc_\proj\bigl(\Diag_\Phi(\Sp)\bigr)\bigr)^\op \rar[hook] \dar
      & \bigl(\Diag_\Phi\bigl(\Diag_\Pi(\sM)\bigr)\bigr)^\op \dar{G_\Phi}\\
      \bigl(\Ho_\Phi\Hrc_\proj\bigl(\Diag_\Phi(\Sp)\bigr)\bigr)^\op
      \arrow[d,"(\Ho w_{\Phi,\proj})^\op"',"\sim"{sloped}]
      &|[alias=midline]|
      \Ho_\Phi \Codiag_\Phi\bigl(\Diag_\Pi(\sD)\bigr) \dar{\bfs}\\
      \bigl(\Diag_\Phi(\Sp)\bigr)^\op \rar[dashed]{G'_\Phi}
      & \Ho_\Phi\Codiag_\Phi(\sD)
    \end{tikzcd}
  \end{equation}
  of pseudo-2-natural transformations of 2-functors.
  The goal is to construct \(G'_\Phi\) as a dashed 2-natural
  pseudo-transformation of 2-functors that makes the diagram commute.
  In the top right,
  \(G_\Phi\) comes from restricting the \(\Phi\)-rectification
  induced by the action of \(G\) on 1-diagrams with types in \(\Pi \times
  \Pi^+\) \cite[Proposition 1.6.8]{GNA02}.
  The pseudo-2-natural equivalence
  in the bottom left comes from Theorem \ref{thm:gnapgp38}, \cite[Theorem
  B1.3.6]{Joh021}, and the compatibility
  of \(w_{I,\proj}\) (resp.\ \(\Ho w_{I,\proj}\)) with pullbacks and natural
  transformations in \(\Phi\).
  The assignment \(I \mapsto \Hrc_\proj(\Diag_I(\Sp))\) (resp.\
  \(I \mapsto \Ho_\Phi\Hrc_\proj(\Diag_I(\Sp))\)) is the 2-functor which we denote by
  \(\Hrc_\proj(\Diag_\Phi(\Sp))\) (resp.\
  \(\Ho_\Phi\Hrc_\proj(\Diag_\Phi(\Sp))\)).
  By \cite[Theorem B1.3.6]{Joh021}, it suffices to show that the dashed
  pseudo-2-natural transformation in \eqref{eq:2fun} exists after taking fibers
  over each object \(I\) of \(\Phi\).
  By Corollary \ref{cor:gnapgp310}, it suffices to show that for every morphism
  of iterated cubical hyperresolutions \(X_\bullet \to X'_\bullet\) of a fixed
  \(I\)-space \(X\), the induced morphism
  \begin{equation}\label{eq:wanttoshow215}
    G(X'_\bullet) \longrightarrow G(X_\bullet)
  \end{equation}
  is a quasi-isomorphism.
  By Lemma \ref{lem:how-1connected} and Remark \ref{rem:how-1connected}, we
  reduce to the case when \(X_\bullet \to X'_\bullet\) is a cubical hyperresolution.
  Moreover, since cubical hyperresolutions and morphisms 
  between them are defined component-wise as \(i \in \Ob(I)\) varies by
  Proposition \ref{prop:gnapgp214}, Definition \ref{def:gnapgp33}, and
  Definition \ref{def:gnapgp32}, we reduce to the case when \(I =
  \underline{1}\) and \(X_\bullet = X\).
  \par The main ingredients in the remainder of the proof in \cite{GNA02} are
  \cite[Expos\'e I, Proposition 2.10, Th\'eor\`eme 2.15, Th\'eor\`eme 3.8,
  and Corollaire 3.10]{GNAPGP88} and the application of the Chow--Hironaka lemma
  in \cite[Lemme 1]{GNA02}.
  The former can be replaced with
  our existence results for augmented 2-resolutions (Proposition
  \ref{prop:gnapgp210}) and projective cubical hyperresolutions (Theorem
  \ref{thm:gnapgp215}\((\ref{thm:gnapgp215compact})\)) in \(\Sp\),
  together with our results for the homotopy
  category of iterated hyperresolutions in \(\Sp\)
  (Theorem \ref{thm:gnapgp38}\((\ref{thm:gnapgp38compact})\) and
  Corollary \ref{cor:gnapgp310}).
  Our assumption on compactifiability is used to ensure the spaces
  involved have only finitely many irreducible components, which is used in
  \cite[Lemme 1]{GNA02} (see Lemma
  \ref{lem:gna1} below) and in \cite[(2.1.7)]{GNA02}.
  See \cite[Remark 3.2]{CG14}.\medskip
  \par It therefore remains to show the analogue of \cite[Lemme
  1]{GNA02} in our context.
  To make sense of the statement of this result, we explain some of the notation
  in \cite{GNA02}.
  For every integer \(n \ge 0\), denote by \(\Sp_n\) the full subcategory of
  \(\Sp\) consisting
  of objects of dimension \(\le n\) and let \(\Sp_{n,0}\) be the
  full subcategory of \(\Sp_n\) whose objects are disjoint unions of objects in
  \(\Sp_{n-1}\) and objects in \(\Sp_\reg\) of dimension \(\le n\).
  For \(\nu \in \{n,(n,0)\}\), consider the following assertion:
  \begin{enumerate}[label=\((\mathrm{T}_\nu)\),ref=\mathrm{T}]
    \item\label{cond:tnu}
      \(G\) induces a \(\Phi\)-rectified functor
      \(G'\colon (\Sp_\nu)^\op \rightarrow \Ho\sD\)
      that satisfies \((\ref{thm:gna251d})\).
  \end{enumerate}
  The proof in \cite{GNA02} shows that to show Theorem \ref{thm:gna215}, it
  suffices to show that
  \((\ref{cond:tnu}_n)\) holds for every \(n \ge 0\).
  To prove \((\ref{cond:tnu}_n)\) holds for every \(n \ge 0\),
  the proof can be broken up into the following two assertions:
  \begin{alignat*}{3}
    (\ref{cond:tnu}_{n-1}) &{}\Longrightarrow
    (\ref{cond:tnu}_{n,0}) &\qquad& \text{for every}\
    n > 0,\\
    (\ref{cond:tnu}_{n,0}) &{}\Longrightarrow
    (\ref{cond:tnu}_{n}) &\qquad& \text{for every}\ n
    > 0.
  \end{alignat*}
  With the replacements listed in the second paragraph of this proof, the proofs
  of \((\ref{cond:tnu}_{0})\) and of \((\ref{cond:tnu}_{n,0}) \Rightarrow
  (\ref{cond:tnu}_{n})\) in \cite{GNA02} work in our context.\medskip
  \par Most of the proof of \((\ref{cond:tnu}_{n-1}) \Rightarrow
  (\ref{cond:tnu}_{n,0})\) works in our context as well.
  The only part of the proof that is substantially different is the following:
  \begin{lemma}[cf.\ {\cite[Lemme 1]{GNA02}}]\label{lem:gna1}
    Suppose \((\ref{cond:tnu}_{n-1})\) holds.
    Let \(X_\bullet\) be an augmented 2-resolution 
    \[
      \begin{tikzcd}
        \tilde{Y} \rar[hook]{j} \dar[swap]{g} & \tilde{X} \dar{f}\\
        Y \rar[hook]{i} & X
      \end{tikzcd}
    \]
    of an object \(X\) of \(\Sp_\reg\) of pure dimension \(n\) such that \(f\) is
    a birational/bimeromorphic blowup morphism.
    Then, \(\bfs G'(X_\bullet)\) is acyclic.
  \end{lemma}
  In \cite{GNA02}, there is no assumption that \(f\) is a blowup.
  In our context, we are allowed to assume that \(f\) is a blowup because we use Theorem
  \ref{thm:gnapgp215}\((\ref{thm:gnapgp215compact})\) to construct
  iterated cubical hyperresolutions using blowups
  and are working with \(\Hrc_\proj\).
  Following the proof in \cite{GNA02} (we use the compactifiability of
  \(X_{00}\) to induce on the number of irreducible components in case
  \((iv)\) in \cite[p.\ 48]{GNA02}; cf.\ \cite[Remark 3.2]{CG14}),
  it suffices to show the special case of
  Lemma \ref{lem:gna1} when both \(X\) and \(\tilde{X}\) are irreducible objects
  of \(\Sp_\reg\) of dimension \(n\).\medskip
  \par Instead of using a version of the Chow--Hironaka lemma, we use the weak
  factorization theorem for spaces of types
  \((\ref{setup:introalgebraicspaces})\textnormal{--}(\ref{setup:introrigidanalyticspaces})\)
  due to Abramovich and Temkin \cite[Theorem 1.4.1]{AT19}.
  In case \((\ref{setup:introadicspaces})\), the same proof as in \cite{AT19}
  applies using the GAGA theorems from \cite[\S6]{Hub07} together with the fact
  that transition morphisms between affinoid subdomains of adic spaces locally
  of weakly finite type over a field are regular by \cite[Lemma 24.9]{LM}.
  The weak factorization theorem \cite[Theorem 1.4.1]{AT19}
  says that \(f\colon \tilde{X} \to X\) can be
  written as a composition
  \[
    \begin{tikzcd}[cramped,column sep=1.475em]
      \tilde{X} = V_0 \rar[dashed]{\varphi_1}
      &[0.125em] V_1 \rar[dashed]{\varphi_2}
      &[0.125em] \cdots \rar[dashed]{\varphi_{l-1}}
      &[0.775em] V_{l-1} \rar[dashed]{\varphi_l}
      & V_l = X
    \end{tikzcd}
  \]
  of rational/meromorphic maps between regular spaces such that
  given a choice of normal crossing divisor \(D \subseteq
  X\), the following properties hold:
  \begin{enumerate}[label=\((\arabic*)\),ref=\arabic*]
    \item \(f = \varphi_l \circ \varphi_{l-1} \circ \cdots \circ \varphi_1\).
    \item The compositions \begin{tikzcd}[cramped,column sep=small]V_i \rar[dashed]
      & X\end{tikzcd}
      are morphisms and
      induce isomorphisms along \(X - (\Delta(f) \cup D)\).
    \item For every \(i\), either \(\varphi_i\) or \(\varphi_i^{-1}\) is the blowup along a
      regular closed subspace \(Z_i\) in \(V_i\) or \(V_{i-1}\), respectively,
      where the \(Z_i\) are disjoint from the inverse image of \(X - (\Delta(f) \cup D)\).
    \item\label{cond:at19nc}
      The inverse images \(D_{V_i} \subseteq V_i\) of the \(D \subseteq X\)
      are normal crossing divisors and have normal crossings with \(Z_i\).
    \item For every \(i\), the morphism \(V_i \to X\) is the blowup along a
      coherent ideal sheaf cosupported in \(D \cup \Delta(f)\).
  \end{enumerate}
  \par We proceed by induction on the length \(l \ge 1\) of the weak
  factorization.
  The base case when \(l = 1\) is case \((i)\) in \cite[pp.\
  45--46]{GNA02}.
  Consider the inductive case when \(l > 1\).
  Then, either \(\varphi_1\) or \(\varphi_1^{-1}\) is a blowup along a regular
  closed subspace.
  The case when \(\varphi_1\) is a blowup along \(Z_1 \subseteq V_1\) follows by
  the same argument as for case \((ii)\) in \cite[pp.\ 46--47]{GNA02} using
  the commutative diagram
  \[
    \begin{tikzcd}[column sep=large]
      \tilde{Y} \rar \dar[hook] & W_1 \rar \dar[hook] & Y \dar[hook]\\
      \tilde{X} \rar{\varphi_1} & V_1 \rar{\varphi_l \circ
      \cdots \circ \varphi_2} & X
    \end{tikzcd}
  \]
  where both squares are Cartesian.
  Now suppose that \(\varphi_1\) is a blowup along \(Z_1 \subseteq \tilde{X}\).
  We then have the commutative diagram
  \[
    \begin{tikzcd}[column sep=large]
      \tilde{Y} \rar[leftarrow] \dar[hook] & W_1 \rar \dar[hook] & Y \dar[hook]\\
      \tilde{X} \rar[leftarrow]{\varphi_1} & V_1 \rar{\varphi_l \circ
      \cdots \circ \varphi_2} & X
    \end{tikzcd}
  \]
  where both squares are Cartesian (using the fact that in \(\Sp\), fiber
  products are reduced).
  By case \((i)\) in \cite[pp.\ 45--46]{GNA02} and the inductive hypothesis,
  respectively, we know that the morphisms
  \[
    \bfs_{\square^+_0} G'(\tilde{Y} \to \tilde{X}) \longrightarrow
    \bfs_{\square^+_0} G'(W_1 \to V_1) \longleftarrow
    \bfs_{\square^+_0} G'(Y \to X)
  \]
  are quasi-isomorphisms.
  The special case of Lemma \ref{lem:gna1} when both \(X\) and \(\tilde{X}\) are
  irreducible objects of \(\Sp_\reg\) of dimension \(n\) now follows from
  \cite[Propositions 1.5.7(2), 1.5.9(1), and 1.5.6(1)]{GNA02} together with
  \((\ref{def:cd8})^\op\).
\end{proof}
\subsection{An extension criterion for functors defined on regular pairs
\texorpdfstring{\except{toc}{(\emph{X},\emph{U})}\for{toc}{\((X,U)\)}}{(X,U)}}
\label{sect:cg14}
To prove the existence of weight filtrations on cohomology of compactifiable
spaces, we need a version of our extension criterion
(Theorem \ref{thm:gna215}) for pairs consisting of a space and an \emph{open}
subspace.
For varieties and complex analytic spaces,
this version of the extension criterion for pairs
is due to Guill\'en and Navarro Aznar \cite[Th\'eor\`emes 2.3.3, 2.3.6, and
4.8]{GNA02} and to Cirici and Guill\'en \cite[Theorem
3.9]{CG14}, respectively.\medskip
\par We start by defining what we mean by pairs.
Note that this notion of a pair is equivalent to the notion of pairs \((S,S')\) from
Definition \ref{def:closedpairs}, except we replace the second argument \(S'\)
by its complement.
Because compactifications of spaces are not unique, even up to
birational/bimeromorphic equivalence (in case
\((\ref{setup:introcomplexanalyticgerms})\), see Serre's example in
\cite[Chapter VI, Example 3.2]{Har70} and \cite[Example 6.5]{CG14}),
we have to work with a category that
remembers the birational/bimeromorphic equivalence class of a choice of
compactification.
Recall that \(\Sp\) denotes a small category of spaces.
\begin{definition}[The category \(\Sp^2\)
  {\citeleft\citen{GNA02}\citemid (2.3) and (4.5)\citepunct
  \citen{CG14}\citemid pp.\ 89--90\citeright}]
  \label{def:sp2}
  Suppose that \(\Sp\) is a subcategory of a small category of
  reduced spaces of one of the
  types
  \((\ref{setup:introalgebraicspaces})\textnormal{--}(\ref{setup:introadicspaces})\).
  We let \(\Sp^{2}\) denote the category of pairs \((X,U)\) where \(X\) is
  a space and \(U\) is an open subspace such that \(D = X - U\) is a closed
  subspace of \(X\).
  Morphisms \(f\colon (X',U') \to (X,U)\) are morphisms \(f\colon X' \to X\)
  such that \(f\) is a morphism of pairs in the sense of Definition
  \ref{def:closedpairs}.
  We let \(\Sp_\reg^2\) denote the full subcategory of \(\Sp^{2}\)
  consisting of pairs \((X,U)\) such that \(X\) is regular and \(D = X - U\) is a
  normal crossing divisor.
  We let \(\Sp^{2}_\comp\) and \(\Sp^2_{\reg,\comp}\) denote the full
  subcategories of \(\Sp^{2}\) and \(\Sp_\reg^2\), respectively, consisting of
  pairs \((X,U)\) such that \(X\) is compactifiable.
\end{definition}
We define acyclic squares as follows.
\begin{definition}[Acyclic squares in \(\Sp^2\) {\citeleft\citen{GNA02}\citemid p.\
  60\citepunct \citen{CG14}\citemid Definitions 3.4--3.6\citeright}]
  \label{def:acyclicsquarepairs}
  Consider the square
  \begin{equation}\label{eq:acyclicsquarepairs}
    \begin{tikzcd}
      (\tilde{Y},\tilde{U} \cap \tilde{Y})
      \rar[hook]{j} \dar[swap]{g} & (\tilde{X},\tilde{U}) \dar{f}\\
      (Y,U \cap Y) \rar[hook]{i} & (X,U)
    \end{tikzcd}
  \end{equation}
  in \(\Sp^{2}\).
  We say the square \eqref{eq:acyclicsquarepairs}
  is an \textsl{acyclic square} if the following properties hold:
  \begin{enumerate}[label=\((\roman*)\)]
    \item \(i\colon Y \to X\) is a closed immersion.
    \item \(f\colon \tilde{X} \to X\) is proper.
    \item The diagram of the first components is Cartesian.
    \item \(f^{-1}(U) = \tilde{U}\).
    \item The diagram of the second components is an
      acyclic square of \(\Sp\).
  \end{enumerate}
  We say an acyclic square \eqref{eq:acyclicsquarepairs}
  is an \textsl{elementary acyclic square} and that the
  morphism \(f\) is an \textsl{elementary proper modification} if the
  following additional properties hold:
  \begin{enumerate}[label=\((\roman*)\),resume,ref=\roman*]
    \item\label{def:acyclicsquarepairsnc}
      \(f\colon \tilde{X} \to X\) is the blowup of \(X\) along the
      center \(Y\) which is regular and has normal crossings with 
      \(D = X - U\).
    \item The diagram of the second components is an elementary acyclic square
      of \(\Sp_\reg\).
  \end{enumerate}
\end{definition}
We will in fact work with a localization of the category \(\Sp^{2}_\comp\).
We denote by
\[
  \begin{tikzcd}[cramped,column sep=1.475em,row sep=0]
    \gamma\colon &[-2.125em] \Sp^{2}_\comp \rar & \Sp\\
    & (X,U) \rar[mapsto] & U
  \end{tikzcd}
\]
the forgetful functor and let \(\Sigma\) be the set of morphisms \(s\) of
\(\Sp^{2}_\comp\) such that \(\gamma(s)\) is an isomorphism.
By the universal property of categories of fractions
\cite[Proposition 5.2.2]{Bor94}, we have the induced functor
\[
  \eta\colon \Sp^{2}_\comp[\Sigma^{-1}] \longrightarrow \Sp.
\]
By \cite[Lemme 4.6]{GNA02} (whose proof applies in \(\Sp\) with only
notational changes), the localized category \(\Sp^{2}_\comp[\Sigma^{-1}]\) is
equivalent to the category \(\Sp_\infty\) defined as follows.
\begin{definition}[The category \(\Sp_\infty\); cf.\ 
  {\citeleft\citen{GNA02}\citemid (4.7)\citepunct
  \citen{CG14}\citemid Definition 3.8\citeright}]
  Suppose that \(\Sp\) is a subcategory of a small category of
  reduced spaces of one of the
  types
  \((\ref{setup:introalgebraicspaces})\textnormal{--}(\ref{setup:introadicspaces})\).
  The category \(\Sp_\infty\) is defined as follows.
  An object \(U_\infty\) of \(\Sp_\infty\) is a space \(U\) in \(\Sp\)
  together with an equivalence class of compactifications \(X\) of \(U\), where
  two compactifications are \textsl{equivalent} if they are
  birational/bimeromorphic.
  In this situation, we say that \(U\) is a \textsl{compactifiable space
  with the birational/bimeromorphic structure \(U_\infty\) at infinity}.
  A \textsl{compactification} of a morphism \(g\colon U' \to U\) of
  compactifiable spaces with birational/bimeromorphic structures \(U'_\infty\)
  and \(U_\infty\) at infinity is an extension of \(f\colon X' \to X\) of \(g\)
  where \(X'\) and \(X\) are representatives of \(U'_\infty\) and \(U_\infty\),
  respectively.
  \par Two compactifications \(f'_1\colon X'_1 \to X_1\) and \(f'_2\colon X'_2 \to
  X_2\) of \(g\) are \textsl{equivalent} if there exists a third
  compactification \(f'_3\colon X'_3 \to X_3\) that dominates \(f'_1\) and
  \(f'_2\).
  Morphisms \(g_\infty\colon U'_\infty \to U_\infty\) of \(\Sp_\infty\)
  are equivalence classes of morphisms \(g\colon U' \to U\) with respect to the
  structures \(U'_\infty\) and \(U_\infty\).
\end{definition}
We are now ready to state the extension criterion for
pairs \((X,U)\).
\begin{theorem}[An extension criterion for functors defined on regular pairs
  \((X,U)\)]\label{thm:cg39}
  Suppose that \(\Sp\) is a subcategory of a small category of
  finite-dimensional reduced spaces of one of the types
  \((\ref{setup:introalgebraicspaces})\textnormal{--}(\ref{setup:introadicspaces})\).
  In cases \((\ref{setup:introalgebraicspaces})\) and
  \((\ref{setup:introformalqschemes})\), suppose that all objects in \(\Sp\) are
  Noetherian and quasi-excellent of equal characteristic zero.
  In cases \((\ref{setup:introberkovichspaces})\),
  \((\ref{setup:introrigidanalyticspaces})\), and
  \((\ref{setup:introadicspaces})\), suppose that
  \(k\) is of characteristic zero.
  \par
  Let \(\sD\) be a small category of cohomological descent and let
  \[
    G\colon \bigl(\Sp^{2}_{\reg,\comp}\bigr)^\op \longrightarrow \Ho(\sD)
  \]
  be a \(\Phi\)-rectified functor
  satisfying the following conditions:
  \begin{enumerate}[label=\((\mathrm{F}\arabic*)^2\),ref=\mathrm{F\arabic*}]
    \item\label{thm:cg39f1}
      \(G(\emptyset,\emptyset) = 1\) and the canonical morphism
      \(G((X,U) \sqcup (Y,V)) \to G(X,U) \times G(Y,V)\)
      is an isomorphism.
    \item\label{thm:cg39f2}
      For every elementary acyclic square \((X_\bullet,U_\bullet)\) in
      \(\Sp^{2}_{\reg,\comp}\), \(\bfs G(X_\bullet,U_\bullet)\) is acyclic.
  \end{enumerate}
  Then, there exists a \(\Phi\)-rectified functor
  \[
    G'\colon \bigl(\Sp_\infty\bigr)^\op \longrightarrow \Ho(\sD)
  \]
  satisfying the following conditions:
  \begin{enumerate}[label=\((\arabic*)\),ref=\arabic*]
    \item\label{thm:cg39e} For every object \((X,U)\) of \(\Sp^{2}_{\reg,\comp}\), we have
      \(G'(U_\infty) \simeq G(X,U)\).
    \item\label{thm:cg39d}
      For every acyclic square \((X_\bullet,U_\bullet)\) in \(\Sp^{2}_{\comp}\), \(\bfs
      G'(X_\bullet,U_\bullet)\) is acyclic.
  \end{enumerate}
  \par Moreover, suppose
  \(F\colon (\Sp^{2}_{\comp})^\op \to \Ho(\sD)\) and
  \(G\colon (\Sp^{2}_{\comp})^\op \to \Ho(\sD)\)
  are \(\Phi\)-rectified
  functors satisfying conditions \((\ref{thm:cg39f1})^2\) and
  \((\ref{thm:cg39f2})^2\) with functors \(F'\) and \(G'\) satisfying
  \((\ref{thm:cg39e})\) and \((\ref{thm:cg39d})\).
  If \(\tau\colon F \to G\) is a morphism of \(\Phi\)-rectified functors, then
  \(\tau\) extends uniquely to a morphism of \(\Phi\)-rectified functors
  \(\tau'\colon F' \to G'\).
\end{theorem}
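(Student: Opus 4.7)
The plan is to mimic the two-step strategy of Guill\'en--Navarro Aznar and Cirici--Guill\'en, replacing their ingredients (Chow--Hironaka, Hironaka resolution, complex-analytic cubical hyperresolutions of pairs) with our generalizations valid in all categories \((\ref{setup:introalgebraicspaces})\textnormal{--}(\ref{setup:introadicspaces})\). First, I extend \(G\) from \(\Sp^{2}_{\reg,\comp}\) to all of \(\Sp^{2}_{\comp}\) via cubical hyperresolutions of pairs. Then, I descend along the localization \(\Sp^{2}_\comp \to \Sp^{2}_\comp[\Sigma^{-1}] \simeq \Sp_\infty\) by showing every morphism in \(\Sigma\) is sent to a quasi-isomorphism.

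For the first step, I will set up the diagram \eqref{eq:2fun} in the category of pairs, i.e., with \(\Sp\) replaced by \(\Sp^{2}_\comp\) and \(\Hrc_\proj\) replaced by the full subcategory \(\Hrc^{2}_\proj\) of iterated cubical hyperresolutions of pairs whose building blocks are elementary acyclic squares of pairs in the sense of Definition \ref{def:acyclicsquarepairs}. The existence of such hyperresolutions follows from Theorem \ref{thm:gnapgp215} applied to the closed complement \(S' = X - U\) (using Remark \ref{rem:iteratednotpairs}), together with Steenbrink's correction so that the condition \((\ref{def:acyclicsquarepairsnc})\) of having normal crossings with the boundary can be realized at each stage: at every blowup center, we use Temkin's embedded resolution \cite{Tem12,Tem18} to ensure the center is regular and has normal crossings with the inverse image of the boundary. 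Then the proof of Theorem \ref{thm:gna215} runs verbatim in this pair setting, using the pair-version of Corollary \ref{cor:gnapgp310} to identify \(G'\) with \(\bfs \circ G_\Phi \circ \eta\) for any quasi-inverse \(\eta\) of \(\Ho w_{\Phi,\proj}\), and yielding an extension \(G'_{\Sp^{2}_\comp}\colon (\Sp^{2}_\comp)^\op \to \Ho(\sD)\) satisfying descent for arbitrary acyclic squares of pairs.

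For the second step, by the universal property of the category of fractions \cite[Proposition 5.2.2]{Bor94}, it suffices to show that \(G'_{\Sp^{2}_\comp}\) sends every \(s \in \Sigma\) to a quasi-isomorphism; equivalently, that for any two compactifications \((X_1,U),(X_2,U)\) of the same \(U\) connected by a proper morphism \(X_2 \to X_1\) that restricts to the identity on \(U\), the induced morphism is a quasi-isomorphism. After passing to a common dominating compactification (given by the scheme- or analytic-theoretic closure of the diagonal \(U \hookrightarrow X_1 \times X_2\) from \S\ref{sect:closures}), I reduce to the case of a single proper modification \(f\colon X_2 \to X_1\) that is an isomorphism over \(U\). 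I then apply the Abramovich--Temkin weak factorization theorem \cite{AT19} (extended to adic spaces as in the proof of Theorem \ref{thm:gna215}) with \(D = X_1 - U\) as the distinguished normal crossing divisor: each intermediate blowup center \(Z_i\) is disjoint from the inverse image of \(U\) and has normal crossings with the inverse image of \(D\), so each factorization step is precisely an elementary acyclic square of pairs in the sense of Definition \ref{def:acyclicsquarepairs}\((\ref{def:acyclicsquarepairsnc})\), to which property \((\ref{thm:cg39f2})^2\) applies. Chaining these quasi-isomorphisms along the factorization yields \(G'_{\Sp^{2}_\comp}(X_1,U) \simeq G'_{\Sp^{2}_\comp}(X_2,U)\), producing the desired functor \(G'\colon (\Sp_\infty)^\op \to \Ho(\sD)\). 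Uniqueness of \(\tau'\) follows from the universal property and the uniqueness portion of Theorem \ref{thm:gna215} applied in the pair setting.

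The main obstacle is the compatibility between the two uses of weak factorization: one needs the factorization in step two to take place within \(\Sp^{2}_{\reg,\comp}\) (so that \((\ref{thm:cg39f2})^2\) applies to each step) and simultaneously respect the distinguished boundary divisor \(D = X_1 - U\). This is precisely the content of conclusion \((\ref{cond:at19nc})\) in Abramovich--Temkin's statement as recalled in the proof of Theorem \ref{thm:gna215}, but one must verify that their theorem remains valid in case \((\ref{setup:introadicspaces})\); this is handled exactly as in that proof, using the GAGA theorems from \cite[\S6]{Hub07} and the regularity of affinoid transition morphisms \cite[Lemma 24.9]{LM}. A secondary subtlety, already flagged in \cite[Remark 3.2]{CG14} and used throughout \S\ref{sect:gna02}, is that compactifiability is essential to guarantee finitely many irreducible components at each inductive step, which is required both for the existence of \(\Hrc^{2}_\proj\)-hyperresolutions and for the inductive descent on irreducible components inside Lemma \ref{lem:gna1} applied to pairs.
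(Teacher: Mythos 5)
You follow the paper's two-step strategy (extend to \(\Sp^2_\comp\), then descend to \(\Sp_\infty\)), and Step 1 of your proposal matches the paper closely. The problem is Step 2. You reduce to a proper modification \(f\colon X_2 \to X_1\) that is an isomorphism over \(U\) and then apply the Abramovich--Temkin weak factorization theorem to \(f\), taking \(D = X_1 - U\) as the distinguished normal crossing divisor. But \((X_1,U)\) and \((X_2,U)\) are arbitrary objects of \(\Sp^{2}_\comp\): \(X_1\) and \(X_2\) need not be regular, and \(D = X_1 - U\) need not be a normal crossing divisor. Weak factorization applies only to proper birational morphisms between \emph{regular} spaces whose distinguished boundary divisors have normal crossings, so it simply does not apply here. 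Relatedly, you invoke \((\ref{thm:cg39f2})^2\) for the factorization steps, but \((\ref{thm:cg39f2})^2\) is a hypothesis on the original functor \(G\) defined on \(\Sp^{2}_{\reg,\comp}\); to use it you would need the entire factorization chain, including the endpoints \((X_1,U)\) and \((X_2,U)\), to consist of regular pairs with normal crossing boundaries, which is exactly what fails.

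The correct Step 2, following \cite[Th\'eor\`eme 2.3.6]{GNA02}, does not apply weak factorization to \(X_1\) and \(X_2\) at all. Instead it exploits the descent property (the analogue of \((\ref{thm:cg39d})\)) of the extended functor \(G''\) obtained in Step 1, together with the fact that \(G''\) is computed by cubical hyperresolutions of pairs and is essentially unique. Weak factorization is only used inside Step 1, where it is applied to the \emph{regular} pairs arising in cubical hyperresolutions, as in the proof of Theorem~\ref{thm:gna215}, to establish descent for \(G''\) in the first place. Once that descent property is available on all of \(\Sp^{2}_\comp\), inverting \(\Sigma\) follows without any further appeal to weak factorization on the possibly singular compactifications themselves. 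So you have the right tool but you are trying to use it at the wrong stage of the argument.
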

\begin{proof}
  We combine the proof of 
  \citeleft\citen{GNA02}\citemid Th\'eor\`eme 2.3.3\citeright\ with our changes in
  Theorem \ref{thm:gnapgp215}.
  First, we need versions of augmented 2-resolutions and cubical
  hyperresolutions for pairs, which exist by Theorem \ref{thm:gnapgp26}, Proposition
  \ref{prop:gnapgp210}, and Theorem \ref{thm:gnapgp215}\((\ref{thm:gnapgp215compact})\).
  Next, we need versions of our results for the homotopy
  category of iterated hyperresolutions in \(\Sp\)
  (Theorem \ref{thm:gnapgp38}\((\ref{thm:gnapgp38compact})\) and
  Corollary \ref{cor:gnapgp310}) for pairs.
  These hold by Remark \ref{rem:iteratednotpairs}.
  As in the proof of Theorem \ref{thm:gnapgp215}, we replace the Chow--Hironaka
  lemma used in \cite{GNA02} with the weak factorization theorem proved in
  \cite[Theorem 1.4.1]{AT19}.
  Note that this weak factorization theorem is already adapted to the setting of
  pairs \((X,U)\); for the normal crossings condition in Definition
  \ref{def:acyclicsquarepairs}\((\ref{def:acyclicsquarepairsnc})\), see
  \((\ref{cond:at19nc})\) on p.\ \pageref{cond:at19nc}.
  With these changes, we follow the proofs of \cite[Th\'eor\`eme 2.3.3]{GNA02}
  and Theorem \ref{thm:gnapgp215}
  to construct a \(\Phi\)-rectified functor
  \begin{align*}
    G''\colon \bigl(\Sp^{2}_{\comp}\bigr)^\op &\longrightarrow \Ho \sD
    \shortintertext{satisfying analogues of \((\ref{thm:cg39e})\) and
    \((\ref{thm:cg39d})\) such that \(G''\) is an essentially unique extension
    of \(G\).
    We can then follow the proof of \cite[Th\'eor\`eme 2.3.6]{GNA02} to construct
    the \(\Phi\)-rectified functor \(G'\) using the equivalence of categories}
    \eta\colon \Sp^{2}_{\comp}[\Sigma^{-1}] &\overset{\sim}{\longrightarrow}
    \Sp_\infty,
  \end{align*}
  which holds by \cite[Lemme 4.6]{GNA02} (whose proof applies in \(\Sp\) with
  only notational changes).
\end{proof}
\section{The Deligne--Du Bois complex and Du Bois
singularities}\label{sect:delignedubois}
In this section, we construct the Deligne--Du Bois complex
\(\underline{\Omega}_X^\bullet\) and the logarithmic Deligne--Du Bois complex
\(\underline{\Omega}_X^\bullet\langle D \rangle\) for many spaces in the categories
\((\ref{setup:introalgebraicspaces})\textnormal{--}(\ref{setup:introadicspaces})\).
The construction for varieties is due to Du Bois \cite[\S\S3--4]{DB81} (see also
\citeleft\citen{GNAPGP88}\citemid Expos\'e V, \S\S3--4\citeright),
the construction for
complex analytic spaces is due to Guill\'en and Navarro Aznar
\cite[Th\'eor\`emes 4.1 and 4.3]{GNA02}, and
the construction for adic spaces of finite type over a \(p\)-adic field is due
to Guo \cite[\S5]{Guo23}.
We extend their results to algebraic spaces and formal schemes of finite type
over a field of characteristic zero and various kinds of non-Archimedean
analytic spaces over more general fields of characteristic zero.\medskip
\par Additionally, we construct the 0-th graded piece \(\underline{\Omega}_X^0\) of
the Deligne--Du Bois complex whenever our existence result for cubical
hyperresolutions (Theorem \ref{thm:gnapgp215}) applies.
We will use \(\underline{\Omega}_X^0\) to define Du Bois singularities in this
generality (Definition \ref{def:dubois}).
\subsection{Conventions}
We denote by \(\Sp\) a subcategory of a (not necessarily small) category of reduced spaces
of one of the types
\((\ref{setup:introalgebraicspaces})\textnormal{--}(\ref{setup:introadicspaces})\)
that is essentially stable under fiber products, immersions, and proper
morphisms.
We will assume \(\Sp\) is chosen according to the strategy outlined in
Remark \ref{rem:smallspaces}.
We denote by \(\Sp_\reg\) the subcategory of \(\Sp\) consisting of regular spaces.
We denote by \(I\) a small category.
While we will remind the reader of these assumptions in statements of results in
this section, we will not restate these assumptions in definitions or remarks.
\par We will also continue to use the terminology \textsl{\(I\)-space} from
Definition \ref{def:ispaces} to refer to objects in \(\Diag_I(\Sp)\).
\subsection{The Deligne--Du Bois complex}
We prove the Deligne--Du Bois complex \(\underline{\Omega}_X^\bullet\)
exists using our version of the extension criterion (Theorem \ref{thm:gna215}).
In \cite{GNA02}, Guill\'en and Navarro Aznar used their extension criterion
to construct \(\underline{\Omega}_X^\bullet\) for complex analytic spaces.
\par Our analogue of the Poincar\'e lemma \((\ref{thm:gna413})\) in case
\((\ref{setup:introadicspaces})\) uses uses the pro-\'etale topology, the
(positive) de Rham sheaf, and the (positive) structural de Rham sheaf from \cite[\S3
and Definition 6.8\((iii)\)]{Sch13} (with the correction in \cite[(3)]{Sch16}).
See \cite[D\'efinition 16.10.1]{EGAIV4} for the definition of differentially
smooth morphisms used in \((\ref{thm:gna41db})\) below.
\begin{theorem}[The Deligne--Du Bois complex exists]
  \label{thm:gna41}
  Let \(\Sp\) be a subcategory of a category of reduced spaces of one of the
  types
  \((\ref{setup:introalgebraicspaces})\textnormal{--}(\ref{setup:introadicspaces})\).
  Let \(X\) be a finite-dimensional space in \(\Sp\).
  In cases \((\ref{setup:introalgebraicspaces})\) and
  \((\ref{setup:introformalqschemes})\), suppose that
  \(X\) is quasi-excellent of equal characteristic zero.
  In cases \((\ref{setup:introberkovichspaces})\),
  \((\ref{setup:introrigidanalyticspaces})\), and
  \((\ref{setup:introadicspaces})\), suppose that \(k\) is of characteristic
  zero.
  \begin{enumerate}[label=\((\roman*)\),ref=\roman*]
    \item\label{thm:gna41db}
      Let \(D \subseteq X\) be a closed subspace of \(X\) and set \(U
      \coloneqq X - D\).
      In case \((\ref{setup:introalgebraicspaces})\), suppose that \(X\) is an
      algebraic space over a field \(k\) of characteristic zero and
      that either \(D = \emptyset\) and the regular objects \(Y\)
      in \(\Sp\) of finite
      type over \(X\) are differentially smooth over
      \(k\) with \(\Omega_Y^1\) of finite rank, or \(D \ne \emptyset\) and
      \(X\) is essentially of finite type
      over \(k\).
      In case \((\ref{setup:introformalqschemes})\), suppose that
      \(X\) is locally of finite type over a
      field \(k\) of characteristic zero and \(D = \emptyset\).
      In cases
      \((\ref{setup:introberkovichspaces})\),
      \((\ref{setup:introrigidanalyticspaces})\), and
      \((\ref{setup:introadicspaces})\),
      suppose that \(D = \emptyset\).
      Then, there exists an object \((\underline{\Omega}_X^\bullet\langle D
      \rangle,\FF)\) of \(\Ho\CF_{\kern-1pt\diff\kern-.5pt,\coh}(X)\) satisfying
      the following properties:
      \begin{enumerate}[label=\((\arabic*)\),ref=\arabic*]
        \item\label{thm:gna411} There exists a natural morphism of filtered complexes
          \[
            \bigl(\Omega_{X}^\bullet(\log D),\FF\bigr) \longrightarrow
            \bigl(\underline{\Omega}_X^\bullet\langle D \rangle,\FF\bigr)
          \]
          that is a filtered quasi-isomorphism if \(X\) is regular and \(D\) is
          a normal crossing divisor.
          On the left-hand side, \(\FF\) is the Hodge filtration and coincides
          with the brutal truncation \(\sigma_{\ge \bullet}\)
          when \((X,D)\) is regular.
        \item\label{thm:gna412}
          If \(f\colon (X',U') \to (X,U)\) is a morphism (resp.\ a proper morphism)
          in \(\Sp^2\), then
          \[
            \RR f_*\bigl(\underline{\Omega}_{X'}^\bullet\langle D'
            \rangle,\FF\bigr)
          \]
          is a well-defined object of \(\Ho\CF_{\kern-1pt\diff}(X)\)
          (resp.\ \(\Ho\CF_{\kern-1pt\diff\kern-.5pt,\coh}(X)\)) where \(D' = X' - U'\).
          Moreover, the assignments
          \begin{equation}\label{eq:gna412functors}
            \begin{tikzcd}[cramped,row sep=0,column sep=1.475em]
              \bigl(\Sp^2_{/(X,U)}\bigr)^\op \rar & \Ho\CF_{\kern-1pt\diff}(X)\\
              \bigl(f\colon (X',U') \to (X,U)\bigr) \rar[mapsto]
              & \RR f_*\bigl(\underline{\Omega}_{X'}^\bullet\langle D'\rangle,\FF\bigr)\\
              \bigl(\Sp^2_{\proper/(X,U)}\bigr)^\op \rar & \Ho\CF_{\kern-1pt\diff\kern-.5pt,\coh}(X)\\
              \bigl(f\colon (X',U') \to (X,U)\bigr) \rar[mapsto]
              & \RR f_*\bigl(\underline{\Omega}_{X'}^\bullet\langle D'\rangle,\FF\bigr)
            \end{tikzcd}
          \end{equation}
          are \(\Phi\)-rectified functors satisfying property
          \((\ref{thm:gna251d})\) from Theorem \ref{thm:gna215}.
        \item\label{thm:gna413}
          \emph{(Poincar\'e lemma)}
          Let \(j\colon U \hookrightarrow X\) be the inclusion morphism.
          In case \((\ref{setup:introalgebraicspaces})\),
          if \(X\) is separated and of finite type over a field \(k \subseteq
          \CC\), then
          the complex
          \[
            \bigl(\underline{\Omega}_X^\bullet\langle D \rangle \otimes_{\cO_X}
            \cO_{X \otimes_k \CC}\bigr)^\an
          \]
          is naturally quasi-isomorphic to the complex \(\RR j_*\CC_U\).
          In case \((\ref{setup:introcomplexanalyticgerms})\),
          the complex \(\underline{\Omega}_X^\bullet\langle D \rangle\) is
          naturally quasi-isomorphic to the complex \(\RR j_*\CC_U\).
          In case \((\ref{setup:introadicspaces})\), if \(X\) is
          locally of finite type
          over \(\Spa(k,\cO_k)\) where \(k\) is a discretely valued
          non-Archimedean extension of \(\QQ_p\) and we consider the functors
          \eqref{eq:gna412functors} as landing in the homotopy category of
          filtered complexes of sheaves on the pro-\'etale topology, then the complex
          \[
            \cO\BB^+_{\dR}\otimes_{\cO_{X_\proet}} \underline{\Omega}_{X_\proet}^\bullet
          \]
          is naturally quasi-isomorphic to the complex \(\BB^+_{\dR}\), where
          the tensor product is as sheaves on the pro-\'etale topology.
        \item\label{thm:gna414}
          In case \((\ref{setup:introalgebraicspaces})\),
          if \(X\) is a separated scheme of finite type over
          \(\CC\), then \((\underline{\Omega}_X^\bullet,\FF)^\an\)
          coincides with the Deligne--Du Bois complex constructed by
          Du Bois in \emph{\cite[Notation 3.18]{DB81}}.
        \item\label{thm:gna415}
          We have \(\Gr_\FF^p\underline{\Omega}_X^\bullet\langle D \rangle
          \simeq 0\) if \(p \notin [0,\dim(X)]\).
      \end{enumerate}
    \item\label{thm:gna41ox} In cases
      \((\ref{setup:introalgebraicspaces})\textnormal{--}(\ref{setup:introadicspaces})\),
      there exists an object \(\underline{\Omega}_X^0\) of
      \(\D_{\coh}^b(X)\) satisfying the following properties:
      \begin{enumerate}
        \item[{\((\ref{thm:gna411})^0\)}]
          There exists a natural morphism of complexes \(\cO_X \to
          \underline{\Omega}_X^0\) that is a quasi-isomorphism if \(X\) is
          regular.
        \item[{\((\ref{thm:gna412})^0\)}]
          If \(f\colon X' \to X\) is a morphism (resp.\ a proper morphism)
          in \(\Sp\), then \(\RR f_*\underline{\Omega}_{X'}^0\) is a well-defined
          object of \(\D^b(X)\) (resp.\ \(\D^b_\coh(X)\)).
          Moreover, the assignments
          \begin{equation}\label{eq:gna412functorsox}
            \begin{tikzcd}[cramped,row sep=0,column sep=1.475em]
              \bigl(\Sp_{/X}\bigr)^\op \rar & \D^b(X)\\
              \bigl(f\colon X' \to X\bigr) \rar[mapsto] & \RR
              f_*\underline{\Omega}_{X'}^0\\
              \bigl(\Sp_{\proper/X}\bigr)^\op \rar & \D^b_\coh(X)\\
              \bigl(f\colon X' \to X\bigr) \rar[mapsto] & \RR
              f_*\underline{\Omega}_{X'}^0
            \end{tikzcd}
          \end{equation}
          are \(\Phi\)-rectified functors satisfying property
          \((\ref{thm:gna251d})\) from Theorem \ref{thm:gna215}.
        \item[{\((\ref{thm:gna414})^0\)}]
          If \(X\) is a separated scheme of finite type over
          \(\CC\), then \((\underline{\Omega}_X^0)^\an\) coincides with the 0-th
          graded piece of the Deligne--Du Bois complex constructed by Du Bois in
          \emph{\cite[Notation 3.18]{DB81}}.
      \end{enumerate}
  \end{enumerate}
  Finally, the objects \((\underline{\Omega}_X^\bullet,\FF)\) and
  \(\underline{\Omega}_X^0\) are essentially uniquely determined by
  properties \((\ref{thm:gna411})\) and \((\ref{thm:gna412})\) and
  properties \((\ref{thm:gna411})^0\) and \((\ref{thm:gna412})^0\), respectively.
\end{theorem}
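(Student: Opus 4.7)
The plan is to deduce both parts from the extension criteria of Theorems~\ref{thm:gna215} and~\ref{thm:cg39}, generalizing the strategy of Guill\'en and Navarro Aznar \cite{GNA02} from the complex analytic setting. The target categories $\C^b$ of coherent $\cO_X$-modules and $\CF_{\kern-1pt\diff\kern-.5pt,\coh}(X)$ will be our categories of cohomological descent, via Example~\ref{ex:cohdescent}$(\ref{ex:cohdescentcochain})$ and $(\ref{ex:cohdescentdubois})$. The essential uniqueness clause will follow automatically from the universal properties built into both extension criteria.

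For part $(\ref{thm:gna41ox})$, we will relativize over a fixed base space $X$ and consider the functor
\[
  G_0\colon \bigl((\Sp_{\reg})_{/X}\bigr)^\op \longrightarrow \Ho \C^b(\cO_X\text{-}\mathrm{mod})
\]
sending a morphism $f\colon X' \to X$ with $X'$ regular to $\RR f_*\cO_{X'}$. Axiom $(\ref{thm:gna251f1})$ is immediate. For axiom $(\ref{thm:gna251f2})$, given a blowup $f\colon \tilde{Y} \to Y$ along a regular closed subspace $Z \subseteq Y$ in $\Sp_\reg$ with preimage $\tilde Z = f^{-1}(Z)$, we will need
\[
  \cO_Y \longrightarrow \RR f_*\cO_{\tilde Y} \oplus \cO_Z \longrightarrow \RR f_*\cO_{\tilde Z}
\]
to be a distinguished triangle. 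This is classical in the algebraic and complex analytic settings, and in cases $(\ref{setup:introformalqschemes})$--$(\ref{setup:introadicspaces})$ it reduces affinoid-locally to the algebraic statement by excellence of affinoid rings and the compatibility of blowups with completion and analytification. Theorem~\ref{thm:gna215} will then furnish $\underline{\Omega}_X^0$ together with the functoriality $(\ref{thm:gna412})^0$; property $(\ref{thm:gna411})^0$ will be immediate from the construction, and $(\ref{thm:gna414})^0$ will follow from essential uniqueness once we verify that Du Bois's construction satisfies the same axioms.

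For part $(\ref{thm:gna41db})$, we will apply Theorem~\ref{thm:cg39} to the functor on regular pairs
\[
  G\colon \bigl((\Sp^2_{\reg,\comp})_{/(X,U)}\bigr)^\op \longrightarrow \Ho\CF_{\kern-1pt\diff\kern-.5pt,\coh}(X)
\]
sending $(X', U') \to (X, U)$ (with $(X', D')$ regular and $D' = X' - U'$ a normal crossing divisor) to $\RR f_*(\Omega^\bullet_{X'}(\log D'), \FF)$ endowed with the Hodge filtration. Axiom $(\ref{thm:cg39f2})^2$ is the classical compatibility of the filtered log de Rham complex with blowups of regular centers having normal crossings with the boundary, proved in \cite[Expos\'e V]{GNAPGP88} and extended to our other categories as in the previous paragraph. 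Theorem~\ref{thm:cg39} will produce the essentially unique $\Phi$-rectified extension $G'$, and we define $(\underline{\Omega}_X^\bullet\langle D\rangle, \FF)$ to be the value of $G'$ at the identity morphism $(X, U) \to (X, U)$. Property $(\ref{thm:gna411})$ will hold by construction. We will obtain property $(\ref{thm:gna415})$ by evaluating $G'$ on a cubical hyperresolution $Z_\bullet \to (X, D)$ furnished by Theorem~\ref{thm:gnapgp215} satisfying $\dim(Z_\alpha) \le \dim(X) - \abs{\alpha} + 1$, using that $\Omega^p_{Z_\alpha} = 0$ for $p > \dim(Z_\alpha)$. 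Properties $(\ref{thm:gna413})$ and $(\ref{thm:gna414})$ will follow from essential uniqueness applied, respectively, to the alternative functor $(X, D) \mapsto \RR j_*\CC_U$ (with the pro-\'etale variant built from Scholze's $\cO\BB^+_{\dR}$ in case $(\ref{setup:introadicspaces})$) and to Du Bois's original construction.

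The main obstacle will be the verification of axiom $(\ref{thm:cg39f2})^2$ in the non-Archimedean and formal settings. The classical verification proceeds by a direct local computation of the filtered log de Rham complex under a blowup along a regular center, using the splitting of $f^*\Omega_Y^\bullet(\log D) \hookrightarrow \Omega_{\tilde Y}^\bullet(\log \tilde D)$ and a detailed analysis of the cokernel supported on the exceptional divisor. Transporting this calculation to our categories will rely on the fact that the statement is affinoid-local, that affinoid rings in cases $(\ref{setup:introformalqschemes})$--$(\ref{setup:introadicspaces})$ are excellent, and that the formation of the log de Rham complex commutes with completion and analytification. A secondary issue is that the target $\CF_{\kern-1pt\diff\kern-.5pt,\coh}(X)$ requires a well-behaved cotangent sheaf, which is precisely why the differential-smoothness and finite-rank hypotheses on $\Omega_Y^1$ are imposed in part $(\ref{thm:gna41db})$ in cases $(\ref{setup:introalgebraicspaces})$ and $(\ref{setup:introformalqschemes})$, so that Example~\ref{ex:cohdescent}$(\ref{ex:cohdescentdubois})$ applies.
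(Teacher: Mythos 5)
Your strategy — constructing $\underline{\Omega}_X^\bullet\langle D\rangle$ and $\underline{\Omega}_X^0$ by applying the extension criteria (Theorems~\ref{thm:gna215} and~\ref{thm:cg39}) to the filtered log de Rham complex on regular (pairs of) spaces relativized over $X$, verifying $(\ref{thm:gna251f1})$ and $(\ref{thm:gna251f2})$ via the blowup distinguished triangle, and extracting $(\ref{thm:gna413})$--$(\ref{thm:gna415})$ from the hyperresolution description and essential uniqueness — matches the paper's overall plan. However, there is a genuine gap. Theorems~\ref{thm:gna215} and~\ref{thm:cg39} require the objects of $\Sp$ to be compactifiable: their proofs run through $\Hrc_\proj$ and the weak factorization theorem, which need finiteness of irreducible components (see Theorem~\ref{thm:gnapgp38}$(\ref{thm:gnapgp38compact})$, Corollary~\ref{cor:gnapgp310}, and the paragraph on compactifiability in the proof of Theorem~\ref{thm:gna215}). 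Theorem~\ref{thm:gna41} imposes no such hypothesis on $X$, so the slice category $(\Sp_\reg)_{/X}$ you propose to feed into Theorem~\ref{thm:gna215} contains non-compactifiable objects whenever $X$ itself is not compactifiable, and the extension criterion cannot be applied as a black box.

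The paper resolves this by not invoking the criterion wholesale, but by isolating the one place where its proof fails. It suffices to verify that the morphism \eqref{eq:wanttoshow215} from the proof of Theorem~\ref{thm:gna215} is a quasi-isomorphism; concretely, that for a cubical hyperresolution $\varepsilon\colon X'_\bullet \to X'$ of a regular $X'$ over $X$, the maps
\[
\RR f_*\bigl(\Omega^\bullet_{X'}(\log D'),\sigma_{\ge\bullet}\bigr) \longrightarrow \RR(f \circ \varepsilon)_*\bigl(\Omega^\bullet_{X'_\bullet}(\log D'_\bullet),\sigma_{\ge\bullet}\bigr)
\qquad\text{and}\qquad
\RR f_*\cO_{X'} \longrightarrow \RR(f \circ \varepsilon)_*\cO_{X'_\bullet}
\]
are quasi-isomorphisms. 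Both statements are \'etale-local on $X$, so one may replace $X$ by an affinoid subdomain, which \emph{is} compactifiable; after that replacement the remainder of the proofs of Theorems~\ref{thm:gna215} and~\ref{thm:cg39} apply verbatim. You should insert this localization step before appealing to the extension criteria. With that fix, your verification of $(\ref{thm:cg39f1})^2$ and $(\ref{thm:cg39f2})^2$ (affinoid-locally, reducing the non-Archimedean and formal cases to the algebraic one via GAGA) and your deduction of $(\ref{thm:gna413})$--$(\ref{thm:gna415})$ track the paper's argument.
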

\begin{proof}
  We prove \((\ref{thm:gna41db})\) and \((\ref{thm:gna41ox})\) simultaneously.
  We want to define
  \begin{align*}
    \RR f_*\bigl(\underline{\Omega}^\bullet_{X'}\langle D' \rangle,\FF\bigr)
    &\coloneqq \RR (f \circ
    \varepsilon)_*\Bigl(\underline{\Omega}^\bullet_{X'_\bullet}(\log
    D'_\bullet),\sigma_{\ge\bullet}\Bigr)\\
    \RR f_*\underline{\Omega}_X^\bullet &\coloneqq \RR (f \circ
    \varepsilon)_*\cO_{X_\bullet}
  \end{align*}
  where \(\varepsilon\colon X'_\bullet \to X'\) is a cubical hyperresolution of
  a pair \((X',D')\) over \((X,D)\) for \((\ref{thm:gna41db})\) and a space
  \(X'\) over \(X\) for \((\ref{thm:gna41ox})\), respectively.
  Here, \(f\colon X' \to X\) is the structure morphism for \(X'\) and the
  ``differentially smooth'' assumption is used to make sense of
  \(\Omega_{X'_\bullet}\).
  Theorems \ref{thm:gna215} and \ref{thm:cg39} do not apply directly since \(X\) is not
  necessarily compactifiable.
  However, following the proof of Theorem \ref{thm:gna215}, to prove a version
  of Theorems \ref{thm:gna215} and \ref{thm:cg39} that applies in our setting it
  suffices to show that \eqref{eq:wanttoshow215} in the proof of Theorem
  \ref{thm:gna215} is a quasi-isomorphism.
  For \((\ref{thm:gna41db})\), we want to show that for every cubical
  hyperresolution as above such that \((X',D')\) is a \emph{regular} pair,
  the induced morphism
  \begin{align*}
    \RR f_*\bigl(\Omega^\bullet_{X'}(\log D'),\sigma_{\ge\bullet}\bigr) &\longrightarrow
    \RR (f \circ \varepsilon)_*\bigl(\Omega^\bullet_{X'_\bullet}(\log
    D'_\bullet),\sigma_{\ge\bullet}\bigr)
  \shortintertext{is a quasi-isomorphism.
  For \((\ref{thm:gna41ox})\), we want to show that for every cubical
  hyperresolution as above such that \(X\) is regular,
  the induced morphism}
    \RR f_*\cO_X &\longrightarrow \RR (f \circ \varepsilon)_*\cO_{X_\bullet}
  \end{align*}
  is a quasi-isomorphism.
  Since these questions are \'etale-local on \(X\), we may replace \(X\) by an
  affinoid subdomain.
  In particular, \(X\) is compactifiable, and hence the rest of the proofs of
  Theorems \ref{thm:gna215} and \ref{thm:cg39} apply.\smallskip
  \par We now apply the versions of Theorems \ref{thm:gna215} and \ref{thm:cg39}
  proved in the previous paragraph to show
  \((\ref{thm:gna411})\), \((\ref{thm:gna412})\),
  \((\ref{thm:gna411})^0\), \((\ref{thm:gna412})^0\), and the
  ``essentially uniquely determined'' statements.
  First, we note that
  \(\CF_{\kern-1pt\diff\kern-.5pt,\coh}(X)\), \(\CF_{\kern-1pt\diff}(X)\),
  \(\C^b_\coh(X)\), and \(\C^b(X)\) are
  categories of cohomological descent by \cite[Propositions 1.7.2 and 1.7.6]{GNA02} (see
  Examples
  \ref{ex:cohdescent}\((\ref{ex:cohdescentcochain})\),\((\ref{ex:cohdescentdubois})\))
  and that the functors \eqref{eq:gna412functors} and
  \eqref{eq:gna412functorsox} restricted to regular objects in \(\Sp^2\)
  are \(\Phi\)-rectified since they factor through
  \(\CF_{\kern-1pt\diff\kern-.5pt,\coh}(X)\), \(\CF_{\kern-1pt\diff}(X)\),
  \(\C^b_\coh(X)\), and \(\C^b(X)\), respectively \cite[p.\ 34]{GNA02}.
  It therefore suffices to show that the functors in \eqref{eq:gna412functors} and
  \eqref{eq:gna412functorsox}
  satisfy \((\ref{thm:cg39f1})^2\) and \((\ref{thm:cg39f2})^2\).
  Condition \((\ref{thm:cg39f1})^2\) holds by the additivity of higher direct
  images, and hence it remains to show \((\ref{thm:cg39f2})^2\).
  Condition \((\ref{thm:cg39f2})^2\) holds by definition of structure
  sheaves for \((\ref{thm:gna41ox})\).
  For \((\ref{thm:gna41db})\),
  since the assertion is local, we may replace \(X\) by an affinoid subdomain to
  assume that \(X\) is affinoid.
  Cases \((\ref{setup:introalgebraicspaces})\) and
  \((\ref{setup:introcomplexanalyticgerms})\) now follow from \cite[Propositions
  3.3 and 4.4]{GNA02}.
  Otherwise, we can reduce to the scheme case (a subcase of
  \((\ref{setup:introalgebraicspaces})\)) using the GAGA correspondence
  \citeleft\citen{EGAIII1}\citemid \S5\citepunct
  \citen{Kop74}\citepunct \citen{Poi10}\citemid Annexe A\citepunct
  \citen{Hub07}\citemid \S6\citeright\ since
  sheaves of differentials are compatible with analytification
  \cite[Proposition 3.3.11]{Ber93}.
  \par Next, we show \((\ref{thm:gna413})\).
  The subcase of \((\ref{setup:introalgebraicspaces})\) where \(X\) is separated
  and of finite type over a field \(k \subseteq \CC\) and case
  \((\ref{setup:introcomplexanalyticgerms})\) can be proved in the same way as
  in \cite[Th\'eor\`emes 4.1 and 4.3]{GNA02}.
  It remains to consider case \((\ref{setup:introadicspaces})\).
  The assignments
  \[
    \begin{tikzcd}[cramped,row sep=0,column sep=1.475em]
      \bigl(f\colon X' \to X\bigr) \rar[mapsto]
      & \RR f_*\bigl(\cO\BB^+_\dR \otimes_{\cO_{X_\proet}}
      \underline{\Omega}^\bullet_{X_\proet}\bigr)\\
      \bigl(f\colon X' \to X\bigr) \rar[mapsto]
      & \RR f_*\BB^+_\dR
    \end{tikzcd}
  \]
  are isomorphic \(\Phi\)-rectified functors on
  the subcategory of regular spaces of finite type over \(X\)
  by \cite[Corollary 6.13]{Sch13}.
  They therefore extend to isomorphic functors on \(\Sp_{/X}\), and these
  extension functors are defined by taking a cubical hyperresolution of \(X'\)
  and then computing the pushforward to \(X\) of the sheaves on the right.
  This construction yields the functor \(f \mapsto \RR f_*\BB^+_\dR\) for the
  second functor described above by cohomological descent in the pro-\'etale
  topology (Theorem \ref{thm:gnapgp69}).
  Statement
  \((\ref{thm:gna413})\) now follows by the essential uniqueness statement in
  Theorem \ref{thm:gna215}.
  \par Finally, \((\ref{thm:gna414})\) and \((\ref{thm:gna414})^0\) are proved
  in \cite[Expos\'e V, Corollaire 3.7]{GNAPGP88}, and \((\ref{thm:gna415})\)
  follows from Theorem \ref{thm:gnapgp215}, which says there exists a cubical
  hyperresolution \(X_\bullet\) of \(X\) of type \(\square_r \in \Ob(\Pi)\) such
  that \(\dim(X_\alpha) \le \dim(X) - \abs{\alpha} + 1\) for every \(\alpha \in
  \square_r\).
\end{proof}
\begin{remark}
  Other than in cases \((\ref{setup:introalgebraicspaces})\) and
  \((\ref{setup:introcomplexanalyticgerms})\),
  we have assumed that \(D = \emptyset\) in Theorem
  \ref{thm:gna41}\((\ref{thm:gna41db})\).
  Logarithmic versions of these statements would hold in the other cases
  if one could show the analogue of \cite[Proposition 4.4]{GNA02} in these
  categories.
\end{remark}
As a result, we can make the following definition.
\begin{definition}[The (logarithmic) Deligne--Du Bois complex;
  cf.\ {\citeleft\citen{DB81}\citemid Notation 3.18 and
  Proposition-Notation 6.4\citepunct
  \citen{GNAPGP88}\citemid D\'efinition 3.5 and (3.6)\citepunct 
  \citen{GNA02}\citemid (4.2)\citeright}]\label{def:logderham}
  With notation as in Theorem \ref{thm:gna41}\((\ref{thm:gna41db})\), the
  \textsl{Deligne--Du Bois complex} is \(\underline{\Omega}_X^\bullet\) and the
  \textsl{logarithmic Deligne--Du Bois complex} is
  \(\underline{\Omega}_X^\bullet\langle D \rangle\).
  We set
  \begin{align*}
    \underline{\Omega}_X^{p} &\coloneqq \Gr^p_\FF
    \underline{\Omega}_X^\bullet[p]\\
    \underline{\Omega}_X^{p}\langle D \rangle &\coloneqq \Gr^p_\FF
    \underline{\Omega}_X^\bullet\langle D \rangle[p]
  \end{align*}
  which is a coherent complex functorial in \(X\).
  Note that if \(p = 0\), this notation matches the definition in Theorem
  \ref{thm:gna41}\((\ref{thm:gna41ox})\).
\end{definition}
\begin{remark}\label{rem:somehodge}
  With notation as in Definition \ref{def:logderham},
  by \((\ref{thm:gna415})\) in Theorem \ref{thm:gna41}\((\ref{thm:gna41db})\),
  we see that \(\underline{\Omega}_X^p\langle D \rangle\) is nonzero if and only
  if \(p \in [0,\dim(X)]\).
  Moreover, by our existence result for cubical hyperresolutions (Theorem
  \ref{thm:gnapgp215}) and the spectral sequence associated to a cubical
  hyperresolution \cite[p.\ 39]{GNAPGP88}, we know that
  \(H^i(\underline{\Omega}_X^p\langle D \rangle) = 0\) for all \(i < 0\) or \(i > \dim(X)\)
  and that
  \[
    \dim\Bigl(\Supp\Bigl(H^i\bigl(\underline{\Omega}_X^p\langle D \rangle\bigr)\Bigr)\Bigr)
    \le \dim(X) - i
  \]
  for all \(i \in [0,\dim(X)]\).
  See \citeleft\citen{GNAPGP88}\citemid Expos\'e III, Proposition 1.17 and
  Expos\'e V, (3.6)\citeright.
  We can also define Hodge cohomology and the Hodge-to-de Rham spectral
  sequence in this setting.
  See \cite[(4.2)]{GNA02}.
\end{remark}
We note two useful facts coming from the construction of the Deligne--Du Bois
complex.
\begin{corollary}[Compatibility of the Deligne--Du Bois complex with \'etale
  base change and ground field extensions;
  cf.\ {\cite[(4.3)]{DB81}}]\label{cor:delignedblocal}
  With notation as in Theorem \ref{thm:gna41}, the formation of
  \(\underline{\Omega}_X^\bullet\) and \(\underline{\Omega}_X^\bullet\langle D
  \rangle\) are compatible with \'etale base
  change on \(X\) and the formation of \(\underline{\Omega}_X^0\) is compatible
  with regular base change on \(X\).
  All three objects are compatible with ground field extensions where in case
  \((\ref{setup:introalgebraicspaces})\) (resp.\ 
  \((\ref{setup:introformalqschemes})\)), \(X\) is assumed to be essentially of
  finite type (resp.\ of finite type) 
  over a field \(k\) of characteristic zero.
\end{corollary}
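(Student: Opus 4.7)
The proof proposal is to leverage the essential uniqueness statement at the end of Theorem \ref{thm:gna41}: an object of $\Ho\CF_{\kern-1pt\diff\kern-.5pt,\coh}(X)$ (respectively $\D^b_\coh(X)$) satisfying properties $(\ref{thm:gna411})$ and $(\ref{thm:gna412})$ (respectively their $(-)^0$ analogues) is determined up to essentially unique isomorphism. Thus, given a base change morphism $g\colon X' \to X$ of the relevant type, it suffices to verify that $g^*\underline{\Omega}_X^\bullet\langle D\rangle$ and $g^*\underline{\Omega}_X^0$ satisfy these characterizing properties on $X'$, and then invoke uniqueness. A parallel strategy applies to ground field extensions.

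First I would treat étale base change for $\underline{\Omega}_X^\bullet\langle D\rangle$. Pick a cubical hyperresolution $\varepsilon\colon X_\bullet \to X$ of the pair $(X,D)$ as produced by Theorem \ref{thm:gnapgp215}, and form the pullback $X'_\bullet \coloneqq X_\bullet \times_X X'$ together with $D'_\bullet \coloneqq D_\bullet \times_X X'$. Since étale morphisms preserve regularity, normal crossing divisors, and (via flat base change) acyclic squares and discriminants, $X'_\bullet$ is a cubical hyperresolution of $(X',D')$. Étale base change commutes with the formation of logarithmic differentials $\Omega^\bullet(\log)$ and with $\RR\varepsilon_*$ (by flat base change for higher direct images on the sites of \S\ref{sect:etaletop}), so $g^*\underline{\Omega}_X^\bullet\langle D\rangle$ is computed by the cubical hyperresolution $X'_\bullet$ and therefore satisfies $(\ref{thm:gna411})$ and $(\ref{thm:gna412})$ on $X'$. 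The essential uniqueness then gives the required isomorphism. For $\underline{\Omega}_X^0$, the argument is identical except the input is only a cubical hyperresolution of $X$ (no divisor) and only the structure sheaf is involved; hence preserving regularity of the components of $X_\bullet$ is all that is needed, which is why regular base change (rather than étale) suffices.

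For ground field extensions $k \subseteq k'$, I would argue similarly: in cases $(\ref{setup:introalgebraicspaces})$ and $(\ref{setup:introformalqschemes})$, a cubical hyperresolution of $X$ base-changes to one of $X_{k'}$ because quasi-excellence and regularity are preserved by field extensions in characteristic zero (by a standard application of André--Popescu, citing \cite[Lemma 24.9]{LM} and the quasi-excellence hypothesis). In the non-Archimedean analytic cases, one reduces via the GAGA comparison (as in the proof of Theorem \ref{thm:gna41}) to the corresponding scheme-theoretic statement, since $\Omega^\bullet$ and coherent pushforwards are compatible with analytification.

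The main obstacle will be the verification that pullback along $g$ commutes with the formation of $\RR\varepsilon_*$ on the cubical hyperresolution in all six categories of spaces simultaneously; the flat base change theorems exist in each case but must be combined with the component-wise description of $\RR f_{\bullet*}$ from Proposition \ref{prop:componentwise}. A secondary subtlety is ensuring that the pullback of the discriminant of $\varepsilon$ is the discriminant of the pulled-back morphism, which is where the étale (respectively regular) assumption is crucial: a merely flat base change could fail to preserve the density condition defining a proper modification, and a merely regular base change could fail to preserve a normal crossing divisor, which is why the logarithmic statement requires étale base change while $\underline{\Omega}_X^0$ tolerates regular base change.
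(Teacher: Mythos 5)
Your core strategy---compute via a cubical hyperresolution and verify that the construction pulls back---is exactly the paper's, and the invocation of essential uniqueness from Theorem \ref{thm:gna41} is a valid (if slightly redundant) framing of the same computation.

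However, the rationale in your final paragraph misidentifies why \'etale (as opposed to merely regular) base change is required for the logarithmic Deligne--Du Bois complex. Flat base change \emph{does} preserve the density condition on discriminants: a flat morphism satisfies going-down, so generic points of \(X'\) map to generic points of \(X\), and hence the preimage of a nowhere-dense closed subset is again nowhere dense. Likewise, a regular local homomorphism takes a regular system of parameters to part of a regular system of parameters, so regular base change \emph{does} preserve normal crossings. The actual distinction is about sheaves of differentials, not about discriminants or normal crossings: for \'etale \(g\) one has \(g^*\Omega^\bullet_{X_\bullet}(\log D_\bullet) \simeq \Omega^\bullet_{X'_\bullet}(\log D'_\bullet)\) because \(\Omega^1_{X'_\bullet/X_\bullet} = 0\), so \(\RR\varepsilon'_*\) applied to the base-changed log de Rham complex recovers \(g^*\underline{\Omega}^\bullet_X\langle D\rangle\); for a regular-but-not-\'etale \(g\), the relative K\"ahler differentials contribute extra terms and this identification fails. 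By contrast \(\underline{\Omega}^0_X\) only uses structure sheaves, which always satisfy \(g^*\cO_{X_\bullet} \simeq \cO_{X'_\bullet}\), so one only needs the cubical hyperresolution itself to base-change correctly, for which preserving regularity of the components (i.e., regular base change) suffices. The overall proof is not invalidated by this misattribution, but the stated reason would not survive a referee's scrutiny.
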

\begin{proof}
  By Theorem \ref{thm:gna41} and its proof,
  all three objects are constructed from taking the derived pushforward of
  the de Rham complex on a cubical hyperresolution \(X_\bullet\) of \(X\).
  For the Deligne--Du Bois complexes, it suffices to note that cubical
  hyperresolutions, the de Rham complex on a regular space, and the logarithmic
  de Rham complex for a regular pair are compatible with pulling back along
  \'etale morphisms.
  For \(\underline{\Omega}_X^0\), it suffices to note that cubical
  hyperresolutions and structure sheaves are compatible with pulling back along
  smooth morphisms.
  For ground field extensions, we use the compatibility of cubical
  hyperresolutions with ground field extensions.
\end{proof}
The following fact says that we can compute the Deligne--Du Bois complex using
regular simplicial resolutions in the sense of \cite[D\'efinition 2.1.1]{DB81}.
\begin{corollary}[Computing the Deligne--Du Bois complex via regular simplicial
  resolutions]\label{cor:simplicial}
  With notation as in Theorem \ref{thm:gna41},
  \(\underline{\Omega}_X^\bullet\), \(\underline{\Omega}_X^\bullet\langle D
  \rangle\), and \(\underline{\Omega}_X^0\) can be computed using regular
  simplicial resolutions.
\end{corollary}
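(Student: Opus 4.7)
The plan is to reduce the statement to the essential uniqueness assertion at the end of Theorem \ref{thm:gna41}, by verifying that a construction via regular simplicial resolutions produces a functor satisfying properties \((\ref{thm:gna411})\)--\((\ref{thm:gna412})\) (respectively \((\ref{thm:gna411})^0\)--\((\ref{thm:gna412})^0\)). For \(\underline{\Omega}_X^0\), given a regular simplicial resolution \(\varepsilon_\bullet \colon X_\bullet \to X\), I set \(\underline{\Omega}_X^{0,\mathrm{simp}} \coloneqq \RR\varepsilon_{\bullet*}\cO_{X_\bullet}\), and analogously
\[
\bigl(\underline{\Omega}_X^\bullet\langle D\rangle^{\mathrm{simp}},\FF\bigr) \coloneqq \RR\varepsilon_{\bullet*}\bigl(\Omega_{X_\bullet}^\bullet(\log D_\bullet),\sigma_{\geq\bullet}\bigr)
\]
for a regular simplicial resolution of the pair \((X,D)\) obtained from the Corollary immediately following Theorem \ref{thm:gnapgp215}.

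The first step is to show these candidates are well-defined and recover the standard logarithmic de Rham complex on regular pairs: \((\ref{thm:gna411})\) and \((\ref{thm:gna411})^0\) are immediate since on a regular \(X\) the constant simplicial object is a simplicial resolution. The second step is the functoriality and descent property \((\ref{thm:gna412})\), \((\ref{thm:gna412})^0\): by the construction in the Corollary after Theorem \ref{thm:gnapgp215}, every simplicial resolution arises from Guill\'en's Dold--Puppe transform \cite[(1.2.1) and Proposition 1.3.7]{Gui87} applied to a cubical hyperresolution, and the Dold--Puppe transform is designed so that the associated totalizations (the simple functors for the cubical and simplicial objects respectively) are canonically quasi-isomorphic. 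Combined with cohomological descent for cubical hyperresolutions (Theorem \ref{thm:gnapgp69}), this gives the corresponding simplicial descent statement and shows the simplicial construction is \(\Phi\)-rectified and independent of the chosen resolution.

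The third step is the identification: for a fixed space \(X\), pick any cubical hyperresolution \(Z_\bullet \to X\) provided by Theorem \ref{thm:gnapgp215} and let \(X_\bullet \to X\) be the associated (semi-)simplicial resolution via Dold--Puppe. The comparison of simple functors recalled above, together with the definition of \(\RR\varepsilon_{\bullet*}\) computed component-wise in Proposition \ref{prop:componentwise}, gives a canonical quasi-isomorphism between \(\RR(f \circ \varepsilon)_*\) computed over the cube and over the simplex, identifying the simplicial candidate with the Deligne--Du Bois complex of Theorem \ref{thm:gna41}. Alternatively, one may invoke the essential uniqueness statement at the end of Theorem \ref{thm:gna41} directly, once \((\ref{thm:gna411})\)--\((\ref{thm:gna412})\) and \((\ref{thm:gna411})^0\)--\((\ref{thm:gna412})^0\) have been verified for the simplicial construction.

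The main obstacle is the second step, namely verifying that descent along an arbitrary regular simplicial resolution follows from the cubical descent established in Theorem \ref{thm:gnapgp69}. One needs that any two regular simplicial resolutions of \(X\) can be connected by a zig-zag of morphisms inducing quasi-isomorphisms after \(\RR\varepsilon_{\bullet*}\) of \(\Omega^\bullet(\log)\) or \(\cO\), which is handled by passing to a common cubical refinement via Lemma \ref{lem:how-1connected} and Remark \ref{rem:how-1connected} and then applying Dold--Puppe; the resulting uniqueness is what feeds the essential uniqueness clause of Theorem \ref{thm:gna41} to conclude.
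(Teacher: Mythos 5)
The key gap is in your second step. You assert that \emph{every} regular simplicial resolution arises from Guill\'en's Dold--Puppe transform applied to a cubical hyperresolution, but the Corollary following Theorem \ref{thm:gnapgp215} only shows that such a construction produces \emph{some} simplicial resolution; it does not characterize all of them. Since Theorem \ref{thm:amb2012} later applies Corollary \ref{cor:simplicial} to a specific resolution (the \v{C}ech nerve of the normalization, following Du Bois--Jarraud) that need not come from Dold--Puppe, the claim must hold for arbitrary regular simplicial resolutions. Your attempt to patch this by "passing to a common cubical refinement via Lemma \ref{lem:how-1connected} and Remark \ref{rem:how-1connected}" does not close the gap: those results connect two \emph{iterated cubical} hyperresolutions by a zig-zag, and say nothing about how to refine or dominate a given \emph{simplicial} resolution by a cubical one. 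No lemma in the paper provides such a bridge.

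The paper takes a different and more robust route for this descent step: rather than reducing simplicial descent to cubical descent, it directly invokes Deligne's result that admissible hypercoverings (in particular, regular simplicial resolutions in the sense of \cite[D\'efinition 2.1.1]{DB81}) are of cohomological descent, citing \cite[(5.3.5)(V)]{Del74} (with \cite[Theorem 7.22]{Con04} for a proof), together with the existence of regular simplicial resolutions via Deligne's construction and Temkin's resolution theorems, and Du Bois's argument \cite[(3.2)]{DB81} for functoriality. This immediately verifies \((\ref{thm:gna412})\) and \((\ref{thm:gna412})^0\) for \emph{all} regular simplicial resolutions, after which the essential uniqueness clause of Theorem \ref{thm:gna41} finishes (this last reduction you do have right). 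To salvage your argument you would need to supply the missing comparison lemma between simplicial and cubical resolutions, or else switch to the paper's direct use of Deligne's cohomological descent theory for hypercoverings.
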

\begin{proof}
  By Theorem \ref{thm:gna41}, it suffices to show that the (filtered) complex
  obtained using regular simplicial resolutions (in the sense of
  \cite[D\'efinition 2.1.1]{DB81}) satisfies properties
  \((\ref{thm:gna411})\) and \((\ref{thm:gna412})\) and
  properties \((\ref{thm:gna411})^0\) and \((\ref{thm:gna412})^0\), respectively.
  \par For regular simplicial resolutions, we first note
  that regular simplicial resolutions exist by using the
  construction in \cite[\S6.2]{Del74} together with the version of resolutions
  of singularities in \cite[Theorems 1.2.1, 5.1.1, and 5.3.2]{Tem12},
  and the fact
  that admissible hypercoverings are of cohomological descent by
  \cite[(5.3.5)(V)]{Del74} (see \cite[Theorem 7.22]{Con04} for a proof).
  One can prove functoriality (\((\ref{thm:gna412})\) and
  \((\ref{thm:gna412})^0\)) as in \cite[(3.2)]{DB81}.
\end{proof}
\subsection{Du Bois singularities}
We can now define Du Bois singularities for all spaces that locally satisfy the
hypotheses in Theorem \ref{thm:gna41}.
Du Bois singularities were originally defined by Steenbrink \cite{Ste83} for
complex varieties.
\begin{definition}[Du Bois singularities; cf.\
  {\cite[(3.5)]{Ste83}}]\label{def:dubois}
  Let \(X\) be a space of one of the types 
  \((\ref{setup:introalgebraicspaces})\textnormal{--}(\ref{setup:introadicspaces})\)
  and consider a point \(x \in X\).
  In cases \((\ref{setup:introalgebraicspaces})\) and
  \((\ref{setup:introformalqschemes})\), suppose that
  \(\cO_{X,x}\) is quasi-excellent of equal characteristic zero.
  In cases \((\ref{setup:introberkovichspaces})\),
  \((\ref{setup:introrigidanalyticspaces})\), and
  \((\ref{setup:introadicspaces})\), suppose that \(k\) is of characteristic
  zero.
  We say that \(X\) \textsl{has Du Bois singularities at \(x\)} or \textsl{is Du
  Bois at \(x\)}
  if \(X\) is reduced at \(x\) and if the natural morphism
  \begin{equation}\label{eq:duboisdef}
    \cO_{X,x} \longrightarrow 
    \underline{\Omega}_{\Spec(\cO_{X,x})}^0
  \end{equation}
  from
  \((\ref{thm:gna411})^0\) in Theorem \ref{thm:gna41}\((\ref{thm:gna41ox})\) is a
  quasi-isomorphism.
  We say that \(X\) \textsl{has Du Bois singularities} or \textsl{is Du Bois}
  if \(X\) is Du Bois at every \(x \in X\).
  \par We note that if \(\underline{\Omega}_X^0\) exists globally on \(X\), then
  \eqref{eq:duboisdef} is a quasi-isomorphism if and only if \(\cO_X \to
  \underline{\Omega}_X^0\) is a quasi-isomorphism at \(x\) by Corollary
  \ref{cor:delignedblocal}.
\end{definition}

\section{Injectivity, torsion-freeness, and vanishing theorems}\label{sect:injnc}
In this section, we prove our injectivity, torsion-freeness,
and vanishing theorems (Theorem \ref{thm:maininj}) uniformly in all categories
\((\ref{setup:introalgebraicspaces})\textnormal{--}(\ref{setup:introadicspaces})\).
The only known previous cases of these results (beyond the smooth or klt
settings) are the generalized normal crossing case for varieties, which is due
to Ambro \cite{Amb03,Amb14,Amb20}, Fujino \cite{Fuj04,Fuj11,Fuj16,Fuj17}, and
Fujino--Fujisawa \cite{FF14}, and
the simple normal crossing case for complex analytic spaces, which is due to Fujino
\cite{Fujvan}.
The corresponding vanishing and injectivity results for klt pairs in the
categories
\((\ref{setup:introalgebraicspaces})\textnormal{--}(\ref{setup:introadicspaces})\)
are due to the author of the present paper \cite{Mur}.
As in our previous work 
\cite{Mur}, the key idea is to approximate proper morphisms of schemes of
equal characteristic zero by proper morphisms of schemes essentially of finite
type over \(\QQ\) and then to deduce the other cases
\((\ref{setup:introalgebraicspaces})\textnormal{--}(\ref{setup:introadicspaces})\)
using GAGA theorems.\medskip
\par A key obstacle when reducing from the generalized normal crossing case to
the case when \(X\) is regular and \(B\) has normal crossing support is that we
do not have sufficiently strong versions of resolutions of singularities of the
form in \citeleft\citen{Kol13}\citemid Proposition 10.59\citepunct
\citen{BVP13}\citeright\ available.
These results of resolutions of singularities hold for varieties.
A version of these resolution statements was shown for complex analytic spaces
by Fujino \cite[Lemma 5.1]{Fujvan}, who used his resolution result to
prove the injectivity, torsion-freeness, and vanishing
theorems for complex analytic spaces \cite{Fujvan}.
Instead, we use the theory we have built up so far in the paper concerning the
\(0\)-th graded piece \(\underline{\Omega}_X^0\) of the
Deligne--Du Bois complex \(\underline{\Omega}_X^\bullet\).
\subsection{Outline}
As in \S\ref{sect:intro}, we follow \cite{Amb20} for the nomenclature for
these results.
In \S\S\ref{sect:wlc}--\ref{sect:gnc},
we review the definitions of \(\omega_{(X,B)}\) and of generalized normal
crossing pairs due to Ambro \cite{Amb18,Amb20}.
We then prove our version of the Esnault--Viehweg injectivity theorem for
schemes (Theorem
\ref{thm:amb2012}).
As consequences, we deduce the Tankeev--Koll\'ar injectivity theorem for
scheems (Theorem
\ref{thm:amb2014gnc}), the Koll\'ar torsion-freeness theorem for schemes (Theorem
\ref{thm:amb2015gnc}), and the Ohsawa--Koll\'ar vanishing theorem for schemes (Theorem
\ref{thm:amb2016gnc}) following the strategy in \cite{Amb20}.
In \S\ref{sect:injectivityothercats}, we extend our injectivity,
torsion-freeness, and vanishing theorems to all
categories
\((\ref{setup:introalgebraicspaces})\textnormal{--}(\ref{setup:introadicspaces})\).
Finally, in Theorem \ref{thm:ma18}, we prove our extension of a result of Ma \cite{Ma18}
to the complete local case, which characterizes derived splinters of equal
characteristic zero in terms of the vanishing condition on maps of Tor.
Theorem \ref{thm:ma18} is an application of the Koll\'ar torsion-freeness
theorem (Theorem \ref{thm:amb2014gnc}).
\begin{remark}
  \par We have not formulated explicit statements of our theorems
  when \((X,B)\) is a normal crossing pair (in the sense of
  \citeleft\citen{Amb03}\citemid Definition 2.3\citepunct \citen{Fuj17}\citemid
  Definition 5.8.4\citeright) when \(B\) has \(\RR\) coefficients.
  These statements follow from statements for \(\QQ\) coefficients by perturbing
  the coefficients of \(B\) \cite[Lemma 5.2.13]{Fuj17}.
\end{remark}
\subsection{\texorpdfstring{\except{toc}{\(\bm{\omega_{(X,B)}}\)}\for{toc}{\(\omega_{(X,B)}\)}}{\unichar{"03C9}(X,B)}
and weakly log canonical pairs}\label{sect:wlc}
We start by defining a version of the canonical sheaf that works well for pairs
\((X,B)\) when \(X\) is not necessarily normal.
\begin{definition}[cf.\ {\cite[\S4]{Amb18}}]\label{def:weaklynormalpair}
  Let \(X\) be a weakly normal, \(S_2\), and locally Noetherian scheme
  satisfying J-1 \cite[(32.B)]{Mat80}.
  Suppose also that \(X\) satisfies N-1 \cite[(31.A)]{Mat80},
  i.e., the normalization
  morphism
  \[
    \pi\colon \bar{X} \longrightarrow X
  \]
  is finite.
  Denote by \(C \subseteq X\) and \(\bar{C} \subseteq \bar{X}\) the
  conductor subschemes.
  Let \(B\) be the closure in \(X\) of a \(\QQ\)-divisor \(B^0\) on \(X_\reg\)
  and let \(\bar{B}\) be the analogous closure in \(\bar{X}\).
  Both \(B\) and \(\bar{B}\) are \(\QQ\)-Weil divisors.
  \par Suppose that \(X\) has a dualizing complex \(\omega_X^\bullet\) with
  associated canonical sheaf \(\omega_X\).
  For every integer \(n\), we define the coherent \(\cO_X\)-module
  \[
    \omega^{[n]}_{(X,B)} \subseteq \omega_{\sK\kern-0.5pt(X)}^{\otimes n}
  \]
  as follows: For every open subset \(U \subseteq X\), we set
  \[
    \Gamma\Bigl(U,\omega^{[n]}_{(X,B)}\Bigr) \coloneqq \Set*{\omega \in
      \omega_{\sK\kern-0.5pt(X)}^{\otimes n} \given
      \!\parbox{0.63\textwidth}{\begin{enumerate}[label=\((\alph*)\),leftmargin=*,nosep]
        \item \((\pi^*\omega) + n(\bar{C}+\bar{B}) \ge 0\) on
          \(\pi^{-1}(U)\).
        \item For every irreducible component \(P\) of \(C \cap U\), there exists
          \(\eta \in \omega_{\smash{\sK\kern-0.5pt(P)}}^{\otimes n}\)
          such that \(\Res_Q \pi^*\omega = \pi^*\eta\) for every irreducible
          component \(Q\) of \(\bar{C} \cap \pi^{-1}(U)\) lying over \(P\).
    \end{enumerate}}}.
  \]
\end{definition}
We can now define weakly normal and weakly log canonical pairs.
\begin{definition}[{cf.\ \cite[Definition 4.6 and \S4.1]{Amb18}}]
  A \textsl{weakly normal pair} \((X,B)\) consists of the following data:
  \begin{enumerate}[label=\((\roman*)\)]
    \item \(X\) a weakly normal, \(S_2\), and locally Noetherian scheme
      satisfying J-1 and N-1 with a dualizing complex \(\omega_X^\bullet\); and
    \item \(B\) the closure of a \(\QQ\)-divisor \(B^0\) on \(X_\reg\)
      satisfying the following property:
      There exists an integer \(r \ge 1\) such that \(rB\) has integer
      coefficients and \(\omega^{[r]}_{(X,B)}\) is invertible.
  \end{enumerate}
  \par With notation as in Definition \ref{def:weaklynormalpair}, we say that a
  pair \((X,B)\) is \textsl{weakly log canonical} if it is weakly normal and
  \((\bar{X},\bar{C}+\bar{B})\) is log canonical.
\end{definition}

\subsection{Generalized normal crossing pairs}\label{sect:gnc}
We define generalized normal crossing pairs
following Ambro \cite[\S3]{Amb20}.
This notion generalizes Ambro's previous notion of normal crossing pairs
defined in
\cite[Definition 2.3]{Amb03} (see also \cite[Definition 5.8.4]{Fuj17}) and a
different notion of generalized normal crossing pairs due to Kawamata
\cite[\S4]{Kaw85}.\medskip
\par We start by defining the local models for generalized normal crossing pairs.
\begin{citeddef}[{\cite[Definition 4]{Amb20}}]
  Let \(k\) be a field.
  For every subset \(F \subseteq \{1,2,\ldots,N\}\), set
  \[
    \bA_{k,F} \coloneqq \bigcap_{i \in \{1,2,\ldots,N\}} \Set[\big]{z \in
    \bA^N_k \given z_i = 0} \subseteq \bA^N_k.
  \]
  A \textsl{generalized normal crossing local model} is a pair
  \((X,B)\) consisting of the following:
  \begin{enumerate}[label=\((\roman*)\)]
    \item \(X = \bigcup_F \bA_{k,F} \subseteq \bA_k^N\) where the union is
      indexed by a finite set of subsets \(F \subseteq \{1,2,\ldots,N\}\) called
      \textsl{facets}, not contained in one another.
      We assume \(X\) satisfies \(S_2\), which by \cite[Corollary 3.11]{Amb18}
      is equivalent to the condition that for every pair of facts \(F \ne F'\),
      there exists a chain of facets \(F = F_0,F_1,\ldots,F_l = F'\)
      such that for every \(0 \le i < l\), the intersection \(F_i \cap
      F_{i+1}\) contains \(F \cap F'\) and \(F_i \cap F_{i+1}\) has codimension
      1 in both \(F_i\) and \(F_{i+1}\).
    \item Set \(\sigma \coloneqq \bigcap_F F\).
      If \(\sigma \preceq \tau \preceq F\) and \(\tau\) has codimension 1 in
      \(F\), then there exists a facet \(F'\) such that \(\tau = F \cap F'\).
    \item \(B = (\sum_{i \in \sigma} b_iH_i)\rvert_X\), where \(b_i \in \QQ \cap
      [0,1]\) and \(H_i = \Set{z \in \bA^N_k \given z_i = 0}\).
      We may write
      \[
        B = \sum_F \sum_{i \in \sigma} b_i\,\bA_{k,F - \{i\}}.
      \]
  \end{enumerate}
\end{citeddef}
We now define generalized normal crossing pairs.
\begin{definition}[{cf.\ \cite[Definition 8]{Amb20}}]
  Let \((X,B)\) be a weakly log canonical pair of equal characteristic.
  We say that \((X,B)\) is a \textsl{generalized normal crossing pair} if,
  for every closed point \(x \in X\), there exists a generalized normal
  crossing local model \((X',B')\) over the residue field \(\kappa(x)\)
  and an isomorphism of complete local \(\kappa(x)\)-algebras
  \[
    \hat{\cO}_{X,x} \simeq \hat{\cO}_{X',0}
  \]
  such that
  \(\Bigl(\omega^{[r]}_{(X,B)}\Bigr)^\wedge_x\) corresponds to
  \(\Bigl(\omega^{[r]}_{(X',B')}\Bigr)^\wedge_0\) for all sufficiently divisible \(r\).
\end{definition}
Note that since generalized normal crossing pairs are defined using completions,
they will automatically be compatible with GAGA.
\subsection{The Esnault--Viehweg injectivity theorem}
We start with our version of the Esnault--Viehweg injectivity theorem
\cite[Theorem 5.1\((a)\)]{EV92}.
Because we do not know how dualizing complexes \(\omega_X^\bullet\) or canonical
divisors \(K_X\) behave under limits of schemes,
we formulate our results in terms of local cohomology instead of in terms of
higher direct images.
\par Compared to our previous work
\cite{Mur}, the approximation process is more subtle
because of the presence of boundary divisors, making it difficult to pass to the
Zariski--Riemann space \(\ZR(X)\) as we did in \cite{Mur}.
Instead of passing to the Zariski--Riemann space \(\ZR(X)\), we work one element
of local cohomology at a time to be able to construct appropriate boundary
divisors on the approximations we work with.\medskip
\par We first show our version of the Esnault--Viehweg injectivity theorem for
reduced simple normal crossing pairs.
\begin{theorem}[The Esnault--Viehweg injectivity theorem for reduced simple
  normal crossing pairs on regular schemes]\label{thm:evinjsncred}
  Let \(f\colon (X,B) \to Y\) be a proper morphism of locally Noetherian schemes
  of equal characteristic zero such that \(X\) is regular and \(B\) is a simple
  normal crossing divisor.
  Let \(D\) be an effective Cartier divisor with support in \(\Supp(B)\).
  Then, for every point \(y \in Y\), the natural maps
  \[
    H^i_{f_y^{-1}(y)}\bigl(X_y,\cO_{X_y}(-B_y-D_y)\bigr)
    \longrightarrow
    H^i_{f_y^{-1}(y)}\bigl(X_y,\cO_{X_y}(-B_y)\bigr)
  \]
  are surjective for every \(i\), where the subscripts \((\,\cdot\,)_y\)
  denote base change to \(\Spec(\cO_{Y,y})\).
\end{theorem}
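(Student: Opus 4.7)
The plan is to reduce the statement to the classical relative Esnault--Viehweg injectivity on schemes essentially of finite type over $\QQ$, using relative Noetherian approximation together with our version of Grothendieck's limit theorem for local cohomology. We work directly with local cohomology throughout, following the rationale stated just before the theorem: local cohomology behaves well under limits of schemes even when the base does not admit a dualizing complex.

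First, since $H^i_{f_y^{-1}(y)}$ depends only on the local ring $\cO_{Y,y}$, we may replace $Y$ by $\Spec A$ with $A=\cO_{Y,y}$ a Noetherian local ring of equal characteristic zero and $y$ the closed point; the hypotheses on $(X,B,D)$ are preserved by this flat base change. Writing $A=\varinjlim_\alpha A_\alpha$ as a filtered colimit of local $\QQ$-subalgebras essentially of finite type over $\QQ$, we apply relative Noetherian approximation \cite[\S8]{EGAIV3} to spread $(f,B,D)$ out to compatible data $(f_\alpha\colon X_\alpha\to Y_\alpha=\Spec A_\alpha, B_\alpha, D_\alpha)$; for $\alpha$ large enough, $X_\alpha$ is regular, $B_\alpha$ is reduced SNC, and $\Supp D_\alpha\subseteq \Supp B_\alpha$, each being open or limit-stable conditions. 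Our version of Grothendieck's limit theorem for local cohomology \cite[Theorem 3.13]{Mur} then presents each of the two local cohomology groups in the conclusion as a filtered colimit of the analogous groups on the approximations $X_\alpha$.

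Given any element $\xi\in H^i_{f_y^{-1}(y)}(X_y,\cO_{X_y}(-B_y))$, we pull it back to an element $\xi_\alpha$ at some finite stage. On $X_\alpha$ the local ring $A_\alpha$ admits a dualizing complex (being essentially of finite type over $\QQ$), so Matlis duality on $A_\alpha$ together with Grothendieck--Serre duality for $f_\alpha$ converts the desired surjectivity of local cohomology at stage $\alpha$ into the injectivity of $R^if_{\alpha*}\omega_{X_\alpha/Y_\alpha}(B_\alpha)\hookrightarrow R^if_{\alpha*}\omega_{X_\alpha/Y_\alpha}(B_\alpha+D_\alpha)$. This latter injectivity is exactly the classical relative Esnault--Viehweg theorem \cite[Theorem 5.1(a)]{EV92}, which applies to $X_\alpha$ regular with reduced SNC boundary $B_\alpha$ over the Noetherian base $Y_\alpha$; one descends from $\CC$ to $\QQ$ by faithful flatness if needed. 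Lifting $\xi_\alpha$ in the source and pushing the lift forward under the colimit produces the desired preimage of $\xi$.

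The main obstacle is keeping the geometric hypotheses intact under Noetherian approximation: regularity of $X_\alpha$, the simple normal crossing structure on $B_\alpha$ (transversality of its components), and the support containment $\Supp D_\alpha\subseteq \Supp B_\alpha$ must all hold at the finite stages, which requires careful use of the constructive statements of \cite[\S8]{EGAIV3} and of the ``one element at a time'' strategy so that we are free to raise $\alpha$ depending on $\xi$. A secondary subtlety is the Matlis duality translation at stage $\alpha$: it is enabled by the dualizing complex on $A_\alpha$ and would fail to apply directly to the original ring $A$, which is precisely why we insist on formulating the theorem in terms of local cohomology rather than higher direct images.
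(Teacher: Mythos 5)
Your overall reduction strategy — localize at $y$, write $A=\varinjlim A_\alpha$ over essentially-finitely-generated $\QQ$-algebras, spread out $(f,B,D)$, use the Grothendieck limit theorem for local cohomology to capture a given class $\xi$ at a finite stage, then apply a classical injectivity result there via Matlis/Grothendieck duality — is the same skeleton the paper uses. However, there is a genuine gap at the heart of the argument that the paper's proof is specifically designed to work around, and your proposal asserts it away.

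You claim that ``for $\alpha$ large enough, $X_\alpha$ is regular, $B_\alpha$ is reduced SNC, and $\Supp D_\alpha\subseteq\Supp B_\alpha$, each being open or limit-stable conditions.'' The support containment can indeed be arranged (the paper handles it by also approximating $D_\red$ and blowing up), but regularity of $X_\alpha$ and the SNC property of $B_\alpha$ are \emph{not} preserved at finite stages of the colimit. Regularity is not a constructible condition for the approximants: already if $X=Y=\Spec A$ with $A$ a regular local ring that is not essentially of finite type over $\QQ$, the finite-type subalgebras $A_\alpha$ (or their localizations) will typically fail to be regular, no matter how large $\alpha$ is. The same applies to the SNC structure of $B_\alpha$. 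So the step where you invoke the classical relative Esnault--Viehweg injectivity on $(X_\alpha,B_\alpha)$ does not apply: the hypotheses of that theorem are not satisfied by the approximants.

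This is exactly the difficulty the paper's proof is built around, and the extra work is what makes it nontrivial. After spreading out and normalizing/blowing up to make the ideal sheaves of $B$, $D$, $D_\red$ invertible, the paper does \emph{not} claim $X_\lambda$ is regular. Instead it takes a log resolution $g_{\lambda,p}\colon W_{\lambda,p}\to X_\lambda$ of $(X_\lambda,B_\lambda)$ — now $W_{\lambda,p}$ is regular with SNC boundary and one can apply the dual form of Fujino's relative Hodge-theoretic injectivity there — together with a compatible log resolution $g\colon W\to X$ fitting into a commutative square over $X_\lambda$. The key new ingredient is then Step 4: since $X$ is regular and the divisor $E=g^*B-u_{\lambda,p}^*B_{\lambda,p}$ is effective, $g$-exceptional, and satisfies $E\le K_{W/X}$, the Grothendieck trace gives a retraction of $\cO_X\to\RR g_*\cO_W(E)$ in $\D^b_\coh(X)$, which, after twisting by $\cO_X(-B)$ and $\cO_X(-B-D)$, transports the lifted class from $W_{\lambda,p}$ (through $W$) back down to $X$. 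Your proposal has no analogue of this retraction because it never confronts the failure of regularity at finite stages. (A secondary point: \cite[Theorem 5.1(a)]{EV92} is stated over $\CC$, not over a general Noetherian base of equal characteristic zero; the paper instead cites Fujino's relative theorem \cite[Theorem 5.5.1]{Fuj17} for the regular input.)
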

\begin{proof}
  We first make some reductions.
  Working one connected component of \(X\) at a time, we may assume that \(X\)
  is integral.
  Replacing \(Y\) by the image of \(X\), we may assume that \(Y\) is integral
  and that \(f\) is surjective.
  Working one point \(y \in Y\) at a time, we may replace \(Y\) with
  \(\Spec(\cO_{Y,y})\) to assume that \(Y = \Spec(R)\) for an integral
  Noetherian local \(\QQ\)-algebra \((R,\fm)\).
  By flat base change for local cohomology \cite[Theorem 6.10]{HO08},
  we may replace \(R\) by \(\hat{R}\) to assume that \(R\) is
  excellent by applying \cite[Lemme 7.9.3.1]{EGAIV2} to each stratum of
  \((X,B)\).
  In this case, since \(X\) is of finite type over \(R\), we know that \(X\) is
  also excellent \cite[Scholie 7.8.3\((ii)\)]{EGAIV2}.
  Moreover, \(R\) has a dualizing complex \(\omega_R^\bullet\) and
  \(\omega_X^\bullet \coloneqq f^!\omega_R^\bullet\) is a dualizing complex on
  \(X\) \cite[p.\ 299]{Har66}.\smallskip
  \begin{step}
    Approximating \(f\) by morphisms over rings essentially of finite type over
    \(\QQ\).
  \end{step}
  Write \(R\) as the direct limit
  \[
    R \simeq \varinjlim_{\lambda \in \Lambda} R_\lambda
  \]
  of its sub-\(\QQ\)-algebras of finite type.
  By \cite[Lemma 4.2]{Mur},
  we can construct a Cartesian diagram
  \begin{equation}\label{eq:mur42diag}
    \begin{tikzcd}
      X \rar{f}\dar[swap]{v_\lambda} & \Spec(R)\dar\\
      X_\lambda \rar{f_\lambda} & \Spec(R_\lambda)
    \end{tikzcd}
  \end{equation}
  where
  the \(f_\lambda\) are proper and surjective.
  We replace \(R_\lambda\) with their localizations at \(\fm \cap
  R_\lambda\) to assume that the \(R_\lambda\) are local with maximal ideals
  \(\fm_\lambda\).
  This replacement does not affect the inverse limits of the inverse systems
  in \eqref{eq:mur42diag} since limits of the inverse systems satisfy the same
  universal property.\smallskip
  \begin{step}
    Approximating \(B\) and \(D\) by objects on the \(X_\lambda\).
  \end{step}
  By \cite[\href{https://stacks.math.columbia.edu/tag/0B8W}{Tag
  0B8W}(3)]{stacks-project}, there exist \(\alpha \in \Lambda\) and
  coherent ideal sheaves \(\cI_\alpha,\cJ_\alpha,\cJ_\alpha^0 \subseteq
  \cO_{X_\alpha}\) such that
  \begin{alignat*}{2}
    \cO_X(-B) &\simeq{}& v_\alpha^{-1}\cI_\alpha \cdot \cO_X,\\
    \cO_X(-D) &\simeq{}& v_\alpha^{-1}\cJ_\alpha \cdot \cO_X,\\
    \cO_X(-D_\red) &\simeq{}& v_\alpha^{-1}\cJ^0_\alpha \cdot \cO_X.
  \end{alignat*}
  We can then replace the schemes \(X_\lambda\)
  in the bottom row of \eqref{eq:mur42diag}
  by their normalized blowups along
  \(v_{\alpha\lambda}^{-1}\cI_\alpha\cJ_\alpha\cJ_\alpha^0\cdot\cO_{X_\lambda}\)
  to assume that
  \(\cI_\alpha\), \(\cJ_\alpha\), and \(\cJ^0_\alpha\)
  are invertible and that \(X_\lambda\) is
  normal.
  This replacement does not change the inverse limit of the inverse
  system 
  \[
    \Set[\big]{f_\lambda\colon X_\lambda \longrightarrow
    \Spec(R_\lambda)}_{\lambda \in \Lambda}
  \]
  in \eqref{eq:mur42diag} by the universal property of blowing up
  \cite[\href{https://stacks.math.columbia.edu/tag/0806}{Tag
  0806}]{stacks-project} and the universal property of normalization
  \cite[\href{https://stacks.math.columbia.edu/tag/035Q}{Tag
  035Q}(4)]{stacks-project}.
  Let \(B_\alpha\), \(D_\alpha\), and \(D^0_\alpha\) be the effective Cartier divisors
  corresponding to \(\cI_\alpha\), \(\cJ_\alpha\), and \(\cJ^0_\alpha\), respectively.
  As a result, we can write the injection \(\cO_X(-B-D) \hookrightarrow
  \cO_X(-B)\) as the pullback of the injections
  \[
    v_\lambda^*\bigl(\cO_{X_\lambda}(-B_\lambda
    -D_\lambda)\bigr) \hooklongrightarrow
    v_\lambda^*\bigl(\cO_{X_\lambda}(-B_\lambda)\bigr)
  \]
  where we denote
  \(B_\lambda \coloneqq v_{\alpha\lambda}^*B_\alpha\) and
  \(D_\lambda \coloneqq v_{\alpha\lambda}^*D_\alpha\), and since \(\Supp(D) =
  \Supp(D_\red)\), we have
  \[
    \Supp\bigl(v_{\alpha\lambda}^*D_\alpha\bigr)
    \subseteq \Supp\bigl(v_{\alpha\lambda}^*B_\alpha\bigr) 
  \]
  for all \(\lambda \ge \alpha\).
  \par For the remainder of the proof, we consider the commutative diagram
  \begin{equation}\label{eq:grothendiecklimitzr}
    \begin{tikzcd}
      H^i_{f^{-1}(\{\fm\})}\bigl(X,\cO_X(-B-D)\bigr) \rar &
      H^i_{f^{-1}(\{\fm\})}\bigl(X,\cO_X(-B)\bigr)\\
      H^i_{f_\lambda^{-1}(\{\fm_\lambda\})}\bigl(X_\lambda,
      \cO_{X_\lambda}(-B_\lambda-D_\lambda)\bigr) \rar \uar & \uar
      H^i_{f_\lambda^{-1}(\{\fm_\lambda\})}\bigl(X_\lambda,
      \cO_{X_\lambda}(-B_\lambda)\bigr)
      \mathrlap{.}
    \end{tikzcd}
  \end{equation}
  Let
  \[
    \eta \in H^i_{f^{-1}(\{\fm\})}\bigl(X,\cO_X(-B)\bigr)
  \]
  be an arbitrary element.
  We want to show that \(\eta\) is in the image of the top horizontal map in
  \eqref{eq:grothendiecklimitzr}.
  By the Grothendieck limit theorem for local cohomology \cite[Theorem
  3.13]{Mur}, the direct limit of the maps in the bottom row of
  \eqref{eq:grothendiecklimitzr} is the map in the top row of
  \eqref{eq:grothendiecklimitzr}.
  Thus, there exists \(\lambda \in \Lambda\) such that \(\eta\) is the image of
  \[
    \eta_\lambda \in
    H^i_{f_\lambda^{-1}(\{\fm_\lambda\})}\bigl(X_\lambda,\cO_{X_\lambda}
    (-B_\lambda)\bigr)
  \]
  under the bottom horizontal map in \eqref{eq:grothendiecklimitzr}.
  \begin{step}
    Finding a preimage for \(\eta_\lambda\) after pulling \(\eta_\lambda\) back
    to a log resolution
    \(W_{\lambda,p}\) of \((X_\lambda,B_\lambda)\).
  \end{step}
  \par Choose a log resolution of singularities
  \[
    g_{\lambda,p}\colon W_{\lambda,p} \longrightarrow X_{\lambda}
  \]
  such that \(\Exc(g_{\lambda,p}) \cup
  g_{\lambda,p\,*}^{-1}B_{\lambda,p}\) has simple normal crossing support.
  Such a log resolution exists by \cite[Chapter I, \S3, Main theorems
  I\((n)\) and II\((N)\)]{Hir64}.
  We then set
  \[
    B_{\lambda,p} \coloneqq g_{\lambda,p\,*}^{-1}B_{\lambda}+E_{\lambda,p}
    \qquad \text{and} \qquad
    D_{\lambda,p} \coloneqq
    g_{\lambda,p}^{*}D_{\lambda}
  \]
  where \(E_{\lambda,p}\)
  is the sum of \(g_{\lambda,p}\)-exceptional divisors that appear in
  \(g_{\lambda,p}^{*}B_{\lambda}\), all with coefficient \(1\).
  Note that \(B_{\lambda,p}\) is an effective divisor with simple normal crossing
  support on \(W_{\lambda,p}\) by the choice of the \(W_{\lambda,p}\) as a log
  resolution.
  Moreover,
  we have \(\Supp(D_{\lambda,p}) \subseteq \Supp(B_{\lambda,p})\) because we
  have added \(E_{\lambda,p}\) to \(g_{\lambda,p\,*}^{-1}B_{\lambda}\) in
  \(B_{\lambda,p}\).
  We also choose a log resolution \(g\colon W \to X\) of \((X,B)\)
  fitting into the commutative diagram below:
  \begin{equation}\label{eq:ulambdap}
    \begin{tikzcd}
      W \rar{g}\dar[swap]{u_{\lambda,p}} & X\dar{v_\lambda}\\
      W_{\lambda,p} \rar{g_{\lambda,p}} & X_\lambda\mathrlap{.}
    \end{tikzcd}
  \end{equation}
  We can find such a resolution by taking a log resolution factoring through the
  closure of the unique irreducible component of \(W_{\lambda,p}
  \times_{X_\lambda} X\) dominating \(W_{\lambda,p}\).
  We then have
  \begin{equation}\label{eq:computeexceptional}
    u_{\lambda,p}^*B_{\lambda,p}
    \le u_{\lambda,p}^*g_{\lambda,p}^* B_\lambda
    = g^*v_\lambda^*B_\lambda = g^*B.
  \end{equation}
  Since all morphisms in \eqref{eq:ulambdap} are dominant morphisms between
  integral schemes, we see that
  all coefficients in \(B_{\lambda,p}\) must be equal to \(1\).
  \par We now pull back the map on local cohomology
  to each \(W_{\lambda,p}\) and extend the commutative
  diagram \eqref{eq:grothendiecklimitzr} as follows:
  \begin{equation}\label{eq:largergrothendiecklimitzr}
    \mathclap{\begin{tikzcd}[ampersand
      replacement=\&,baseline=(midarrow.base)]
      H^i_{f^{-1}(\{\fm\})}\bigl(X,\cO_X(-B-D)\bigr) \rar
      \& H^i_{f^{-1}(\{\fm\})}\bigl(X,\cO_X(-B)\bigr)\\
      H^i_{f_\lambda^{-1}(\{\fm_\lambda\})}\bigl(X_\lambda,
      \cO_{X_\lambda}(-B_\lambda-D_\lambda)\bigr) \rar \uar\dar 
      \&|[alias=midarrow]| \uar\dar
      H^i_{f_\lambda^{-1}(\{\fm_\lambda\})}\bigl(X_\lambda,
      \cO_{X_\lambda}(-B_\lambda)\bigr)\\
      H^i_{(f_\lambda \circ g_{\lambda,p})^{-1}(\{\fm_\lambda\})}\bigl(W_{\lambda,p},
      \cO_{W_{\lambda,p}}(- B_{\lambda,p} - D_{\lambda,p})\bigr)
      \rar[twoheadrightarrow]
      \&
      H^i_{(f_\lambda \circ g_{\lambda,p})^{-1}(\{\fm_\lambda\})}\bigl(
      W_{\lambda,p},\cO_{W_{\lambda,p}}(- B_{\lambda,p})\bigr)\mathrlap{.}
    \end{tikzcd}}
  \end{equation}
  The bottom horizontal morphism is surjective by the 
  the local-global dual \citeleft\citen{Lip78}\citemid
  Theorem on p.\ 188\citeright\ (see \citeleft\citen{Mur}\citemid Lemma
  5.2\citeright) of Fujino's relative Hodge-theoretic injectivity theorem
  \cite[Theorem 5.5.1]{Fuj17}.
  We can therefore find
  \[
    \tilde{\eta}_{\lambda,p} \in
    H^i_{(f_\lambda \circ g_{\lambda,p})^{-1}(\{\fm_\lambda\})}\bigl(W_{\lambda,p},
    \cO_{W_{\lambda,p}}(- B_{\lambda,p} - D_{\lambda,p})\bigr)
  \]
  mapping to the image of \(\eta_\lambda\) in the bottom right term of the
  diagram \eqref{eq:largergrothendiecklimitzr} under the bottom horizontal
  morphism in \eqref{eq:largergrothendiecklimitzr}.\smallskip
  \begin{step}
    Conclusion of proof.
  \end{step}
  We first note that the difference \(E \coloneqq g^*B -
  u_{\lambda,p}^*B_{\lambda,p}\) is an effective Cartier divisor by
  \eqref{eq:computeexceptional}.
  Moreover, \(E\) is a sum of
  \(g\)-exceptional divisors all with coefficient \(1\) because \(B\) is a
  simple normal crossing divisor.
  We then claim that the natural pullback morphism
  \[
    \cO_X \longrightarrow \RR g_*\cO_W(E)
  \]
  has a retraction in the derived category \(\D^b_\coh(X)\).
  Since \(X\) is regular and \(E\) is a sum of \(g\)-exceptional divisors all
  with coefficient \(1\), we know that \(E \le K_{W/X}\), where \(K_{W/X}\) is
  defined as the nonzero cohomology sheaf of lowest degree of
  the relative dualizing sheaf \(g^!\cO_X\).
  The composition
  \begin{equation}\label{eq:derivedpullbackretraction}
    \cO_X \longrightarrow \RR g_*\cO_W(E) \longrightarrow \RR g_*g^!\cO_X
    \longrightarrow \cO_X
  \end{equation}
  is the identity, where the last morphism is the Grothendieck trace for \(g\)
  \cite[Chapter VII, Corollary 3.4\((b)\)]{Har66}, since this composition
  coincides with the identity after restricting to the isomorphism locus of
  \(g\) and using the reflexivity of \(\cO_X\).
  \par Now since \(\cO_X \to \RR g_*\cO_W(E)\) has a retraction, tensoring
  everything by \(\cO_X(-B)\), we see that
  \[
    \cO_X(-B) \longrightarrow \RR g_*\cO_W(-g^*B+E) = \RR
    g_*\cO_W(-u_{\lambda,p}^*B_{\lambda,p})
  \]
  has a retraction, and similarly after further tensoring everything by
  \(\cO_X(-D)\).
  By definition of the retraction in \eqref{eq:derivedpullbackretraction}, this
  retraction is natural with respect to the inclusion \(\cO_X(-B-D)
  \hookrightarrow \cO_X(-B)\).
  Thus, we obtain the diagram
  \[
    \begin{tikzcd}[column sep=1.475em,ampersand replacement=\&]
      H^i_{(f \circ g)^{-1}(\{\fm\})}\bigl(W,
      \cO_{W}(-u_{\lambda,p}^*B_{\lambda,p}-g^*D)
      \bigr) \rar \ar[d,bend right=40,xshift=-0.75em,start anchor=south,end
      anchor=north,dashed,twoheadrightarrow] \&
      H^i_{(f \circ g)^{-1}(\{\fm\})}\bigl(W,
      \cO_{W}(-u_{\lambda,p}^*B_{\lambda,p}) \bigr)
      \ar[d,bend left=40,xshift=0.75em,start anchor=south,end
      anchor=north,dashed,twoheadrightarrow]\\
      H^i_{f^{-1}(\{\fm\})}\bigl(X,\cO_X(-B-D)\bigr) \rar\ar[u,hook] \&
      H^i_{f^{-1}(\{\fm\})}\bigl(X,\cO_X(-B)\bigr) \ar[u,hook']\\
      H^i_{f_\lambda^{-1}(\{\fm_\lambda\})}\bigl(X_\lambda,
      \cO_{X_\lambda}(-B_\lambda-D_\lambda)\bigr) \rar \uar\dar 
      \& \uar\dar
      H^i_{f_\lambda^{-1}(\{\fm_\lambda\})}\bigl(X_\lambda,
      \cO_{X_\lambda}(-B_\lambda)\bigr)\\
      H^i_{(f_\lambda \circ g_{\lambda,p})^{-1}(\{\fm_\lambda\})}\bigl(W_{\lambda,p},
      \cO_{W_{\lambda,p}}(- B_{\lambda,p} - D_{\lambda,p})\bigr)
      \rar[twoheadrightarrow]
      \ar[uuu,bend left=40,xshift=-8.25em,start anchor=north,end
      anchor=south]
      \&
      H^i_{(f_\lambda \circ g_{\lambda,p})^{-1}(\{\fm_\lambda\})}\bigl(
      W_{\lambda,p},\cO_{W_{\lambda,p}}(- B_{\lambda,p})\bigr)
      \ar[uuu,bend right=40,xshift=7em,start anchor=north,end
      anchor=south]
    \end{tikzcd}
  \]
  where the curved dashed arrows are retractions for the morphisms the curved
  dashed arrows are next to.
  By the commutativity of the diagram, \(\eta\) is the image of
  the image of \(\tilde{\eta}_{\lambda,p}\) in
  \(H^i_{f^{-1}(\{\fm\})}(X,\cO_X(-B-D))\).
\end{proof}
We can now use cyclic covers and resolutions of singularities
to reduce the normal crossing case to the
\emph{reduced} simple normal crossing case.
\begin{theorem}[The Esnault--Viehweg injectivity theorem for
  normal crossing pairs on regular schemes]\label{thm:evinjnc}
  Let \(f\colon (X,B) \to Y\) be a proper morphism of locally Noetherian schemes
  of equal characteristic zero such that \(X\) is regular and \(B\) is an
  \(\RR\)-divisor with normal crossing support and with coefficients in
  \([0,1]\).
  Let \(L\) be a Cartier divisor on \(X\) such that \(L \sim_\RR B\) and
  let \(D\) be an effective Cartier divisor with support in \(\Supp(B)\).
  Then, for every point \(y \in Y\), the natural maps
  \[
    H^i_{f_y^{-1}(y)}\bigl(X_y,\cO_{X_y}(-L_y-D_y)\bigr)
    \longrightarrow
    H^i_{f_y^{-1}(y)}\bigl(X_y,\cO_{X_y}(-L_y)\bigr)
  \]
  are surjective for every \(i\), where the subscripts \((\,\cdot\,)_y\)
  denote base change to \(\Spec(\cO_{Y,y})\).
\end{theorem}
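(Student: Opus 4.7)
The plan is to reduce Theorem \ref{thm:evinjnc} to Theorem \ref{thm:evinjsncred} via the classical Esnault--Viehweg cyclic cover construction, adapted to the local cohomology formulation. Since the conclusion is local at \(y\in Y\), I would first replace \(Y\) by \(\Spec(\cO_{Y,y})\), so that \(f\) becomes a proper morphism to a Noetherian local \(\QQ\)-algebra, and I shall write \(Z \coloneqq f^{-1}(y)\).

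\textbf{Reduction to \(\QQ\)-coefficients.} Writing \(L - B = \sum_i r_i \operatorname{div}(\phi_i)\) with \(r_i\in\RR\) and \(\phi_i\in\sK(X)^\times\), I approximate each \(r_i\) by a rational \(q_i\) closely enough so that \(B' \coloneqq B - \sum_i(r_i-q_i)\operatorname{div}(\phi_i)\) has the same support as \(B\), still has coefficients in \([0,1]\), and satisfies \(L\sim_\QQ B'\). Because the map in the conclusion depends only on \(L\) and \(D\) and not on \(B\) itself, replacing \(B\) by \(B'\) does not affect the statement. (When some coefficient of \(B\) equals \(0\) or \(1\), perturbation is chosen so as not to push it outside \([0,1]\); see Fujino's \cite[Lemma 5.2.13]{Fuj17}.)

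\textbf{Cyclic cover and log resolution.} Choose \(N\ge 1\) so that \(NB\) is an integral Cartier divisor with \(NL \sim NB\), and let \(s\in\Gamma(X,\cO_X(NL))\) be a section cutting out \(NB\). Form the degree-\(N\) cyclic cover
\[
  \pi\colon X' \coloneqq \Spec_X\Bigl(\bigoplus_{i=0}^{N-1}\cO_X(-iL)\Bigr)\longrightarrow X
\]
with algebra structure given by multiplication by \(s\), and take a log resolution \(\mu\colon \tilde X\to X'\) such that, setting \(g\coloneqq \pi\circ\mu\), the preimage \(g^{-1}(\Supp B)\) has simple normal crossing support. A standard local computation with cyclic covers shows that \(g^*L\) is linearly equivalent to a reduced effective simple normal crossing divisor \(\tilde B\) on \(\tilde X\) with \(\Supp(\tilde B)\subseteq g^{-1}(\Supp B)\); the key input is that \(\pi\) trivializes the \(N\)-torsion obstruction \(\cO_X(B-L)\). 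Setting \(\tilde D\coloneqq g^*D\) gives an effective Cartier divisor with \(\Supp(\tilde D)\subseteq\Supp(\tilde B)\), and Theorem \ref{thm:evinjsncred} applied to \((\tilde X,\tilde B)\) and \(\tilde D\) yields surjectivity of
\[
  H^i_{g^{-1}(Z)}\bigl(\tilde X,\cO_{\tilde X}(-\tilde B-\tilde D)\bigr)\longrightarrow H^i_{g^{-1}(Z)}\bigl(\tilde X,\cO_{\tilde X}(-\tilde B)\bigr).
\]

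\textbf{Descent to \(X\).} Since \(\cO_{\tilde X}(-\tilde B)\simeq g^*\cO_X(-L)\) and \(\cO_{\tilde X}(-\tilde B-\tilde D)\simeq g^*\cO_X(-L-D)\), the projection formula identifies \(\RR g_*\cO_{\tilde X}(-\tilde B)\simeq \cO_X(-L)\otimes_{\cO_X}^{\mathbf L}\RR g_*\cO_{\tilde X}\), and similarly for the pair. The decomposition \(\pi_*\cO_{X'} = \bigoplus_{i=0}^{N-1}\cO_X(-iL)\) exhibits \(\cO_X\) as a canonical direct summand of \(\pi_*\cO_{X'}\) (the \(i=0\) component). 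For \(\mu\), the retraction argument of Step 5 in the proof of Theorem \ref{thm:evinjsncred}---using that effective \(\mu\)-exceptional divisors are dominated by \(K_{\tilde X/X'}\) together with the Grothendieck trace---shows that the natural morphism \(\cO_{X'}\to \RR\mu_*\cO_{\tilde X}\) splits in \(\D^b_\coh(X')\). Composing the two retractions, \(\cO_X(-L)\) becomes a direct summand of \(\RR g_*\cO_{\tilde X}(-\tilde B)\) in \(\D^b_\coh(X)\), naturally compatible with the inclusion \(\cO_X(-L-D)\hookrightarrow\cO_X(-L)\). Using the identity \(\RR\Gamma_{g^{-1}(Z)}(\tilde X,-) = \RR\Gamma_Z(X,\RR g_*(-))\) for the proper morphism \(g\), the surjection on \(\tilde X\) then induces the desired surjection \(H^i_Z(X,\cO_X(-L-D))\to H^i_Z(X,\cO_X(-L))\) by projecting onto this summand.

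\textbf{Main obstacle.} The most delicate part is the descent step: one must verify that the direct summand decomposition of \(\pi_*\cO_{X'}\) and the derived retraction for \(\mu\) are functorial enough to preserve the inclusion \(\cO_X(-L-D)\hookrightarrow\cO_X(-L)\) in \(\D^b_\coh(X)\) and are compatible with the change-of-support morphisms built into local cohomology. A secondary technicality is handling components of \(B\) with coefficient exactly \(1\): one should peel off \(\lfloor B\rfloor\) first, apply the cyclic cover to the fractional part \(\{B\}\) (whose coefficients lie in \([0,1)\), where the cover decomposition is cleanest), and then reincorporate \(\lfloor B\rfloor\) at the end by an integer twist of line bundles that passes straight through the entire argument.
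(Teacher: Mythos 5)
Your proposal is in the same general spirit as the paper's (Esnault--Viehweg covering trick to reduce the normal-crossing case to the reduced SNC case of Theorem~\ref{thm:evinjsncred}), but it takes a genuinely different route: one large cyclic cover followed by a resolution, rather than the paper's inductive Kawamata-cover plus cyclic-cover construction in Claim~\ref{claim:cycliccovers}, which keeps the cover \emph{regular} at every stage and produces a \emph{finite} map \(p\colon W\to X\). This difference is where your argument has a real gap.

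The step that fails is the claim that ``a standard local computation with cyclic covers shows that \(g^*L\) is linearly equivalent to a reduced effective simple normal crossing divisor \(\tilde B\) on \(\tilde X\).'' On the normalized cyclic cover the branch divisor \(D_j\) with coefficient \(a_j\) written in lowest terms as \(p_j/q_j\) picks up ramification index \(N/\gcd(N,Na_j)=q_j\), and chasing the linear equivalence \(N\pi^*L\sim\pi^*(NB)\) through the ramification gives \(\pi^*L\sim\sum_j p_j D_j''\) on the normalized cover. This is reduced only when every numerator \(p_j\) equals \(1\), i.e.\ only when every fractional coefficient is of the form \(1/q_j\). For a coefficient like \(2/3\) the pullback of \(L\) acquires a multiplicity-\(2\) component, and a log resolution only adds further multiplicities on exceptional divisors; no choice of \(\tilde B\) in \(\lvert g^*L\rvert\) will be reduced. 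Exactly this is why the paper's Claim~\ref{claim:cycliccovers} first applies Kawamata's covering lemma \cite[Lemma 8.1]{Mur} to make the relevant divisor \(m\)-divisible, and only then takes an \(m\)-th cyclic cover along a smooth divisor (so the cover is regular and the pullback is a reduced smooth divisor of ramification exactly \(m\)); iterating this one component at a time avoids the multiplicity problem entirely. Your ``peel off \(\lfloor B\rfloor\)'' remark addresses the coefficient-\(1\) components but not this numerator issue, which is the actual heart of the reduction.

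There is also a secondary problem in your descent step. The retraction \(\cO_{X'}\to \RR\mu_*\cO_{\tilde X}\) is justified by analogy with Step~5 of the proof of Theorem~\ref{thm:evinjsncred}, but that argument relies crucially on the target being \emph{regular} to obtain the inequality \(E\le K_{W/X}\) for a sum of exceptional divisors with coefficient~\(1\). Your \(X'=\Spec_X(\bigoplus_i\cO_X(-iL))\) is neither normal nor regular in general. Even after normalizing (which you should do, and which also requires inserting a trace splitting of \(\cO_{X'}\hookrightarrow\nu_*\cO_{(X')^{\mathrm{nor}}}\)), one still needs a statement of the form ``\((X')^{\mathrm{nor}}\) has rational singularities'' to get the derived retraction; this is true here because normalized cyclic covers along SNC branch loci in characteristic zero have quotient (hence rational) singularities, but it is a separate fact that needs to be stated and cited, not the retraction argument of Step~5 verbatim. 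By contrast, the paper's \(W\) is regular by construction and \(p\) is finite, so the descent is a direct-summand statement at the level of sheaves with no derived categories or rationality inputs.
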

\begin{proof}
  As in the proof of Theorem \ref{thm:evinjsncred}, we may assume that \(X\) is
  integral and \(Y = \Spec(R)\) for an excellent integral local \(\QQ\)-algebra
  \((R,\fm)\).
  By perturbing the coefficients of \(B\)
  \cite[Lemma 5.2.13]{Fuj17}, we can replace \(\RR\) coefficients with \(\QQ\)
  coefficients.\smallskip
  \setcounter{step}{0}
  \begin{step}\label{step:evinjsncproj}
    The case when \(f\) is projective and \(B\) has simple normal crossing support.
  \end{step}
  It suffices to reduce to the case when \(B\) is reduced.
  We need the following variant of the version of the covering lemma we showed
  in \cite[Lemma 8.1 and Claim 8.2.1]{Mur}.
  \begin{claim}\label{claim:cycliccovers}
    Let \(X\) be an integral regular scheme projective over an integral
    Noetherian local \(\QQ\)-algebra \((R,\fm)\).
    Let \(M\) be a \(\QQ\)-Cartier divisor on \(X\) and let \(B\) be a simple
    normal crossing divisor on \(X\).
    Let \(L\) be a Cartier divisor on \(X\) such that
    \[
      L \sim_\QQ M + \sum_{j=1}^s a_j D_j + \sum_{j=s+1}^r D_j,
    \]
    where the \(D_j\) are regular (possibly disconnected)
    divisors, \(\sum_j D_j\) is a simple normal crossing divisor, and the
    \(a_j\) are rational numbers in \([0,1)\).
    Then, there exist a finite surjective morphism \(p \colon W \to X\) from a
    regular integral scheme \(W\) and a divisor \(M_W\) on \(W\) such that \(M_W
    \sim_\QQ p^*M\) and such that \(\cO_X(-L)\) is a direct summand of
    \(p_*\cO_W(-M_W-\sum_{j=1}^s D'_j)\) where \(D'_j = (p^*D_j)_\red\) are
    regular and
    \[
      \sum_{j=1}^s D'_j + \sum_{j=s+1}^r D_j
    \]
    is a simple normal crossing divisor.
  \end{claim}
  \begin{proof}[Proof of Claim \ref{claim:cycliccovers}]
    We proceed by induction on \(s\).
    If \(s = 0\), setting \(p = \id_X\) works.
    Suppose \(s > 0\).
    Write \(a_s = b/m\), where \(m\) is a positive integer.
    By \cite[Lemma 8.1]{Mur} applied to \(\sum_jD_j\), \(N_s = m\), and \(N_j =
    1\) for all \(j \ne s\), there exists a surjective morphism \(p_1\colon
    X_1 \to X\) such that \(p_1^*D_s \sim mD'\) for some divisor \(D'\) on
    \(X_1\).
    Moreover, \(D'\) and \(p_1^*D_j\) for \(j \ne s\) are regular and
    \(D' + \sum_{j \ne s} p_1^*D_j\) is a simple normal crossing
    divisor.
    By \cite[Theorem 2.64, Step 1]{KM98}, the canonical morphism \(\cO_X \to
    p_{1*}\cO_{X_1}\) splits, and hence \(\cO_X(-L) \to
    p_{1*}\cO_{X_1}(-p_1^*L)\) also splits.
    \par Now \(p_1^*D_s\) corresponds to a section of \(\cO_{X_1}(mD')\), and
    hence we can take the associated \(m\)-th cyclic cover \(p_2\colon X_2 \to
    X_1\) as in \cite[Definition 2.50]{KM98}.
    Then, \cite[Lemma 2.51]{KM98} implies that \(X_2\) is regular, the
    \(p_2^*p_1^*D_j\) are regular, and \(\sum_{j \ne s} p_2^*p_1^*D_j\) is a
    simple normal crossing divisor.
    We have the decompositions
    \begingroup
    \allowdisplaybreaks
    \begin{align*}
      p_{2*}\cO_{X_2} &= \bigoplus_{\ell = 0}^{m-1}
      \cO_{X_1}(-\ell D')\\
      p_{2*}\cO_{X_2}(-p_2^*p_1^*L+(b-1)\,p_2^*D') &= \bigoplus_{\ell = 0}^{m-1}
      \cO_{X_1}\bigl(-p_1^*L+(b-\ell-1)D'\bigr).
    \end{align*}
    \endgroup
    The \(\ell = b-1\) summand shows that \(\cO_{X_1}(-p_1^*L)\) is a direct
    summand of
    \[
      p_{2*}\cO_{X_2}\bigl(-p_2^*p_1^*L+(b-1)p_2^*D'\bigr).
    \]
    \par We now have the \(\QQ\)-linear equivalence
    \[
      p_2^*p_1^*L - (b-1)p_2^*D' \sim_\QQ p_2^*p_1^*M + \sum_{j=1}^{s-1} a_j\,
      p_2^*p_1^*D_j + p_2^*D' + \sum_{j=s}^r p_2^*p_1^*D_j,
    \]
    which satisfies the hypotheses of Claim \ref{claim:cycliccovers}.
    By the inductive hypothesis, there exists a finite surjective morphism \(W
    \to X_2\) satisfying the conclusion of Claim \ref{claim:cycliccovers} for
    \(X_2\).
    The composition \(W \to X_2 \to X\) then satisfies the conclusion of Claim
    \ref{claim:cycliccovers} for \(X\).
  \end{proof}
  We now return to the proof of the special case in Step
  \ref{step:evinjsncproj}.
  Write
  \[
    L \sim_\QQ \{B\} + \lfloor B \rfloor
  \]
  and let \(p\colon W \to X\) be the finite surjective morphism constructed in
  Claim \ref{claim:cycliccovers}.
  We then obtain the commutative diagram
  \[
    \begin{tikzcd}
      H^i_{f^{-1}(\{\fm\})}\bigl(X,\cO_X(-L-D)\bigr)\dar \rar[hook]
      & H^i_{(f \circ p)^{-1}(\{\fm\})}\Bigl(W,\cO_W\bigl(
      -\bigl(p^*\{B\}\bigr)_\red-\lfloor B \rfloor - p^*D\bigr)\Bigr)
      \dar[twoheadrightarrow]
      \ar[l,twoheadrightarrow,bend right=30,start anchor=west,end
      anchor=east,yshift=0.75em,dashed]\\
      H^i_{f^{-1}(\{\fm\})}\bigl(X,\cO_X(-L)\bigr) \rar[hook]
      & H^i_{(f \circ p)^{-1}(\{\fm\})}\Bigl(W,\cO_W\bigl(
      -\bigl(p^*\{B\}\bigr)_\red-\lfloor B \rfloor \bigr)\Bigr)
      \ar[l,twoheadrightarrow,bend right=30,start anchor=west,end
      anchor=east,yshift=0.75em,dashed]
    \end{tikzcd}
  \]
  where the curved dashed arrows are retractions for the horizontal injections.
  Since the right vertical map is surjective by Theorem \ref{thm:evinjsncred},
  we are done.\smallskip
  \begin{step}
    The general case.
  \end{step}
  By Chow's lemma \cite[Corollaire 5.6.2]{EGAII} and \cite[Chapter I, \S3, Main
  theorems I\((n)\) and II\((N)\)]{Hir64}, we can find a projective log
  resolution \(g\colon \tilde{X} \to X\) for the pair
  \((X,B)\) such that \(f \circ g\) is projective and \(\Exc(g) \cup
  g_*^{-1}B\) has simple normal crossing support.
  We then obtain the commutative diagram
  \[
    \begin{tikzcd}
      H^i_{f^{-1}(\{\fm\})}\bigl(X,\cO_X(-L-D)\bigr)\dar \rar{\sim}
      & H^i_{(f \circ g)^{-1}(\{\fm\})}\bigl(\tilde{X},\cO_{\tilde{X}}(
      -g^*L-g^*D)\bigr)
      \dar[twoheadrightarrow]\\
      H^i_{f^{-1}(\{\fm\})}\bigl(X,\cO_X(-L)\bigr) \rar{\sim}
      & H^i_{(f \circ g)^{-1}(\{\fm\})}\bigl(\tilde{X},\cO_{\tilde{X}}(
      -g^*L)\bigr)
    \end{tikzcd}
  \]
  where both horizontal maps are isomorphisms since regular \(\QQ\)-algebras
  are pseudo-rational \cite[\S4]{LT81}, and hence \(\cO_X \to \RR
  g_*\cO_{\tilde{X}}\) is a quasi-isomorphism \cite[Theorem 6.2]{Mur}.
  The right vertical map is surjective by Step \ref{step:evinjsncproj} since
  \(g^*L \sim_\QQ g^*B\) and \(g^*B\) is a simple normal crossings divisor.
\end{proof}
We now extend our Esnault--Viehweg injectivity theorem \ref{thm:evinjnc} to the
generalized normal crossing case.
We will use \emph{simplicial} schemes in the proof below.
See \cite[(1.2.1), Proposition 1.3.7, and (2.1.6)]{Gui87} for the
relationship between cubical and simplicial objects.
This injectivity theorem will follow from applying Theorem \ref{thm:evinjnc} to
each component of a regular simplicial resolution.
A similar strategy using simplicial schemes was originally used by Kawamata
\cite{Kaw85} to prove related theorems (see Theorems \ref{thm:amb2014gnc} and
\ref{thm:amb2015gnc} below).
\begin{theorem}[The Esnault--Viehweg injectivity theorem for generalized normal
  crossing pairs]
  \label{thm:amb2012}
  Let \(f\colon (X,B) \to Y\) be a proper morphism of locally Noetherian schemes
  of equal characteristic zero such that \((X,B)\) is a generalized normal
  crossing pair and \(Y\) is locally quasi-excellent and has a
  dualizing complex \(\omega_Y^\bullet\).
  Let \(\sL\) be an invertible \(\cO_X\)-module such that 
  \[
    \sL^{\otimes r} \simeq \omega^{[r]}_{(X,B)}
  \]
  for an integer \(r \ge 1\) such that \(rB\) has
  integer coefficients.
  Let \(D\) be an effective Cartier divisor with support in \(\Supp(B)\).
  Then, the natural map
  \[
    R^if_*\sL \longrightarrow R^if_*\bigl(\sL(D)\bigr)
  \]
  is injective for every \(i\).
\end{theorem}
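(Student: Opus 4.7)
The plan is to bootstrap from the normal-crossing case on regular schemes (Theorem \ref{thm:evinjnc}) to the generalized normal-crossing case via a regular simplicial resolution $a_\bullet\colon X_\bullet \to X$, using cohomological descent and the fact (to be verified) that $(X,B)$ is Du Bois. The injectivity we want pulls back to injectivity of $R^i f_{n*}(a_n^*\sL) \to R^i f_{n*}(a_n^*\sL(D))$ for each simplicial degree $n$, which falls within the scope of Theorem \ref{thm:evinjnc}; we then assemble the pieces using a spectral sequence.

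First, I would perform the same initial reductions as in Theorem \ref{thm:evinjsncred}: replace $Y$ by $\Spec(\cO_{Y,y})$, pass to the completion using flat base change and the Grothendieck limit theorem \cite[Theorem~3.13]{Mur}, and dualize via Grothendieck--Serre duality so that the injectivity is equivalent to a surjectivity statement for local cohomology on $X$. Then I would take a regular simplicial resolution $a_\bullet\colon X_\bullet \to X$ adapted to the pair $(X,\Supp(B))$, provided by Corollary~\ref{cor:simplicial} via Theorem~\ref{thm:gnapgp215}; by construction each $X_n$ is regular and the pullback $a_n^{-1}B$ has normal crossing support. The simplicial resolution of a generalized normal crossing scheme $X$ arises naturally from the facet structure of the local models: $X_0$ is a disjoint union of normalizations of the maximal strata, and higher $X_n$ are fiber products of these over $X$, so one may arrange $a_n^*\omega^{[r]}_{(X,B)}$ to agree with a line bundle of the form $\omega_{X_n}(r(a_n^{-1}B)^{\mathrm{eff}})$ fitting the hypothesis of Theorem~\ref{thm:evinjnc}.

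The key descent input is that $X$ is Du Bois in the sense of Definition~\ref{def:dubois}: this should follow from the local model description together with Theorem~\ref{thm:gna41}\((\ref{thm:gna41ox})\) and Corollary~\ref{cor:simplicial}, which give $\underline{\Omega}_X^0 \simeq \RR a_{\bullet*}\cO_{X_\bullet}$, and hence $\cO_X \overset{\sim}{\to} \RR a_{\bullet*}\cO_{X_\bullet}$ once the simplicial model is seen to have trivial higher cohomology of the structure sheaf. Tensoring with the locally free sheaf $\sL$ and using the projection formula, this gives $\sL \overset{\sim}{\to} \RR a_{\bullet*}(a_\bullet^*\sL)$, and likewise for $\sL(D)$ since $D$ is Cartier. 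Consequently the map $R^i f_*\sL \to R^i f_*\sL(D)$ is the abutment of a map of first-quadrant spectral sequences whose $E_1^{p,q}$ terms are $R^q f_{p*}(a_p^*\sL) \to R^q f_{p*}(a_p^*\sL(D))$. By Theorem~\ref{thm:evinjnc} each of these $E_1$-level maps is injective, and a standard argument (injectivity is preserved under passage to totalizations of bicomplexes with injective rows) then yields the claimed injectivity on $R^i f_*\sL$.

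The main obstacle is the compatibility statement in the simplicial resolution: identifying $a_n^*\omega^{[r]}_{(X,B)}$ level-wise with $\omega_{X_n}(r a_n^*B)$ (after replacing $a_n^*B$ with its effective part). This requires tracking the conductor condition in Definition~\ref{def:weaklynormalpair} through the local models of generalized normal crossings from Ambro's definition, and checking that the $\QQ$-boundary $B$ restricts correctly to each stratum and its preimage in $X_\bullet$. A secondary technical point is verifying the Du Bois property in equal characteristic zero with the generality of our definitions; this should be read off from the facet description of GNC local models via the explicit cubical hyperresolution built by iteratively blowing up intersections, but the verification is where the generalized (as opposed to simple) normal crossing hypothesis is used in an essential way.
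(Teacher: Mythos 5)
Your overall strategy is the same as the paper's: use a regular simplicial resolution $X_\bullet \to X$, exploit the fact that $\cO_X \to \RR\varepsilon_*\cO_{X_\bullet}$ is a quasi-isomorphism (Du Bois-ness of the GNC pair), and apply Theorem~\ref{thm:evinjnc} level by level. However, two steps do not go through as you have written them.

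The most serious problem is your final assembly step. You assert that ``injectivity is preserved under passage to totalizations of bicomplexes with injective rows.'' This is false. If $K^{\bullet,\bullet} \hookrightarrow L^{\bullet,\bullet}$ is a termwise-injective map of first-quadrant bicomplexes, the induced map $H^i(\mathrm{Tot}\,K) \to H^i(\mathrm{Tot}\,L)$ need not be injective: in the long exact sequence associated to $0 \to \mathrm{Tot}\,K \to \mathrm{Tot}\,L \to \mathrm{Tot}(L/K) \to 0$, there is no reason for the connecting map $H^{i-1}(\mathrm{Tot}(L/K)) \to H^i(\mathrm{Tot}\,K)$ to vanish. (An elementary counterexample: take $K$ concentrated in bidegree $(0,0)$ with value $\ZZ$, and $L$ with $\ZZ$ in bidegrees $(0,0)$ and $(1,0)$ and identity horizontal differential; the inclusion is termwise injective but $H^0(\mathrm{Tot}\,K)=\ZZ \to H^0(\mathrm{Tot}\,L) = 0$ is not.) Equivalently, injectivity on the $E_1$-page of a map of spectral sequences does not propagate to the abutment without extra input. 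This is precisely where the Hodge-theoretic strictness/degeneration machinery in Ambro's proof of his Theorem~12 (to which the paper defers, replacing Ambro's Lemma~11 by the local-global dual of Theorem~\ref{thm:evinjnc}) is doing nontrivial work. You cannot replace it with a generic spectral-sequence comparison.

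The second problem is your choice of simplicial resolution. You propose using a generic regular simplicial resolution coming from Corollary~\ref{cor:simplicial} and Theorem~\ref{thm:gnapgp215}, and then ``arranging'' that $a_n^*\omega^{[r]}_{(X,B)}$ agrees level-wise with the boundary structure needed for Theorem~\ref{thm:evinjnc}. For a resolution built by blowups (as in Theorem~\ref{thm:gnapgp215}) this compatibility is not automatic: exceptional divisors enter and the pullback of $\omega^{[r]}_{(X,B)}$ need not be of the required form on each $X_n$. The paper sidesteps this by using the specific \v{C}ech-type resolution $X_n = (\bar{X}/X)^{n+1}$ arising from the normalization $\pi\colon \bar{X} \to X$; Ambro's Lemma~10(1) simultaneously shows this $\varepsilon\colon X_\bullet \to X$ is a regular simplicial resolution and provides the precise compatibility of $\varepsilon_n^*\omega^{[r]}_{(X,B)}$ with the normal crossing pair structure on each $X_n$. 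Your identification of this compatibility as ``the main obstacle'' is accurate, but the obstacle is not resolved by a generic resolution; you should instead use the normalization \v{C}ech resolution so the compatibility becomes transparent. The role of Corollary~\ref{cor:simplicial} in the paper is only to identify $\RR\varepsilon_*\cO_{X_\bullet}$ with $\underline{\Omega}^0_X$ so that Corollary~\ref{cor:delignedblocal} applies (to get compatibility with localization and completion, since Ambro works with complex varieties), not to construct the resolution itself.
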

\begin{proof}
  Since the question is local, we may assume that \(Y\) is affine and Noetherian.
  Set \(\Sigma \coloneqq \Supp(B)\) and \(U \coloneqq X - \Sigma\).
  Since \(rB\) is Cartier, we have the isomorphism
  \[
    \varinjlim_{m \ge 0} H^i\bigl(X,\sL(mrB)\bigr)
    \overset{\sim}{\longrightarrow} H^i\bigl(U,\sL\rvert_U \bigr).
  \]
  The assertion in the theorem as \(D\) ranges over all possible \(D\) is
  therefore equivalent to the injectivity of the restriction map
  \[
    H^i(X,\sL) \longrightarrow H^i\bigl(U,\sL\rvert_U\bigr).
  \]
  \par Denote by \(\pi\colon \bar{X} \to X\) the normalization morphism.
  Following \cite[p.\ 746]{DBJ74}, we can construct a simplicial scheme
  \[
    X_\bullet = \Biggl(
      \begin{tikzcd}[cramped,column sep=1.475em]
        \cdots 
        \arrow[shift left=15pt]{r}
        \arrow[shift left=10pt,leftarrow]{r}
        \arrow[shift left=5pt]{r}
        \arrow[leftarrow]{r}
        \arrow[shift right=5pt]{r}
        \arrow[shift right=10pt,leftarrow]{r}
        \arrow[shift right=15pt]{r}
        & \bar{X} \times_X \bar{X} \times_X \bar{X}
        \arrow[shift left=10pt]{r}
        \arrow[shift left=5pt,leftarrow]{r}
        \arrow{r}
        \arrow[shift right=5pt,leftarrow]{r}
        \arrow[shift right=10pt]{r}
        & \bar{X} \times_X \bar{X} \arrow[shift left=5pt]{r}
        \arrow[leftarrow]{r}
        \arrow[shift right=5pt]{r}
        & \bar{X}
      \end{tikzcd}
    \Biggr)
  \]
  where \(X_n = (\bar{X}/X)^{n+1}\) for every \(n \ge 0\).
  By \cite[Lemma 10(1)]{Amb20}, since \((X,B)\) is a generalized normal
  crossings log pair, the augmentation morphism \(\varepsilon\colon X_\bullet
  \to X\) is a regular simplicial resolution in the sense of \cite[D\'efinition
  2.1.1]{DB81}.
  Set \(\Sigma_n \coloneqq \varepsilon_n^{-1}(\Sigma)\) and \(U_n \coloneqq X_n
  - \Sigma_n\).
  By Corollary \ref{cor:simplicial},
  which allows us to compute the
  Deligne--Du Bois complex using regular simplicial resolutions,
  \(\RR\varepsilon_*\cO_{X_\bullet}\) coincides with \(\underline{\Omega}_X^0\). 
  Since \(\underline{\Omega}_X^0\) is compatible with
  localization and completion
  (Corollary \ref{cor:delignedblocal}),
  the natural maps
  \[
    \cO_X \longrightarrow \RR \varepsilon_*\cO_{X_\bullet} \qquad \text{and}
    \qquad
    \cO_U \longrightarrow \RR \varepsilon_*\cO_{U_\bullet}
  \]
  are quasi-isomorphisms by \cite[Lemma 10(1)]{Amb20}.
  It therefore suffices to show that the restriction maps
  \[
    \alpha \colon H^i(X_\bullet,\sL_\bullet) \longrightarrow
    H^i\bigl(U_\bullet,\sL_\bullet\rvert_{U_\bullet}\bigr)
  \]
  are injective, where \(\sL_\bullet \coloneqq \varepsilon^*\sL\).
  Now the rest of the proof of \cite[Theorem 12]{Amb20} applies, replacing
  \cite[Lemma 11]{Amb20} with the local-global dual
  \citeleft\citen{Lip78}\citemid Theorem on p.\ 188\citeright\ (see
  \citeleft\citen{Mur}\citemid Lemma 5.2\citeright) of Theorem
  \ref{thm:evinjnc}.
\end{proof}
\subsection{The Tankeev--Koll\'ar injectivity theorem}
Next, we prove our version of the Tankeev--Koll\'ar injectivity theorem 
\citeleft\citen{Tan71}\citemid Proposition
1\citepunct \citen{Kol86}\citemid Theorem 2.2\citeright\ for
normal crossing pairs.
\begin{theorem}[The Tankeev--Koll\'ar injectivity theorem for normal crossing
  pairs on regular schemes]\label{lem:amb2013}
  Let \(f\colon (X,B) \to Y\) be a proper morphism of locally Noetherian schemes
  of equal characteristic zero such that \(X\) is regular and \(B\) is an
  \(\RR\)-divisor with normal crossing support and with coefficients in
  \([0,1]\).
  Let \(L\) be a Cartier divisor on \(X\) such that the \(\RR\)-Cartier divisor
  \(A = L - (K_X+B)\) is \(f\)-semiample.
  Let \(D\) be an effective Cartier divisor such that \(D \sim_\RR uA\) for some
  \(u > 0\) and \(D\) contains no strata of \((X,B)\).
  Then, for every point \(y \in Y\), the natural maps
  \[
    H^i_{f_y^{-1}(y)}\bigl(X_y,\cO_{X_y}(-L_y-D_y)\bigr)
    \longrightarrow
    H^i_{f_y^{-1}(y)}\bigl(X_y,\cO_{X_y}(-L_y)\bigr)
  \]
  are surjective for every \(i\), where the subscripts \((\,\cdot\,)_y\)
  denote base change to \(\Spec(\cO_{Y,y})\).
\end{theorem}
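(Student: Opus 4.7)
The plan is to deduce Theorem \ref{lem:amb2013} from the Esnault--Viehweg injectivity theorem (Theorem \ref{thm:evinjnc}) via a cyclic cover argument, following the classical strategy of Koll\'ar \cite{Kol86} adapted to our setting of proper morphisms of locally Noetherian schemes of equal characteristic zero.

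First, I would perform the same preliminary reductions as in the proofs of Theorems \ref{thm:evinjsncred} and \ref{thm:evinjnc}. Using flat base change for local cohomology \cite[Theorem 6.10]{HO08}, reduce to the case where $Y = \Spec R$ for an excellent Noetherian local $\QQ$-algebra $R$ and $X$ is integral. By Chow's lemma \cite[Corollaire 5.6.2]{EGAII} followed by a Hironaka log resolution \cite[Chapter I, \S3]{Hir64} (as in the general case of Theorem \ref{thm:evinjnc}), we may further assume that $f$ is projective and that $B + D$ has simple normal crossing support. Finally, by the perturbation trick of \cite[Lemma 5.2.13]{Fuj17}, we may assume $B$ has $\QQ$-coefficients, $A$ is $\QQ$-Cartier, and $u \in \QQ$.

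Next, I would construct a cyclic cover that converts the semiample contribution of $A$ into a component of a simple normal crossing boundary. Since $A$ is $f$-semiample and $\QQ$-Cartier, choose an integer $m \ge 1$ sufficiently divisible so that $mA$ is Cartier and $\lvert mA\rvert_f$ is relatively base-point-free. By a relative Bertini argument (applied on the closed fiber of $f$ over the residue field of $R$ and lifted), select a general $M \in \lvert mA\rvert$ such that $M + B + D$ has simple normal crossing support and no component of $M$ is contained in a stratum of $(X,B+D)$. Form the $m$-fold cyclic cover $\pi\colon W \to X$ branched along $M$ using the section of $\mathcal{O}_X(mA)$ defining $M$ \cite[Definition 2.50]{KM98}; by \cite[Lemma 2.51]{KM98}, $W$ is regular and $\pi^{-1}(B + M + D)$ has simple normal crossing support. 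Using $\pi_*\mathcal{O}_W = \bigoplus_{i=0}^{m-1}\mathcal{O}_X(-iA)$ and the exactness of $\pi_*$, the local cohomology of $\mathcal{O}_W(-\pi^*L)$ on $W$ decomposes as a direct sum in which one summand corresponds to the local cohomology of $\mathcal{O}_X(-L)$ on $X$ that we seek.

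Then, combining the cyclic cover computation with the covering trick of Claim \ref{claim:cycliccovers} (cf.\ \cite[Lemma 8.1]{Mur}) applied to absorb the fractional coefficients of $\pi^*B$ after passing to a further Kawamata-type cover $\pi'\colon W' \to W$, I would arrange that $(\pi \circ \pi')^* L$ is $\QQ$-linearly equivalent to a reduced simple normal crossing divisor $B_{W'}$ on $W'$ with coefficients in $[0,1]$, so that Theorem \ref{thm:evinjnc} applies on $W'$ to yield surjectivity of the corresponding local cohomology map. To descend this surjectivity from $W'$ back to $X$, use that the trace maps split $\mathcal{O}_W \hookrightarrow \pi'_*\mathcal{O}_{W'}$ and $\mathcal{O}_X \hookrightarrow \pi_*\mathcal{O}_W$ in characteristic zero \cite[Theorem 2.64]{KM98}, and that local cohomology commutes with finite pushforward; the splittings are compatible with the inclusions $\mathcal{O}_X(-L-D) \hookrightarrow \mathcal{O}_X(-L)$ and their pullbacks, so surjectivity at the top of the tower implies surjectivity at the bottom. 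The main obstacle will be the bookkeeping in the cover: one must verify that the ramification divisor $R_\pi = (m-1)M'$ with $\pi^*M = mM'$, combined with the pullback $\pi^*A \sim_{\QQ} M'$ and the Kawamata covering converting the fractional parts of $\pi^*B$, yield a Cartier divisor on $W'$ $\QQ$-linearly equivalent to a reduced SNC divisor with coefficients in $[0,1]$, and that the resulting pair $(W',B_{W'})$ satisfies the hypotheses of Theorem \ref{thm:evinjnc}. A secondary obstacle is verifying the relative Bertini statement for $|mA|_f$ in the Noetherian local setting (rather than the usual projective-over-a-field setting).
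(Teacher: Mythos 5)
Your proposal takes a different route from the paper's, replacing the paper's perturbation argument with a cyclic-cover reduction; but there are genuine gaps. The paper's proof never takes any covers. In Step \ref{step:amb20131} it observes that since \(D \sim_\QQ uA\), one has \(\varepsilon D + (1-\varepsilon u)A \sim_\QQ A\), so
\[
  L \sim_\QQ K_X + \bigl(B + \varepsilon D\bigr) + (1-\varepsilon u)A,
\]
and \(B + \varepsilon D\) is still a boundary with coefficients in \([0,1]\) precisely because \(D\) contains no stratum. Then a relative Bertini theorem \cite[Theorem 10.1 and Remark 10.2]{LM} replaces \((1-\varepsilon u)A\) by \(\tfrac{1-\varepsilon u}{n}S\) for a general \(S \in \lvert nA\rvert\), producing a boundary \(B'' = B + \varepsilon D + \tfrac{1-\varepsilon u}{n}S\) with \(\Supp(D) \subseteq \Supp(B'')\), after which the Esnault--Viehweg theorem applies directly. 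The general case is a log resolution plus \cite[Lemma 13]{Amb20}. This \(\varepsilon\)-absorption is the essential bridge between the Tankeev--Koll\'ar hypothesis (``\(D\) contains no stratum'') and the Esnault--Viehweg hypothesis (``\(\Supp(D) \subseteq \Supp(B)\)''), and your proposal never supplies an analogue of it: nothing in your cover construction forces the pullback of \(D\) to lie in the support of whatever boundary you end up with on \(W'\).

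There are also concrete technical obstructions to the covers as you describe them. First, the \(m\)-fold cyclic cover \(\pi\colon W \to X\) with \(\pi_*\cO_W = \bigoplus_{i=0}^{m-1}\cO_X(-iA)\) requires \(A\) to be an integral Cartier divisor, but \(A = L - (K_X+B)\) is only \(\RR\)-Cartier (or \(\QQ\)-Cartier after perturbation) because \(B\) has fractional coefficients, so the sheaves \(\cO_X(-iA)\) and the line bundle \(\sL\) with \(\sL^{\otimes m} \simeq \cO_X(mA)\) needed for \cite[Definition 2.50]{KM98} do not exist; the paper's own Claim \ref{claim:cycliccovers} is careful to run the Kawamata covering \emph{first} to make the relevant divisors \(m\)-divisible. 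Second, even granting a well-defined cyclic cover branched along a general \(M \in \lvert mA\rvert\), the Riemann--Hurwitz formula \(K_W = \pi^*K_X + (m-1)M'\) together with \(\pi^*A \sim_\QQ M'\) gives
\[
  \pi^*L - K_W \sim_\QQ \pi^*B - (m-2)M',
\]
which has a negative coefficient for \(m > 2\); so \(\pi^*L\) is not of the form \(K_W + (\text{boundary with coefficients in }[0,1])\), and the Esnault--Viehweg theorem cannot be applied on \(W\). Branching a cyclic cover along a generic member of the semiample system \(\lvert mA\rvert\) is the right move to \emph{kill} a fractional coefficient \((m-1)/m\) on a boundary component, not to convert a full coefficient-\(1\) semiample contribution into a boundary component; the latter is what the paper's Bertini argument does, without any cover.
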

\begin{proof}
  As in the proof of Theorem \ref{thm:evinjsncred}, we may assume that \(X\) is
  integral and \(Y = \Spec(R)\) for an excellent integral local \(\QQ\)-algebra
  \((R,\fm)\) admitting a dualizing complex \(\omega_R^\bullet\).
  By perturbing the coefficients of \(B\)
  \cite[Lemma 5.2.13]{Fuj17}, we may replace \(\RR\) coefficients with
  \(\QQ\) coefficients.\smallskip
  \begin{step}\label{step:amb20131}
    The case when \(B+\varepsilon D\) has simple normal crossing support
    for some \(0 < \varepsilon < 1/u\).
  \end{step}
  We have
  \[
    L = K_X+B + \varepsilon D + (A-\varepsilon D) \sim_\QQ K_X+B + \varepsilon D
    + (1-\varepsilon u)A.
  \]
  Let \(n \ge 1\) be such that \(\cO_X(nA)\) is \(f\)-generated.
  By our Bertini theorem for \(f\)-generated Cartier
  divisors over integral Noetherian local \(\QQ\)-algebras
  \cite[Theorem 10.1 and Remark 10.2]{LM}, there exists an effective
  Cartier divisor \(S\) on \(X\) such that
  \[
    L \sim_\QQ K_X+B + \varepsilon D + \frac{1-\varepsilon u}{n} S
  \]
  and \(B+\varepsilon D + \frac{1-\varepsilon u}{n} S\) has simple normal
  crossing support.
  Note that \(D\) has support in \(\Supp(B+\varepsilon D + \frac{1-\varepsilon
  u}{n} S)\).
  We are now done by the local-global dual \citeleft\citen{Lip78}\citemid
  Theorem on p.\ 188\citeright\ (see \citeleft\citen{Mur}\citemid Lemma
  5.2\citeright) of Theorem \ref{thm:evinjsncred}.\smallskip
  \begin{step}
    The general case.
  \end{step}
  By log resolution of singularities \cite[Chapter I, \S3, Main
  theorems I\((n)\) and II\((N)\)]{Hir64}, there exists
  a resolution of singularities \(\mu\colon X' \to X\) such that \(\Exc(\mu)
  \cup \mu_*^{-1}B\)
  has simple normal crossing support.
  We can now follow the proof of \cite[Lemma 13]{Amb20}, replacing Step 1 of
  \cite[Lemma 13]{Amb20} with our version of Step \ref{step:amb20131} above.
\end{proof}
We can now extend Theorem
\ref{lem:amb2013} to the generalized normal crossing case.
\begin{theorem}[The Tankeev--Koll\'ar injectivity theorem for generalized normal
  crossing pairs]\label{thm:amb2014gnc}
  Let \(f\colon (X,B) \to Y\) be a proper morphism of locally Noetherian schemes
  of equal characteristic zero such that \((X,B)\) is a generalized normal
  crossing pair and \(Y\) is locally quasi-excellent and has a
  dualizing complex \(\omega_Y^\bullet\).
  Let \(\sL\) be an invertible \(\cO_X\)-module such that
  \[
    \sL^{\otimes r} \simeq \omega^{[r]}_{(X,B)} \otimes_{\cO_X} \sH
  \]
  for an integer \(r \ge 1\) such that
  \(rB\) has integer coefficients and for an invertible \(\cO_X\)-module \(\sH\)
  such that \(f^*f_*\sH \to \sH\) is surjective.
  Let \(s \in \Gamma(X,\sH)\) be a global section that is invertible at the
  generic point of every log canonical center of \((X,B)\) and let \(D\) be the
  effective Cartier divisor defined by \(s\).
  Then, the natural maps
  \[
    R^i f_*\sL \longrightarrow R^if_*\bigl(\sL(D)\bigr)
  \]
  are injective for every \(i\).
\end{theorem}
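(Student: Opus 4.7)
The plan is to mirror the strategy used to prove Theorem \ref{thm:amb2012}, replacing the Esnault--Viehweg input (Theorem \ref{thm:evinjnc}) with the Tankeev--Koll\'ar input for normal crossing pairs on regular schemes (Theorem \ref{lem:amb2013}). First, since the question is local on \(Y\), we may assume \(Y\) is affine and Noetherian. Setting \(\Sigma \coloneqq \Supp(D)\) and \(U \coloneqq X - \Sigma\), a standard limit argument of the form
\[
  \varinjlim_{m \ge 0} H^i\bigl(X,\sL(mD)\bigr) \overset{\sim}{\longrightarrow} H^i\bigl(U,\sL\rvert_U\bigr)
\]
reduces the desired injectivity to the injectivity of the restriction map \(H^i(X,\sL) \to H^i(U,\sL\rvert_U)\).

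Next, following \cite[p.\ 746]{DBJ74} and the proof of Theorem \ref{thm:amb2012}, I form the simplicial scheme \(X_\bullet\) whose \(n\)-th term is the \((n+1)\)-fold fiber product of the normalization \(\pi\colon \bar{X} \to X\) over \(X\), with augmentation \(\varepsilon\colon X_\bullet \to X\). By \cite[Lemma 10(1)]{Amb20}, the generalized normal crossing hypothesis implies that \(\varepsilon\) is a regular simplicial resolution in the sense of \cite[D\'efinition 2.1.1]{DB81} and that \((X,B)\) is Du Bois. Combining Corollary \ref{cor:simplicial} (that \(\underline{\Omega}_X^0\) can be computed via regular simplicial resolutions) with Corollary \ref{cor:delignedblocal} (compatibility of \(\underline{\Omega}_X^0\) with localization and completion), we deduce that \(\cO_X \to \RR\varepsilon_*\cO_{X_\bullet}\) is a quasi-isomorphism. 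Applying the same reasoning over \(U\) gives \(\cO_U \to \RR\varepsilon_*\cO_{U_\bullet}\) is a quasi-isomorphism as well, where \(U_\bullet = \varepsilon^{-1}(U)\). Setting \(\sL_\bullet \coloneqq \varepsilon^*\sL\), it therefore suffices to prove that
\[
  H^i(X_\bullet,\sL_\bullet) \longrightarrow H^i\bigl(U_\bullet,\sL_\bullet\rvert_{U_\bullet}\bigr)
\]
is injective for every \(i\). By the standard spectral sequence for the cohomology of a simplicial scheme, this will follow from the injectivity of the analogous maps on each term \(X_n\).

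On each level \(X_n\), the scheme is regular, the pulled-back divisor \(B_n \coloneqq \varepsilon_n^*B\) has normal crossing support with coefficients in \([0,1]\), and \(\sL_n \coloneqq \varepsilon_n^*\sL\) satisfies \(\sL_n^{\otimes r} \simeq \omega^{[r]}_{(X_n,B_n)} \otimes_{\cO_{X_n}} \sH_n\) where \(\sH_n \coloneqq \varepsilon_n^*\sH\). The surjectivity of \(f^*f_*\sH \to \sH\) pulls back to show that \((f\varepsilon_n)^*(f\varepsilon_n)_*\sH_n \to \sH_n\) is surjective, so the \(\RR\)-Cartier divisor \(A_n \coloneqq L_n - (K_{X_n} + B_n)\) is \((f\varepsilon_n)\)-semiample. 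The key compatibility is that the strata of the normal crossing pair \((X_n, B_n)\) map onto the log canonical centers of \((X,B)\) by \cite[Lemma 10(2)]{Amb20}; hence the hypothesis that \(s\) is invertible at the generic point of every log canonical center of \((X,B)\) translates, after pulling back to \(X_n\), to the statement that \(\varepsilon_n^*s\) defines a Cartier divisor \(D_n\) that contains no stratum of \((X_n,B_n)\). The local-global dual \citeleft\citen{Lip78}\citemid Theorem on p.\ 188\citeright\ (see \cite[Lemma 5.2]{Mur}) of Theorem \ref{lem:amb2013} then yields the desired injectivity on each \(X_n\).

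The main obstacle is the verification that the stratum-respecting properties of the section \(s\) and the relative semiampleness of \(A\) survive pullback to every term of the simplicial resolution; this is where the precise relationship between log canonical centers of a generalized normal crossing pair and strata of its regular simplicial resolution from \cite[Lemma 10]{Amb20} is essential, and where the generality of Corollary \ref{cor:delignedblocal} (beyond the variety case) is needed to justify the Du Bois reduction in our broader setting.
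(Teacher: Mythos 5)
Your proposal follows the same high-level strategy as the paper's proof (which simply says: localize on \(Y\), then run the proof of Theorem \ref{thm:amb2012} with the Esnault--Viehweg input replaced by the Tankeev--Koll\'ar input from Theorem \ref{lem:amb2013}). Your reduction to the restriction map on \(U = X - \Supp(D)\), the passage to the normalization-based simplicial resolution, the invocation of Corollaries \ref{cor:simplicial} and \ref{cor:delignedblocal} to get the Du Bois quasi-isomorphism, and the level-wise application of Theorem \ref{lem:amb2013} via its local-global dual all track the paper's argument accurately. Your observation that the support of \(D\) (rather than \(\Supp(B)\)) must govern the open subset \(U\) is a correct and necessary adaptation of the limit reduction that the paper implicitly takes for granted when it says "the proof is the same."

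However, there is a genuine gap at the spectral-sequence step. You assert that injectivity of \(H^i(X_\bullet,\sL_\bullet) \to H^i(U_\bullet,\sL_\bullet\rvert_{U_\bullet})\) "will follow from the injectivity of the analogous maps on each term \(X_n\)" by "the standard spectral sequence." This is false in general: a morphism of (first-quadrant) spectral sequences that is injective on every \(E_1\) term need not be injective on abutments. Concretely, if the \(E_1\)-differential of the source has a larger image than that of the target, the induced map on \(E_2\) can acquire a kernel, and this can propagate to the abutment. The paper sidesteps this by deferring to Ambro's Theorem 12/14 in \cite{Amb20}, whose proof contains the nontrivial argument needed to pass from level-wise injectivity to simplicial injectivity (this is not a formal consequence, and in the variety case it is precisely where the Hodge-theoretic content traditionally entered). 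You would need to either cite that reference explicitly, as the paper does, or supply the additional argument (e.g., a degeneration or splitting statement that controls the spectral sequence) rather than appealing to a "standard" consequence that does not hold.
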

\begin{proof}
  Since the question is local on \(Y\), we may replace \(Y\) by
  \(\Spec(\cO_{Y,y})\) for each \(y \in Y\) to assume that \(Y\) is the spectrum
  of an excellent local \(\QQ\)-algebra.
  The proof is then the same as that of Theorem \ref{thm:amb2012} after replacing
  Theorem \ref{thm:evinjnc} with Theorem \ref{lem:amb2013}.
\end{proof}
\subsection{The Koll\'ar torsion-freeness theorem}
Next, we prove our version of the Koll\'ar torsion-freeness theorem 
\cite[Theorem 2.1\((i)\)]{Kol86} for
generalized normal crossing pairs.
\begin{theorem}[The Koll\'ar torsion-freeness theorem for generalized normal
  crossing pairs]
  \label{thm:amb2015gnc}
  Let \(f\colon (X,B) \to Y\) be a proper morphism of locally Noetherian schemes
  of equal characteristic zero such that \((X,B)\) is a generalized normal
  crossing pair and \(Y\) is locally quasi-excellent and has a
  dualizing complex \(\omega_Y^\bullet\).
  Let \(\sL\) be an invertible \(\cO_X\)-module such that
  \[
    \sL^{\otimes r} \simeq \omega^{[r]}_{(X,B)} \otimes_{\cO_X} \sH
  \]
  for an integer \(r \ge 1\) such that
  \(rB\) has integer coefficients
  and for an invertible \(\cO_X\)-module
  \(\sH\) such that \(f^*f_*\sH \to \sH\) is surjective.
  Then, every associated point of \(R^i f_*\sL\) is the \(f\)-image of the
  generic point of an irreducible component of \(X\) or a log canonical center
  of \((X,B)\).
\end{theorem}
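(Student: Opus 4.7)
The plan is to deduce Theorem \ref{thm:amb2015gnc} from the Tankeev--Koll\'ar injectivity theorem (Theorem \ref{thm:amb2014gnc}) by the classical argument of Koll\'ar \cite[Theorem 2.1\((i)\)]{Kol86}, as adapted to the generalized normal crossing setting by Ambro \cite[Theorem 15]{Amb20}. Since the assertion is local on \(Y\), I would fix an associated point \(y \in Y\) of the coherent sheaf \(R^if_*\sL\) and argue by contradiction: assume \(y\) is not the \(f\)-image of the generic point of any irreducible component of \(X\) or of any log canonical center of \((X,B)\). After replacing \(Y\) with an affine open neighborhood of \(y\), I choose a local section \(\sigma\) of \(R^if_*\sL\) near \(y\) whose scheme-theoretic support is exactly \(\overline{\{y\}}\), which exists precisely because \(y\) is an associated point.

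The next step is to produce an effective Cartier divisor \(D\) on \(X\), defined by a section that is invertible at every log canonical center of \((X,B)\), whose defining section annihilates \(\sigma\). I first replace \(\sL\) by \(\sL \otimes f^*\sA^{\otimes N}\) for an ample invertible \(\cO_Y\)-module \(\sA\) (which exists after further shrinking of \(Y\)) and \(N \gg 0\); this replaces \(\sH\) by \(\sH' \coloneqq \sH \otimes f^*\sA^{\otimes rN}\), still globally \(f\)-generated, and by the projection formula \(R^if_*(\sL \otimes f^*\sA^{\otimes N}) \simeq R^if_*\sL \otimes \sA^{\otimes N}\), so the associated-point structure is preserved and \(\sigma\) persists. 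I then choose \(s \coloneqq s_0 \otimes f^*t \in \Gamma(X,\sH')\), where \(s_0 \in \Gamma(X,\sH)\) does not vanish at the (finitely many in a neighborhood of \(y\)) generic points of log canonical centers and irreducible components of \(X\), and \(t \in \Gamma(Y,\sA^{\otimes rN})\) is chosen, for \(N\) sufficiently large, to vanish at \(y\) to an order high enough that \(f^*t\cdot\sigma = 0\) while still not vanishing at the \(f\)-images of those generic points (which by assumption are distinct from \(y\)).

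Setting \(D \coloneqq (s)_0\), the section \(s\) is invertible at the generic point of every log canonical center of \((X,B)\), so Theorem \ref{thm:amb2014gnc} yields the injection
\begin{equation*}
R^if_*(\sL \otimes f^*\sA^{\otimes N}) \hooklongrightarrow R^if_*\bigl((\sL \otimes f^*\sA^{\otimes N})(D)\bigr).
\end{equation*}
However, viewing \(\sigma\) in \(R^if_*\sL \otimes \sA^{\otimes N}\) via the projection formula, its image under this map is \(s\cdot\sigma = s_0 \cdot (f^*t\cdot\sigma) = 0\) by construction, a contradiction.

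The main obstacle is to rigorously establish, at the level of generality in the hypotheses, the finiteness of generic points of log canonical centers and irreducible components of \(X\) in a neighborhood of \(f^{-1}(y)\) (so that sections of \(\sH\) can be chosen to separate them), together with the existence of \(t \in \Gamma(Y,\sA^{\otimes rN})\) of sufficiently high vanishing order at \(y\) that nonetheless avoids these finitely many other images. The finiteness uses that \((X,B)\) is a generalized normal crossing pair, where log canonical centers are defined via completions, combined with the Noetherianity and quasi-excellence hypotheses on \(Y\) and the properness of \(f\); the existence of \(t\) follows from ampleness of \(\sA\) together with Serre-type vanishing on affine \(Y\).
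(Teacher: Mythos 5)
Your argument is correct and follows essentially the same route as the paper: localize \(Y\) at \(y\) to reduce to a local Noetherian \(\QQ\)-algebra \((R,\fm)\), find a function that kills the torsion section \(\sigma\) without vanishing at the \(f\)-images of the generic points of log canonical centers and irreducible components, and conclude from the Tankeev--Koll\'ar injectivity theorem (Theorem \ref{thm:amb2014gnc}). Two observations.

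First, the twist by \(f^*\sA^{\otimes N}\) is superfluous. After localizing at \(y\), the base \(Y = \Spec(R)\) is affine local, so one may take \(\sA = \cO_Y\) and the twist is vacuous; the element \(t\) may simply be taken to be \(h^N\) for some \(h \in \fm - \bigcup_i \fp_i\) (which exists by prime avoidance, since \(\fm\) is not among the \(\fp_i\)) and \(N\) large enough that \(h^N\sigma = 0\). Relative ampleness and Serre vanishing play no role here, and the extra bookkeeping with \(\sH' = \sH \otimes f^*\sA^{\otimes rN}\) disappears.

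Second, your treatment of \(D\) is actually more careful than the paper's. The paper takes \(D\) to be the zero scheme of \(f^*h\) and asserts the injectivity of \(R^if_*\sL \to R^if_*(\sL(D))\) directly by Theorem \ref{thm:amb2014gnc}, but the hypotheses there require \(D\) to be cut out by a global section of \(\sH\), whereas \(f^*h\) is a section of \(\cO_X\) and the zero scheme of \(f^*h\) lies in the trivial linear equivalence class. Your choice of \(s_0 \in \Gamma(X,\sH)\) not vanishing at the relevant generic points (which is possible since \(f^*f_*\sH \to \sH\) is surjective, \(Y\) is affine, and there are only finitely many such points after localization) and \(D\) the zero scheme of \(s_0 \otimes f^*t\) makes the application of Theorem \ref{thm:amb2014gnc} literal and correct; the factorization of \(\sL \hookrightarrow \sL(D)\) through \(\sL\) twisted by the pullback of the divisor of \(t\), together with the projection formula, then gives the vanishing of the image of \(\sigma\) and hence the contradiction. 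This is the standard deduction of torsion-freeness from injectivity, and it makes explicit a step that the paper's proof leaves implicit.
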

\begin{proof}
  Working one connected component of \(X\) at a time, we may assume that \(X\)
  is connected by \cite[Proposition 3.1.7\((iii)\)]{EGAIV2}.
  We prove the contrapositive: If \(y \in Y\) is not the \(f\)-image of the
  generic point of any irreducible component of \(X\) or any log canonical
  center of \((X,B)\), then \(y \notin \Ass_{Y}(R^if_*\sL)\).
  Replacing \(Y\) with \(\Spec(\cO_{Y,y})\) we may assume that \(Y = \Spec(R)\)
  for a quasi-excellent local ring \((R,\fm)\).
  Moreover, we may replace \(R\) by \(R_\red\) to assume that \(R\) is reduced
  by \cite[Lemme 3.1.10.1]{EGAIV2}.
  \par If \(R\) is a field, then either \(R^if_*\sL = 0\), in which case
  \(\Ass_Y(R^if_*\sL) = \emptyset\) by \cite[Corollaire 3.1.5]{EGAIV2} and there
  is nothing to show, or \(R^if_*\sL \ne 0\) and
  \(\fm = (0)\) is the image of every irreducible component of \(X\).
  We may therefore assume that \(R\) is not a field and that \(\fm\) is
  not a minimal prime of \(R\).
  \par Let \(\{\fp_1,\fp_2,\ldots,\fp_n\}\) be the set of prime ideals in \(R\)
  consisting of the minimal primes of \(R\), the \(f\)-images of the generic points
  of the irreducible components of \(X\), and the \(f\)-images of the log
  canonical centers of \((X,B)\).
  Since \(\fm \ne \fp_i\) for any \(i\), there exists \(h \in \fm - \bigcup_i
  \fp_i\) by prime avoidance.
  Then, \(h\) is a nonzerodivisor on \(R\), and
  the pullback of \(h\) to \(X\) then defines an effective Cartier divisor
  \(D\) on \(X\) that does not contain any log canonical center of \((X,B)\).
  \par Now consider the commutative diagram
  \[
    \begin{tikzcd}
      R \rar{h \cdot -}\dar[equals] & R\dar{1 \mapsto h^{-1}}\\
      R \rar & R(h)
    \end{tikzcd}
  \]
  where \(R(h) = \Gamma(Y,\cO_Y(V(h)))\).
  Tensoring by \(R^if_*\sL\), we obtain the commutative diagram
  \[
    \begin{tikzcd}
      R^if_*\sL \rar{h\cdot-}\dar[equals] & R^if_*\sL\dar\\
      R^if_*\sL \rar[hook] & R^if_*\bigl(\sL(D)\bigr)
    \end{tikzcd}
  \]
  by the projection formula, where the bottom horizontal map is injective by
  Theorem \ref{thm:amb2014gnc}.
  By the commutativity of the diagram, the top horizontal map is injective, and
  hence \(h\) is a nonzerodivisor on \(R^if_*\sL\).
  Thus, \(\fm \notin \Ass_Y(R^if_*\sL)\) as claimed.
\end{proof}
\subsection{The Ohsawa--Koll\'ar vanishing theorem}
We prove our version of the Ohsawa--Koll\'ar vanishing theorem for
normal crossing pairs.
\begin{theorem}[The Ohsawa--Koll\'ar vanishing theorem for normal crossing pairs
  on regular schemes]\label{thm:amb2016}
  Let
  \[
    (X,B) \overset{f}{\longrightarrow} Y \overset{g}{\longrightarrow} Z
  \]
  be morphisms of locally Noetherian schemes
  of equal characteristic zero such that \(f\) is proper, \(g\) is locally projective,
  \(X\) is regular, \(B\) is an \(\RR\)-divisor with normal crossing support and
  coefficients in \([0,1]\), and \(Z\)
  is locally quasi-excellent and has a
  dualizing complex \(\omega_Z^\bullet\).
  Let \(L\) be a Cartier divisor on \(X\) such that \(L \sim_\RR K_X+B + f^*A\)
  for a \(g\)-ample Cartier divisor \(A\) on \(Y\).
  Then, we have
  \[
    R^pg_*R^qf_*\bigl(\cO_X(L)\bigr) = 0
  \]
  for all \(q\) and for all \(p \ne 0\).
\end{theorem}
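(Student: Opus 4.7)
The plan is to follow the classical approach of Koll\'ar \cite[Theorem 2.1(iii)]{Kol86} and Ambro \cite[Theorem 16]{Amb20}: reduce to the case of an affine base, establish a key injection via the Tankeev--Koll\'ar theorem together with Serre vanishing, and conclude using a Koll\'ar-type degeneration of the Leray spectral sequence.

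First, since the vanishing is local on $Z$, I will replace $Z$ by $\Spec\cO_{Z,z}$ for each point $z \in Z$ to assume that $Z = \Spec R$ is the spectrum of a Noetherian local ring and that $g$ becomes projective after restricting to an affinoid cover of $Y$. I perturb the coefficients of $B$ to lie in $\QQ$ via \cite[Lemma 5.2.13]{Fuj17} and fix an integer $r \ge 1$ such that $rB$ is integral. By the $g$-ampleness of $A$ and Serre vanishing, for $m \gg 0$
\[
  R^p g_*\bigl(R^q f_*\cO_X(L) \otimes_{\cO_Y} \cO_Y(mrA)\bigr) = 0 \qquad\text{for all } p > 0,
\]
which by the projection formula equals $R^p g_* R^q f_* \cO_X(L + mrf^*A)$; in particular the Leray spectral sequence for $gf$ applied to $\cO_X(L + mrf^*A)$ collapses onto its base row. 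Using that $\cO_Y(mrA)$ is $g$-globally generated for $m \gg 0$ and that the $f$-images of log canonical centers of $(X,B)$ form a finite family of proper closed subsets of $Y$, I choose $s \in \Gamma(Y, \cO_Y(mrA))$ such that $f^*s$ is invertible at the generic point of every log canonical center of $(X,B)$, and set $D \coloneqq \operatorname{div}(f^*s)$.

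Next, applying the Tankeev--Koll\'ar injectivity theorem (Theorem \ref{lem:amb2013}, with $r$ replaced by $mr$) to the proper morphism $gf$ produces an injection
\[
  R^n(gf)_*\cO_X(L) \hooklongrightarrow R^n(gf)_*\cO_X(L+D) = g_* R^n f_* \cO_X(L+D),
\]
where the equality is the trivial degeneration established above. Applying Theorem \ref{lem:amb2013} to $f$ itself and using the left-exactness of $g_*$ gives $g_* R^n f_*\cO_X(L) \hookrightarrow g_* R^n f_*\cO_X(L+D)$; combining the two shows that the edge map $R^n(gf)_*\cO_X(L) \to E_2^{0,n} = g_* R^n f_* \cO_X(L)$ of the Leray spectral sequence for $\cO_X(L)$ is injective. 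Its factorization $R^n(gf)_*\cO_X(L) \twoheadrightarrow E_\infty^{0,n} \hookrightarrow E_2^{0,n}$ then forces both arrows to be isomorphisms, so that $E_\infty^{p,q} = 0$ for $p > 0$ and $E_\infty^{0,n} = E_2^{0,n}$.

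The main obstacle will be upgrading the vanishing of $E_\infty^{p,q}$ to the desired vanishing of $E_2^{p,q} = R^p g_* R^q f_* \cO_X(L)$, i.e., showing that the Leray spectral sequence degenerates at $E_2$. Following \cite[Theorem 3.1]{Kol86} and \cite[Theorem 16]{Amb20}, this degeneration is obtained by producing a Koll\'ar-type derived splitting
\[
  \RR f_*\cO_X(L) \simeq \bigoplus_q R^q f_*\cO_X(L)[-q] \qquad \text{in } \D^b_\coh(Y),
\]
derived from the injectivity and torsion-freeness theorems (Theorems \ref{lem:amb2013} and \ref{thm:amb2015gnc}) together with Deligne's splitting criterion for degenerate filtered complexes. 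Granting the splitting, applying $\RR g_*$ yields $R^n(gf)_*\cO_X(L) \simeq \bigoplus_{p+q=n} R^p g_* R^q f_*\cO_X(L)$, and the injection from the previous paragraph, restricted to each summand with $p > 0$, lands in $R^p g_*\bigl(R^q f_*\cO_X(L) \otimes \cO_Y(mrA)\bigr) = 0$ by Serre vanishing, forcing the off-diagonal summands to vanish and completing the proof.
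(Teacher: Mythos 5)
Your proposal departs substantially from the paper's proof, and there is a genuine gap at the crucial step.

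The paper's proof proceeds by induction on \(\dim(X)\): after perturbing to \(\QQ\)-coefficients and localizing, it applies Serre vanishing and the Bertini theorem from \cite{LM} to produce a divisor \(T = f^*S\) on \(X\) such that \(\cO_X(T) \simeq f^*\sA^{\otimes m}\), \(T\) is regular, and \(B+T\) has normal crossing support. The torsion-freeness theorem (Theorem \ref{thm:amb2015gnc}) shows that the connecting map in the long exact sequence for \(0 \to \sL \to \sL(T) \to \sL(T)\rvert_T \to 0\) is zero; the vanishing for \(p \ge 2\) then follows from the inductive hypothesis applied to the pair \((T, B_T)\) and Serre vanishing, and the case \(p = 1\) is handled by a Leray spectral sequence morphism argument using the Tankeev--Koll\'ar injectivity theorem. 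This route never needs (nor mentions) a derived splitting.

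Your proposal correctly gets as far as showing \(E_\infty^{p,q} = 0\) for \(p > 0\). But to conclude \(E_2^{p,q} = 0\) you invoke a Koll\'ar-type derived splitting \(\RR f_*\cO_X(L) \simeq \bigoplus_q R^qf_*\cO_X(L)[-q]\) in \(\D^b_\coh(Y)\), said to be ``derived from the injectivity and torsion-freeness theorems \ldots together with Deligne's splitting criterion.'' This is the gap. Koll\'ar's decomposition theorem does \emph{not} follow from the injectivity and torsion-freeness theorems alone: Deligne's splitting criterion requires a hard-Lefschetz-type input (an isomorphism produced by cup product with an ample class), and Koll\'ar's original proof of the decomposition in \cite[Theorem 3.1]{Kol86} crucially relies on variations of Hodge structure. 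That kind of input is precisely what this paper must avoid, since it works with schemes, formal schemes, and non-Archimedean analytic spaces over arbitrary fields of equal characteristic zero, where no comparable Hodge theory is available; the paper's whole strategy is to substitute approximation arguments and cubical descent for Hodge-theoretic ingredients. The derived splitting would thus require its own proof, which is neither supplied nor cited, whereas the paper's inductive argument sidesteps the need for it entirely. Absent that splitting, your argument shows only the \(E_\infty\)-vanishing and does not close the theorem. (A secondary, fixable issue: Theorem \ref{lem:amb2013} is stated as a surjectivity on local cohomology, not an injectivity on higher direct images; you would first need to pass through the local--global duality of \cite{Lip78} as the paper does elsewhere.)
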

\begin{proof}
  We proceed by induction on \(\dim(X)\).
  The \(\dim(X) = 0\) case is trivial, and hence it suffices to show the
  inductive case when \(\dim(X) > 0\).
  As in the proof of Theorem \ref{thm:evinjsncred}, we may assume that \(X\) is
  integral, \(f\) and \(g\) are surjective, and \(Z = \Spec(R)\)
  for an excellent integral local \(\QQ\)-algebra
  \((R,\fm)\) admitting a dualizing complex \(\omega_R^\bullet\).
  By perturbing the coefficients of \(B\)
  \cite[Lemma 5.2.13]{Fuj17}, we may replace \(\RR\) coefficients with
  \(\QQ\) coefficients.\smallskip
  \par We denote by \(\sL\) and \(\sA\) the invertible sheaves associated to
  \(L\) and \(A\), respectively.
  Since \(\sA\) is \(g\)-ample, we know that \(f^*\sA\) is \((g \circ
  f)\)-semi-ample by \cite[Lemma 2.11\((i)\)]{CT20}.
  Let \(m\) be a sufficiently large integer such that
  \begin{equation}\label{eq:amb2016serrevan}
    R^p g_*R^q f_*(\sL \otimes_{\cO_X} f^*\sA^{\otimes m}) = 0
  \end{equation}
  for all \(p \ne 0\), which exists by Serre vanishing, and such that
  \(f^*\sA^{\otimes m}\) is \((g \circ f)\)-generated.
  By our Bertini theorem for relatively generated invertible sheaves
  over integral Noetherian local \(\QQ\)-algebras
  \cite[Theorem 10.1 and Remark 10.2]{LM}, there exists an effective
  Cartier divisor \(S\) on \(Y\) such that setting \(T \coloneqq f^*S\),
  we have \(\cO_X(T) \simeq f^*\sA^{\otimes m}\), \(T\) is regular, and
  \(B+T\) has normal crossing support.
  \par Consider the short exact sequence
  \[
    0 \longrightarrow \sL \longrightarrow \sL(T) \longrightarrow
    \bigl(\sL(T)\bigr)\bigr\rvert_T \longrightarrow0.
  \]
  In the long exact sequence associated to pushing forward by \(f\),
  the connecting map
  \[
    R^qf_*\Bigl(\bigl(\sL(T)\bigr)\bigr\rvert_T\Bigr)
    \longrightarrow R^{q+1}f_*\sL
  \]
  is the zero
  map by Theorem \ref{thm:amb2015gnc} since the image is supported by \(T\), which
  contains no strata of \((X,B)\) and \(L \sim_\QQ K_X+B\) locally over \(Y\).
  This long exact sequence therefore breaks up into the short exact sequences
  \begin{align}
    0 \longrightarrow R^qf_*\sL
    \longrightarrow R^qf_*\bigl(\sL(T)\bigr) &\longrightarrow
    R^qf_*\Bigl(\bigl(\sL(T)\bigr)\bigr\rvert_T\Bigr)
    \longrightarrow0
    \label{eq:amb2016ses}
  \shortintertext{for every \(q\).\endgraf
  We now consider the long exact sequence}
    \cdots \longrightarrow R^pg_*R^qf_*\sL \longrightarrow
    R^pg_*R^qf_*\bigl(\sL(T)\bigr) &\longrightarrow
    R^pg_*R^qf_*\Bigl(\bigl(\sL(T)\bigr)\bigr\rvert_T\Bigr)
    \longrightarrow\cdots
    \label{eq:amb2016les}
  \end{align}
  associated to pushing forward the
  short exact sequence \eqref{eq:amb2016ses} by \(g\).
  For \(p \ge 2\), we first note that the \(\QQ\)-linear equivalence \(L+T
  \sim_\QQ K_X+B+T+f^*A\) restricts to the \(\QQ\)-linear equivalence
  \[
    (L+T)\rvert_T \sim_\QQ K_T+B_T+\bigl(f\rvert_T\bigr)^*\bigl(A\rvert_S\bigr).
  \]
  Since the long exact sequence \eqref{eq:amb2016les} implies
  \begin{equation}
    R^pg_*R^qf_*\bigl(\sL(T)\bigr) \simeq
    R^pg_*R^qf_*(\sL \otimes_{\cO_X} f^*\sA^{\otimes m}) = 0
    \label{eq:amb2016applyserrevan}
  \end{equation}
  for all \(p \ne 0\) by the projection formula and \eqref{eq:amb2016serrevan},
  the inductive hypothesis and 
  the long exact sequence \eqref{eq:amb2016les} imply
  \[
    0 = R^{p-1}g_*R^qf_*\Bigl(\bigl(\sL(T)\bigr)\bigr\rvert_T\Bigr)
    \overset{\sim}{\longrightarrow}
    R^{p}g_*R^qf_*\sL.
  \]
  \par It remains to show \(R^{1}g_*R^qf_*\sL = 0\).
  Consider the morphism connecting the two Leray spectral sequences
  \begin{align*}
    E_2^{p,q} = R^pg_*R^qf_*\sL
    &\Rightarrow R^{p+q}(g \circ f)_*\sL,\\
    E_2^{p,q} = R^pg_*R^qf_*\bigl(\sL(T)\bigr) &\Rightarrow R^{p+q}(g \circ
    f)_*\bigl(\sL(T)\bigr).
  \end{align*}
  Since the terms of the two spectral sequences vanish for all \(p \notin
  \{0,1\}\), we obtain the commutative diagram
  \[
    \begin{tikzcd}
      R^{1+q}(g \circ f)_*\sL \rar[hook]{\beta} & R^{1+q}(g \circ
      f)_*\bigl(\sL(T)\bigr)\\
      R^1g_*R^1f_*\sL \rar\uar[hook] &
      R^1g_*R^qf_*\bigl(\sL(T)\bigr)\uar[hook]
    \end{tikzcd}
  \]
  with injective vertical arrows.
  The map \(\beta\) is injective by Theorem \ref{thm:amb2014gnc} since \(T\)
  contains no strata of \((X,B)\).
  By the commutativity of the diagram, we see that
  \[
    R^{1}g_*R^qf_*\sL \hooklongrightarrow
    R^1g_*R^qf_*\bigl(\sL(T)\bigr) = 0
  \]
  where the vanishing on the right follows from
  \eqref{eq:amb2016applyserrevan}.
\end{proof}
Finally, we extend our Ohsawa--Koll\'ar vanishing theorem \ref{thm:amb2016}
to the generalized normal crossing case.
\begin{theorem}[The Ohsawa--Koll\'ar vanishing theorem for generalized normal
  crossing pairs]
  \label{thm:amb2016gnc}
  Let
  \[
    (X,B) \overset{f}{\longrightarrow} Y \overset{g}{\longrightarrow} Z
  \]
  be morphisms of locally Noetherian schemes
  of equal characteristic zero such that \(f\) is proper, \(g\) is locally projective,
  \((X,B)\) is a generalized normal crossing pair, and \(Z\)
  is locally quasi-excellent and has a
  dualizing complex \(\omega_Z^\bullet\).
  Let \(\sL\) be an invertible \(\cO_X\)-module such that
  \[
    \sL^{\otimes r} \simeq \omega^{[r]}_{(X,B)} \otimes_{\cO_X} f^*\sA
  \]
  for an integer \(r \ge 1\) such that
  \(rB\) has integer coefficients and \(\sA\) is a \(g\)-ample invertible
  \(\cO_Y\)-module.
  Then, we have
  \[
    R^pg_*R^qf_*\sL = 0
  \]
  for all \(q\) and for all \(p \ne 0\).
\end{theorem}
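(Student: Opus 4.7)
The plan is to induct on $\dim(X)$, adapting the proof of the normal crossing case (Theorem \ref{thm:amb2016}) by replacing the Tankeev--Koll\'ar injectivity and Koll\'ar torsion-freeness inputs used there with their generalized normal crossing counterparts from Theorems \ref{thm:amb2014gnc} and \ref{thm:amb2015gnc}. The base case $\dim(X) = 0$ is trivial. For the inductive step, by the same standard reductions as in the proof of Theorem \ref{thm:amb2016}, we may assume $X$ is connected, $Z = \Spec(R)$ for an excellent integral local $\QQ$-algebra with dualizing complex, and (working locally on $Z$) $g$ is projective. The crux is to construct an effective Cartier divisor $T = f^*S$ on $X$, where $S$ is an effective Cartier divisor on $Y$, satisfying: (i) $\cO_X(T) \simeq f^*\sA^{\otimes m}$ for some $m \gg 0$ chosen large enough that $f^*\sA^{\otimes m}$ is $(g\circ f)$-generated and $R^pg_*R^qf_*(\sL \otimes_{\cO_X} f^*\sA^{\otimes m}) = 0$ for all $p > 0$ by Serre vanishing; (ii) $T$ contains no log canonical center of $(X,B)$; and (iii) $(T, B\rvert_T)$ is a generalized normal crossing pair of dimension $\dim(X) - 1$.

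Given such a $T$, the argument of Theorem \ref{thm:amb2016} runs mutatis mutandis. The short exact sequence $0 \to \sL \to \sL(T) \to \sL(T)\rvert_T \to 0$ yields, after applying $\RR f_*$, connecting maps $R^qf_*(\sL(T)\rvert_T) \to R^{q+1}f_*\sL$ that vanish by Theorem \ref{thm:amb2015gnc} thanks to property (ii), producing short exact sequences
\[
  0 \longrightarrow R^qf_*\sL \longrightarrow R^qf_*\sL(T) \longrightarrow R^qf_*\bigl(\sL(T)\rvert_T\bigr) \longrightarrow 0
\]
for every $q$. Applying $R^pg_*$, Serre vanishing (property (i)) gives $R^pg_*R^qf_*\sL(T) = 0$ for $p > 0$, while the inductive hypothesis applied to $(T, B\rvert_T) \to Y \to Z$ (property (iii)) gives $R^pg_*R^qf_*(\sL(T)\rvert_T) = 0$ for $p > 0$. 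The long exact sequence then directly forces $R^pg_*R^qf_*\sL = 0$ for all $p \ge 2$. For the remaining case $p = 1$, a diagram chase identical to the one at the end of the proof of Theorem \ref{thm:amb2016}, using Theorem \ref{thm:amb2014gnc} applied to the proper morphism $g\circ f$ (with $\sH = f^*\sA^{\otimes m}$ and exponent $r' = rm$, so that $\sL^{r'} \simeq \omega^{[r']}_{(X,B)} \otimes \sH$, and with the defining section of $T$, which by (ii) is invertible at generic points of log canonical centers) to establish the injectivity $R^{1+q}(g\circ f)_*\sL \hookrightarrow R^{1+q}(g\circ f)_*\sL(T)$, yields $R^1g_*R^qf_*\sL \hookrightarrow R^1g_*R^qf_*\sL(T) = 0$.

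The main obstacle is therefore the construction of $T$ satisfying (i)--(iii). Properties (i) and (ii) follow from Serre vanishing and the Bertini theorem \cite[Theorem 10.1 and Remark 10.2]{LM} applied to the $(g\circ f)$-generated invertible sheaf $f^*\sA^{\otimes m}$ exactly as in Theorem \ref{thm:amb2016}, noting that log canonical centers of $(X,B)$ dominating $Y$ automatically fail to be contained in $T = f^*S$ for any proper effective Cartier divisor $S$ of $Y$. Property (iii) is the genuinely new input required beyond the normal crossing case. We verify it formal-locally: at each closed point $x \in X$, the completion $\hat{\cO}_{X,x}$ is isomorphic to that of a generalized normal crossing local model $X' = \bigcup_F \bA_{k(x),F}$, and the germ of $T$ at $x$ is cut by the pullback $f^*s$ of the defining section $s$ of $S$ at $f(x)$. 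For $m$ sufficiently large that the rational map associated to $f^*\sA^{\otimes m}$ separates the facets of the generalized normal crossing structure, a sufficiently general $S$ gives a pullback $f^*s$ that intersects each facet $\bA_{k(x),F}$ transversely; the resulting transverse hyperplane section of $X'$ is again a generalized normal crossing local model in dimension $\dim(X) - 1$, so $(T, B\rvert_T)$ is a generalized normal crossing pair of dimension $\dim(X) - 1$ as required.
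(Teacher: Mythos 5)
Your proposal takes a genuinely different route from the paper, and it has a significant gap. The paper's proof of Theorem \ref{thm:amb2016gnc} does \emph{not} run the induction of Theorem \ref{thm:amb2016} directly in the generalized normal crossing setting. Instead, exactly as in the proof of Theorem \ref{thm:amb2012}, it forms the simplicial scheme $X_\bullet$ with $X_n = (\bar{X}/X)^{n+1}$ built from the normalization $\pi\colon \bar{X}\to X$, uses \cite[Lemma 10(1)]{Amb20} together with Corollaries \ref{cor:delignedblocal} and \ref{cor:simplicial} to identify $\cO_X \xrightarrow{\sim} \RR\varepsilon_*\cO_{X_\bullet}$, and reduces to proving the vanishing for each $(X_n,B_n)$, which by \cite[Theorem 12]{Amb20} is a \emph{regular} normal crossing pair. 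Theorem \ref{thm:amb2016} is then applied component-wise. Thus the heavy lifting is done once, in the regular case, and the GNC case is obtained by descent rather than by a new induction.

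The crucial unproved step in your argument is property (iii): that the hyperplane section $(T, B\rvert_T)$, with $T = f^*S$ given by the Bertini theorem, is again a generalized normal crossing pair. This does not follow from the Bertini theorem \cite[Theorem 10.1 and Remark 10.2]{LM} cited in Theorem \ref{thm:amb2016}. That theorem produces a divisor whose pullback $T$ is regular and such that $B+T$ has normal crossing support when $X$ is already regular; it says nothing about preserving the formal-local GNC structure. The GNC condition is a condition on the completed local ring $\hat{\cO}_{X,x}$ at \emph{every} closed point $x$, namely that it be isomorphic to a GNC local model. The Cartier divisor $T$ near $x$ is cut by the pullback $f^*s$, whose image in $\hat{\cO}_{X,x}$ is determined by the geometry of $f$ near $x$ and cannot be chosen freely; a single global choice of $S$ on $Y$ cannot be adjusted to realize the "transverse hyperplane section of a local model" picture at infinitely many points simultaneously. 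Your formal-local verification therefore does not reduce to a finite set of genericity conditions that Bertini controls, and the assertion that a sufficiently general $S$ produces a GNC local model after restriction is unjustified. No "Bertini theorem for generalized normal crossing pairs" is stated or proved anywhere in the paper, and proving one would itself be nontrivial. Until that gap is filled, the induction does not close. (In contrast, the paper's descent to the regular normal crossing case via the simplicial resolution is precisely designed to avoid needing such a Bertini statement for GNC pairs.)

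There are also two smaller points worth flagging. First, you invoke Theorem \ref{thm:amb2014gnc} for the composite $g\circ f$ with $\sH = f^*\sA^{\otimes m}$ and exponent $r' = rm$, which silently requires the identification $\bigl(\omega^{[r]}_{(X,B)}\bigr)^{\otimes m} \simeq \omega^{[rm]}_{(X,B)}$; this should be verified (it is plausible from the invertibility of $\omega^{[r]}_{(X,B)}$, but it is not immediate from Definition \ref{def:weaklynormalpair}). Second, the reduction to "$X$ connected" needs care: unlike the regular case, $X$ here need not be irreducible, so the standard reduction from the proof of Theorem \ref{thm:evinjsncred} to $X$ integral cannot be reused verbatim. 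Neither of these is fatal, but the first point above is.
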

\begin{proof}
  Denote by \(\pi\colon \bar{X} \to X\) the normalization morphism.
  As in the proof of Theorem \ref{thm:amb2012}, 
  we can construct a simplicial scheme
  \[
    X_\bullet = \Biggl(
      \begin{tikzcd}[cramped,column sep=1.475em]
        \cdots 
        \arrow[shift left=15pt]{r}
        \arrow[shift left=10pt,leftarrow]{r}
        \arrow[shift left=5pt]{r}
        \arrow[leftarrow]{r}
        \arrow[shift right=5pt]{r}
        \arrow[shift right=10pt,leftarrow]{r}
        \arrow[shift right=15pt]{r}
        & \bar{X} \times_X \bar{X} \times_X \bar{X}
        \arrow[shift left=10pt]{r}
        \arrow[shift left=5pt,leftarrow]{r}
        \arrow{r}
        \arrow[shift right=5pt,leftarrow]{r}
        \arrow[shift right=10pt]{r}
        & \bar{X} \times_X \bar{X} \arrow[shift left=5pt]{r}
        \arrow[leftarrow]{r}
        \arrow[shift right=5pt]{r}
        & \bar{X}
      \end{tikzcd}
    \Biggr)
  \]
  where \(X_n = (\bar{X}/X)^{n+1}\) for every \(n \ge 0\).
  As before,
  by \cite[Lemma 10(1)]{Amb20} and Corollaries \ref{cor:delignedblocal} and
  \ref{cor:simplicial},
  since \((X,B)\) is a generalized normal crossings log pair,
  the natural maps
  \[
    \cO_X \longrightarrow \RR \varepsilon_*\cO_{X_\bullet} \qquad \text{and}
    \qquad
    \cO_U \longrightarrow \RR \varepsilon_*\cO_{U_\bullet}
  \]
  are quasi-isomorphisms where \(\varepsilon\colon X_\bullet \to X\) is the
  augmentation morphism, \(\Sigma_n \coloneqq \varepsilon_n^{-1}(\Sigma)\),
  and \(U_n \coloneqq X_n - \Sigma_n\).
  It therefore suffices to show that
  \[
    R^pg_*R^q(f \circ \varepsilon)_*\sL_\bullet = 0
  \]
  for all \(q\) and for all \(p \ne 0\), where \(\sL_\bullet \coloneqq
  \varepsilon^*\sL\).
  As in the proof of \cite[Theorem 12]{Amb20}, the generalized normal crossing
  pair \((X,B)\) induces a pair \((X_n,B_n)\) such that \(X_n\) is regular and
  \(B_n\) has normal crossing support and coefficients in \([0,1]\) for every
  \(n\).
  We can then apply Theorem \ref{thm:amb2016}.
\end{proof}
\subsection{Extensions to other categories of spaces}
\label{sect:injectivityothercats}
We now extend our injectivity and torsion-freeness theorems to the other
categories of spaces.
For simplicity, we assume the morphism \(f\) is locally projective.
\begin{customthm}{A}\label{thm:maininj}
  Let \(f\colon (X,B) \to Y\) be a locally projective morphism of spaces of one
  of the types
  \((\ref{setup:introalgebraicspaces})\textnormal{--}(\ref{setup:introadicspaces})\)
  such that \((X,B)\) is a generalized normal crossing pair.
  In case \((\ref{setup:introalgebraicspaces})\) (resp.\
  \((\ref{setup:introformalqschemes})\)), suppose that \(Y\) is locally
  quasi-excellent of equal characteristic zero and has a dualizing complex
  (resp.\ \(c\)-dualizing complex) \(\omega_Y^\bullet\).
  In cases \((\ref{setup:introberkovichspaces})\),
  \((\ref{setup:introrigidanalyticspaces})\), and
  \((\ref{setup:introadicspaces})\), suppose that \(k\) is of characteristic
  zero.
  Let \(\sL\) be an invertible \(\cO_X\)-module.
  \begin{enumerate}[label=\((\roman*)\),ref=\roman*]
    \item\label{thm:maininjesvi}
      \textnormal{(The Esnault--Viehweg injectivity theorem)}
      Suppose that 
      \[
        \sL^{\otimes r} \simeq \omega^{[r]}_{(X,B)}
      \]
      for an integer \(r \ge 1\) such that \(rB\) has
      integer coefficients.
      Let \(D\) be an effective Cartier divisor with support in \(\Supp(B)\).
      Then, the natural map
      \[
        R^if_*\sL \longrightarrow R^if_*\bigl(\sL(D)\bigr)
      \]
      is injective for every \(i\).
    \item\label{thm:maininjtankol}
      \textnormal{(The Tankeev--Koll\'ar injectivity theorem)}
      Suppose that
      \[
        \sL^{\otimes r} \simeq \omega^{[r]}_{(X,B)} \otimes_{\cO_X} \sH
      \]
      for an integer \(r \ge 1\) such that
      \(rB\) has integer coefficients and for an invertible \(\cO_X\)-module \(\sH\)
      such that \(f^*f_*\sH \to \sH\) is surjective.
      Let \(s \in \Gamma(X,\sH)\) be a global section that is invertible at the
      generic point of every log canonical center of \((X,B)\) and let \(D\) be the
      effective Cartier divisor defined by \(s\).
      Then, the natural maps
      \[
        R^i f_*\sL \longrightarrow R^if_*\bigl(\sL(D)\bigr)
      \]
      are injective for every \(i\).
    \item\label{thm:maininjtf} \textnormal{(The Koll\'ar torsion-freeness theorem)}
      Suppose that
      \[
        \sL^{\otimes r} \simeq \omega^{[r]}_{(X,B)}
      \]
      for an integer \(r \ge 1\) such that
      \(rB\) has integer coefficients
      and for an invertible \(\cO_X\)-module
      \(\sH\) such that \(f^*f_*\sH \to \sH\) is surjective.
      In case \((\ref{setup:introalgebraicspaces})\), suppose that \(Y\) is an
      algebraic space.
      Then, every associated subspace of \(R^i f_*\sL\) is the \(f\)-image of an
      irreducible component of \(X\) or of a log canonical center of \((X,B)\).
    \item\label{thm:maininjohskol}
      \textnormal{(The Ohsawa--Koll\'ar vanishing theorem)}
      Let \(g\colon Y \to Z\) be another locally projective morphism, where
      \(Z\) satisfies the same hypotheses as \(Y\) above.
      Suppose that
      \[
        \sL^{\otimes r} \simeq \omega^{[r]}_{(X,B)} \otimes_{\cO_X} f^*\sA
      \]
      for an integer \(r \ge 1\) such that
      \(rB\) has integer coefficients and \(\sA\) is a \(g\)-ample invertible
      \(\cO_Y\)-module.
      Then, we have
      \[
        R^pg_*R^qf_*\sL = 0
      \]
      for all \(q\) and for all \(p \ne 0\).
  \end{enumerate}
\end{customthm}
\begin{proof}
  Since all four statements are local, we may replace \(Y\) by an affinoid
  subdomain to assume that \(Y\) is affinoid and \(f\) is projective.
  In case \((\ref{setup:introalgebraicspaces})\), we obtain a projective
  morphism of schemes after smooth base change, and the dualizing complex
  \(\omega_Y^\bullet\) is compatible with this base change \cite[Definition
  2.16]{AB10}.
  In the other cases, we apply the GAGA theorems from
  \citeleft\citen{EGAIII1}\citemid Proposition 5.1.2\citepunct
  \citen{AT19}\citemid Theorem C.1.1\citepunct \citen{Poi10}\citemid
  Th\'eor\`eme A.1\((i)\)\citepunct \citen{Kop74}\citemid Folgerung
  6.6\citepunct \citen{Hub07}\citemid Corollary 6.4\citeright, which say that
  the relevant cohomology modules are compatible with GAGA, and \cite[\S\S23--25]{LM},
  which says that Grothendieck duality and dualizing complexes are compatible
  with GAGA.
  Note that the sheaves \(\omega_{(X,B)}^{[r]}\) and the generalized normal
  crossings conditions are compatible with GAGA
  because these notions are defined in terms
  of completions of local rings.
  In case \((\ref{setup:introalgebraicspaces})\), instead of constructing
  \(\omega_{(X,B)}^{[r]}\) on \(X\), the hypothesis of each statement is 
  that \(\sL\) is an invertible
  \(\cO_X\)-module such that after replacing \(Y\) by a smooth covering by a
  scheme, we have an isomorphism between \(\sL^{\otimes r}\) and (the twist of)
  \(\omega^{[r]}_{(X,B)}\).
  Now that we have reduced to the scheme case, we may apply Theorems
  \ref{thm:amb2012}, \ref{thm:amb2014gnc}, \ref{thm:amb2015gnc}, and
  \ref{thm:amb2016gnc}.
\end{proof}

\subsection{Derived splinters and the vanishing property for maps on Tor}
We now give an application of our version of the Koll\'ar torsion-freeness
theorem (Theorem \ref{thm:amb2015gnc}) to commutative algebra.
Specifically, we extend Ma's characterization \cite[Theorem 5.5]{Ma18}
of derived splinters of equal characteristic zero as the class
of \(\QQ\)-algebras satisfying Hochster and Huneke's vanishing conjecture for
maps of Tor \cite[Theorem 4.1 and (4.4)]{HH95} to the complete local case.
Ma's result applies to local rings essentially of finite type
over fields of characteristic zero.
\par As mentioned in \S\ref{sect:intro}, 
our proof recovers Hochster and Huneke's vanishing theorem for maps of Tor
for all regular domains of equal characteristic zero \cite[Theorem 4.1]{HH95}.
In contrast with \cite{HH95}, this proof does not use reduction modulo \(p\).
See \cite[Remark 5.6(2)]{Ma18}.
\begin{theorem}[{cf.\ \cite[Theorem 5.5]{Ma18}}]\label{thm:ma18}
  Let \(S\) be a Noetherian complete local domain of equal characteristic zero.
  The following are equivalent:
  \begin{enumerate}[label=\((\arabic*)\),ref=\arabic*]
    \item\label{thm:ma18i}
      \(S\) satisfies the \textsl{vanishing conditions for maps of Tor},
      i.e., for all maps \(A \to R \to S\) of \(\QQ\)-algebras
      such that \(A\) is a regular domain and \(A \to R\) is a module-finite
      torsion-free extension, the natural map
      \[
        \Tor_i^A(M,R) \longrightarrow \Tor_i^A(M,S)
      \]
      vanishes for every \(A\)-module \(M\) and every \(i \ge 1\).
    \item\label{thm:ma18ii} \(S\) is a derived splinter.
    \item\label{thm:ma18iii} For every regular local ring \(A\) with \(S = A/P\) and every
      module-finite torsion-free extension \(A \to B\) with \(Q \in \Spec(B)\)
      lying over \(P\), the map \(P \to Q\) splits as a map of \(A\)-modules.
  \end{enumerate}
\end{theorem}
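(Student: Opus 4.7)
The strategy is to follow Ma's proof of \cite[Theorem 5.5]{Ma18} and substitute Theorem \ref{thm:amb2015gnc} for his use of Koll\'ar's torsion-freeness theorem. In \cite{Ma18}, Ma's argument for the essentially-of-finite-type case proceeds by reduction to Koll\'ar's torsion-freeness theorem applied to a resolution of singularities. Since every Noetherian complete local ring of equal characteristic zero is quasi-excellent by \cite[Scholie 7.8.3]{EGAIV2} and admits a dualizing complex by \cite{Har66}, the hypotheses of Theorem \ref{thm:amb2015gnc} are satisfied for any proper morphism into \(\Spec(S)\). Resolutions of singularities are available in this generality by \cite[Theorems 1.1.11 and 1.1.13]{Tem18}.

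The implication \((\ref{thm:ma18i}) \Rightarrow (\ref{thm:ma18iii})\) is formal: apply (1) to the \(A\)-module \(M = A/P\) and the extension \(A \to B\) from the setup of (3); the long exact sequence of Tor associated to \(0 \to P \to A \to A/P \to 0\), together with the vanishing of \(\Tor_1^A(A/P, B) \to \Tor_1^A(A/P, S)\), produces an \(A\)-linear section of \(P \hookrightarrow Q\). The implication \((\ref{thm:ma18iii}) \Rightarrow (\ref{thm:ma18ii})\) follows Ma's argument by reformulating (3) as saying that trace maps split for proper modifications of \(\Spec(S)\) by regular schemes; together with Temkin's resolution of singularities, this yields a splitting of \(S \to \mathbf{R}f_*\cO_Y\) for every proper surjective \(f : Y \to \Spec(S)\), which is the derived splinter property.

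The main direction is \((\ref{thm:ma18ii}) \Rightarrow (\ref{thm:ma18i})\). By Cohen's structure theorem, we may write \(S = A/P\) for a complete regular local ring \(A\). Given \(A \to R \to S\) as in (1), the plan is to replace \(R\) by its normalization \(\bar{R}\), choose a resolution of singularities \(\mu\colon Y \to \Spec(\bar{R})\), and analyze the diagram obtained by composing with \(\Spec(S) \hookrightarrow \Spec(\bar{R})\). By Theorem \ref{thm:amb2015gnc}, every associated point of \(R^i\mu_*\omega_Y\) on \(\Spec(\bar{R})\) is the image of the generic point of an irreducible component of \(Y\) or of a log canonical center of a canonically associated pair. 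Combined with the derived splinter property of \(S\), which provides compatible splittings after restriction to \(\Spec(S)\), one then runs the change-of-rings spectral sequence argument from \cite{Ma18} to deduce the vanishing \(\Tor_i^A(M, R) \to \Tor_i^A(M, S) = 0\) for every \(i \ge 1\) and every \(A\)-module \(M\).

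The main obstacle will be verifying that each step of Ma's proof transfers from his essentially-of-finite-type setting to the complete local setting. The subtlest point is the application of the torsion-freeness theorem on the resolution \(\mu\); this is exactly bridged by Theorem \ref{thm:amb2015gnc}, whose generality in the Noetherian scheme case was motivated in part by this application. Other references in \cite{Ma18} to Grothendieck duality and compatibilities under pullback along regular morphisms are handled by standard facts for quasi-excellent schemes with dualizing complexes, using that completion is a regular morphism by \cite[Scholie 7.8.3]{EGAIV2}. Running the argument verbatim with these substitutions completes the proof of all three implications, and hence of the theorem.
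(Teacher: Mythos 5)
Your high-level strategy matches the paper's: follow Ma's proof of \cite[Theorem 5.5]{Ma18} and substitute a stronger torsion-freeness theorem for Koll\'ar's vanishing theorem in the one place where the essentially-of-finite-type hypothesis enters. The paper also establishes \((\ref{thm:ma18i}) \Leftrightarrow (\ref{thm:ma18iii})\) by citing \cite[Theorem 4.3]{Ma18} directly, and \((\ref{thm:ma18i}) \Rightarrow (\ref{thm:ma18ii})\) by identifying derived splinters with pseudo-rational rings via \cite[Theorem 9.5]{Mur} and invoking \cite[Proposition 3.4]{Ma18}, rather than re-deriving these implications; your sketches of these implications are plausible but unnecessary.

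However, in the main direction \((\ref{thm:ma18ii}) \Rightarrow (\ref{thm:ma18i})\) your writeup has two problems. First, there is a genuine geometric error: you assert an inclusion \(\Spec(S) \hookrightarrow \Spec(\bar{R})\), but no such embedding exists. Since \(A \to S\) is surjective (after the reduction via \cite[(4.5)]{HH95}, which the paper uses in place of Cohen's theorem here), \(\Spec(S)\) is a closed subscheme of \(\Spec(A)\), while \(\Spec(\bar{R})\) is a finite cover of \(\Spec(A)\); the two are not nested. Second, and more substantively, you invoke Theorem \ref{thm:amb2015gnc} but never explain how it is actually used. What Ma's \cite[Theorem 5.2]{Ma18} requires is Koll\'ar's \emph{vanishing} theorem \cite[Theorem 2.1\((ii)\)]{Kol86} — the statement that \(R^i f_*\omega_X = 0\) for \(i\) exceeding the generic fiber dimension — not the torsion-freeness statement itself. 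The missing (short) observation is that torsion-freeness \emph{implies} this vanishing: for such \(i\), the generic stalk of \(R^i f_*\omega_X\) vanishes, and a torsion-free sheaf on an integral base with zero generic stalk is zero. Without this bridge, citing Theorem \ref{thm:amb2015gnc} does not close the gap in Ma's argument. Your remark about a ``change-of-rings spectral sequence'' and ``compatible splittings'' describes a different portion of Ma's argument and does not supply this step.
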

\begin{proof}
  First, \((\ref{thm:ma18i}) \Leftrightarrow (\ref{thm:ma18iii})\) holds by
  \cite[Theorem 4.3]{Ma18}.
  Moreover, derived splinters are the same as pseudo-rational rings in this
  context by \cite[Theorem 9.5]{Mur}, and hence
  \((\ref{thm:ma18i}) \Rightarrow (\ref{thm:ma18ii})\) follows from
  \cite[Proposition 3.4]{Ma18}.
  \par It remains to show \((\ref{thm:ma18ii}) \Rightarrow
  (\ref{thm:ma18i})\).
  By \cite[(4.5)]{HH95}, we may assume that \(A\) is local, \(R\) is a domain,
  \(A \to S\) is surjective, and \(M\) is a finitely generated \(A\)-module.
  Since \(A\) is regular local, the module \(M\) has finite projective
  dimension.
  To show that \((\ref{thm:ma18i})\) holds, it now suffices to show that
  \cite[Corollary 5.4]{Ma18} holds after replacing the local domain \((A,\fm)\)
  essentially of finite type over a field of characteristic zero
  in the statement of \cite[Corollary
  5.4]{Ma18} with a Noetherian complete local domain \((A,\fm)\) of equal
  characteristic zero.
  The only place where this assumption is used in the proof of \cite[Theorem
  5.2]{Ma18}, which (in equal characteristic zero) uses resolutions of
  singularities \cite{Hir64}
  and Koll\'ar's vanishing theorem \cite[Theorem
  2.1\((ii)\)]{Kol86}.
  Resolutions of singularities exist in this context by \cite[Chapter I, \S3,
  Main theorem I\((n)\)]{Hir64}.
  Koll\'ar's vanishing theorem holds by our version of the Koll\'ar
  torsion-freeness theorem: If \(f\colon X \to Y\)
  is a proper surjective morphism of integral regular excellent schemes of equal
  characteristic zero and \(Y\) has a dualizing complex \(\omega_Y^\bullet\), then
  \(R^if_*\omega_X = 0\) for all \(i\) greater than the generic fiber dimension
  since \(R^if_*\omega_X\) must be torsion-free by Theorem \ref{thm:amb2015gnc}.
\end{proof}

\section{A weight filtration on
cohomology\texorpdfstring{\except{toc}{\nopagebreak\\}}{} of schemes and compactifiable
rigid analytic spaces}\label{sect:weights}
In this section, we construct a weight filtration on (pro-)\'etale cohomology of
schemes and compactifiable rigid analytic spaces.
The case of varieties is due to Deligne \cite{Del74} and the case of
compactifiable complex analytic spaces is due to Cirici and Guill\'en
\cite{CG14}.
The key point is that as in \cite{CG14},
the data necessary to construct the weight filtration for
complex varieties \(X\) in \cite{Del74} is all contained in a hyperresolution for
\(X\), and hence we can apply our extension criterion
(Theorems \ref{thm:gna215} and \ref{thm:cg39}) to construct an analogous weight
filtration for schemes and for rigid analytic spaces.\medskip
\par To construct the weight filtration, Deligne used Hodge theory
\cite[(8.1.19)]{Del74}.
Later, Gillet and Soul\'e constructed the weight filtration using resolutions of
singularities and algebraic \(K\)-theory \cite[Theorem 3]{GS95}.
However, the extension criterion of Guill\'en and Navarro Aznar
\cite[Th\'eor\`eme 2.1.5]{GNA02} gives a new approach to constructing the weight
filtration that only uses resolutions of singularities and either the
Chow--Hironaka lemma (in \cite{GNA02}) or the weak factorization theorem (in
this paper).
This observation is due to Totaro, who constructed the weight filtration on
singular cohomology with compact support on complex or real analytic spaces
\cite[Theorems 2.2 and 2.3]{Tot02} using \cite[Th\'eor\`eme 2.1.5]{GNA02}.
Using similar ideas,
McCrory and Parusi\'{n}ski constructed the weight filtration on Borel--Moore homology
\cite[Theorem 1.1]{McP11} and on compactly supported singular homology
\cite[Theorem 8.1]{McP14} on real
algebraic varieties, both with \(\ZZ/2\ZZ\) coefficients.
Cirici and Guill\'en showed that a similar construction (with the version of the
Guill\'en--Navarro Aznar extension criterion considered in \S\ref{sect:cg14})
can be used to construct the weight filtration on compactifiable complex
analytic spaces \cite[Theorem 6.1]{CG14}.\medskip
\par We follow the approach in \cite[\S6]{CG14}.
We work with the (pro-)\'etale topology and (pro-)\'etale cohomology.
See \S\ref{sect:etaletop} for references discussing the (pro-)\'etale topology.
\subsection{Conventions}
We denote by \(\Sp\) a subcategory of a small category of reduced spaces of one of the types
\((\ref{setup:introformalqschemes})\) or \((\ref{setup:introadicspaces})\)
that is essentially stable under fiber products, immersions, and proper
morphisms.
We will restrict to the subcases when \(\Sp\) consists of ordinary schemes
and when \(k\) is a \(p\)-adic field, respectively.
We will assume \(\Sp\) is chosen according to the strategy outlined in
Remark \ref{rem:smallspaces}.
We denote by \(\Sp_\reg\) the subcategory of \(\Sp\) consisting of regular spaces.
See Definition \ref{def:sp2} for definitions of \(\Sp^2\), \(\Sp_{\reg}^2\),
\(\Sp_\comp^2\), and \(\Sp^2_{\reg,\comp}\).
We denote by \(I\) a small category.
While we will remind the reader of these assumptions in statements of results in
this section, we will not restate these assumptions in definitions or remarks.
\par We will also continue to use the terminology \textsl{\(I\)-space} from
Definition \ref{def:ispaces} to refer to objects in \(\Diag_I(\Sp)\).
\subsection{(Pro-)\'etale cohomology of a pair
\texorpdfstring{\except{toc}{(\emph{X},\emph{U})}\for{toc}{\((X,U)\)}}{(X,U)}}
Following the strategy of \cite[\S6]{CG14}, we can use Theorem \ref{thm:cg39}
to construct a weight
filtration on (pro-)\'etale cohomology of schemes or adic spaces over \(p\)-adic
fields.
The case of germs of complex analytic spaces (case
\((\ref{setup:introcomplexanalyticgerms})\)) also holds by the proof in
\cite{CG14}.
\par For the statement below, for an object \((X,U)\) of \(\Sp_{\reg,\comp}^2\),
we denote by \(j\colon U \hookrightarrow X\) the inclusion morphism.
We first discuss the \'etale case.
Consider the \'etale sheaf \(\bm{\mu}_{n,U}\) on \(U\) where \(\mu_n\) is the
multiplicative group of \(n\)-th roots of unity for an integer \(n > 0\).
We can then compute \(\RR j_*\bm{\mu}_{n,U}\) using the Godement resolution as
defined in \cite[Expos\'e XVII, D\'efinition 4.2.2]{SGA43}, where we note that
the \'etale sites for schemes
\cite[\href{https://stacks.math.columbia.edu/tag/040S}{Tag
040S}]{stacks-project} and for adic spaces \cite[Proposition
2.5.5]{Hub96} have enough points.
Next, consider the filtered complex of sheaves
\[
  \bigl( \RR j_*\bm{\mu}_{n,U},\tau_{\le\bullet} \bigr)
\]
where
\(\tau_{\le\bullet}\) is the canonical filtration.
For each integer \(n > 0\), taking the right derived functor of global sections yields a
functor
\[
  \begin{tikzcd}[cramped,row sep=0,column sep=1.475em]
    \cW_n\colon&[-2.125em] \Sp_{\reg,\comp}^2 \rar & \D^+_1(\FF \Ab)\\
    & (X,U) \rar[mapsto] & \RR\Gamma\bigl(X_\et,\bigl( \RR
      j_*\bm{\mu}_{n,U},\tau_{\le \bullet}
    \bigr)\bigr)
  \end{tikzcd}
\]
that is \(\Phi\)-rectified since it factors through \(C^+(\FF A\mhyphen\Mod)\)
\cite[p.\ 34]{GNA02}.
Here,
\[
  \D^+_1(\FF \Ab) \coloneqq \C^+(\FF \Ab)[\cE_1^{-1}]
\]
denotes the category of filtered complexes of
Abelian groups localized with respect to the class of filtered morphisms that
induce isomorphisms on the \(E_1\) page of the spectral sequences associated to
the filtrations
\cite[Definition 2.2]{CG14}.
\par In the pro-\'etale topology, we can work directly with the constant sheaf
\(\underline{\ZZ_l}_{\kern1ptU}\) since the pro-\'etale site has enough points
\citeleft\citen{stacks-project}\citemid
\href{https://stacks.math.columbia.edu/tag/0992}{Tag 0992}\citepunct
\citen{Sch13}\citemid Proposition 3.13\citeright.
\begin{theorem}
  \label{thm:weightfiltration}
  Suppose that \(\Sp\) is a subcategory of a small category of
  finite-dimensional reduced spaces of type
  \((\ref{setup:introformalqschemes})\) or \((\ref{setup:introadicspaces})\)
  that is essentially stable under fiber products, immersions, and proper
  morphisms.
  In case \((\ref{setup:introformalqschemes})\), suppose that all objects in \(\Sp\) are
  ordinary schemes that are Noetherian, quasi-excellent, and of equal
  characteristic zero.
  In case
  \((\ref{setup:introadicspaces})\), suppose that
  \(k\) is a \(p\)-adic field of characteristic zero.
  Then, for every integer \(n > 0\) such that \(p \nmid n\),
  there exists a \(\Phi\)-rectified functor
  \[
    \cW_n'\colon \Sp_\infty \longrightarrow \D^+_1(\FF \Ab)
  \]
  satisfying the following properties:
  \begin{enumerate}[label=\((\roman*)\),ref=\roman*]
    \item\label{thm:weightfiltrationi}
      If \(U_\infty\) is an object of \(\Sp_\infty\), then
      \(\mathbf{h}^i(\cW'_n(U_\infty)) \simeq H^i_\et(U,\bm{\mu}_{n,U})\) for
      every \(i\).
    \item If \((X,U)\) is an object of \(\Sp_{\reg,\comp}^2\) then
      \(\cW_n'(U_\infty) \simeq \cW_n(X,U)\).
    \item\label{thm:weightfiltrationiii}
      For every \(p,q\in \ZZ\) and every acyclic square
      \[
        \begin{tikzcd}
          \tilde{Y}_\infty \rar \dar & \tilde{X}_\infty \dar\\
          Y_\infty \rar & X_\infty
        \end{tikzcd}
      \]
      in \(\Sp_\infty\), there is a long exact sequence
      \[
        \cdots \longrightarrow E_2^{*,q}\bigl(\cW'_n(X_\infty)\bigr)
        \longrightarrow E_2^{*,q}\bigl(\cW'_n(\tilde{X}_\infty)\bigr) \oplus
        E_2^{*,q}\bigl(\cW'_n(Y_\infty)\bigr) \longrightarrow
        E_2^{*,q}\bigl(\cW'_n(\tilde{Y}_\infty)\bigr) \longrightarrow \cdots.
      \]
  \end{enumerate}
  The same results hold for pro-\'etale cohomology with \(\ZZ_l\) or \(\QQ_l\)
  coefficients when \(l \ne p\).
\end{theorem}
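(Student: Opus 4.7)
The plan is to apply the extension criterion for pairs (Theorem \ref{thm:cg39}) to the functor $\cW_n$ defined on $\Sp^2_{\reg,\comp}$, closely following the approach of Cirici and Guill\'en \cite{CG14} in the complex analytic setting. By Example \ref{ex:cohdescent}(\ref{ex:erqisos}), the category $\C^+(\FF \Ab)$ equipped with the class $\cE_1$ of filtered morphisms inducing isomorphisms on the $E_1$-page and the filtered totalization functor is a category of cohomological descent, with localization $\D^+_1(\FF \Ab)$. The functor $\cW_n$ takes values in this category, computed by applying $\Gamma(X_\et,-)$ to the Godement resolution of $\RR j_* \bm{\mu}_{n,U}$ equipped with the canonical filtration $\tau_{\le \bullet}$, and is $\Phi$-rectified by \cite[p.\ 34]{GNA02} since it factors through $\C^+(\FF A\mhyphen\Mod)$.

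Next I would verify the hypotheses $(\ref{thm:cg39f1})^2$ and $(\ref{thm:cg39f2})^2$ of Theorem \ref{thm:cg39}. The additivity condition $(\ref{thm:cg39f1})^2$ is immediate from the additivity of $\RR\Gamma(X_\et,-)$ and $\RR j_*$ under disjoint unions. The main obstacle is $(\ref{thm:cg39f2})^2$: for every elementary acyclic square in $\Sp^2_{\reg,\comp}$ arising from a blowup $f\colon \tilde{X} \to X$ along a regular center $Y$ having normal crossings with $D = X - U$, the simple complex $\bfs\,\cW_n(X_\bullet, U_\bullet)$ must be $\cE_1$-acyclic. Passing to the $E_1$-page of the spectral sequence of a filtered complex, this reduces to a Mayer--Vietoris exact sequence for the sheaves $R^q j_* \bm{\mu}_{n,U}$ under the blowup square, which follows from proper base change and the standard blowup formula for \'etale cohomology with torsion coefficients prime to the residue characteristics. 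In the scheme case this is classical; in the adic space case over a $p$-adic field with $p \nmid n$, proper base change holds by \cite{Hub96}, and the local computation near the exceptional divisor uses the normal crossings assumption on $Y$ and $D$.

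Applying Theorem \ref{thm:cg39} produces the $\Phi$-rectified extension $\cW_n'\colon \Sp_\infty \to \D^+_1(\FF \Ab)$. Property (\ref{thm:weightfiltrationi}+1) (namely, the identification on $\Sp^2_{\reg,\comp}$) holds by construction via Theorem \ref{thm:cg39}(\ref{thm:cg39e}), and property (\ref{thm:weightfiltrationiii}) follows from Theorem \ref{thm:cg39}(\ref{thm:cg39d}): the $\cE_1$-acyclicity of $\bfs\,\cW_n'(X_\bullet, U_\bullet)$ translates, via the spectral sequence of a filtered complex, into the desired long exact sequence on the $E_2$-page. For property (\ref{thm:weightfiltrationi}), on regular compactifiable pairs $(X, U)$ the Leray spectral sequence for $j\colon U \hookrightarrow X$ gives $\mathbf{h}^i(\cW_n(X, U)) \simeq H^i_\et(U, \bm{\mu}_{n, U})$; for a general object $U_\infty$, we pick a compactification $X$ of $U$ and a cubical hyperresolution of $X$ relative to $(X, U)$ by Theorem \ref{thm:gnapgp215}, and the underlying unfiltered complex of $\cW_n'(U_\infty)$ computes $\RR\Gamma(U_\et, \bm{\mu}_{n, U})$ by the cubical descent for cohomology established in Theorem \ref{thm:gnapgp69}.

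Finally, the pro-\'etale variant with $\ZZ_l$ or $\QQ_l$ coefficients for $l \ne p$ is obtained by the same construction, replacing $\bm{\mu}_{n,U}$ with the pro-\'etale constant sheaves $\underline{\ZZ_l}_{\kern1pt U}$ or $\underline{\QQ_l}_{\kern1pt U}$, the Godement resolution with a pro-\'etale $K$-injective resolution of $\RR j_*$ equipped with the canonical filtration, and the \'etale blowup formula and cohomological descent with their pro-\'etale analogues from \cite{BS15} (for schemes) and \cite{Sch13} (for adic spaces). The remainder of the argument is identical.
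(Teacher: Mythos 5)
The proposal follows essentially the same route as the paper: both apply Theorem \ref{thm:cg39} to the functor $\cW_n$, both note that $(\C^+(\FF\Ab),\cE_1,\bfs^1)$ is a category of cohomological descent by \cite[Theorem~2.8]{CG14}, and both reduce the verification of condition $(\ref{thm:cg39f2})^2$ to a statement about exactness on $E_2$-pages via \cite[Proposition~2.7]{CG14}. However, your treatment of the key step is too vague to constitute a proof.

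The paper identifies $E_2^{*,q}(\cW_n(X,U))$ with the Gysin complex $G^q(X,U)^*$ from \cite[p.~81]{GNA02} and \cite[4.3]{CG14}, and then transplants the argument of \cite[Proposition~4.5]{CG14}, which has two inputs. First, contravariant functoriality of the Gysin complex with respect to \emph{all} morphisms of regular pairs (not just open immersions or proper maps), which in \cite{CG14} is established via Poincar\'e--Verdier duality; this must be replaced by the six-functor formalism in the \'etale/pro-\'etale topologies for schemes and adic spaces. Second, for the base case of the inductive acyclicity argument, the explicit computation of $R^q j_*\bm{\mu}_n$ for a normal crossings compactification --- this is cohomological purity (\cite[Expos\'e~VII, Th\'eor\`emes~2.2.1 and 8.1]{SGA5}), not the ``standard blowup formula.'' Your claim that acyclicity ``reduces to a Mayer--Vietoris exact sequence for the sheaves $R^qj_*\bm\mu_{n,U}$ [which] follows from proper base change and the standard blowup formula'' names neither of these ingredients and in particular says nothing about why the Gysin complex is functorial, so the argument as stated would not go through. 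Finally, the pro-\'etale case for adic spaces is genuinely more delicate than ``identical'': as the paper notes, one cannot simply reduce the pro-\'etale purity computation to the \'etale one; one must rerun the first steps of the proof of \cite[Expos\'e~VII, Th\'eor\`eme~8.1]{SGA5} in the pro-\'etale topology using enough points, proper base change from \cite{BS15} or \cite{Sch}, and the projective space computation, and only then compare with the \'etale case.
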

\begin{proof}
  By \cite[Theorem 2.8]{CG14} (see Example
  \ref{ex:cohdescent}\((\ref{ex:erqisos})\)), the triple \((\C^+(\FF
  \Ab),\cE_1,\bfs^1)\) is a category of cohomological descent.
  By Theorem \ref{thm:cg39}, to show
  \((\ref{thm:weightfiltrationi})\textnormal{--}(\ref{thm:weightfiltrationiii})\),
  it suffices to show the functor \(\cW_n\) satisfies
  conditions \((\ref{thm:cg39f1})^2\) and \((\ref{thm:cg39f2})^2\) in the
  statement of Theorem \ref{thm:cg39}.
  Condition \((\ref{thm:cg39f1})^2\) holds by definition of the global sections
  functor, and hence it remains to show \((\ref{thm:cg39f2})^2\).
  \par We want to show that for every acyclic square \((X_\bullet,U_\bullet)\)
  augmented over \((X,U)\) in \(\Sp^2_{\reg,\comp}\), the simple
  \(\bfs^1(\cW_n(X_\bullet,U_\bullet))\) is \(E_1\)-acyclic.
  By \cite[Propositon 2.7]{CG14}, for every cubical codiagram of filtered
  complexes \(K^\bullet\), we have a quasi-isomorphism
  \[
    E_1^{*,q}\bigl(\bfs(K^\bullet)\bigr) \overset{\sim}{\longleftrightarrow} \bfs
    E_1^{*,q}(K^\bullet).
  \]
  Thus, it suffices to show that for every fixed integer \(q\), the sequence
  \[
    \mathclap{\cdots \longrightarrow E_2^{*,q}\bigl(\cW_n(X,U)\bigr)
    \longrightarrow E_2^{*,q}\bigl(\cW_n(\tilde{X},\tilde{U})\bigr) \oplus
    E_2^{*,q}\bigl(\cW_n(Y,U \cap Y)\bigr) \longrightarrow
    E_2^{*,q}\bigl(\cW_n(\tilde{Y},\tilde{U} \cap \tilde{Y})\bigr) \longrightarrow
    \cdots}
  \]
  is exact.
  By definition of \(\cW_n\), the spectral sequence \(E_2^{*,q}(\cW_n(X,U))\) is the
  Leray spectral sequence for the inclusion \(j \colon U \hookrightarrow
  X\) (with a shift) and therefore coincides with the Gysin complex
  \(G^q(X,U)^*\) (see \citeleft\citen{GNA02}\citemid p.\ 81\citepunct
  \citen{CG14}\citemid 4.3\citeright) for every \(q\).
  \par We can therefore apply \cite[Proposition 4.5]{CG14} and its proof with the
  following changes.
  In the proof of \cite[Proposition 4.5(1)]{CG14}, it suffices to replace
  Poincar\'e--Verdier duality and the six functor formalism used in the proof of
  \cite[Proposition 4.1]{CG14} with the corresponding results from \cite{SGA43}
  (which also applies to
  formal schemes by \cite[Expos\'e I, Corollaire 8.4]{SGA1new})
  in case \((\ref{setup:introformalqschemes})\) and from
  \cite{Hub96} in case
  \((\ref{setup:introadicspaces})\) in the \'etale topology.
  In the pro-\'etale topology, we instead use \cite[\S6]{BS15} and
  \cite[\S3]{Sch13}, respectively.
  In the proof of \cite[Proposition 4.5(2)]{CG14}, we make the following changes
  in the base case \((r,s) = (1,0)\):
  \begin{itemize}
    \item
      In case \((\ref{setup:introformalqschemes})\),
      we apply \cite[Expos\'e VII, Th\'eor\`emes 2.2.1 and
      8.1]{SGA5} over \(U\) and then apply \(H^i(X,\RR j_*(\,\cdot\,))\)
      instead of using \cite[Proposition 4.2]{CG14}.
      Note that in the pro-\'etale case, we cannot reduce to the \'etale case
      directly.
      Instead, we follow the first steps of the proof of \cite[Expos\'e VII,
      Th\'eor\`eme 8.1]{SGA5} in the pro-\'etale topology using
      the fact that the pro-\'etale site has enough points 
      \citeleft\citen{stacks-project}\citemid
      \href{https://stacks.math.columbia.edu/tag/0992}{Tag 0992}\citeright\ 
      and proper base
      change \cite[Lemma 6.7.5]{BS15} to reduce to the computation for
      projective space in \cite[Expos\'e VII, Lemme 8.2]{SGA5}.
      This computation can now be reduced to the \'etale case using
      \cite[Proposition 5.6.2]{BS15}.
    \item In case 
      \((\ref{setup:introadicspaces})\), we need a rigid analytic analogue of
      \cite[Expos\'e VII, Th\'eor\`emes 2.2.1 and 8.1]{SGA5} over \(U\), which
      can then replace \cite[Proposition 4.2]{CG14} after applying \(H^i(X,\RR
      j_*(\,\cdot\,))\).
      As in the previous case,
      we follow the first steps of the proof of \cite[Expos\'e VII,
      Th\'eor\`eme 8.1]{SGA5} using the fact that the \'etale site and
      pro-\'etale site have enough points \citeleft\citen{Hub96}\citemid
      Proposition 2.5.5\citepunct \citen{Sch13}\citemid Proposition
      3.13\citeright\ and proper base change \citeleft\citen{Hub96}\citemid
      Theorem 4.4.1\citepunct \citen{Sch}\citemid Theorem 19.2\citeright.
      This reduces to a computation of (pro-)\'etale cohomology of projective
      space, which can be done algebraically by \citeleft\citen{Hub96}\citemid
      Theorem 3.8.1\citepunct \citen{Sch}\citemid Proposition
      27.5\citeright.\qedhere
  \end{itemize}
\end{proof}

\addtocontents{toc}{\protect\medskip}
\bookmarksetup{startatroot}
\section*{Acknowledgments}
Theorem \ref{thm:maininj} arose from a question of Osamu Fujino.
I would like to thank him for encouraging me to write this paper.
I am grateful to Osamu Fujino and Haoyang Guo for helpful comments on a
preliminary draft of this paper.

\end{document}